\def\blfootnote{\xdef\@thefnmark{}\@footnotetext}
\theoremstyle{plain}
\newtheorem{theorem}{Theorem}[section]
\newtheorem*{theorem*}{Theorem}
\newtheorem{lemma}[theorem]{Lemma}
\newtheorem*{lemma*}{Lemma}
\newtheorem{corollary}[theorem]{Corollary}
\newtheorem{proposition}[theorem]{Proposition}
\newtheorem*{proposition*}{Proposition}
\newtheorem*{conjecture*}{Conjecture}
\theoremstyle{remark}
\newtheorem{remark}[theorem]{Remark}
\newtheorem{assumption}[theorem]{Assumption}
\theoremstyle{definition}
\newtheorem{definition}[theorem]{Definition}
\newtheorem*{acknowledgements}{Acknowledgements}
\numberwithin{equation}{section}
\renewcommand\labelenumi{(\roman{enumi})}
\renewcommand\theenumi\labelenumi
\newcommand{\sbt}{\,\begin{picture}(-1,1)(-1,-2)\circle*{2}\end{picture}\ }
\newcommand{\Q}{\mathbb{Q}}
\newcommand{\Z}{\mathbb{Z}}
\newcommand{\N}{\mathbb{N}}
\newcommand{\R}{{\mathrm R}}
\newcommand{\lsk}{\mathsf{LS}_{k^0}}
\newcommand{\lss}{\mathsf{LS}_S}
\newcommand{\lscs}{\mathsf{LS}_{\mathcal{S}}}
\newcommand{\lst}{\mathsf{LS}_{T}}
\newcommand{\lsbs}{\overline{\textsf{LS}}_S}
\newcommand{\lsbcs}{\overline{\textsf{LS}}_{\cs}}
\newcommand{\lsbt}{\overline{\textsf{LS}}_T}
\newcommand{\lsbct}{\overline{\textsf{LS}}_{\ct}}
\newcommand{\lelsb}{\mathcal{T}\text{-}\overline{\textsf{LELS}}_T}
\newcommand{\ssk}{\overline{\textsf{LS}}^{\textrm{ss}}_{k^0}}
\newcommand{\sst}{\overline{\textsf{LS}}^{\textrm{ss}}_{\mathcal{T}}}
\newcommand{\ovk}{\overline{K} }
\newcommand{\ovv}{\overline{V} }
\newcommand{\ovx}{\overline{X}}
\newcommand{\ovy}{\overline{Y}}
\newcommand{\ovz}{\overline{Z}}
\newcommand{\ovcz}{\overline{\mathcal{Z}}}
\newcommand{\ovcy}{\overline{\mathcal{Y}}}
\newcommand{\ovtimes}{\overline{\times}}
\newcommand{\ovm}{\overline{M}}
\newcommand{\ovp}{\overline{P}}
\newcommand{\ovt}{{\overline{T}}}
\newcommand{\ovct}{{\overline{\mathcal{T}}}}
\newcommand{\an}{\mathrm{an} }
\newcommand{\gp}{\mathrm{gp} }
\newcommand{\rig}{\mathrm{rig} } 
\newcommand{\dr}{\mathrm{dR} }
\newcommand{\conv}{\mathrm{conv} }
\newcommand{\eet}{\mathrm{\acute{e}t}} 
\newcommand{\h}{\mathrm{h}}
\newcommand{\Zar}{\mathrm{Zar} }
\newcommand{\syn}{ \mathrm{syn} }
\newcommand{\hk}{\mathrm{HK} }
\newcommand{\D}{\mathrm{D}}
\newcommand{\NN}{\mathrm{NN}}
\newcommand{\cris}{\mathrm{cr} }
\newcommand{\nr}{\mathrm{nr}}
\newcommand{\Gd}{\operatorname{Gd}}
\newcommand{\Gal}{\operatorname{Gal} }
\newcommand{\can}{ \operatorname{can} }
\newcommand{\id}{ \operatorname{id} }
\newcommand{\holim}{\operatorname{holim} }
\newcommand{\Cone}{\operatorname{Cone} }
\newcommand{\kker}{\operatorname{Ker} }
\newcommand{\coker}{\operatorname{Coker} }
\newcommand{\im}{\operatorname{Im} }
\newcommand{\fil}{\operatorname{Fil} }
\newcommand{\Hom}{\operatorname{Hom} }
\newcommand{\Sym}{\operatorname{Sym}}
\newcommand{\Spec}{\operatorname{Spec} }
\newcommand{\Proj}{\operatorname{Proj}}
\newcommand{\Spf}{\operatorname{Spf} }
\newcommand{\Spwf}{\operatorname{Spwf}}
\newcommand{\Gr}{\operatorname{Gr}}
\newcommand{\rqhk}{\textsf{RQ}_{\mathrm{HK}}}
\newcommand{\RTOF}{\mathsf{RT}_{\mathscr{O}_F^0}}
\newcommand{\cs}{{\mathcal{S}}} 
\newcommand{\ct}{{\mathcal{T}}}
\newcommand{\cd}{{\mathcal{D}}}
\newcommand{\co}{{\mathcal{O}}}
\newcommand{\cx}{{\mathcal{X}}}
\newcommand{\cy}{{\mathcal{Y}}}
\newcommand{\cz}{{\mathcal{Z}}}
\newcommand{\cm}{{\mathcal{M}}}
\newcommand{\cn}{{\mathcal{N}}}
\newcommand{\so}{{\mathscr{O}}}
\begin{document}

\selectlanguage{british}

\thispagestyle{empty}

\title{Comparison between rigid syntomic and crystalline syntomic cohomology for strictly semistable log schemes with boundary}
\author{Veronika Ertl and Kazuki Yamada}
\date{}
\maketitle 
 
\begin{abstract}
{\noindent
We introduce rigid syntomic cohomology for strictly semistable log schemes over a complete discrete valuation ring of mixed characteristic $(0,p)$. 
In case a good compactification exists, we compare this  cohomology theory  to Nekov\'a\v{r}--Nizio\l{}'s crystalline syntomic cohomology of the generic fibre.
The main ingredients are a modification of Gro\ss{}e-Kl\"{o}nne's rigid Hyodo--Kato theory and a generalization of it for strictly semistable log schemes with boundary.}
\end{abstract}
\vspace{-9pt}
\selectlanguage{french}
\begin{abstract}
{\noindent
On introduit la cohomologie syntomique rigide pour les  sch\'emas logarithmique de r\'eduction strictement semistable sur un anneau de valuation discr\`ete de caract\'eristique $(0,p)$. 
Dans le cas de bonne compactification, on compare cette th\'eorie de cohomologie \`a la cohomologie syntomique cristalline de Nekov\'a\v{r}--Nizio\l{} sur la fibre g\'en\'erique.
La cl\'e est une modification de la th\'eorie  Hyodo--Kato rigide de Gro\ss{}e-Kl\"onne et une g\'en\'eralisation de celle-ci aux sch\'emas logarithmiques de r\'eduction semistable avec bord.\\

\noindent
\textit{Key Words}: Syntomic cohomology, rigid cohomology, semistable reduction.\\
\textit{Mathematics Subject Classification 2000}:   14F30, 14G20, 14F42}

\end{abstract}
\selectlanguage{british}

\blfootnote{The authors' research was supported in part by KAKENHI 26247004, 18H05233, as well as the JSPS Core-to-core program ``Foundation of a Global Research Cooperative Center in Mathematics focused on Number Theory and Geometry''. The first named author was supported by the Alexander~von~Humboldt-Stiftung and the Japan Society for the Promotion of Science as a JSPS International Research Fellow. The second named author was supported by the grant KAKENHI16J01911.}

\tableofcontents

\addcontentsline{toc}{section}{Introduction}
\section*{Introduction}

In this article we construct rigid syntomic cohomology for strictly semistable log schemes, and compare it with  crystalline syntomic cohomology in the case that there exists a good compactification.

Let $K$ be a $p$-adic field and $\so_K$ its ring of integers.
In general terms, syntomic cohomology $ \R\Gamma_\syn(X,r)$ is defined for semistable (log) schemes $X$ over $\so_K$ and integers $r\in\mathbb{Z}$, as a $p$-adic analogue of the Deligne--Beilinson cohomology.
An important application of syntomic cohomology is, at least conjecturally, the $p$-adic Beilinson conjecture.
It states that the special values of $p$-adic $L$-functions are described by the rational part of the special values of $L$-functions and an arithmetic invariant defined in terms of the $p$-adic regulator.
This conjecture was formulated by Perrin-Riou \cite{PerrinRiou1995} in the case that $X$ is smooth over $\so_K$.
While there is still no precise formulation of such a general conjecture in the semistable case, we may expect a similar picture.

There are two constructions of syntomic cohomology.
One uses  (log) crystalline cohomology, and the other one uses (log) rigid cohomology of the special fibre of $X$.
We call them crystalline syntomic cohomology and rigid syntomic cohomology, respectively.
Crystalline syntomic cohomology was originally introduced by Fontaine and Messing \cite{FontaineMessing1987}, and generalized by Kato \cite{Kato1994}, Nekov\'{a}\v{r} and Nizio{\l} \cite{NekovarNiziol2016} to the logarithmic case.
D\'{e}glise and Nizio{\l} \cite{DegliseNiziol2018} proved that Nekov\'{a}\v{r}--Nizio\l's crystalline syntomic cohomology can be regarded as absolute $p$-adic Hodge cohomology if the twist $r$ is non-negative.
Rigid syntomic cohomology was constructed by Besser \cite{Besser2000} for smooth schemes over $\so_K$, and further developed by Bannai \cite{Bannai2002}, as well as Chiarellotto, Ciccioni, and Mazzari \cite{ChiarellottoCiccioniMazzari2013} as an absolute $p$-adic Hodge cohomology.

An advantage of rigid syntomic cohomology is that it is purely $p$-adically analytic.
Thus it is useful for computations of $p$-adic regulators, and should relate directly with $p$-adic $L$-functions.
Indeed, there are several results concerning the  $p$-adic Beilinson conjecture  which use rigid syntomic cohomology \cite{BannaiKings2011, BertoliniDarmon2014, Besser2009, Niklas2010}.

A disadvantage of rigid syntomic cohomology is that the theory of log rigid cohomology often has technical difficulties, since it depends a priori on the choice of local liftings. We address this point in this paper and even construct  canonical log rigid complexes analogous to Besser's canonical rigid complexes introduced in \cite{Besser2000} which follows a suggestion of  Berthelot.

Moreover, the rigid Hyodo--Kato map depends on the choice of a uniformizer of $\so_K$, unlike Beilinson's crystalline Hyodo--Kato map, which was used by Nekov\'{a}\v{r} and Nizio\l.
Hence their crystalline syntomic cohomology does not depend on such a choice, and moreover extends to a very sophisticated theory for any varieties over $K$.
We remark that the constructions of the crystalline and the rigid Hyodo--Kato map are based on very different techniques.
Hence their comparison  does not automatically follow from the construction, whereas the comparison of Frobenius and monodromy operator are more straight forward.

\subsection*{Overview of the paper}
 
In Section \ref{sec: Log rigid complexes} we construct the rigid Hyodo--Kato complex for strictly semistable log schemes with boundary and log rigid complexes for fine log schemes and log schemes with boundary. 
In particular, we construct canonical complexes (not only in the  derived category) for embeddable objects, clarify the definition of these complexes for simplicial objects and prove important functoriality properties. 
The notion of log schemes with boundary was introduced in \cite{GrosseKloenne2004} to express compactifications in the sense of log geometry. 
Although the use of strictly semistable log schemes with boundary makes the construction more involved due to combinatorial difficulties, it pays of as it allows us to compare log rigid and log crystalline cohomology in the appropriate cases.

As mentioned above, in the construction of log rigid cohomology \cite{GrosseKloenne2007} there is usually a choice of local liftings involved. Thus in \S\ref{subsec:Prelim} and  \S\ref{subsec: Axiomatisation} we discuss some technical points which allow us to glue in a canonical way and obtain functoriality.
We use this  in \S\ref{Subsec: Log-rig-com} to construct several versions of canonical log rigid complexes -- 
first a more general version of  canonical log rigid complexes for fine log schemes and fine log schemes with boundary and 
subsequently a more subtle version in the case of strictly semistable log schemes with boundary   based on a construction by Kim and Hain \cite{KimHain2004}, which we call rigid Hyodo--Kato complex. 
The latter one allows an  explicit definition of Frobenius and monodromy operator. 
We finally compare these complexes and discuss base change properties.

In Section~\ref{sec: rigid structures} we look at the additional structure of the complexes constructed in Section 1 which is indispensable for the construction of syntomic cohomology.
Most importantly, we give in \S\ref{subsec: HK-map} a rigid analytic construction of the Hyodo--Kato morphism for log rigid cohomology. 
This is a generalization of Gro\ss{}e-Kl\"onne's construction in \cite{GrosseKloenne2007} to log schemes with boundary. 
We use a combinatorial modification of his construction to show the functoriality of the rigid Hyodo--Kato map.
In \S\ref{Subsec: Frobenius monodromy} we show that the Frobenius on the rigid Hyodo--Kato complex induced by local lifts and the one on the log rigid complexes induced by base change are compatible. 

Section~\ref{section: syntomic cohomology} focuses on syntomic cohomology. 
We review crystalline syntomic cohomology in \S\ref{subsec: syntomic for K-varieties} and include some basic constructions on crystalline cohomology.
In \S\ref{subsec: syntomic def} we finally give a definition of rigid syntomic cohomology for strictly semistable log schemes including a cup product on  cohomology groups.

Section~\ref{section: comparison} is reserved for the comparison of crystalline syntomic and rigid syntomic cohomology in the compactifiable case. 
An essential point is the comparison between log rigid and log crystalline cohomology which is carried out in \S\ref{Subsec:comparison rig - cris}.
The comparison of Frobenius, monodromy and Hyodo--Kato morphisms in \S\ref{subsec: comparison structures} finally implies the compatibility of the rigid syntomic and the crystalline syntomic cohomology.
It follows immediately from the constructions that the cup products on both are compatible.

\subsection*{Notation and conventions}

Let $\so_K$ be a complete discrete valuation ring of mixed characteristic $(0,p)$ with fraction field $K$, perfect residue field $k$, and maximal ideal $\mathfrak{m}_K$. Let $\pi$ be a uniformizer of $\so_K$.
As usual, denote by $\ovk$ an algebraic closure of $K$ and by $\so_{\ovk}$ the integral closure of $\so_K$ in $\ovk$. 
Let $\so_F = W(k)$ be the ring of Witt vectors of $k$, $F$ the fraction field of $\so_F$, and $F^{\nr}$ its maximal unramified extension.
Let $e$ be the absolute ramification index of $K$ and denote by $G_K = \Gal(\ovk/K)$ the absolute Galois group of $K$.
Let $\sigma$ be the canonical Frobenius on $\so_F$.

Since our construction is based on Gro\ss{}e-Kl\"onne's work, weak formal log schemes and dagger spaces will play a crucial part. 
For an $\so_F$-algebra $A$, we denote by $\widehat{A}$ the $p$-adic completion of $A$.
The weak completion of $A$ is the $\so_F$-subalgebra $A^\dagger$ of $\widehat{A}$ consisting of power series $\sum_{(i_1,\ldots,i_n)\in\N^n}a_{i_1,\ldots,i_n}x_1^{i_1}\cdots x_n^{i_n}$ for $x_1,\ldots,x_n\in A$, where $a_I\in\so_F$, such that there exists a constant $c>0$ which satisfies the Monsky--Washnitzer condition
	\[c(\mathrm{ord}_p(a_{i_1,\ldots,i_n}) + 1)\geqslant i_1 + \cdots + i_n\]
for any $(i_1,\ldots,i_n)\in\N^n$.
For further details, we refer to \cite{MonskyWashnitzer1968} and \cite{vanderPut1986} for weakly complete algebras, \cite{Meredith1972} for weak formal schemes, \cite{GrosseKloenne2000} and \cite{LangerMuralidharan2013} for dagger spaces.
Recall that a weak formal scheme over $\so_F$ is said to be admissible, if it is flat over $\so_F$.

We will use a shorthand for certain homotopy limits. Namely,  if $f\colon C\to C'$ is a map  in the derived dg-category of abelian groups, we set
	$$
	[\xymatrix{C \ar[r]^f&C'}]: = \holim(C\to C^{\prime}\leftarrow 0).
	$$
If $f$ is represented by a morphism of complexes, this can be seen as a mapping cone as
	\[ [\xymatrix{C \ar[r]^f&C'}] \cong\Cone(f)[-1].\]
And we set
	$$
	\left[\begin{aligned}
	\xymatrix{C_1 \ar[d]^\beta  \ar[r]^\alpha & C_2 \ar[d]^\delta \\
	C_3 \ar[r]^\gamma & C_4
	}\end{aligned}\right]
	: = [[C_1 \xrightarrow{\alpha} C_2] \xrightarrow{(\beta,\delta)} [C_3 \xrightarrow{\gamma} C_4]],
	$$ 
for  a commutative diagram (the one inside the large bracket) in the derived dg-category of abelian groups.
	
For an abelian category $\mathcal{M}$, we denote by $\mathscr{C}^ + (\mathcal{M})$ the category of bounded below complexes in $\mathcal{M}$, and $\mathscr{D}^ + (\mathcal{M})$ its derived category.

We assume all schemes to be of finite type. 
For a scheme $X/\so_K$ denote by $X_n$ for $n\in\N$ its reduction modulo $p^n$ and let $X_0$ be its special fibre, whereas $X_K$ is the generic fibre.

We will use log structures extensively.
We consider all log structures on log schemes as  given by a sheaf of monoids with respect to the Zariski topology.
Thus if we say that a log scheme $X$ is fine, it means that Zariski locally $X$ has a chart given by a fine monoid.
Note that giving a fine log scheme in our context is equivalent to giving a fine log scheme in the  usual sense which is of Zariski type \cite[Cor.~1.1.11]{Shiho2002}.
For a (weak formal) log scheme $X$, we often denote its underlying (weak formal) scheme by $X$ again, and its log structure by $\cn_X$.

Let $\widehat{\mathcal{T}}$ (resp. $\mathcal{T}$) be the formal (resp. weak formal) log scheme $\Spf\widehat{\so_F[t]}$ (resp. $\Spwf\so_F[t]^\dagger$)  with the log structure associated to the map 
$1 \mapsto  t$, and let $T$ be the reduction of $\mathcal{T}$ mod $p$ with the induced log structure. 
We denote by $\so_F^0$ (resp. $k^0$) the exact closed weak formal log subscheme of $\mathcal{T}$ (resp. $T$) defined by $t = 0$, and by $\so_K^\pi$ the exact closed weak formal log subscheme of $\ct$ defined by $t = \pi$.
Unless otherwise stated, we use on $\widehat{\mathcal{T}}$,  $\mathcal{T}$, $T$, $\so_F^0$, $\so_K^\pi$, $k^0$ the charts $c_\ct$, $c_T$, $c_{\so_F^0}$, $c_{\so_K^\pi}$, $c_{k^0}$ induced by the above map.
Moreover, we write $k^\varnothing, \so_F^\varnothing, \so_K^\varnothing,\ldots$ if we consider the trivial log-structure.
Note that the weak formal log scheme $\so_K^\pi$ is independent of the choice of $\pi$, but the exact closed immersion $\so_K^\pi\hookrightarrow\ct$ and the chart $c_\pi$ depend on $\pi$.
We extend the canonical Frobenius $\sigma$ on $\so_F$ to $\so_F[t]$ by sending $t$ to $t^p$. This induces a unique Frobenius on $\ct$ which we denote by abuse of notation again by $\sigma$. 

For a weak formal log scheme $\mathcal{X}$ over $\so_F$, we denote by $\widehat{\mathcal{X}}$ the $p$-adic completion of $\mathcal{X}$, which is a formal log scheme over $\so_F$.
For a weak formal scheme $\mathcal{X}$ over $\so_F$, we denote by $\mathcal{X}_\Q$ the generic fiber of $\mathcal{X}$, which is a dagger space over $F$.
For any locally closed subset $\mathcal{U}\subset\mathcal{X}$, we denote by $]\mathcal{U}[_{\mathcal{X}}$ the inverse image of the specialization map $\mathcal{X}_\Q\rightarrow\mathcal{X}$.
This is an admissible open subset of $\mathcal{X}_\Q$.

\begin{acknowledgements}
We thank the referee for a careful reading of the manuscript and helpful comments.

Our work on this article started during a visit of the second author to the University of Regensburg and continued throughout the first author's visit to Keio University. We would like to thank these institutions for their support and hospitality. 

We would like to thank Kenichi Bannai and all members of the KiPAS-AGNT group for many helpful suggestions and for providing a pleasant working atmosphere where we enjoyed productive discussions. 

It is a pleasure to thank Wies\l{}awa Nizio\l{}, Atsushi Shiho, Go Yamashita and Seidai Yasuda for stimulating discussions and helpful comments related to the topic of this article.
\end{acknowledgements}

\section{Logarithmic rigid complexes}\label{sec: Log rigid complexes}

Gro\ss{}e-Kl\"onne showed that (non-logarithmic) rigid cohomology can be computed using certain spaces with overconvergent structure sheaves.
He showed that this construction can also be carried out for log schemes. 
This not only simplifies the construction of rigid cohomology, but is essential in \cite{GrosseKloenne2005} to construct the rigid Hyodo--Kato morphism.

In this section we will recall several different versions of log rigid complexes introduced by Gro\ss{}e-Kl\"onne in \cite{GrosseKloenne2005}. 
We use his insight to construct  {\it canonical}  log rigid complexes similar to the rigid complexes Besser considers in the non-logarithmic situation \cite{Besser2000}.

In the first subsection, we start with some technical preliminaries which will be needed in the subsequent sections to glue local constructions, and to prove functoriality even for the simplicial case.
In \S\ref{subsec: Axiomatisation} we describe Besser's approach in a category theoretical manner.
This allows us in \S\ref{Subsec: Log-rig-com} to construct several versions of canonical log rigid complexes. 
We finish this section by  comparing these complexes and discuss base change properties.

\subsection{Preliminaries}\label{subsec:Prelim}

We start with  some topological definitions and facts from \cite{Beilinson2012}.
This will be used to glue rigid complexes which are constructed locally.

\begin{definition}[{\cite[\S 2.1]{Beilinson2012}}]\label{def: base}
	Let $\mathcal{V}$ be an essentially small site. A {\it base} for $\mathcal{V}$ is a pair $(\mathcal{B},\theta)$ of an essentially small category $\mathcal{B}$ and a faithful functor $\theta\colon\mathcal{B} \rightarrow \mathcal{V}$ which satisfy the following property.
	For any $V\in\mathcal{V}$ and a finite family of pairs $(B_\alpha,f_\alpha)$ of $B_\alpha\in\mathcal{B}$ and $f_\alpha\colon V \rightarrow \theta(B_\alpha)$, there exists a set of objects $B'_\beta\in\mathcal{B}$ and a covering family $\{\theta(B'_\beta) \rightarrow  V\}$ such that every composition $\theta(B'_\beta) \rightarrow  V \rightarrow \theta(B_\alpha)$ lies in $\Hom(B'_\beta,B_\alpha)\subset\Hom(\theta(B'_\beta),\theta(B_\alpha))$.
\end{definition}

We define a covering sieve in $\mathcal{B}$ as a sieve whose image by $\theta$ is a covering family in $\mathcal{V}$.

\begin{proposition}[{\cite[\S 2.1]{Beilinson2012}}]\label{prop: topos eq}
	Covering sieves in $\mathcal{B}$ form a Grothendieck topology on $\mathcal{B}$.
	Moreover the functor $\theta$ is continuous and induces an equivalence of the associated topoi.
\end{proposition}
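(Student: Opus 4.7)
The plan is to verify the three Grothendieck topology axioms on $\mathcal{B}$, deduce continuity of $\theta$ essentially from the construction, and then establish the equivalence of associated topoi via a comparison-lemma style argument adapted to the fact that $\theta$ is only faithful rather than fully faithful.

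First I would check the Grothendieck topology axioms. The maximal sieve on $B$ is covering since $\theta(\mathrm{id}_B)=\mathrm{id}_{\theta(B)}$ already generates a covering family in $\mathcal{V}$. For stability under pullback, let $S$ be a covering sieve on $B$ and $g\colon B'\to B$ a morphism in $\mathcal{B}$. In $\mathcal{V}$ the pullback sieve $\theta(g)^{*}\theta(S)$ is covering, so we can choose a covering family $\{U_\gamma\to\theta(B')\}$ of $\theta(B')$ with the property that each composite $\theta(g)\circ(U_\gamma\to\theta(B'))$ factors through some $\theta(f_\gamma)$ with $f_\gamma\colon A_\gamma\to B$ in $S$. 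Applying the base property to $V=U_\gamma$ with the two pairs $(B',\,U_\gamma\to\theta(B'))$ and $(A_\gamma,\,U_\gamma\to\theta(A_\gamma))$ yields, after refining, a covering family of $U_\gamma$ in the image of $\theta$ whose compositions both with $U_\gamma\to\theta(B')$ and with $U_\gamma\to\theta(A_\gamma)$ come from morphisms in $\mathcal{B}$. Stringing these refinements together gives a covering family of $\theta(B')$ lying in $\theta(g^{*}S)$, showing that $g^{*}S$ is a covering sieve. The transitivity (local character) axiom is verified in the same style: two successive refinements via the base property turn the corresponding two-step covering in $\mathcal{V}$ into one which lies in the image of $\mathcal{B}$.

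Continuity of $\theta$ is then essentially tautological, since by construction a covering sieve in $\mathcal{B}$ maps to a covering family in $\mathcal{V}$, and compatibility with pullbacks of covers was already built in during the verification of stability. For the equivalence of associated topoi, I would study the restriction functor $F\mapsto F\circ\theta$ from sheaves on $\mathcal{V}$ to sheaves on $\mathcal{B}$ and show it is an equivalence by a comparison-lemma argument. The basic density statement — that every $V\in\mathcal{V}$ admits a covering by images of objects of $\mathcal{B}$ — follows by applying the base property to the empty family of pairs. The inverse equivalence is produced by right Kan extension along $\theta$ followed by sheafification; one verifies that this is indeed a sheaf and that the two composites are naturally isomorphic to the identity, using the base property to handle morphisms in $\mathcal{V}$ between objects of $\theta(\mathcal{B})$ which do not themselves come from $\mathcal{B}$.

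I expect the main obstacle to lie in this last step, namely in compensating for the failure of $\theta$ to be full. In the classical comparison lemma one typically assumes fully faithfulness on a generating subcategory; here only faithfulness holds, and fullness is recovered just \emph{after passing to a cover} through the base property. Making this precise in the construction of the quasi-inverse — that every sheaf on $\mathcal{B}$ extends in an essentially unique way to a sheaf on $\mathcal{V}$ — is the part of the argument that will require the most care, and is exactly where the full strength of Definition~\ref{def: base} must be invoked.
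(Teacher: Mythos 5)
The paper does not prove this proposition: it is cited verbatim from Beilinson's~\cite[\S 2.1]{Beilinson2012} with no argument given, so there is no paper-internal proof to compare your attempt against. Judged on its own terms, your outline goes in the right direction and the hard part is indeed the comparison-lemma step at the end, but a few things should be tightened. In the pullback-stability step you actually need to invoke faithfulness of $\theta$ explicitly: after the base-property refinement you obtain $h_\delta\colon B''_\delta\to B'$ and $k_\delta\colon B''_\delta\to A_\gamma$ with $\theta(g\circ h_\delta)=\theta(f_\gamma\circ k_\delta)$, and only faithfulness lets you conclude $g\circ h_\delta=f_\gamma\circ k_\delta$ in $\mathcal{B}$, hence $h_\delta\in g^\ast S$; without this the refined family need not lie in $\theta(g^\ast S)$. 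For the transitivity (local character) axiom you do not need the base property at all: if $S$ is covering on $B$ and $f^\ast T$ is covering for every $f\in S$, then for any $g=\theta(f)\circ h$ in the sieve generated by $\theta(S)$ one checks directly that $h^\ast\langle\theta(f^\ast T)\rangle\subset g^\ast\langle\theta(T)\rangle$, and transitivity in $\mathcal{V}$ finishes it, so your ``two successive refinements via the base property'' is superfluous there. Finally, for the topos equivalence, the quasi-inverse of the restriction functor $F\mapsto F\circ\theta$ is most naturally the left adjoint (left Kan extension along $\theta^{\mathrm{op}}$ followed by sheafification), not a right Kan extension; the two relevant inputs from Definition~\ref{def: base} are exactly the ones you identify --- local density of $\theta(\mathcal{B})$ in $\mathcal{V}$ (base property applied to the empty family) and ``fullness after passing to a cover'' (base property applied to a single pair), which together replace the full-faithfulness hypothesis in the classical SGA4 comparison lemma.
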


The following notion will be used to define rigid complexes for simplicial objects.

\begin{definition}
Let $\mathcal{C}_{\sbt}$ be a simplicial site.
For any morphism $\alpha\colon[m] \rightarrow [n]$ in the simplex category, let $u_\alpha\colon\mathcal{C}_m \rightarrow \mathcal{C}_n$ be the functor which corresponds to the morphism of sites induced by $\alpha$.
The {\it total site} $\mathcal{C}_{\sbt}^{\mathrm{tot}}$ of $\mathcal{C}_{\sbt}$ is the site whose objects are pairs $(n,U)$ with $n\in\N$ and $U \in \mathcal{C}_n$, whose morphisms are pairs $(\alpha,f)\colon(n,U) \rightarrow (m,V)$ consisting of a morphism $\alpha\colon[m] \rightarrow [n]$ in the simplex category and a morphism $f\colon U \rightarrow  u_\alpha(V)$ in $\mathcal{C}_n$, and whose coverings are families $\{(\mathrm{id},f)\colon (n,U_i) \rightarrow  (n,U)\}$ such that $\{U_i \rightarrow  U\}$ is a covering family in $\mathcal{C}_n$.
\end{definition}

Next we recall and introduce some notions for log geometry.

\begin{definition}\label{def: lss}
	For a fine log scheme $S$, let $\lss$ be the category of fine log schemes over $S$.
	For a fine weak formal log scheme $\cs$, let $\lscs$ be the category of fine weak formal log schemes over $\cs$.
	We regard $\lss$ and $\lscs$ as sites for the Zariski topology.
	Namely a covering family of an object $Y$ in $\lss$ or $\lscs$ is a family of strict open immersions $U_h\hookrightarrow Y$ with $\bigcup_hU_h = Y$.
\end{definition}

When we say immersion in the current context, we mean the following definition.

\begin{definition}
	A morphism $f\colon X \rightarrow  Y$ of (weak formal) log schemes is an \textit{immersion} if $f$ can be factored as $f = i\circ j$ by a closed immersion $j\colon X\hookrightarrow Z$ and a strict open immersion $i\colon Z \rightarrow  Y$.
	This is equivalent to the condition that $f$ is an immersion on the underlying schemes and $f^\ast\cn_Y \rightarrow \cn_X$ is surjective.
	We say that an immersion $f$ as above is \textit{strict} if $f^\ast\cn_Y \rightarrow \cn_X$ is an isomorphism.
\end{definition}

\begin{definition}
	Let $f\colon Y\hookrightarrow Y'$ be an immersion of fine (weak formal) log schemes.
	An \textit{exactification} of $f$ is a factorization $f = h\circ j$ by an exact closed immersion $j\colon Y\hookrightarrow Y''$ and a log \'{e}tale morphism $h\colon Y'' \rightarrow  Y'$.
\end{definition}

Note that according to the proof of \cite[Prop.~4.10 (1)]{Kato1989} an exactification of $i$ exists if $i$ admits a chart. 
In general an exactification of an immersion is not unique.
However one can show that the tubular neighborhoods on exactifications are canonically isomorphic to each other.

Let $\mathcal{S}$ be a fine admissible weak formal log scheme over $\so_F^\varnothing$ and $S$ its reduction modulo $p$.

\begin{lemma}\label{lem: tube is canonical}
	Let $Y\hookrightarrow\mathcal{Y}$ be an immersion over $\mathcal{S}$ of a fine log scheme $Y$ over $S$ into a fine admissible weak formal log scheme $\mathcal{Y}$ over $\mathcal{S}$.
	Assume that there are exactifications $Y\hookrightarrow\mathcal{Y}_i \rightarrow \mathcal{Y}$ for $i = 1,2$.
	Then there is a canonical isomorphism $\iota\colon]Y[_{\cy_1} \cong ]Y[_{\cy_2}$.
	Moreover, if we let $\omega^{\sbt}_{\cy_i/\mathcal{S},\Q}$ for $i = 1,2$ be the complexes of sheaves on $\cy_{i,\Q}$ given by tensoring the log de~Rham complexes of $\cy_i$ over $\mathcal{S}$ with $\Q$, then there is a canonical isomorphism $\iota^\ast\omega^{\sbt}_{\cy_2/\mathcal{S},\Q}|_{]Y[_{\cy_2}} \cong\omega^{\sbt}_{\cy_1/\mathcal{S},\Q}|_{]Y[_{\cy_1}}$.
\end{lemma}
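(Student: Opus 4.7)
The plan is to build a common refinement of the two exactifications and then invoke the log-\'etale strong fibration property for tubular neighborhoods. First, form the fiber product $\cy_3':=\cy_1\times_{\cy}\cy_2$ in the category of fine admissible weak formal log schemes over $\cs$. The two exact closed immersions $Y\hookrightarrow\cy_i$ agree after composition with $\cy_i\to\cy$, hence induce a diagonal immersion $\Delta\colon Y\hookrightarrow\cy_3'$. Working locally, this immersion admits a chart inherited from charts used to construct the $\cy_i$, so by the weak-formal analogue of \cite[Prop.~4.10 (1)]{Kato1989} it can be exactified: we obtain $Y\hookrightarrow\cy_3\to\cy_3'$ with the first map exact closed and the second log \'etale. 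Composing with the two projections yields log \'etale morphisms $p_i\colon\cy_3\to\cy_i$ for $i=1,2$ that restrict to the identity on $Y$.

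Next, since $p_i$ is log \'etale and coincides with $\id_Y$ on the common exact closed subscheme $Y$, the induced morphism of tubular neighborhoods $]Y[_{\cy_3}\to]Y[_{\cy_i}$ is an isomorphism. Concretely, after passing to the $p$-adic formal completion along $Y$, $p_i$ becomes an isomorphism by repeated application of the infinitesimal lifting property of log \'etale morphisms to the reductions modulo $p^n$; in the dagger setting this yields an isomorphism on the underlying tubes. Moreover log \'etaleness gives $p_i^\ast\omega^1_{\cy_i/\cs}\xrightarrow{\sim}\omega^1_{\cy_3/\cs}$, so the isomorphism lifts to an isomorphism of (the restrictions of) the entire log de Rham complexes. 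Composing the two resulting isomorphisms yields the required map $\iota\colon]Y[_{\cy_1}\xrightarrow{\sim}]Y[_{\cy_2}$ together with the corresponding de Rham comparison.

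For canonicity, suppose we made a second choice $\tilde{\cy}_3$ of exactification of $\Delta$. Apply the construction of the previous paragraph now to the pair $(\cy_3,\tilde{\cy}_3)$ over $\cy_3'$: their fiber product over $\cy_3'$ contains $Y$ as an immersion, and any exactification $\cy_4$ of it produces log \'etale morphisms $\cy_4\to\cy_3$ and $\cy_4\to\tilde{\cy}_3$ restricting to $\id_Y$. By the fibration argument these are isomorphisms on tubes, so the two candidate isomorphisms $]Y[_{\cy_1}\cong]Y[_{\cy_2}$ induced by $\cy_3$ and $\tilde{\cy}_3$ coincide. The same argument, applied to overlaps, allows the local isomorphisms (produced when $\cy_3$ exists only locally over $\cy$) to be glued to a global $\iota$, and likewise for the de Rham comparison.

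The main obstacle is the fibration statement invoked in the second paragraph: a log \'etale morphism of weak formal log schemes that restricts to the identity on a common exact closed subscheme induces an isomorphism of tubular neighborhoods. This is a logarithmic analogue of the classical strong fibration theorem of Berthelot in rigid cohomology; one either unwinds the local structure of log \'etale morphisms via Kato's charts (so that $p_i$ is, up to strict \'etale refinement, a base change of an \'etale map in the classical sense, for which the statement is standard) or adapts Shiho's treatment of log convergent tubular neighborhoods. Once granted, every other step is essentially formal.
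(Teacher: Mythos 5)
Your proof follows essentially the same route as the paper's: exactify the diagonal immersion $Y\hookrightarrow\cy_1\times_\cy\cy_2$ and compare both $\cy_i$ with the resulting common refinement $\cy'$ via the log \'etale projections $p_i$, citing the log analogue of Berthelot's strong fibration theorem (\cite[Lem.~1.2]{GrosseKloenne2005}, which is exactly what the paper invokes) for the tube isomorphism. The one place you overcomplicate is the ``main obstacle'': rather than appealing to Kato's local structure of log \'etale morphisms, observe that $Y\hookrightarrow\cy'$ and $Y\hookrightarrow\cy_i$ are both \emph{exact} closed immersions while $p_i$ restricts to $\id_Y$, so $p_i^\ast\cn_{\cy_i}\to\cn_{\cy'}$ is an isomorphism over $Y$ and therefore (by coherence of the fine log structures) over a Zariski neighborhood of $Y$; thus $p_i$ is strict, hence classically \'etale, near $Y$, and the classical fibration theorem for weak formal schemes applies at once. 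This strictness also gives the de Rham comparison directly, which is exactly how the paper phrases the second half of the proof, and it patches the small inaccuracy in your second paragraph: an isomorphism after formal completion along $Y$ does not by itself yield an isomorphism of dagger tubes --- that is precisely the content of the fibration theorem, which you then correctly flag as the real ingredient.
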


\begin{proof}
	The first statement follows exactly in the same way as \cite[Lem.~1.2]{GrosseKloenne2005} by taking an exactification $Y\hookrightarrow\cy' \rightarrow \cy_1\times_\cy\cy_2$ of the diagonal embedding.
	Since the projection $\cy' \rightarrow \cy_i$ is strict on neighborhoods of $Y$, the second statement follows, too.
\end{proof}

The above lemma implies that the tubes glue to give a canonical dagger space.
Note that we use  the existence of Zariski (not only \'{e}tale) local charts in the gluing argument.

\begin{definition}\label{def: log tube}
	Let $f\colon Y\hookrightarrow\cy$ be an immersion over $\mathcal{S}$ of a fine log scheme $Y$ over $S$ into a fine admissible weak formal log scheme $\cy$ over $\mathcal{S}$.
	We define the log tube $]Y[_\cy^{\log}$ of $i$ and a complex of sheaves $\omega^{\sbt}_{\cy/\mathcal{S},\Q}$ on $]Y[^{\log}_{\cy}$ as follows:
	\begin{enumerate}
	\item If $f$ admits a chart, there exists an exactification $Y\hookrightarrow\cy' \rightarrow \cy$.
		We set $]Y[_\cy^{\log}: = ]Y[_{\cy'}$ and let $\omega^{\sbt}_{\cy/\mathcal{S},\Q}$ be the complex of sheaves on $]Y[^{\log}_\cy$ given by tensoring the log de~Rham complex of $\cy'$ over $\mathcal{S}$ with $\Q$, which is independent of the choice of an exactification by Lemma~\ref{lem: tube is canonical}.
	\item In general, take an open covering $\{U_h\}_h$ of $Y$ and $\{\mathcal{U}_h\}_h$ of $\cy$, such that $f$ induces an immersion $U_h\hookrightarrow\mathcal{U}_h$ which admits a chart for each $h$.
		Then we can define $]U_h[_{\mathcal{U}_h}^{\log}$ and $\omega^{\sbt}_{\mathcal{U}_h/\mathcal{S},\Q}$ as above, and they naturally glue along $]U_h\cap U_{h'}[_{\mathcal{U}_h\times\mathcal{U}_{h'}}^{\log}$.
		The dagger space and the complex of sheaves obtained by gluing are independent of the choice of an open covering, and we denote them by $]Y[_\cy^{\log}$ and $\omega^{\sbt}_{\cy/\mathcal{S},\Q}$.
	\end{enumerate}
\end{definition}

We recall some basic definitions for log schemes with boundary.
For more details see \cite{GrosseKloenne2004}.

\begin{definition}[{\cite[Def.~1.5]{GrosseKloenne2004}, \cite[1.6]{GrosseKloenne2005}}]
	\begin{enumerate}
	\item A {\it log scheme with boundary} is a strict open immersion of log schemes $i\colon Z\hookrightarrow\ovz$ such that $\cn_{\ovz} \rightarrow  i_\ast\cn_Z$ is injective, $\cn_{\ovz}^\gp \rightarrow (i_\ast\cn_Z)^\gp$ is an isomorphism, and $Z$ is schematically dense in $\ovz$.
		To simplify notations we often write  $(Z,\ovz)$ for $i\colon Z\hookrightarrow\ovz$.
		A morphism $f\colon(Z',\ovz')\rightarrow(Z,\ovz)$ of log schemes with boundary is a morphism $f\colon\ovz'\rightarrow\ovz$ of log schemes such that the restriction of $f$ to $Z'$ factors through $Z$.
		Note that such a factorization is unique if it exists.
	\item An {\it $S$-log scheme with boundary} is a log scheme with boundary $(Z,\ovz)$ together with a morphism of log schemes $Z \rightarrow  S$.
		A morphism $f\colon(Z',\ovz')\rightarrow(Z,\ovz)$ of $S$-log schemes with boundary is a morphism of log schemes with boundary such that the restriction of $f$ to $Z'$ is a morphism over $S$.
	\item An $S$-log scheme with boundary $(Z,\ovz)$ is {\it fine}  if $Z$ and $\ovz$ are fine.
	\end{enumerate}
	We define the notions of weak formal log schemes with boundary, weak formal $\cs$-log schemes with boundary, and fine weak formal $\cs$-log schemes with boundary in a similar manner.
\end{definition}

\begin{definition}\label{def: chart of boundary}
	Let $(Z,\ovz)$ be a fine $S$-log scheme with boundary.
	A {\it chart} for $(Z,\ovz)$ is a triple $(\alpha,\beta,\gamma)$ consisting of a chart $\alpha\colon P_{\ovz} \rightarrow \cn_{\ovz}$ for $\ovz$, a chart $\beta\colon Q_S \rightarrow \cn_S$ for $S$, and a homomorphism $\gamma\colon Q \rightarrow  P^\gp$, such that $\alpha^\gp\circ\gamma_{\ovz}\colon Q_{\ovz} \rightarrow \cn_{\ovz}^\gp$ coincides with the composition $Q_{\ovz} \xrightarrow{\beta}j_ + f^{-1}\cn_S \rightarrow  j_ + \cn_Z \rightarrow \cn_{\ovz}^\gp$, where $f\colon Z \rightarrow  S$ is the structure morphism, $j_ + $ is the sheaf theoretic push forward, and the third map is given by \cite[Lem.~1.3]{GrosseKloenne2004}.
	
	If a chart $\beta$ for $S$ is fixed, we call the pair $(\alpha,\gamma)$ a chart for $(Z,\ovz)$ extending $\beta$.
\end{definition}
For the following, see also \cite[Def.~2.1]{GrosseKloenne2004}.
\begin{definition}
	Let $f\colon (Z,\ovz) \rightarrow (Z',\ovz')$ be a morphism of $S$-log schemes with boundary.
	\begin{enumerate}
	\item We say that $f$ is a {\it boundary closed immersion} (resp. {\it boundary exact closed immersion)} if $\ovz \rightarrow \ovz'$ is a closed immersion (resp. an exact closed immersion) of log schemes, and if for any open neighborhood $U\subset Z'$ of $Z$, there exists an open neighborhood $\overline{U}\subset\ovz'$ of $\ovz$ such that $U$ is schematically dense in $\overline{U}$.
	\item We say that $f$ is a {\it boundary strict open immersion} if $\ovz \rightarrow \ovz'$ is a strict open immersion of log schemes and if $Z = \ovz\times_{\ovz'}Z'$.
	\item We say that $f$ is a {\it boundary (strict) immersion} if $f$ can be factored as $f = i\circ j$ by a boundary (exact) closed immersion $j\colon (Z,\ovz) \rightarrow (Y,\ovy)$ and a boundary strict open immersion $i\colon(Y,\overline{Y}) \rightarrow (Z',\ovz')$.
	\item We say that $f$ is a {\it first order thickening} if $\ovz \rightarrow \ovz'$ is an exact closed immersion defined by a square zero ideal in $\mathcal{O}_{\ovz'}$ and if $Z = \ovz\times_{\ovz'}Z'$.
	\end{enumerate}
\end{definition}

\begin{remark}
	In the original definition of \cite[Def.~2.1]{GrosseKloenne2004}, the condition $Z = \ovz\times_{\ovz'}Z'$ was not required for the definition of first order thickenings.
	However the authors think \cite[Prop.~2.3]{GrosseKloenne2004} is not valid without this condition.
	More precisely, to ensure that the morphism $\overline{L}\rightarrow\ovx$ constructed in the end of his proof induces a morphism $L\rightarrow X$, we need to require that $L'\hookrightarrow L$ is also en exact closed immersion defined by a square zero ideal.
	This is the reason why we added the condition.
	
	For example, consider smooth $T$-log scheme $\overline{X}$ and its log schematically dense open log subscheme $X$.
	The morphism $(X,\ovx)\rightarrow(\ovx,\ovx)$ is a first order thickening in the original definition, but not in our definition.
	In the original definition, the smoothness of $(X,\overline{X})$ requires that there locally exists a morphism $(\ovx,\ovx)\rightarrow(X,\ovx)$, but this is impossible in general.
\end{remark}

\begin{definition}
	Let $f\colon(Z,\ovz)\hookrightarrow(Z',\ovz')$ be a boundary immersion of $S$-log schemes with boundary.
	An {\it exactification} of $f$ is a factorization $f = h\circ j$ by a boundary exact closed immersion $j \colon(Z,\ovz)\hookrightarrow(Y,\ovy)$ and a morphism $h\colon (Y,\ovy) \rightarrow (Z',\ovz')$ which is log \'{e}tale as a morphism of log schemes $\ovy \rightarrow \ovz'$.
	
	We define the notion of an exactification of a boundary immersion of weak formal $\cs$-log schemes with boundary in a similar manner.
\end{definition}

\begin{definition}\label{def: lsbs}
	Let $\lsbs$ be the category of fine $S$-log schemes with boundary, and $\lsbcs$ be the category of fine weak formal $\cs$-log schemes with boundary.
	We regard $\lsbs$ and $\lsbcs$ as sites for the Zariski topology.
	Namely, a covering family of an object $(Z,\ovz)$ in $\lsbs$ or $\lsbcs$ is a family of boundary strict open immersions $(U_h,\overline{U}_h)\hookrightarrow (Z,\ovz)$ with $\bigcup_h\overline{U}_h = \ovz$.
\end{definition}

\begin{lemma}[{\cite[Lem.~1.2]{GrosseKloenne2004}}]
	The categories $\lsbs$ and $\lsbcs$ have finite products.
	The products in $\lsbs$ are given by $(Z,\ovz)\times_S(Z',\ovz') = (Z\times_SZ',\ovz\ovtimes_S\ovz')$, where $\ovz\ovtimes_S\ovz'$ is the log schematic image of $Z\times_SZ'$ in $\ovz\times_k\ovz'$.
	The products in $\lsbcs$ are given in a similar way.
\end{lemma}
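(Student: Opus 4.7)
The plan is to construct the product explicitly, verify the boundary axioms, and then check the universal property; the weak formal case will be analogous.

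First I would form the fine log product $\overline{Z}\times_k\overline{Z}'$ in the usual way (taking the fibre product of the underlying schemes over $\operatorname{Spec}k$ and the fine log structure associated with the amalgamated sum of charts over the trivial monoid). The strict open immersions $Z\hookrightarrow\overline{Z}$ and $Z'\hookrightarrow\overline{Z}'$ together with the structure morphisms to $S$ give a canonical morphism of log schemes $Z\times_S Z'\to\overline{Z}\times_k\overline{Z}'$. I would then define $\overline{Z}\,\overline{\times}_S\overline{Z}'$ to be the scheme-theoretic image of this morphism, equipped with the log structure pulled back from $\overline{Z}\times_k\overline{Z}'$. By construction $Z\times_S Z'$ sits as a schematically dense open in this closed log subscheme, so the candidate $(Z\times_S Z',\overline{Z}\,\overline{\times}_S\overline{Z}')$ is defined.

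Next I would verify that this candidate is a fine log scheme with boundary. Fineness reduces, via charts for $(Z,\overline{Z})$ and $(Z',\overline{Z}')$ extending a fixed chart of $S$ (Definition~\ref{def: chart of boundary}), to the observation that the chart monoid on $\overline{Z}\times_k\overline{Z}'$ restricts to a fine chart on its closed subscheme $\overline{Z}\,\overline{\times}_S\overline{Z}'$. The openness and strictness of $Z\times_S Z'\hookrightarrow\overline{Z}\,\overline{\times}_S\overline{Z}'$ are inherited from $\overline{Z}\times_k\overline{Z}'$. For the remaining two conditions—injectivity of $\mathcal{N}_{\overline{Z}\,\overline{\times}_S\overline{Z}'}\to\iota_\ast\mathcal{N}_{Z\times_S Z'}$ and bijectivity after passing to associated groups—I would argue Zariski-locally on charts: the charts for $(Z,\overline{Z})$ and $(Z',\overline{Z}')$ give injections into their associated sheaves on $Z$, $Z'$ with an isomorphism on groups, and amalgamating and pulling back to the schematic image preserves both properties because the operations involved (direct sum of monoids, passage to a schematically dense open) are compatible with $\iota_\ast$ on stalks of constructible monoid sheaves.

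For the universal property I would take $(Y,\overline{Y})$ together with morphisms to $(Z,\overline{Z})$ and $(Z',\overline{Z}')$ over $S$. The universal property of the fine log product over $k$ yields a unique morphism $\overline{Y}\to\overline{Z}\times_k\overline{Z}'$, whose restriction to $Y$ factors through $Z\times_S Z'\subset\overline{Z}\,\overline{\times}_S\overline{Z}'$ by construction. Since $Y$ is schematically dense in $\overline{Y}$ and $\overline{Z}\,\overline{\times}_S\overline{Z}'$ is a closed subscheme of $\overline{Z}\times_k\overline{Z}'$, the whole morphism factors through $\overline{Z}\,\overline{\times}_S\overline{Z}'$, and by construction of the log structure this factorisation is a morphism of log schemes with boundary; uniqueness is clear. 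The weak formal case is treated identically, with the weak formal product $\overline{\mathcal{Z}}\times_k\overline{\mathcal{Z}}'$ (flat closure over $\mathscr{O}_F$ built into the definition of admissibility) replacing $\overline{Z}\times_k\overline{Z}'$.

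The main obstacle will be the chart-level verification that the two sheaf-theoretic boundary axioms survive the passage from $\overline{Z}\times_k\overline{Z}'$ to the schematic image $\overline{Z}\,\overline{\times}_S\overline{Z}'$; once the monoidal computation with fine charts extending a chart of $S$ is in place, everything else is formal.
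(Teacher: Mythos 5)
This lemma is cited from \cite[Lem.~1.2]{GrosseKloenne2004}; the paper does not give its own proof, so there is no internal argument to compare against. Nevertheless, your outline has a genuine gap.

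The problem is in your definition of $\overline{Z}\,\overline{\times}_S\overline{Z}'$. You take the scheme-theoretic image $W$ of $Z\times_S Z'\to\overline{Z}\times_k\overline{Z}'$ and equip it with the \emph{pull-back} of $\mathcal{N}_{\overline{Z}\times_k\overline{Z}'}$, and then assert that strictness of the open immersion $Z\times_S Z'\hookrightarrow W$ is ``inherited from $\overline{Z}\times_k\overline{Z}'$.'' That inheritance fails, and the construction also breaks the injectivity axiom. Concretely: the log structure on the fibre product $Z\times_S Z'$ (taken in fine log schemes over $S$) is associated to the amalgamated-sum chart $P_0\oplus_Q P_0'$, where $P_0,P_0'$ are charts for $Z,Z'$ and $Q$ is a chart for $S$; the pull-back of $\mathcal{N}_{\overline{Z}\times_k\overline{Z}'}$ to the locally closed subscheme $Z\times_S Z'$ is associated to the direct-sum chart $P_0\oplus P_0'$. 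The natural map between the associated log structures is the surjection induced by $P_0\oplus P_0'\twoheadrightarrow P_0\oplus_Q P_0'$, which is not an isomorphism as soon as $Q$ acts nontrivially. Hence $Z\times_S Z'\hookrightarrow W$ is not strict, and $\mathcal{N}_W\to\iota_\ast\mathcal{N}_{Z\times_S Z'}$ is not injective, so $(Z\times_S Z',W)$ as you have defined it is not a log scheme with boundary at all.

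The correct construction — and the reason the statement speaks of the \emph{log schematic image} rather than the scheme-theoretic image with pulled-back log structure — is to take $W$ to be the scheme-theoretic closure as above, but to equip it with the log structure given by the \emph{image} of the composite $\mathcal{N}_{\overline{Z}\times_k\overline{Z}'}|_W\to\iota_\ast\bigl(\mathcal{N}_{\overline{Z}\times_k\overline{Z}'}|_{Z\times_S Z'}\bigr)\to\iota_\ast\mathcal{N}_{Z\times_S Z'}$. With this choice the injectivity of $\mathcal{N}_W\to\iota_\ast\mathcal{N}_{Z\times_S Z'}$ holds by construction, the surjection on groups follows since the chart map already becomes an isomorphism after group completion (the $Q$-relations come from $\gamma\colon Q\to P^{\gp}$ as in Definition~\ref{def: chart of boundary}, so they are visible at the group level on both sides), and strictness of $Z\times_S Z'\hookrightarrow W$ holds because over the open part the image log structure coincides with the amalgamated-sum log structure. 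The universal-property argument you sketch then goes through essentially as written, once one notes that the factorisation $\overline{Y}\to W$ respects this refined log structure because $Y$ is schematically dense in $\overline{Y}$ and $\mathcal{N}_{\overline{Y}}\to j_\ast\mathcal{N}_Y$ is injective. So the overall strategy (construct a candidate inside $\overline{Z}\times_k\overline{Z}'$ and verify the axioms and universal property) is the right one; the specific choice of log structure on the image is what needs to be repaired.
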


Next we recall smoothness of $S$-log schemes with boundary introduced  in \cite{GrosseKloenne2004}.
It is defined in terms of the infinitesimal lifting property and another condition, which ensures that  log rigid cohomology is independent of the choice of local lifts.

\begin{definition}[{\cite[Def.~2.1]{GrosseKloenne2004}}]
	An $S$-log scheme with boundary $(Z,\ovz)$  is {\it smooth} if the following conditions hold:
	\begin{enumerate}
	\item For any first order thickening $i\colon (Y,\ovy)\hookrightarrow (Y',\ovy')$ and any morphism $f\colon(Y,\ovy) \rightarrow (Z,\ovz)$, locally on $\ovy'$ there exists a morphism $g\colon(Y',\ovy') \rightarrow (Z,\ovz)$ such that $f = g\circ i$.
	\item For any boundary exact closed immersion $i\colon (Y,\ovy)\hookrightarrow(Y',\ovy')$ and any morphism $f\colon (Y,\ovy) \rightarrow (Z,\ovz)$, locally on $\ovy'\ovtimes_S\ovz$ there exists an exactification $\ovy\hookrightarrow\ovx \rightarrow \ovy'\ovtimes_S\ovz$ of the diagonal embedding such that the projection $\ovx \rightarrow \ovy'$ is strict and log smooth.
	\end{enumerate}
	We define the notion of smoothness for weak formal $\cs$-log schemes with boundary in a similar manner.
\end{definition}

Note that if an $S$-log scheme with boundary $(Z,\ovz)$ is smooth, then $Z$ is log smooth over $S$.

To show functoriality of log rigid cohomology for a certain class of $T$-log schemes with boundary, we use a stronger condition of smoothness for weak formal $\ct$-log schemes with boundary defined in terms of local charts.

\begin{definition}\label{def: strongly smooth}
	A weak formal $\ct$-log scheme with boundary $(\cz,\ovcz)$ is {\it strongly smooth} if locally on $\ovcz$ there exist a chart $(\alpha\colon P_{\ovcz} \rightarrow \cn_{\ovcz},\ \beta\colon\N \rightarrow  P^\gp)$ for  $(\cz,\ovcz)$ extending $c_\ct$ and elements $a,b\in P$, satisfying the following conditions. 
	\begin{enumerate}
	\item $\beta^\gp$ is injective, and its cokernel is $p$-torsion free.
	\item $\beta(1) = b-a$.
	\item $\ovcz \rightarrow \Spwf\so_F[P]^\dagger$ is classically smooth.
	\item $\cz \cong \ovcz[\frac{1}{\alpha'(a)}]^\dagger$, where $\alpha'\colon P_{\ovcz} \rightarrow \co_{\ovcz}$ is induced by $\alpha$.
	\end{enumerate}
\end{definition}

\begin{remark}
	In the setting of Definition~\ref{def: strongly smooth}, let $P'$ be the submonoid of $P^\gp$ generated by $P$ and $-a$, and endow  $\Spwf\so_F[P']^\dagger$ and $\Spwf\so_F[P]^\dagger$ with the log structures defined by $P'$ and $P$, respectively.
	The above condition implies that there exists a Cartesian diagram
	\[\xymatrix{
	\cz \ar[r] \ar[d]&\ovcz \ar[d]\\
	\Spwf\so_F[P']^\dagger \ar[r]&\Spwf\so_F[P]^\dagger,
	}\]
	where the vertical arrows are strict and log smooth.
\end{remark}

\begin{proposition}[{\cite[Thm.~2.5]{GrosseKloenne2004}}]
	Strongly smooth weak formal $\ct$-log schemes with boundary are smooth.
\end{proposition}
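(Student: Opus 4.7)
The plan is to verify the two axioms of smoothness locally by reducing to the standard model described after the definition. Working Zariski-locally on $\ovcz$, we may assume a chart $(\alpha, \beta)$ with monoid $P$ and elements $a,b\in P$ satisfying the four conditions of Definition~\ref{def: strongly smooth} exists globally, so that we have the Cartesian square
\[\xymatrix{
\cz \ar[r] \ar[d]&\ovcz \ar[d]\\
\Spwf\so_F[P']^\dagger \ar[r]&\Spwf\so_F[P]^\dagger,
}\]
with vertical arrows strict and classically smooth. The problem then splits into a statement about the standard pair $(\Spwf\so_F[P']^\dagger, \Spwf\so_F[P]^\dagger)$ over $\ct$, which is controlled by the combinatorics of $\beta\colon\N\to P^\gp$, together with propagation along a classically smooth base change.

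For the first-order lifting condition, given a first order thickening $i\colon(Y,\ovy)\hookrightarrow(Y',\ovy')$ and a morphism $f\colon(Y,\ovy)\to(\cz,\ovcz)$, the composite $\ovy\to\ovcz\to\Spwf\so_F[P]^\dagger$ lifts to a morphism $\ovy'\to\Spwf\so_F[P]^\dagger$ compatible with the $\ct$-structure because $\beta^\gp$ is injective with $p$-torsion free cokernel and $\beta(1)=b-a$; this is the usual Kato criterion for log smoothness of $\Spwf\so_F[P]^\dagger$ over $\ct$ applied to the square-zero thickening. The classical smoothness of $\ovcz$ over $\Spwf\so_F[P]^\dagger$ then lifts this to $\ovy'\to\ovcz$. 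Since $\cz=\ovcz[1/\alpha'(a)]^\dagger$ and the ideal defining $Y\subset Y'$ is nilpotent, the image of $\alpha'(a)$ is a unit on $Y'$ as soon as it is on $Y$; consequently the lift automatically factors through $(\cz,\ovcz)$.

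For the exactification condition, given a boundary exact closed immersion $(Y,\ovy)\hookrightarrow(Y',\ovy')$ and $f\colon(Y,\ovy)\to(\cz,\ovcz)$, I would produce the exactification of the diagonal $\ovy\hookrightarrow\ovy'\ovtimes_\ct\ovcz$ from chart data. Locally on $\ovy'\ovtimes_\ct\ovcz$, pick a chart for $\ovy'$ by a fine monoid $Q$ extending $c_\ct$; pulling back the chart $(\alpha,\beta)$ yields a chart for $\ovy'\ovtimes_\ct\ovcz$ on $Q\oplus_\N P$. Kato's standard exactification procedure (passing to the exactification of the diagonal in the category of fine monoids) produces a factorization $\ovy\hookrightarrow\ovx\to\ovy'\ovtimes_\ct\ovcz$ whose projection to $\ovy'$ is strict by construction; its log smoothness reduces, via the Cartesian square above, to the log smoothness of the analogous projection for the standard pair, which again follows from injectivity and $p$-torsion freeness of $\beta^\gp$. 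To lift this to the category of log schemes with boundary, one inverts $\alpha'(a)$ on $\ovx$ to obtain $X$, and the requirement $\cz=\ovcz[1/\alpha'(a)]^\dagger$ guarantees that the resulting $(X,\ovx)$ fits into the desired exactification in $\lsbct$.

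The main obstacle, and the reason the proposition is not purely formal from log smoothness of $\cz\to\ct$, lies in controlling the boundary: one must ensure both that the lift in~(i) takes values in the open subscheme $\cz$ rather than merely in $\ovcz$, and that the exactification in~(ii) acquires a compatible open subscheme making the projection strict in the boundary sense. Both issues are handled by the explicit description $\cz=\ovcz[1/\alpha'(a)]^\dagger$ with $a\in P$, which propagates cleanly through base change and through inversion of a chart element, together with the hypothesis that $P^\gp/\beta^\gp(\N)$ is $p$-torsion free, which is precisely what makes the relevant monoid pushouts log smooth.
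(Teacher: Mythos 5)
The paper offers no proof of this proposition: it is stated as a direct citation to Gro\ss{}e-Kl\"onne \cite[Thm.~2.5]{GrosseKloenne2004}, so there is no argument in the paper to compare against. Assessing your proposal on its own merits, your handling of condition~(i) is essentially correct: lift along $\Spwf\so_F[P]^\dagger$ using the combinatorial criterion for log smoothness, use classical smoothness of $\ovcz\to\Spwf\so_F[P]^\dagger$ to lift further, and observe that invertibility of $\alpha'(a)$ survives a nilpotent thickening of $Y\subset Y'$. One should also remark that the resulting lift is automatically a morphism over $\ct$ on the open part, since the structure map to $\ct$ is given on charts by $\beta$; this is implicit in your construction.

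For condition~(ii), however, there is a genuine gap. You assert that Kato's exactification of the diagonal $\ovy\hookrightarrow\ovy'\ovtimes_\ct\ovcz$ ``produces a factorization whose projection to $\ovy'$ is strict by construction.'' That assertion is precisely the nontrivial content of condition~(ii), and it is not automatic: exactifying the immersion makes $\ovy\hookrightarrow\ovx$ exact, but says nothing a priori about whether the log structure on $\ovx$ agrees with its pull-back along $\ovx\to\ovy'$ --- the two log structures coincide after restriction to $\ovy$, which is weaker than strictness on a neighbourhood. To obtain strictness one has to write down an explicit chart for $\ovy'\ovtimes_\ct\ovcz$ near $\ovy$ (combining a chart $Q'$ of $\ovy'$ with $P$ over $\N$), check that the exactification with respect to this chart eliminates exactly the log divisors contributed by $\ovcz$ after the identification forced by the relation $\beta(1)=b-a$ and the unit $\alpha'(a)$, and then verify that the resulting projection is strict and (by injectivity and $p$-torsion-freeness of $\beta^\gp$) log smooth. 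Without this chart-level computation the ``strict by construction'' claim is unsubstantiated, and the argument does not yet prove the proposition.
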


\begin{lemma}\label{lem: product of strongly smooth}
	For $i = 1,2$, let $(\cz_i,\ovcz_i)$ be strongly smooth weak formal $\ct$-log schemes with boundary.
	Then $(\cz_1\times_\ct\cz_2,\ovcz_1\ovtimes_\ct\ovcz_2)$ is also strongly smooth.
\end{lemma}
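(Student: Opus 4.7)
The plan is to construct a strongly smooth chart on the product by taking a suitable pushout of monoids over $\N$.

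Working locally, fix strongly smooth charts $(\alpha_i\colon (P_i)_{\ovcz_i}\to\cn_{\ovcz_i},\ \beta_i\colon\N\to P_i^\gp)$ together with distinguished elements $a_i,b_i\in P_i$ as in Definition~\ref{def: strongly smooth}, for $i=1,2$. Form the abelian group $G:=(P_1^\gp\oplus P_2^\gp)/\langle(\beta_1(1),-\beta_2(1))\rangle$ and let $P\subset G$ be the submonoid generated by the images of $P_1$ and $P_2$. Define $\beta\colon\N\to P^\gp=G$ by sending $1$ to the common image of $\beta_1(1)$ and $\beta_2(1)$, and set
\[ a:=a_1+a_2\in P,\qquad b:=a_1+b_2\in P, \]
noting that in $G$ one has $a_1+b_2=b_1+a_2$ since $b_i-a_i=\beta_i(1)$. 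The charts $\alpha_1$ and $\alpha_2$ then induce a chart $\alpha\colon P_{\ovcz_1\ovtimes_\ct\ovcz_2}\to\cn_{\ovcz_1\ovtimes_\ct\ovcz_2}$ extending $c_\ct$.

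Next I would verify conditions (i)--(iv) of Definition~\ref{def: strongly smooth} for this chart and these elements. Condition~(ii) is immediate by construction: $b-a=b_2-a_2=\beta_2(1)=\beta(1)$. Condition~(iv) follows from the identity $\alpha'(a)=\alpha'(a_1)\cdot\alpha'(a_2)$ together with the hypothesis $\cz_i\cong\ovcz_i[\tfrac{1}{\alpha_i'(a_i)}]^\dagger$, since weak completion commutes with localization and the product $\cz_1\times_\ct\cz_2$ is obtained from $\ovcz_1\ovtimes_\ct\ovcz_2$ by inverting both $\alpha'(a_1)$ and $\alpha'(a_2)$. Condition~(i) amounts to a short diagram chase in the sequence relating $P^\gp$ to $P_1^\gp\oplus P_2^\gp$ modulo the cyclic subgroup $\langle(\beta_1(1),-\beta_2(1))\rangle$: injectivity of $\beta^\gp$ and $p$-torsion freeness of its cokernel propagate from the corresponding properties of $\beta_1^\gp$ and $\beta_2^\gp$ because one is quotienting by a single relation that matches the two copies of $\N$. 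Finally, condition~(iii) is reduced to the statement that the log schematic image $\Spwf\so_F[P_1]^\dagger\ovtimes_\ct\Spwf\so_F[P_2]^\dagger$ is precisely $\Spwf\so_F[P]^\dagger$, together with the fact that classical smoothness is preserved under ordinary (weak formal) fiber product and descends to the log schematic image.

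The main obstacle is condition~(iii): one has to show that the monoid $P$ defined as the image in $G$ of the raw pushout of $P_1$ and $P_2$ agrees with the chart of the log schematic image $\ovcz_1\ovtimes_\ct\ovcz_2$, rather than with the raw monoid pushout $P_1\oplus_\N P_2$ (which in general need not embed into its groupification). This is exactly where the passage to $G$ plays its role: sections of the naive pushout that vanish on the schematically dense open $\cz_1\times_\ct\cz_2$ are precisely those killed by the relation $\beta_1(1)=\beta_2(1)$ in $G$, which is how $P$ was built. Once this identification is in place, the remaining verifications are routine diagram chases and the result follows.
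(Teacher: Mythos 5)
Your proposal follows the same route as the paper: form the pushout group $G = P_1^\gp\oplus_\Z P_2^\gp$ (the paper's $Q^\gp$), take the submonoid $P$ generated by the images of $P_1$ and $P_2$ (the paper's $Q$), and verify the four conditions of Definition~\ref{def: strongly smooth} directly, checking~(i) via the same $3\times 3$ diagram of short exact sequences. Note that your choice $b = a_1 + b_2$ is in fact the correct one: condition~(ii) together with the requirement that the chart extend $c_\ct$ force $b - a$ to be the \emph{common} class of $\beta_1(1)$ and $\beta_2(1)$ in $G$, which is what your $b - a$ gives and what the $\delta^\gp$ in the paper's exact-sequence diagram actually produces; the paper's written choice $d = $ image of $(b_1,b_2)$ makes $d - c$ equal to twice that class, which appears to be a slip. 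The handling of condition~(iii) is at roughly the level of detail of the paper, which simply asserts it from the construction of products in $\lsbct$; be aware, though, that the phrase ``descends to the log schematic image'' is not quite the right mechanism --- what one actually needs is that $\ovcz_1\ovtimes_\ct\ovcz_2 \hookrightarrow \ovcz_1\times_{\so_F}\ovcz_2$ is the base change of $\Spwf\so_F[Q]^\dagger \hookrightarrow \Spwf\so_F[P_1\oplus P_2]^\dagger$, so that smoothness is inherited by \emph{pullback}, not by restriction to a closed subscheme.
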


\begin{proof}
	We may assume that  there are charts $(\alpha_i\colon P_{\ovcz_i} \rightarrow \cn_{\ovcz_i},\ \beta_i\colon\N \rightarrow  P^\gp_i)$ for $(\cz_i,\ovcz_i)$ and elements $a_i,b_i\in P_i$ as in Definition~\ref{def: strongly smooth}.
	Let $Q$ be the image of
		\[P_1\oplus P_2 \rightarrow  P_1^\gp\oplus_{\Z}P_2^\gp = \coker\left(\epsilon\colon\Z \rightarrow  P_1^\gp\oplus P_2^\gp,\ n \mapsto (\beta_1^\gp(n),-\beta_2^\gp(n))\right),\]
	and let $c,d\in Q$ be the images of $(a_1,a_2),(b_1,b_2)\in P_1\oplus P_2$, respectively.
	Then by the construction of products in $\lsbct$ we have a chart $\gamma\colon Q_{\ovcz_1\ovtimes_\ct\ovcz_2} \rightarrow \cn_{\ovcz_1\ovtimes_\ct\ovcz_2}$ which is compatible with projections to $\ovcz_i$, and $\alpha_i$, and the induced morphism $\ovcz_1\ovtimes_\ct\ovcz_2 \rightarrow \Spwf\so_F[Q]^\dagger$ is classically smooth.
	 Note that if we let $P_i'$ be the submonoid of $P_i^\gp$ generated by $P_i$ and $-a_i$, then $P_1'\oplus_{\N} P_2'$ is the submonoid of $P_1^\gp\oplus_{\Z}P_2^\gp$ generated by $Q$ and $-(a_1,a_2)$.
	 Thus we have
	 	\[\cz_1\times_\ct\cz_2 = (\ovcz_1\ovtimes_\ct\ovcz_2)[\frac{1}{\gamma'(c)}]^\dagger,\]
	where $\gamma'\colon Q_{\ovcz_1\ovtimes_\ct\ovcz_2} \rightarrow \mathcal{O}_{\ovcz_1\ovtimes_\ct\ovcz_2}$ is induced by $\gamma$.
	 We define $\delta\colon\N \rightarrow  Q^\gp$ by $1 \mapsto  d-c$.
	 From the morphism of exact sequences
	 	\[\xymatrix{
		0 \ar[r]&\Z \ar[r]\ar@{ = }[d]&\Z\oplus\Z \ar[r] \ar[d]^{(\beta_1^\gp,\beta_2^\gp)}&\Z \ar[r] \ar[d]^{\delta^\gp}&0\\
		0 \ar[r]&\Z \ar[r]^<<<<<{\epsilon}&P_1^\gp\oplus P_2^\gp \ar[r]& Q^\gp \ar[r]& 0,
		}\]
	where the upper horizontal arrows are given by $m \mapsto (m,-m)$ and $(m,n) \mapsto  m + n$, we obtain
		\begin{align*}
		\kker\delta^\gp \cong\kker\alpha_1^\gp\oplus\kker\alpha_2^\gp &&\text{and}&& \coker\delta^\gp \cong\coker\alpha_1^\gp\oplus\coker\alpha_2^\gp.
		\end{align*}
	This shows that $(\cz_1\times_\ct\cz_2,\ovcz_1\ovtimes_\ct\ovcz_2)$ is strongly smooth.
\end{proof}

The following lemma which ensures the local existence of exactifications of boundary immersions is the reason why we introduced the notion of strong smoothness.
Note that, the base change of a log schematically dense open immersion by a smooth morphism is in general not a log schematically dense open immersion.

\begin{lemma}\label{lem: strongly smooth}
	Let $(\cz,\ovcz)$ be a strongly smooth weak formal $\ct$-log scheme with boundary, and $f\colon \ovcy \rightarrow \ovcz$  a smooth morphism.
	Then the morphism $\cy: = \ovcy\times_{\ovcz}\cz\hookrightarrow\ovcy$ defines a weak formal $\ct$-log scheme with boundary, which is strongly smooth.
\end{lemma}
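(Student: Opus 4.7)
The plan is to prove strong smoothness locally by pulling back the chart data witnessing the strong smoothness of $(\cz,\ovcz)$ along the morphism $f$. Since the property is local on $\ovcz$, I would first shrink $\ovcz$ to assume the existence of a chart $(\alpha\colon P_{\ovcz}\to\cn_{\ovcz},\ \beta\colon\N\to P^\gp)$ for $(\cz,\ovcz)$ extending $c_\ct$ together with elements $a,b\in P$ as in Definition~\ref{def: strongly smooth}. Pulling $\alpha$ back by $f$ gives a chart $f^\ast\alpha\colon P_{\ovcy}\to\cn_{\ovcy}$ which, together with the unchanged $\beta\colon\N\to P^\gp$, will serve as the candidate chart for the pair $(\cy,\ovcy)$ extending $c_\ct$; I keep $a,b\in P$ unchanged.

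Before checking the four conditions, I would verify that $(\cy,\ovcy)$ is indeed a fine weak formal $\ct$-log scheme with boundary. The inclusion $\cy = \ovcy\times_{\ovcz}\cz\hookrightarrow\ovcy$ is the base change of the strict open immersion $\cz\hookrightarrow\ovcz$, hence strict open. The injectivity of $\cn_{\ovcy}\to j_\ast\cn_{\cy}$ and the isomorphism $\cn_{\ovcy}^\gp\cong (j_\ast\cn_{\cy})^\gp$ are immediate from the pulled-back chart, since both properties already hold over $(\cz,\ovcz)$ with the same monoid $P$. The one slightly delicate point is schematic density of $\cy$ in $\ovcy$; this I would deduce from the flatness of $f$ (smooth implies flat), which ensures that flat pullback of a schematically dense open subscheme remains schematically dense.

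Finally I would check (i)--(iv) of Definition~\ref{def: strongly smooth} for $(\cy,\ovcy)$. Conditions (i) and (ii) only involve $\beta$, $a$, $b$, which are unchanged, so they hold at once. For (iv), base change gives
\[
\cy = \ovcy\times_{\ovcz}\cz = \ovcy\times_{\ovcz}\ovcz\bigl[\tfrac{1}{\alpha'(a)}\bigr]^\dagger = \ovcy\bigl[\tfrac{1}{(f^\ast\alpha)'(a)}\bigr]^\dagger,
\]
which is exactly the required local description. For (iii), I need that $\ovcy\to\Spwf\so_F[P]^\dagger$ is classically smooth; this factors as $\ovcy\xrightarrow{f}\ovcz\to\Spwf\so_F[P]^\dagger$, where the second map is classically smooth by hypothesis on $(\cz,\ovcz)$ and $f$ is smooth with pulled-back (hence strict) log structure, so the composition is classically smooth.

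The main obstacle is really only the schematic density assertion together with the correct interpretation of ``smooth'' ensuring condition (iii); the remaining verifications are formal consequences of base change and of the fact that $\beta,a,b$ live in the monoid $P$ which is preserved under chart pullback. No delicate combinatorics as in Lemma~\ref{lem: product of strongly smooth} are needed here, because the monoid and the distinguished elements do not change.
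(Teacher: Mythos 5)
The proposal rests on a misreading of the hypothesis. You interpret ``$f\colon\ovcy\to\ovcz$ a smooth morphism'' as a strict morphism with classically smooth underlying map, so that the chart for $\ovcy$ is the pullback $f^\ast\alpha\colon P_{\ovcy}\to\cn_{\ovcy}$ with the \emph{same} monoid $P$. But the intended meaning is \emph{log} smooth. This is forced by the application: Lemma~\ref{lem: boundary exactification} says its conclusion ``follows immediately from Lemma~\ref{lem: strongly smooth},'' and there the morphism $h\colon\ovcy\to\ovcz'$ in an exactification is merely log \'etale, hence log smooth but in general not strict. If Lemma~\ref{lem: strongly smooth} only covered strict smooth morphisms, Lemma~\ref{lem: boundary exactification} would not follow.

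Once $f$ is allowed to be log smooth but not strict, essentially every step of your argument breaks. The pullback $f^\ast\alpha$ is no longer a chart for $\cn_{\ovcy}$, so the distinguished monoid and the elements $a,b$ cannot be ``kept unchanged''; by Kato's chart theorem (\cite[Rem.~3.6]{Kato1989}) one must pass to a chart $(\gamma\colon Q_{\ovcy}\to\cn_{\ovcy},\ \delta\colon P\to Q)$ for $f$ with $\delta^\gp$ injective and $p$-torsion free cokernel, and the new distinguished data are $\delta^\gp\circ\beta$, $\delta(a)$, $\delta(b)$. Your verification of condition (iii) fails: a log smooth $\ovcy\to\ovcz$ is not classically smooth on underlying (weak formal) schemes, so $\ovcy\to\ovcz\to\Spwf\so_F[P]^\dagger$ need not be classically smooth; the correct target is $\Spwf\so_F[Q]^\dagger$, and smoothness is seen by factoring through $\ovcz\times_{\Spwf\so_F[P]^\dagger}\Spwf\so_F[Q]^\dagger$. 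Likewise, your argument for schematic density of $\cy$ in $\ovcy$ via ``smooth implies flat'' uses flatness of the underlying map, which fails for general log smooth $f$. In fact the paper warns about exactly this directly before the lemma: ``the base change of a log schematically dense open immersion by a smooth morphism is in general not a log schematically dense open immersion.'' The whole point of introducing strong smoothness and condition (iii) is to produce a classically smooth (hence flat) map $\ovcy\to\Spwf\so_F[Q]^\dagger$ along which one can pull back the toric open dense immersion $\Spwf\so_F[Q']^\dagger\hookrightarrow\Spwf\so_F[Q]^\dagger$; schematic density is deduced from \emph{that} flatness, not from flatness of $f$ itself.

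So while your outline (local charts, check the four conditions) has the right shape and would be correct under the additional hypothesis that $f$ is strict, it does not prove the lemma as stated, and the missing case is precisely the one the paper needs.
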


\begin{proof}
	Locally on $\ovcz$ and $\ovcy$, we can take a chart $(\alpha\colon P_{\ovcz} \rightarrow \cn_{\ovcz},\ \beta\colon\N \rightarrow  P^\gp)$ for $(\cz,\ovcz)$ and elements $a,b\in P$ as in Definition~\ref{def: strongly smooth}, and a chart $(\gamma\colon Q_{\ovcy} \rightarrow \cn_{\ovcy},\ \delta\colon P \rightarrow  Q)$ for $f$ extending $\alpha$, such that $\gamma^\gp\colon P^\gp \rightarrow  Q^\gp$ is injective with $p$-torsion free cokernel, and
		\[\ovcy \rightarrow \ovcz\times_{\Spwf\so_F[P]^\dagger}\Spwf\so_F[Q]^\dagger\]
	is classically smooth (see also \cite[Rem.~3.6]{Kato1989}).
	Let $Q'$ be the submonoid of $Q^\gp$ generated by $Q$ and $-\delta(a)$.
	Now we have a Cartesian diagram
		\[\xymatrix{
		\cy \ar[r] \ar[d]&\ovcy \ar[d]\\
		\Spwf\so_F[Q']^\dagger \ar[r]&\Spwf\so_F[Q]^\dagger
		}\]
	in which the vertical arrows are strict and log smooth, and hence classically smooth.
	Thus $\cy\hookrightarrow\ovcy$ is log schematically dense, and defines a weak formal $\ct$-log scheme with boundary. 
	Now one can easily see that $(\gamma\colon Q_{\ovcy} \rightarrow \cn_{\ovcy},\ \delta^\gp\circ\beta\colon\N \rightarrow  Q^\gp)$ and $\delta(a),\delta(b)\in Q$ satisfy the desired conditions in Definition~\ref{def: strongly smooth}.
\end{proof}

\begin{lemma}\label{lem: boundary exactification}
	Let $(\cz,\ovcz)\hookrightarrow(\cz',\ovcz')$ be a boundary immersion in $\lsbct$, and assume that $(\cz',\ovcz')$ is strongly smooth.
	Then locally on $\ovcz'$ there exists an exactification $(\cz,\ovcz)\hookrightarrow(\cy,\ovcy) \rightarrow (\cz',\ovcz')$ such that $(\cy,\ovcy)$ is strongly smooth.
\end{lemma}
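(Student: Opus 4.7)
The plan has three stages. First, working locally on $\ovcz'$, we reduce to the case of a boundary closed immersion. Any boundary immersion factors as a boundary closed immersion followed by a boundary strict open immersion, and the latter is already exact with log étale underlying morphism, so we may absorb it into $(\cz',\ovcz')$ by restriction: strong smoothness of $(\cz',\ovcz')$ clearly passes to a strict open subobject, since one just restricts the chart. Hence we may assume the given boundary immersion is a boundary closed immersion $(\cz,\ovcz)\hookrightarrow(\cz',\ovcz')$.

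Second, choose a chart $(\alpha\colon P_{\ovcz'}\to\cn_{\ovcz'},\,\beta\colon\N\to P^\gp)$ together with $a,b\in P$ realising strong smoothness of $(\cz',\ovcz')$ as in Definition~\ref{def: strongly smooth}, and locally choose a chart for $\ovcz$ extending $\alpha$. Apply Kato's exactification \cite[Prop.~4.10(1)]{Kato1989} to the underlying closed immersion $\ovcz\hookrightarrow\ovcz'$ to obtain a factorisation $\ovcz\hookrightarrow\ovcy\xrightarrow{h}\ovcz'$ with exact closed first arrow and log étale second arrow. Define $\cy:=\ovcy\times_{\ovcz'}\cz'$. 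Since $h$ is log étale, hence log smooth in the sense used in Lemma~\ref{lem: strongly smooth}, that lemma guarantees that $(\cy,\ovcy)$ is a strongly smooth weak formal $\ct$-log scheme with boundary. The universal property of the fibre product then produces a morphism $(\cz,\ovcz)\to(\cy,\ovcy)$ compatible with the original immersion, whose underlying morphism is the exact closed immersion $\ovcz\hookrightarrow\ovcy$ produced by Kato.

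The remaining and main technical point is to verify that $(\cz,\ovcz)\to(\cy,\ovcy)$ is a boundary exact closed immersion; exactness of the underlying closed immersion is automatic from Kato's construction, so the issue is the schematic density condition: for every open $U\subset\cy$ containing $\cz$, one must produce an open $\ov{U}\subset\ovcy$ containing $\ovcz$ with $U$ schematically dense in $\ov{U}$. The plan is to check this on charts, exploiting that Kato's exactification is locally of the form $\ovcy=\ovcz'\times_{\Spwf\so_F[P]^\dagger}\Spwf\so_F[H]^\dagger$ for an appropriate fs submonoid $H\subset Q^\gp$ (where $Q$ is the local chart for $\ovcz$), and that $\cy\to\cz'$ is the corresponding base change after inverting $\alpha'(a)$. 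Using this explicit local product structure, together with the fact that the closed subscheme $\ovcz\subset\ovcy$ is the strict transform of $\ovcz\subset\ovcz'$ through this chart-controlled base change, the given density condition for $(\cz,\ovcz)\hookrightarrow(\cz',\ovcz')$ transfers to the desired density condition inside $\ovcy$ by enlarging $U$ along the fibres of $h$ within the complement of $\ovcz$.
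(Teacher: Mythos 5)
Your argument is essentially the paper's intended one, just written out in full. The paper's entire proof reads ``This follows immediately from Lemma~\ref{lem: strongly smooth}'', which implicitly means: shrink $\ovcz'$ so that charts exist, exactify the underlying immersion $\ovcz\hookrightarrow\ovcz'$ via Kato to get a log \'etale $h\colon\ovcy\to\ovcz'$, and then invoke Lemma~\ref{lem: strongly smooth} with $f = h$ (log \'etale being in particular smooth) to conclude that $\cy:=\ovcy\times_{\ovcz'}\cz'\hookrightarrow\ovcy$ is a strongly smooth weak formal $\ct$-log scheme with boundary. You have correctly identified and spelled out all of these steps, and your reduction to the boundary closed case via a strict open restriction (noting strong smoothness is Zariski-local) is a clean way to organize the shrinking. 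The only place you diverge in emphasis is your third paragraph: you flag the verification that $(\cz,\ovcz)\hookrightarrow(\cy,\ovcy)$ is a \emph{boundary} exact closed immersion (i.e.\ the schematic-density condition on neighbourhoods) as the ``main technical point'' and only sketch it, whereas the paper treats it as immediate and says nothing. Your sketch via the explicit chart-level description of Kato's exactification is the right way to close this, so this is not a gap in your argument so much as you being more honest than the source about a step that is not literally contained in Lemma~\ref{lem: strongly smooth}.
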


\begin{proof}
	This follows immediately from Lemma~\ref{lem: strongly smooth}.
\end{proof}

As in the case of fine log schemes, we have the following. 

\begin{lemma}\label{lem: bdry tube}
	Let $(Z,\ovz)\hookrightarrow(\cz,\ovcz)$ be a boundary immersion of a fine $S$-log scheme with boundary into a fine admissible weak formal $\cs$-log scheme with boundary.
	Assume that there are two exactifications $(Z,\ovz)\hookrightarrow(\cz_i,\ovcz_i) \rightarrow (\cz,\ovcz)$ for $i = 1,2$.
	Let $\iota\colon]\ovz[_{\ovcz_1}\hookrightarrow]\ovz[_{\ovcz_2}$ be the canonical isomorphism from Lemma~\ref{lem: tube is canonical}.
	Denote by $\omega^{\sbt}_{(\cz_i,\ovcz_i)/\cs,\Q}$ the complexes of sheaves on $\ovcz_{i,\Q}$ given by tensoring the de~Rham complex of $(\cz_i,\ovcz_i)$ over $\cs$ \cite[Def.~1.6]{GrosseKloenne2004} with $\Q$.
	Then we have a canonical isomorphism $\iota^\ast\omega^{\sbt}_{(\cz_2,\ovcz_2)/\cs,\Q}|_{]\ovz[_{\ovcz_2}} \cong\omega^{\sbt}_{(\cz_1,\ovcz_1)/\cs,\Q}|_{]\ovz[_{\ovcz_1}}$.
\end{lemma}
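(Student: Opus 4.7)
The plan is to mirror the strategy of Lemma~\ref{lem: tube is canonical}, working inside $\lsbcs$ rather than $\lscs$. First I form the fibre product $(\cz_1,\ovcz_1)\times_{(\cz,\ovcz)}(\cz_2,\ovcz_2) = (\cz_1\times_\cz\cz_2,\ \ovcz_1\ovtimes_{\ovcz}\ovcz_2)$, and observe that the diagonal induces a boundary immersion
	\[
	(Z,\ovz)\hookrightarrow(\cz_1\times_\cz\cz_2,\ \ovcz_1\ovtimes_{\ovcz}\ovcz_2)
	\]
over $(\cz,\ovcz)$. Working locally on $\ovcz$, I combine the charts of the given exactifications into a chart for this product along the lines of Definition~\ref{def: chart of boundary}, and then construct an exactification $(Z,\ovz)\hookrightarrow(\cy,\ovcy)\to(\cz_1\times_\cz\cz_2,\ \ovcz_1\ovtimes_{\ovcz}\ovcz_2)$ whose second arrow is log \'etale on the underlying log schemes.

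Composition with the two projections then yields morphisms $p_i\colon(\cy,\ovcy)\to(\cz_i,\ovcz_i)$ in $\lsbcs$ such that each $\ovcy\to\ovcz_i$ is log \'etale. Since both $\ovz\hookrightarrow\ovcy$ and $\ovz\hookrightarrow\ovcz_i$ are boundary exact closed immersions, $p_i$ is strict along $\ovz$, so the standard fact that a log \'etale morphism which is strict on a closed subscheme is strict in a Zariski neighborhood of it shows that $p_i$ is strict on some open neighborhood $\overline{\mathcal{V}}_i\subset\ovcy$ of $\ovz$. Passing to generic fibres and restricting to tubes, each $p_i$ induces an isomorphism $]\ovz[_{\ovcy}\xrightarrow{\sim}\,]\ovz[_{\ovcz_i}$, and because strict morphisms pull back the de~Rham complex of log schemes with boundary correctly, also an isomorphism of the restrictions of $\omega^{\sbt}_{(\cz_i,\ovcz_i)/\cs,\Q}$. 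Composing the two yields the required identification, which recovers $\iota$ once the tubes are identified.

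The main obstacle is twofold: (i) producing a local exactification of the diagonal in the boundary setting, which reduces to lifting charts from each $(\cz_i,\ovcz_i)$ to a chart on the product in the sense of Definition~\ref{def: chart of boundary} and then running Kato's construction; and (ii) controlling the log structure of $\ovcy$ near $\ovz$ so that $p_i$ becomes strict on a neighborhood, which rests on the exactness of both $\ovz\hookrightarrow\ovcy$ and $\ovz\hookrightarrow\ovcz_i$ combined with log \'etaleness of $p_i$. To conclude canonicity, I will verify that two choices of local exactification of the diagonal give the same isomorphism by applying Lemma~\ref{lem: tube is canonical} to their underlying tubes, and then glue the resulting local isomorphisms over an open cover of $\ovcz$ to obtain the global canonical identification $\iota^\ast\omega^{\sbt}_{(\cz_2,\ovcz_2)/\cs,\Q}|_{]\ovz[_{\ovcz_2}}\cong\omega^{\sbt}_{(\cz_1,\ovcz_1)/\cs,\Q}|_{]\ovz[_{\ovcz_1}}$.
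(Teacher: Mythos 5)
Your route diverges from the paper's in a way that opens up a real gap, and the divergence matters precisely at the delicate point the paper is designed to avoid.

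The paper's proof does \emph{not} form the boundary product $\ovcz_1\ovtimes_{\ovcz}\ovcz_2$, nor does it seek an exactification in the category of log schemes with boundary. Instead it exactifies the plain log scheme closed immersion $\ovz\hookrightarrow\ovcz_1\times_{\ovcz}\ovcz_2$ (the ordinary fibre product, which has local charts in the usual way, so Kato's argument applies directly), getting $\ovz\hookrightarrow\ovcz'\to\ovcz_1\times_{\ovcz}\ovcz_2$. Each projection $p_i\colon\ovcz'\to\ovcz_i$ is log \'etale because it factors as the exactification (log \'etale) followed by the base change of the log \'etale map $\ovcz_{3-i}\to\ovcz$, and it is strict near $\ovz$. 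Only \emph{after} this does the paper introduce the boundary structure: it defines $\cz':=p_1^{-1}(\cz_1)\cap p_2^{-1}(\cz_2)$ and proves that $\cz'$ is log schematically dense in $\ovcz'$, citing the proof of \cite[Prop.~2.6]{GrosseKloenne2004}. That density statement is exactly the content that turns $(\cz',\ovcz')$ into a genuine log scheme with boundary and hence into the object you need.

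Your proposal tries to produce a boundary exactification of $(Z,\ovz)\hookrightarrow(\cz_1\times_\cz\cz_2,\ \ovcz_1\ovtimes_{\ovcz}\ovcz_2)$ directly. This runs into three difficulties that are not addressed. First, $\ovcz_1\ovtimes_{\ovcz}\ovcz_2$ is a log schematic image, which is in general a proper closed subscheme of $\ovcz_1\times_{\ovcz}\ovcz_2$; producing a chart on it ``along the lines of Definition~\ref{def: chart of boundary}'' is precisely the kind of chart-on-a-schematic-image argument that the paper only carries out under strong smoothness (Lemma~\ref{lem: product of strongly smooth}), a hypothesis not available here. Second, the existence of a local boundary exactification is established in the paper only for strongly smooth targets (Lemma~\ref{lem: boundary exactification}); without that hypothesis you still owe a proof that the open part of your candidate $(\cy,\ovcy)$ is log schematically dense in $\ovcy$ --- which is the same density step the paper has to prove, not an output of Kato's construction. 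Third, even granting a boundary exactification $\ovcy\to\ovcz_1\ovtimes_{\ovcz}\ovcz_2$, the composite $\ovcy\to\ovcz_i$ need not be log \'etale: $\ovcz_1\ovtimes_{\ovcz}\ovcz_2\hookrightarrow\ovcz_1\times_{\ovcz}\ovcz_2$ is a closed immersion and the restriction of a log \'etale map to a closed subscheme is not generally log \'etale, so your claim that ``each $\ovcy\to\ovcz_i$ is log \'etale'' does not follow. The paper's choice to exactify in the plain log scheme category and reintroduce the boundary by hand is made to sidestep exactly these issues.
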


\begin{proof}
	We may assume that there exists an exactification $\ovz\hookrightarrow\ovcz' \rightarrow \ovcz_1\times_{\ovcz}\ovcz_2$ of the diagonal embedding.
	Then $p_i\colon\ovcz' \rightarrow \ovcz_i$ is smooth and strict on neighborhoods of $\ovz$.
	Since shrinking does not change the tube, we may assume that $p_i$ is strict and smooth.
	If we set $\cz': = p_1^{-1}(\cz_1)\cap p_2^{-1}(\cz_2)$, it is log schematically dense in $\ovcz'$, as in the proof of \cite[Prop.~2.6]{GrosseKloenne2004}.
	Hence we have morphisms $(\cz',\ovcz') \rightarrow (\cz_i,\ovcz_i)$
	As in the classical case, we obtain $\iota_i\colon]\ovz[_{\ovcz'} \cong]\ovz[_{\ovcz_i}$ and $\iota_i^\ast\omega^{\sbt}_{(\cz_i,\ovcz_i)/\cs,\Q}|_{]\ovz[_{\ovcz_i}} \cong\omega^{\sbt}_{(\cz',\ovcz')/\cs,\Q}|_{]\ovz[_{\ovcz'}}$.
\end{proof}

\begin{definition}\label{def: bdry tube}
	Let $(Z,\ovz)\hookrightarrow(\cz,\ovcz)$ be a boundary immersion of a fine $T$-log scheme with boundary into a strongly smooth weak formal $\ct$-log scheme with boundary.
	Since locally on $\ovcz$ there exist  exactifications as in Lemma~\ref{lem: boundary exactification}, we can define the complex of sheaves $\omega^{\sbt}_{(\cz,\ovcz)/\cs,\Q}$ on $]\ovz[_{\ovcz}^{\log}$ by the gluing procedure explained in Definition~\ref{def: log tube}.
\end{definition}

\subsection{Axiomatization of canonical rigid complexes}\label{subsec: Axiomatisation}

In the next subsection we will define several variations of (log) rigid complexes for certain (simplicial) spaces.
Since rigid complexes a priori depend on the choice of local liftings, it is not clear how to extend the ``naive''  definition to simplicial spaces and prove their functoriality in general.
As a remedy we construct canonical rigid complexes which are independent of any choices, by following a method due to Berthelot outlined by Besser \cite{Besser2000}.
As we use it several times, we axiomatize this method here.
While this seems very laborious, we think that it streamlines the constructions later on and hope that it will be useful for future references.

We first introduce \textit{multisets} in order to allow multiplicity in the choice of local liftings. 
This  will make the arguments in the construction of base change simpler.

\begin{definition}
	Let $\mathsf{Set}$ be the category of sets, and let $P$ be a set.
	\begin{enumerate}
	\item A {\it multiset} in $P$ is a functor $A\colon P \rightarrow \mathsf{Set}$, where we regard $P$ as a discrete category.
		We call $\mathrm{Supp}(A): = \{x\in P\mid A(x)\neq\emptyset\}\subset P$ the {\it support} of a multiset $A$.
		We set $\underline{A}: = \coprod_{x\in P}A(x)$.
		A multiset $A$ in $P$ is said to be {\it finite} if its support is finite.
		We denote by $\mathsf{FMSet}(P)$ be the category of finite multisets in $P$ with natural transformations. 
	\item For multisets $A$ and $B$ in a set $P$, let $A\amalg B$ be a multiset in $P$ defined by
		\[(A\amalg B)(x): = A(x)\amalg B(x)\ \ (x\in P).\]
	Then the natural inclusions $A(x)\hookrightarrow A(x)\amalg B(x)$ and $B(x)\hookrightarrow A(x)\amalg B(x)$ give canonical morphisms $A \rightarrow  A\amalg B$ and $B \rightarrow  A\amalg B$.
	\end{enumerate}
\end{definition}

For $x\in\mathrm{Supp}(A)$, we call the cardinality of $A(x)$  the multiplicity of $x$.
Note that a morphism $\tau\colon A \rightarrow  B$ in $\mathsf{FMSet}(P)$ corresponds to a map $\underline{\tau}\colon\underline{A} \rightarrow \underline{B}$ with $\underline{\tau}(A(x))\subset B(x)$ for any $x\in P$.

\subsubsection{Rigid cohomological tuples}\label{subsubsec: Rigid cohomological tuple}

Berthelot--Besser's construction will be applied to a tuple
	\[\Xi = (\mathcal{A},\mathcal{B},\mathrm{red},\mathcal{I},\mathcal{M},\mathcal{K},\{P_Y\}_Y,\{g^\circ_\tau\}_{g,\tau},\{D_A\}_{(Y,\cz,i),A})\]
whose entries, numbered 1.,$\ldots$,9.,  we will define step by step in this section.
We will call such a tuple a {\it rigid cohomological tuple}.

\begin{enumerate}
	\item[1.] An essentially small site $\mathcal{A}$ called {\it base site}.
	\item[2.] An essentially small category $\mathcal{B}$ called {\it lifting category}.
	\item[3.] A functor $\mathrm{red}\colon\mathcal{B} \rightarrow \mathcal{A}$ called {\it reduction functor}.
	\item[4.] A class $\mathcal{I}$ of morphisms in $\mathcal{A}$ called {\it immersion class} which is closed under composition.
\end{enumerate}

\noindent
To identify the subsequent entries in the tuple $\Xi$, the following definition will be helpful.

\begin{definition}
	\begin{enumerate}
		\item A {\it $\Xi$-rigid triple} is a triple $(Y,\cz,i)$ consisting of $Y\in\mathcal{A}$, $\cz\in\mathcal{B}$, and $i\colon Y \rightarrow \mathrm{red}(\cz)$ in $\mathcal{I}$.
			A morphism of $\Xi$-rigid triples $(Y,\cz,i) \rightarrow (Y',\cz',i')$ is a pair $(f,F)$ of morphisms $f\colon Y \rightarrow  Y'$ in $\mathcal{A}$ and $F\colon \cz \rightarrow \cz'$, such that $\mathrm{red}(F)\circ i = i'\circ f$.
			We denote by $\mathsf{RT}_\Xi$ the category of $\Xi$-rigid triples.
			
		\item A {\it $\Xi$-rigid datum} for an object $Y\in\mathcal{A}$ is a pair $(\cz,i)$ which makes $(Y,\cz,i)$ into a $\Xi$-rigid triple.
			A morphism of $\Xi$-rigid data for $Y$ is a morphism $(\mathrm{id}_Y,F)$ in $\mathsf{RT}_\Xi$.
			We denote by $\mathsf{RD}_\Xi(Y)$ the category of $\Xi$-rigid data for $Y$.
			
		\item An object $Y\in\mathcal{A}$ is called {\it $\Xi$-embeddable} if the category $\mathsf{RD}_\Xi(Y)$ is non-empty.
			We denote by $\mathcal{A}_\Xi^{\mathrm{em}}$ the full subcategory of $\mathcal{A}$ consisting of $\Xi$-embeddable objects.
			
		\item An object $Y\in\mathcal{A}$ is called {\it locally $\Xi$-embeddable} if there exists a covering family $\{U_\lambda \rightarrow  Y\}_{\lambda\in\Lambda}$ of $Y$ in $\mathcal{A}$ such that each $U_\lambda$ is $\Xi$-embeddable.
			We denote by $\mathcal{A}_\Xi^{\mathrm{loc}}$ the full subcategory of $\mathcal{A}$ consisting of locally $\Xi$-embeddable objects.
			We regard $\mathcal{A}_\Xi^{\mathrm{loc}}$ as a site with respect to the topology induced from $\mathcal{A}$.
			Note that by definition the natural inclusion $\mathcal{A}_\Xi^{\mathrm{em}} \hookrightarrow\mathcal{A}_\Xi^{\mathrm{loc}}$ is a base for $\mathcal{A}_\Xi^{\mathrm{loc}}$ in the sense of Definition \ref{def: base}.
	\end{enumerate}
\end{definition}

\noindent
The next entries in the tuple $\Xi$ are given as follows.

\begin{enumerate}
	\item[5.] A cocomplete abelian category $\mathcal{M}$, called {\it realization category}.
		
	\item[6.] A functor $\mathcal{K}\colon\mathsf{RT}_\Xi \rightarrow  \mathscr{C}^ + (\mathcal{M})$, called {\it realization functor}.
\end{enumerate}

\begin{assumption}\label{ass: weq}
	For any object $Y\in\mathcal{A}$ and any morphism $F\colon (\cz,i) \rightarrow  (\cz',i')$ in $\mathsf{RD}_\Xi(Y)$, the morphism $\mathcal{K}(Y,\cz',i') \rightarrow \mathcal{K}(Y,\cz,i)$ induced by $(\mathrm{id}_Y,F)$ is a quasi-isomorphism.
\end{assumption}

Berthelot's idea to construct canonical complexes is to take into account all possible available rigid data.
However, it is not possible to take limits naively in such a way that they are functorial. 
Instead one is lead to consider certain filtered index categories as follows.

\begin{definition}
	Let $Y\in\mathcal{A}_\Xi^{\mathrm{em}}$.
	\begin{enumerate}
		\item Let $\mathrm{PT}_\Xi(Y)$ be the set of isomorphism classes of quadruples $(f,Y',\cz',i')$, where $f\colon Y \rightarrow  Y'$ is a morphism in $\mathcal{A}_\Xi^{\mathrm{em}}$ and $(\cz',i')$ is a $\Xi$-rigid datum for $Y'$. 
		
		\item Let $\mathsf{SET}_\Xi(Y): = \mathsf{FMSet}(\mathrm{PT}_\Xi(Y))$ be the category of finite multisets in $\mathrm{PT}_\Xi(Y)$.
			For $x\in\mathrm{PT}_\Xi(Y)$ and $a\in A(x)$, let $(f^{(a)},Y^{(a)},\cz^{(a)},i^{(a)}): = x$.

		\item Let $\mathsf{SET}^0_\Xi(Y)$ be the full subcategory of $\mathsf{SET}_\Xi(Y)$ consisting of all multisets $A$ in $\mathrm{PT}_\Xi(Y)$ such that there is an element $a\in\underline{A}$ with $f^{(a)} = \mathrm{id}_Y$.
			This is a filtered category.
	\end{enumerate}
\end{definition}

\noindent
This allows us to identify the seventh entry in the tuple $\Xi$.

\begin{enumerate}
	\item[7.] A family $\{P_Y\}_Y$ of contravariant functors $P_Y\colon\mathsf{SET}^0_\Xi(Y) \rightarrow \mathsf{RD}_\Xi(Y)$ for $Y\in\mathcal{A}_\Xi^{\mathrm{em}}$.
		We often use the notation $(\cz_A,i_A): = P_Y(A)$.
\end{enumerate}

\begin{definition}
	For $Y\in\mathcal{A}_\Xi^{\mathrm{em}}$, we define an object $\widehat{\mathcal{K}}(Y)$ in $\mathscr{C}^ + (\mathcal{M})$ by
		\[\widehat{\mathcal{K}}(Y): = \varinjlim_{A\in\mathsf{SET}^0_\Xi(Y)}\mathcal{K}(Y, \cz_A,i_A).\]
\end{definition}

\noindent
For a morphism $g\colon Y \rightarrow  Y'$ in $\mathcal{A}_\Xi^{\mathrm{em}}$, there is a map
	\[g^\circ\colon\mathrm{PT}_\Xi(Y') \rightarrow \mathrm{PT}_\Xi(Y),\ (f',\cz',i') \mapsto  (f'\circ g,\cz',i').\]
This induces a functor
	\[g^\circ\colon\mathsf{SET}_\Xi(Y') \rightarrow \mathsf{SET}_\Xi(Y)\]
	by $g^\circ(A)(x): = \coprod_{y\in (g^{\circ})^{-1}(x)}A(y)$.
Note that we have an equality $\underline{g^\circ(A)} = \underline{A}$. 
However $g^\circ$ does not send $\mathsf{SET}^0_\Xi(Y')$ to $\mathsf{SET}^0_\Xi(Y)$.

We assume that the family $\{P_Y\}_Y$ is natural in the sense that there exists a family as described below which forms the eighth entry of the tuple $\Xi$.

\begin{enumerate}
	\item[8.] A family $\{g^\circ_{\tau}\}_{g,\tau}$ of morphisms $g^\circ_{\tau}\colon (Y,\cz_B,i_B) \rightarrow (Y',\cz_C,i_C)$ in $\mathsf{RT}_\Xi$ for $g\colon Y \rightarrow  Y'$ in $\mathcal{A}_\Xi^{\mathrm{em}}$, $B\in\mathsf{SET}^0_\Xi(Y)$, $C\in\mathsf{SET}^0_\Xi(Y')$, and $\tau\colon g^\circ(C) \rightarrow  B$ in $\mathsf{SET}_\Xi(Y)$, such that
		\[P_{Y'}(\iota')\circ g^\circ_{\tau'} = g^\circ_\tau\circ P_Y(\iota)\]
		for any $\iota\colon B \rightarrow  B'$ in $\mathsf{SET}^0_\Xi(Y)$, $\iota'\colon C \rightarrow  C'$ in $\mathsf{SET}^0_\Xi(Y')$, $\tau\colon g^\circ(C) \rightarrow  B$, and $\tau'\colon g^\circ(C') \rightarrow  B'$ with $\tau'\circ g^\circ(\iota') = \iota\circ\tau$.
		Here $P_Y(\iota)$ and $P_{Y'}(\iota')$ are regarded as morphisms of $\Xi$-rigid triples. 
\end{enumerate}

\noindent
When no confusion arises, we  omit $\tau$ and write  $g^\circ = g^\circ_\tau$ to simplify notations.
We denote by
	\[g^\circ_\ast = g^\circ_{\tau,\ast}\colon\mathcal{K}(Y',\cz_C,i_C) \rightarrow \mathcal{K}(Y,\cz_B,i_B)\]
the morphism induced by $g^\circ_{\tau}$.

For $Y\in\mathcal{A}_\Xi^{\mathrm{em}}$, $A\in\mathsf{SET}^0_\Xi(Y)$, and $a\in\underline{A}$, we define $\langle a\rangle\in\mathsf{SET}^0_\Xi(Y^{(a)})$ by
	\[\langle a\rangle(x): = \begin{cases}\{a\}&\text{if }x = (\mathrm{id}_{Y^{(a)}},Y^{(a)},\cz^{(a)},i^{(a)}) \\
	 \emptyset&\text{if }x\neq (\mathrm{id}_{Y^{(a)}},Y^{(a)},\cz^{(a)},i^{(a)})  \end{cases}\]
for each $x\in\mathrm{PT}_\Xi(Y^{(a)})$.
Then the natural morphism $\iota_a\colon f^{(a),\circ}(\langle a\rangle) \rightarrow  A$ induces a morphism
	\[f^{(a),\circ}_{\iota_a}\colon(Y,\cz_A,i_A) \rightarrow (Y^{(a)},\cz^{(a)},i^{(a)})\]
in $\mathsf{RT}_\Xi$.

The following proposition can be proved similarly as \cite[Prop.~4.14]{Besser2000}.

\begin{proposition}\label{prop: functoriality axiom}
	For a morphism $g\colon Y \rightarrow  Y'$ in $\mathcal{A}_\Xi^{\mathrm{em}}$, there exists a unique map
	\begin{equation*}
	g^\ast\colon\widehat{\mathcal{K}}(Y') \rightarrow \widehat{\mathcal{K}}(Y)
	\end{equation*}
	such that the diagram
	\begin{equation}\label{eq: axiom diag}
		\xymatrix{
		\displaystyle\varinjlim_C\mathcal{K}(Y',\cz_C,i_C) \ar[r]^{g^\ast}
		&\displaystyle\varinjlim_B\mathcal{K}(Y,\cz_B,i_B)\\
		\displaystyle\varinjlim_{(B,C)}\mathcal{K}(Y',\cz_C,i_C) \ar[r]^<<<<<{g^\circ_\ast} \ar[u]^{(B,C) \mapsto  C}
		&\displaystyle\varinjlim_{(B,C)}\mathcal{K}(Y,\cz_{B\amalg g^\circ(C)}i_{B\amalg g^\circ(C)}), \ar[u]^{(B,C) \mapsto  B\amalg g^\circ(C)}
	}\end{equation}
	commutes. Here the limits are taken over $B\in\mathsf{SET}_\Xi^0(Y)$ and $C\in\mathsf{SET}_\Xi^0(Y')$, and $g^\circ_\ast$ are associated to the canonical morphisms $g^\circ(C) \rightarrow  B\amalg g^\circ(C)$.
	For any two composable morphisms $g\colon Y \rightarrow  Y'$ and $h\colon Y' \rightarrow  Y''$, we have $(h\circ g)^\ast = g^\ast\circ h^\ast$.
\end{proposition}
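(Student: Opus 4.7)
The plan is to realize $g^*$ via diagram~\eqref{eq: axiom diag}: define the bottom arrow pointwise through entry~8, verify its naturality so that it descends to a morphism between the colimits, and then invert the two vertical comparison maps.

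For each $(B, C) \in \mathsf{SET}^0_\Xi(Y) \times \mathsf{SET}^0_\Xi(Y')$ one has $B \amalg g^\circ(C) \in \mathsf{SET}^0_\Xi(Y)$, since $B$ already contributes an element indexed by $\mathrm{id}_Y$. The canonical inclusion $\tau \colon g^\circ(C) \hookrightarrow B \amalg g^\circ(C)$ triggers entry~8 to supply a morphism $g^\circ_\tau \colon (Y, \cz_{B \amalg g^\circ(C)}, i_{B \amalg g^\circ(C)}) \to (Y', \cz_C, i_C)$ in $\mathsf{RT}_\Xi$, and thence the component $g^\circ_\ast$ of the bottom arrow. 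The naturality clause in entry~8, applied to $\iota \colon B \to B'$ in $\mathsf{SET}^0_\Xi(Y)$ and $\iota' \colon C \to C'$ in $\mathsf{SET}^0_\Xi(Y')$ with $\iota \circ \tau = \tau' \circ g^\circ(\iota')$, is exactly what exhibits this family as a morphism of diagrams on $\mathsf{SET}^0_\Xi(Y) \times \mathsf{SET}^0_\Xi(Y')$; passing to the colimit then furnishes the bottom horizontal map.

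Next I verify that the two vertical arrows are isomorphisms of filtered colimits. The left vertical is induced by the projection $\mathsf{SET}^0_\Xi(Y) \times \mathsf{SET}^0_\Xi(Y') \to \mathsf{SET}^0_\Xi(Y')$, which is cofinal because $\mathsf{SET}^0_\Xi(Y)$ is filtered and nonempty by $\Xi$-embeddability of $Y$. For the right vertical I check that the functor $H(B, C) = B \amalg g^\circ(C)$ is cofinal into $\mathsf{SET}^0_\Xi(Y)$: given $A \in \mathsf{SET}^0_\Xi(Y)$, pick any $C_0 \in \mathsf{SET}^0_\Xi(Y')$ (nonempty by embeddability of $Y'$) and use the canonical inclusion $A \hookrightarrow A \amalg g^\circ(C_0)$ to populate the comma category $(A \downarrow H)$, while connectedness follows by combining two such data via componentwise $\amalg$. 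Defining $g^*$ by inverting both verticals then yields the unique morphism making \eqref{eq: axiom diag} commute; uniqueness is immediate from the invertibility of the right vertical.

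Functoriality $(h \circ g)^* = g^* \circ h^*$ will follow by juxtaposing the corresponding diagrams for $g$ and for $h$ along the common term $\widehat{\mathcal{K}}(Y')$, together with the identities $(h \circ g)^\circ = g^\circ \circ h^\circ$ on $\mathsf{PT}_\Xi$, and an appeal to the uniqueness clause applied to $h \circ g$. I expect the principal obstacle to be the careful bookkeeping required both for well-definedness at the level of colimits and for cofinality in the composition step: the very point of working with multisets rather than subsets is that $\amalg$ absorbs auxiliary data (whether coming from two different choices of $B$, or from $g$ and $h$ separately) without identifying distinct copies, thereby preserving the naturality conditions exactly where they are needed.
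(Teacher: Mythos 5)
Your overall strategy---construct the bottom arrow from entry~8 of the tuple, reduce to a cofinality statement about the index categories, and then invert the comparison maps---is the expected one; the paper itself simply defers to \cite[Prop.~4.14]{Besser2000}, which proceeds in this spirit. However, two of your intermediate claims are wrong, even though the conclusion can be rescued.

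The cofinality claim for $H(B,C) = B\amalg g^\circ(C)$ fails in general. For a counterexample take $g = \mathrm{id}_Y$, let $A$ be supported at a single $x_0$ with $A(x_0)$ a singleton, and set $B = C = A$. The two inclusions $A \rightrightarrows A\amalg A = H(A,A)$ cannot be coequalized by any morphism of the form $H(\iota,\iota') = \iota\amalg g^\circ(\iota')$, because such a morphism preserves the decomposition into the $B$-summand and the $g^\circ(C)$-summand. The comma category $A\downarrow H$ is therefore disconnected, and the right vertical arrow is in general not an isomorphism (it is a split epimorphism, via $A\mapsto (A,C_0)$ and the inclusion $A\hookrightarrow A\amalg g^\circ(C_0)$, but that section is not inverse). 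The connectivity argument by ``combining two such data via componentwise $\amalg$'' only reduces to the same difficulty, because the two induced maps into $H(B_1\amalg B_2, C_1\amalg C_2)$ can land in different summands. Relatedly, the attribution of uniqueness to ``the invertibility of the right vertical'' is a misstatement: in a square $g^\ast\circ a = b\circ g^\circ_\ast$, uniqueness of $g^\ast$ follows from $a$ (the left vertical) being an epimorphism and has nothing to do with $b$.

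Both issues evaporate once you notice the right vertical plays no essential role. The left vertical is induced by the projection $\mathsf{SET}^0_\Xi(Y)\times\mathsf{SET}^0_\Xi(Y')\to\mathsf{SET}^0_\Xi(Y')$, which is cofinal because $\mathsf{SET}^0_\Xi(Y)$ is filtered and nonempty; hence $a$ is an isomorphism. Setting $g^\ast := b\circ g^\circ_\ast\circ a^{-1}$ makes the diagram commute, and any two $g^\ast$ satisfying the commutativity must agree because $a$ is epi. This is all that is needed, and your naturality check for the bottom arrow via the compatibility clause in entry~8 is correct. Your functoriality sketch via $(h\circ g)^\circ = g^\circ\circ h^\circ$ and the uniqueness clause then goes through, again with only the left verticals of the juxtaposed diagrams being inverted.
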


To relate $\mathcal{K}(Y,\cz,i)$ and $\widehat{\mathcal{K}}(Y)$ in a functorial way, we need to consider another index category.

\begin{definition}
Let $(Y,\cz,i)\in\mathsf{RT}_\Xi$.
	\begin{enumerate}
		\item Let $\mathrm{PT}_\Xi(Y,\cz,i)$ be the set of all isomorphism classes of morphisms of rigid triples whose domains are $(Y,\cz,i)$.
		\item Let $\mathsf{SET}_\Xi(Y,\cz,i): = \mathsf{FMSet}(\mathrm{PT}_\Xi(Y,\cz,i))$, the category of finite multisets in $\mathrm{PT}_\Xi(Y,\cz,i)$.
			For $x\in\mathrm{PT}_\Xi(Y,\cz,i)$ and $a\in A(x)$, let
				\[(f^{(a)},F^{(a)})\colon(Y,\cz,i) \rightarrow (Y^{(a)},\cz^{(a)},i^{(a)})\]
			be the morphism $x$. 
			
		\item  Let $\mathsf{SET}_\Xi^0(Y,\cz,i)$ be the full subcategory of $\mathsf{SET}_\Xi(Y,\cz,i)$ consisting of all multisets $A$ in $\mathrm{PT}_\Xi(Y,\cz,i)$ with $A(\mathrm{id}_{(Y,\cz,i)})\neq\emptyset$.
			This is a filtered category.
	\end{enumerate}
\end{definition}

\noindent
Similarly as above, for a morphism $(g,G)\colon(Y,\cz,i) \rightarrow (Y',\cz',i')$ in $\mathsf{RT}_\Xi$, we obtain a map
	\[(g,G)^\circ\colon\mathsf{PT}_\Xi(Y',\cz',i') \rightarrow \mathsf{PT}_\Xi(Y,\cz,i),\ (f,F) \mapsto  (f,F)\circ(g,G)\]
and a functor
	\[(g,G)^\circ\colon\mathsf{SET}_\Xi(Y',\cz',i') \rightarrow \mathsf{SET}_\Xi(Y,\cz,i),\]
so that $(g,G)^\circ(A)(x): = \coprod_{y\in(g,G)^{\circ,-1}(x)}A(y)$.
	
For $(Y,\cz,i)\in\mathsf{RT}_\Xi$, the forgetful map $\Pi\colon\mathrm{PT}_\Xi(Y,\cz,i) \rightarrow \mathrm{PT}_\Xi(Y)$ induces functors
	\begin{eqnarray*}
	&&\Pi_{(Y,\cz,i)}\colon\mathsf{SET}_\Xi(Y,\cz,i) \rightarrow \mathsf{SET}_\Xi(Y),\\
	&&\Pi_{(Y,\cz,i)}\colon\mathsf{SET}^0_\Xi(Y,\cz,i) \rightarrow \mathsf{SET}^0_\Xi(Y),
	\end{eqnarray*}
which are defined by $\Pi(A)(x): = \coprod_{y\in\Pi^{-1}(x)} A(y)$.

Note that we have
	\[\Pi_{(Y,\cz,i)}\circ(g,G)^\circ = g^\circ\circ\Pi_{(Y',\cz',i')}\]
for any $(g,G)\colon(Y,\cz,i) \rightarrow (Y',\cz',i')$.

\begin{definition}
	\begin{enumerate}
	\item For $(Y,\cz,i)\in\mathsf{RT}_\Xi$, we define a functor 
		\[P_{(Y,\cz,i)}\colon\mathsf{SET}^0_\Xi(Y,\cz,i) \rightarrow \mathsf{RD}_\Xi(Y)\] 
	by $P_{(Y,\cz,i)}: = P_Y\circ\Pi_{(Y,\cz,i)}$.
		For $A\in\mathsf{SET}^0_\Xi(Y,\cz,i)$, we often denote $(\cz_A,i_A): = (\cz_{\Pi_{(Y,\cz,i)}(A)},i_{\Pi(Y,\cz,i)}(A)) = P_{(Y,\cz,i)}(A)$.
	\item For a morphism $(g,G)\colon(Y,\cz,i) \rightarrow (Y',\cz',i')$ in $\mathsf{RT}_\Xi$, $B\in\mathsf{SET}_\Xi^0(Y,\cz,i)$, $C\in\mathsf{SET}^0_\Xi(Y',\cz',i')$, and $\tau\colon(g,G)^\circ(C) \rightarrow  B$, the morphism
		\[\Pi(\tau)\colon g^\circ\circ\Pi_{(Y',\cz',i')}(C) = \Pi_{(Y,\cz,i)}\circ(g,G)^\circ(C) \rightarrow \Pi_{(Y,\cz,i)}(B)\]
	induces $g^\circ_{\Pi(\tau)}\colon (Y,\cz_B,i_B) \rightarrow (Y',\cz_C,i_C)$.
		We denote this by $(g,G)^\circ_\tau: = g^\circ_{\Pi(\tau)}$.
	\end{enumerate}
\end{definition}

\noindent
We can now identify the ninth entry of the tuple $\Xi$.

\begin{enumerate}
	\item[9.] A family $\{D_A\}_{(Y,\cz,i),A}$ of morphisms $D_A\colon(\cz,i) \rightarrow (\cz_A,i_A)$ in $\mathsf{RD}_\Xi(Y)$ for $(Y,\cz,i)\in\mathsf{RT}_\Xi$ and $A\in\mathsf{SET}^0_\Xi(Y,\cz,i)$, such that
		\[D_A = P(\iota)\circ D_{A'}\]
		for any morphism $\iota\colon A \rightarrow  A'$ in $\mathsf{SET}^0_\Xi(Y,\cz,i)$, and
		\[(g,G)^\circ_\tau\circ D_B = D_C\circ (g,G)\]
		for any $(g,G)\colon(Y,\cz,A) \rightarrow (Y',\cz',i')$ in $\mathsf{RT}_\Xi$, $B\in\mathsf{SET}^0_\Xi(Y,\cz,i)$, $C\in\mathsf{SET}^0_\Xi(Y',\cz',i')$, and $\tau\colon(g,G)^\circ(C) \rightarrow  B$.
\end{enumerate}

\begin{definition}
	For $(Y,\cz,i)\in\mathsf{RT}_\Xi$, we define an object $\widetilde{\mathcal{K}}(Y,\cz,i)$ in $\mathscr{C}^ + (\mathcal{M})$ by
		\[\widetilde{\mathcal{K}}(Y,\cz,i): = \varinjlim_{A\in\mathsf{SET}^0_\Xi(Y,\cz,i)}\mathcal{K}(Y,\cz_A,i_A).\]
\end{definition}

We obtain functoriality for $\widetilde{\mathcal{K}}$ in a similar manner as for $\widehat{\mathcal{K}}$ (cf.$\;$Proposition \ref{prop: functoriality axiom}).

\begin{proposition}
	For a morphism $(g,G)\colon (Y,\cz,i) \rightarrow  (Y',\cz',i')$ in $\mathsf{RT}_\Xi$, there exists a unique map
	\begin{equation*}
	(g,G)^\ast\colon\widetilde{\mathcal{K}}(Y',\cz',i') \rightarrow \widetilde{\mathcal{K}}(Y,\cz,i)
	\end{equation*}
	such that the diagram
	\begin{equation}\label{eq: axiom diag tilde}
		\xymatrix{
		\displaystyle\varinjlim_C\mathcal{K}(Y',\cz_C,i_C) \ar[r]^{(g,G)^\ast}
		&\displaystyle\varinjlim_B\mathcal{K}(Y,\cz_B,i_B)\\
		\displaystyle\varinjlim_{(B,C)}\mathcal{K}(Y',\cz_C,i_C) \ar[r]^<<<<<{(g,G)^\circ_\ast} \ar[u]^{(B,C) \mapsto  C}
		&\displaystyle\varinjlim_{(B,C)}\mathcal{K}(Y,\cz_{B\amalg (g,G)^\circ(C)},i_{B\amalg (g,G)^\circ(C)}), \ar[u]^{(B,C) \mapsto  B\amalg (g,G)^\circ(C)}
	}\end{equation}
	where the limits are taken over $B\in\mathsf{SET}_\Xi^0(Y,\cz,i)$ and $C\in\mathsf{SET}_\Xi^0(Y',\cz',i')$, commutes.
	For any two composable morphisms $(g,G)\colon (Y,\cz,i) \rightarrow  (Y',\cz',i')$ and $(h,H)\colon (Y',\cz',i') \rightarrow  (Y'',\cz'',i'')$, we have $(h\circ g,H\circ G)^\ast = (g,G)^\ast\circ (h,H)^\ast$.
\end{proposition}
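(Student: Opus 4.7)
The plan is to imitate the proof of Proposition~\ref{prop: functoriality axiom} verbatim, working with the ``triple'' index categories $\mathsf{SET}_\Xi^0(Y,\cz,i)$ and $\mathsf{SET}_\Xi^0(Y',\cz',i')$ instead of $\mathsf{SET}_\Xi^0(Y)$ and $\mathsf{SET}_\Xi^0(Y')$. The crucial point is that, given a morphism $(g,G)\colon (Y,\cz,i)\to(Y',\cz',i')$ in $\mathsf{RT}_\Xi$, the functor $(g,G)^\circ$ lifts $g^\circ$ under $\Pi$, so that all constructions made earlier for $\widehat{\mathcal K}$ transport to $\widetilde{\mathcal K}$ by applying $\Pi$.

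First I would construct the map. For each pair $(B,C)$ with $B\in\mathsf{SET}_\Xi^0(Y,\cz,i)$ and $C\in\mathsf{SET}_\Xi^0(Y',\cz',i')$ set $B\sqcup(g,G)^\circ(C)\in\mathsf{SET}_\Xi^0(Y,\cz,i)$ (this is still in $\mathsf{SET}_\Xi^0$ because the identity distinguished element of $B$ survives). The canonical inclusion $\tau\colon(g,G)^\circ(C)\hookrightarrow B\sqcup(g,G)^\circ(C)$ yields via entry~8 a morphism $(g,G)^\circ_\tau\colon(Y,\cz_{B\sqcup(g,G)^\circ(C)},i_{B\sqcup(g,G)^\circ(C)})\to (Y',\cz_C,i_C)$ in $\mathsf{RT}_\Xi$, and hence a map on realizations $(g,G)^\circ_\ast\colon\mathcal K(Y',\cz_C,i_C)\to\mathcal K(Y,\cz_{B\sqcup(g,G)^\circ(C)},i_{B\sqcup(g,G)^\circ(C)})$. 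Composing with the structure map into $\widetilde{\mathcal K}(Y,\cz,i)$ produces a cocone indexed by $(B,C)$ on the system defining $\widetilde{\mathcal K}(Y',\cz',i')$, which gives the desired $(g,G)^\ast$.

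Next I would verify that this cocone is well-defined and compatible with transition maps. The compatibility in $C$ (and in $B$) relies on the naturality condition imposed in entry~8 of the tuple $\Xi$: for any pair of morphisms $\iota\colon B\to B'$, $\iota'\colon C\to C'$ together with $\tau,\tau'$ making the obvious square commute, one has $P(\iota')\circ(g,G)^\circ_{\tau'}=(g,G)^\circ_\tau\circ P(\iota)$, and this is exactly what is needed to glue. The uniqueness characterization by diagram~\eqref{eq: axiom diag tilde} then follows from the cofinality of the diagonal functor $(B,C)\mapsto B\sqcup(g,G)^\circ(C)$ in $\mathsf{SET}_\Xi^0(Y,\cz,i)\times\mathsf{SET}_\Xi^0(Y',\cz',i')$: any $B_0\in\mathsf{SET}_\Xi^0(Y,\cz,i)$ admits a morphism $B_0\to B_0\sqcup(g,G)^\circ(C)$ for any choice of $C$ with $C(\mathrm{id}_{(Y',\cz',i')})\neq\emptyset$.

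Finally, for the functoriality relation $(h\circ g,H\circ G)^\ast=(g,G)^\ast\circ(h,H)^\ast$, I would expand both sides using the defining diagram~\eqref{eq: axiom diag tilde}. For $E\in\mathsf{SET}_\Xi^0(Y'',\cz'',i'')$, iterating the construction yields a representative in $\mathsf{SET}_\Xi^0(Y,\cz,i)$ of the form $B\sqcup(g,G)^\circ(C\sqcup(h,H)^\circ(E))$, while the one-step construction yields $B\sqcup(h\circ g,H\circ G)^\circ(E)$; the identification $(g,G)^\circ\circ(h,H)^\circ=(h\circ g,H\circ G)^\circ$ together with the naturality of entry~8 and the cofinality argument above identifies the two maps. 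The main obstacle I anticipate is purely bookkeeping: correctly tracking the multiset indices and the morphisms $\tau$ involved through the iterated construction, and checking that all coherence squares provided by entry~8 actually combine to give the equality on the colimit. Once the naturality is applied carefully this is formal.
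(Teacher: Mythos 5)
Your proposal is correct and mirrors the paper's (implicit) argument: the paper itself gives no proof here, stating only that functoriality for $\widetilde{\mathcal{K}}$ is obtained ``in a similar manner'' to Proposition~\ref{prop: functoriality axiom}, which is in turn proved as in Besser's Proposition~4.14. You have identified the one substantive point that makes the transport work, namely that $(g,G)^\circ$ lifts $g^\circ$ under $\Pi$ and that $(g,G)^\circ_\tau := g^\circ_{\Pi(\tau)}$ reduces the verification to the naturality of entry~8, which is exactly what the paper's ninth-entry definitions are designed to accomplish.
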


By  Assumption \ref{ass: weq}, for any $(Y,\cz,i)\in\mathcal{A}_\Xi^{\mathrm{em}}$ and $A\in\mathsf{SET}^0_\Xi(Y,\cz,i)$, the morphism $D^\ast_A\colon\mathcal{K}(Y,\cz_A,i_A) \rightarrow \mathcal{K}(Y,\cz,i)$ induced by $D_A$ is a quasi-isomorphism.
If we take the limit over $\mathsf{SET}^0_\Xi(Y,\cz,i)$, we obtain a quasi-isomorphism
	\begin{equation}
	D_{(Y,\cz,i)}\colon\widetilde{\mathcal{K}}(Y,\cz,i) \rightarrow \mathcal{K}(Y,\cz,i),
	\end{equation}
which is functorial on $\mathsf{RT}_\Xi$.

Recall that for $(Y,\cz,i)\in\mathsf{RT}_\Xi$ and $A\in\mathsf{SET}^0_\Xi(Y,\cz,i)$, we set $(Y,\cz_A,i_A): = (Y,\cz_{\Pi(A)},i_{\Pi(A)})$ and hence
	\[\widetilde{\mathcal{K}}(Y,\cz,i) = \varinjlim_{A\in\mathsf{SET}^0_\Xi(Y,\cz,i)}\mathcal{K}(Y,\cz_{\Pi(A)},i_{\Pi(A)}).\]
Since the natural map
	\[\varinjlim_{A\in\mathsf{SET}^0_\Xi(Y,\cz,i)}\mathcal{K}(Y,\cz_{\Pi(A)},i_{\Pi(A)}) \rightarrow \widehat{\mathcal{K}}(Y)\]
is a quasi-isomorphism, we obtain a quasi-isomorphism
	\[\Delta_{(Y,\cz,i)}\colon\widetilde{\mathcal{K}}(Y,\cz,i) \rightarrow \widehat{\mathcal{K}}(Y).\]
It is straightforward to show  that this is functorial in $(Y,\cz,i)$.

\begin{definition}
	\begin{enumerate}
	\item Let $\mathfrak{K}_\Xi$ be the sheafification of the presheaf $\widehat{\mathcal{K}}$ on $\mathcal{A}_\Xi^{\mathrm{em}}$.
		By Proposition \ref{prop: topos eq}, we may extend it to a sheaf on $\mathcal{A}_\Xi^{\mathrm{loc}}$.
		For $Y\in\mathcal{A}_\Xi^{\mathrm{loc}}$, we define an object $\mathbb{R}\Gamma_\Xi(Y)\in\mathscr{D}^ + (\mathcal{M})$ by
			\[\mathbb{R}\Gamma_\Xi(Y): =  \R\Gamma(\mathcal{A}_\Xi^{\mathrm{loc}}/Y, \mathfrak{K}_\Xi).\]
	\item Let $Y_{\sbt}$ be a simplicial object on $\mathcal{A}_\Xi^{\mathrm{loc}}$.
		Let $(\mathcal{A}_\Xi^{\mathrm{loc}}/Y_{\sbt})^\mathrm{tot}$ be the total site of the simplicial site $\mathcal{A}_\Xi^{\mathrm{loc}}/Y_{\sbt}$.
		Then the restriction of $\mathfrak{K}_\Xi$ defines a sheaf on the total site on $(\mathcal{A}_\Xi^{\mathrm{loc}}/Y_{\sbt})^{\mathrm{tot}}$, which we denote again by $\mathfrak{K}_\Xi$.
		We define an object $\mathbb{R}\Gamma_\Xi(Y_{\sbt})\in\mathscr{D}^ + (\mathcal{M})$ by
			\[\mathbb{R}\Gamma_\Xi(Y_{\sbt}): =  \R\Gamma((\mathcal{A}_\Xi^{\mathrm{loc}}/ Y_{\sbt})^{\mathrm{tot}},\mathfrak{K}_\Xi).\]
	\end{enumerate}		
\end{definition}

\noindent
For $Y\in\mathcal{A}_\Xi^{\mathrm{em}}$, there is a natural morphism $\theta_Y\colon\mathrm{loc}\circ\widehat{\mathcal{K}}(Y) \rightarrow \mathbb{R}\Gamma_\Xi(Y)$ in $\mathscr{D}^ + (\mathcal{M})$, where $\mathrm{loc}\colon\mathscr{C}^ +  (\mathcal{M}) \rightarrow \mathscr{D}^ +  (\mathcal{M})$ denotes the canonical functor.

\begin{assumption}\label{ass: descent}
	For $(Y,\cz,i)\in\mathsf{RT}_\Xi$ and a covering family $\{j_\lambda\colon U_\lambda \rightarrow  Y\}_{\lambda\in\Lambda}$ in $\mathcal{A}$, we have the following two conditions.
	\begin{enumerate}
	\item The triple $(U_\lambda,\cz,i\circ j_\lambda)$ is also in $\mathsf{RT}_\Xi$.
	\item Bearing in mind that the first condition provides us in an  obvious way with a simplicial object $(U_{\sbt},\cz,i\circ j_{\sbt})$ in $\mathsf{RT}_\Xi$, we require that the natural morphism $\mathcal{K}(Y,\cz,i) \rightarrow  s(\mathcal{K}(U_{\sbt},\cz,i\circ j_{\sbt}))$ is a quasi-isomorphism, where $s$ is the functor which associates to a simplicial complex its simple complex in $\mathscr{C}^ + (\mathcal{M})$.
	\end{enumerate}
\end{assumption}

\begin{proposition}
	For $Y\in\mathcal{A}_\Xi^{\mathrm{em}}$, the morphism $\theta_Y\colon\mathrm{loc}\circ\widehat{\mathcal{K}}(Y) \rightarrow \mathbb{R}\Gamma_\Xi(Y)$ is an isomorphism.
\end{proposition}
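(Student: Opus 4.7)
The plan is to show that $\widehat{\mathcal{K}}$, viewed as a presheaf of bounded below complexes on $\mathcal{A}_\Xi^{\mathrm{em}}$, already satisfies \v{C}ech descent; then its sheafification $\mathfrak{K}_\Xi$ will automatically have the same derived sections at any embeddable object, which is exactly the content of $\theta_Y$ being an isomorphism. Since by Proposition~\ref{prop: topos eq} the inclusion $\mathcal{A}_\Xi^{\mathrm{em}}\hookrightarrow\mathcal{A}_\Xi^{\mathrm{loc}}$ induces an equivalence of topoi, it is legitimate to compute $\mathbb{R}\Gamma_\Xi(Y)$ using only coverings by embeddables.

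First I would fix $Y\in\mathcal{A}_\Xi^{\mathrm{em}}$ and a covering $\{j_\lambda\colon U_\lambda\to Y\}$ in $\mathcal{A}_\Xi^{\mathrm{em}}$ whose \v{C}ech nerve $U_\sbt$ has embeddable terms (such coverings are cofinal by the base property behind Proposition~\ref{prop: topos eq}, possibly after further refining each fibre product). The task then reduces to the quasi-isomorphism
\[\widehat{\mathcal{K}}(Y)\longrightarrow s\bigl(\widehat{\mathcal{K}}(U_\sbt)\bigr).\]
Fixing any rigid datum $(\cz,i)\in\mathsf{RD}_\Xi(Y)$, Assumption~\ref{ass: descent}(i) supplies rigid triples $(U_n,\cz,i\circ j_{\underline{n}})$ for every $n$, and Assumption~\ref{ass: descent}(ii) then yields the quasi-isomorphism
\[\mathcal{K}(Y,\cz,i)\longrightarrow s\bigl(\mathcal{K}(U_\sbt,\cz,i\circ j_{\underline{\sbt}})\bigr).\]
I would glue this with the functorial-in-$\mathsf{RT}_\Xi$ zig-zag of quasi-isomorphisms
\[\widehat{\mathcal{K}}(Z)\xleftarrow{\Delta_{(Z,\cz',i')}}\widetilde{\mathcal{K}}(Z,\cz',i')\xrightarrow{D_{(Z,\cz',i')}}\mathcal{K}(Z,\cz',i'),\]
recalled just before the statement, applied levelwise along $U_\sbt$, to obtain the desired descent for $\widehat{\mathcal{K}}$.

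Once this descent holds, a standard argument finishes: the augmented \v{C}ech bicomplex of $\widehat{\mathcal{K}}$ associated to such a covering computes $\R\Gamma(\mathcal{A}_\Xi^{\mathrm{loc}}/Y,\mathfrak{K}_\Xi)$, and degenerates by descent onto the $\widehat{\mathcal{K}}(Y)$-column, identifying $\mathbb{R}\Gamma_\Xi(Y)$ with $\widehat{\mathcal{K}}(Y)$ via $\theta_Y$.

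The principal technical obstacle I anticipate is commuting the filtered colimit defining $\widehat{\mathcal{K}}$ with both the totalisation $s$ and with quasi-isomorphisms; this requires exactness of filtered colimits in $\mathcal{M}$ (an AB5 hypothesis, implicit in the cocompleteness) together with the boundedness below of the complexes involved. A secondary subtlety is that $\Delta$ and $D$ are only functorial along individual morphisms in $\mathsf{RT}_\Xi$, so promoting them to a coherent zig-zag of \emph{simplicial} complexes over $U_\sbt$ requires the compatibility conditions spelled out in entries~8 and~9 of the tuple $\Xi$, which were arranged precisely so that the relevant simplicial squares commute.
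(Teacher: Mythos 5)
Your proof is correct and follows essentially the same strategy as the paper: pick a rigid datum $(\cz,i)$ for $Y$, invoke Assumption~\ref{ass: descent} to get \v{C}ech descent for $\mathcal{K}(Y,\cz,i)$, and transfer it to $\widehat{\mathcal{K}}(Y)$ via the functorial zig-zag through $\widetilde{\mathcal{K}}$, which the paper packages in a single $3\times 2$ commutative diagram of quasi-isomorphisms. The extra worry about refining fibre products is unnecessary — Assumption~\ref{ass: descent}(i) already makes every term of the \v{C}ech nerve embeddable via the same $\cz$ — and the AB5/totalisation concern is handled by the boundedness below of the complexes, but neither point affects the validity of the argument.
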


\begin{proof}
	Take a $\Xi$-rigid datum $(\cz,i)$ for $Y$.
	For any covering family $\{j_\lambda\colon U_\lambda \rightarrow  Y\}_{\lambda\in\Lambda}$ in $\mathcal{A}$, by the assumption, the left vertical arrow in a diagram
		\[\xymatrix{
		\mathcal{K}(Y,\cz,i) \ar[d]& \widetilde{\mathcal{K}}(Y,\cz,i) \ar[d] \ar[l]_\sim \ar[r]^\sim& \widehat{\mathcal{K}}(Y) \ar[d]\\
		s(\mathcal{K}(U_{\sbt},\cz,i\circ j_{\sbt}))& s(\widetilde{\mathcal{K}}(U_{\sbt},\cz,i\circ j_{\sbt})) \ar[l]_\sim \ar[r]^<<<<<\sim& s(\widehat{\mathcal{K}}(U_{\sbt}))
		}\]
	is a quasi-isomorphism, and hence the right vertical arrow is also a quasi-isomorphism.
	This implies the proposition.
\end{proof}

In conclusion, we obtained the following theorem.

\begin{theorem}\label{Thm: cohomology functors}
	The constructions above give functors
		\begin{align*}
		\mathbb{R}\Gamma_\Xi\colon\mathcal{A}_\Xi^{\mathrm{loc}} \rightarrow \mathscr{D}^ + (\mathcal{M}),&&
		\widehat{\mathcal{K}}\colon\mathcal{A}_\Xi^{\mathrm{em}} \rightarrow \mathscr{C}^ + (\mathcal{M}),&&
		\widetilde{\mathcal{K}}\colon\mathsf{RT}_\Xi \rightarrow \mathscr{C}^ + (\mathcal{M}),
		\end{align*}
	with functorial isomorphisms $\widehat{\mathcal{K}}(Y) \rightarrow \mathbb{R}\Gamma_\Xi(Y)$ in $\mathscr{D}^ + (\mathcal{M})$ for $\mathcal{A}_\Xi^{\mathrm{em}}$, and quasi-isomorphisms
		\[\widehat{\mathcal{K}}(Y)\xleftarrow[]{\Delta_{(Y,\cz,i)}}\widetilde{\mathcal{K}}(Y,\cz,i) \xrightarrow[]{D_{(Y,\cz,i)}}\mathcal{K}(Y,\cz,i)\]
	in $\mathscr{C}^ + (\mathcal{M})$, which are functorial on $\mathsf{RT}_\Xi$ if we regard $\widehat{\mathcal{K}}$ as a functor from $\mathsf{RT}_\Xi$ through the forgetful functor $\mathsf{RT}_\Xi \rightarrow \mathcal{A}_\Xi^{\mathrm{em}}$.
\end{theorem}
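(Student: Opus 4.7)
The theorem collects and repackages the constructions and compatibilities built up through this subsection, so my plan is primarily to verify that the pieces fit together coherently. Three functors must be produced, together with the comparison between $\widehat{\mathcal{K}}$ and $\mathbb{R}\Gamma_\Xi$ and two zig-zag quasi-isomorphisms.

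For functoriality: the functor $\widehat{\mathcal{K}}$ on $\mathcal{A}_\Xi^{\mathrm{em}}$ is the content of Proposition~\ref{prop: functoriality axiom}, where $g^\ast$ is defined via diagram \eqref{eq: axiom diag}. The composition law $(h\circ g)^\ast = g^\ast\circ h^\ast$ follows from $(h\circ g)^\circ = g^\circ\circ h^\circ$ on index categories combined with the naturality stipulated in entry~8 of the tuple $\Xi$, by chasing the cube obtained by stacking two copies of \eqref{eq: axiom diag}. The same strategy applied to \eqref{eq: axiom diag tilde} gives functoriality of $\widetilde{\mathcal{K}}$ on $\mathsf{RT}_\Xi$. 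Functoriality of $\mathbb{R}\Gamma_\Xi$ on $\mathcal{A}_\Xi^{\mathrm{loc}}$ is automatic since $\mathfrak{K}_\Xi$ is a sheaf on the site $\mathcal{A}_\Xi^{\mathrm{loc}}$.

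For the quasi-isomorphism claims: $D_{(Y,\cz,i)}$ is, by construction, the filtered colimit over $\mathsf{SET}^0_\Xi(Y,\cz,i)$ of the maps $D_A^\ast\colon\mathcal{K}(Y,\cz_A,i_A)\to\mathcal{K}(Y,\cz,i)$ induced by entry~9; each is a quasi-isomorphism by Assumption~\ref{ass: weq}, and cocompleteness of $\mathcal{M}$ together with exactness of filtered colimits transfers the property to the colimit. Functoriality of $D$ on $\mathsf{RT}_\Xi$ is a direct consequence of the two coherence identities required of $\{D_A\}$. For $\Delta_{(Y,\cz,i)}$, the key step is to show that the forgetful functor $\Pi_{(Y,\cz,i)}\colon\mathsf{SET}^0_\Xi(Y,\cz,i)\to\mathsf{SET}^0_\Xi(Y)$ is cofinal: given $B\in\mathsf{SET}^0_\Xi(Y)$, for each $x\in\mathrm{Supp}(B)$ corresponding to $(f^{(a)},Y^{(a)},\cz^{(a)},i^{(a)})$ one chooses an arbitrary morphism $(Y,\cz,i)\to(Y^{(a)},\cz^{(a)},i^{(a)})$ in $\mathsf{RT}_\Xi$ lifting $f^{(a)}$ (using an auxiliary rigid datum supplied by $P_Y$), and adjoins $\mathrm{id}_{(Y,\cz,i)}$ to remain in $\mathsf{SET}^0_\Xi(Y,\cz,i)$. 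The induced map on filtered colimits is then an isomorphism. Finally, the comparison $\mathrm{loc}\circ\widehat{\mathcal{K}}(Y)\xrightarrow{\sim}\mathbb{R}\Gamma_\Xi(Y)$ in $\mathscr{D}^+(\mathcal{M})$ for $Y\in\mathcal{A}_\Xi^{\mathrm{em}}$ is the preceding proposition, whose proof uses Assumption~\ref{ass: descent} together with the $D$/$\Delta$ zig-zag applied component-wise to the \v{C}ech simplicial object of a covering.

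The main obstacle is the cofinality argument for $\Pi_{(Y,\cz,i)}$: one must construct lifts in a way compatible with the multiset structure and with the requirement that the resulting multiset contain the identity; once this is done, every formal property of the zig-zag quasi-isomorphism $\mathcal{K}(Y,\cz,i)\xleftarrow{D}\widetilde{\mathcal{K}}(Y,\cz,i)\xrightarrow{\Delta}\widehat{\mathcal{K}}(Y)$ and its functoriality on $\mathsf{RT}_\Xi$ reduces formally to the coherence conditions recorded in entries~8 and~9 of the rigid cohomological tuple.
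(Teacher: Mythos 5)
The proposal tracks the paper's approach for most of the claims, but the justification you give for $\Delta_{(Y,\cz,i)}$ being a quasi-isomorphism does not hold as stated.

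You attempt to show that $\Pi_{(Y,\cz,i)}\colon\mathsf{SET}^0_\Xi(Y,\cz,i)\to\mathsf{SET}^0_\Xi(Y)$ is cofinal, and the crucial step is to take, for each $x=(f^{(a)},Y^{(a)},\cz^{(a)},i^{(a)})\in\mathrm{Supp}(B)$, ``an arbitrary morphism $(Y,\cz,i)\to(Y^{(a)},\cz^{(a)},i^{(a)})$ in $\mathsf{RT}_\Xi$ lifting $f^{(a)}$.'' Such a morphism in $\mathsf{RT}_\Xi$ requires a lift $F\colon\cz\to\cz^{(a)}$ with $\mathrm{red}(F)\circ i=i^{(a)}\circ f^{(a)}$, and this need not exist: two rigid data for the same $Y$ are in general not directly comparable (the whole construction of $P_Y$ via finite products is there precisely to produce a \emph{common} refinement, not a morphism from one to the other). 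Since morphisms in $\mathsf{FMSet}$ preserve fibers over the indexing set, a map $B\to\Pi(A)$ forces $\mathrm{Supp}(B)\subset\mathrm{Supp}(\Pi(A))\subset\mathrm{Im}(\Pi)$, so if $B$ contains an element of $\mathrm{PT}_\Xi(Y)$ that admits no lift from $(\cz,i)$, the comma category $B/\Pi_{(Y,\cz,i)}$ is empty and cofinality fails. The parenthetical ``auxiliary rigid datum supplied by $P_Y$'' does not repair this, because $P_Y$ produces morphisms out of the product $\cz_A$, not out of the fixed $\cz$.

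The correct, and simpler, argument does not need cofinality. Both index categories are filtered, and by Assumption~\ref{ass: weq} every transition map in both diagrams $\mathcal{K}\circ P_{(Y,\cz,i)}$ and $\mathcal{K}\circ P_Y$ is a quasi-isomorphism; since $\mathcal{M}$ is Grothendieck (AB5), the structure map from any term to either colimit is therefore a quasi-isomorphism. Pick any $A_0\in\mathsf{SET}^0_\Xi(Y,\cz,i)$. By definition $(\cz_{A_0},i_{A_0})=(\cz_{\Pi(A_0)},i_{\Pi(A_0)})$, and the triangle
\[
\mathcal{K}(Y,\cz_{A_0},i_{A_0})\;\longrightarrow\;\widetilde{\mathcal{K}}(Y,\cz,i)\;\xrightarrow{\;\Delta_{(Y,\cz,i)}\;}\;\widehat{\mathcal{K}}(Y)
\]
commutes, with the first arrow and the composite (the structure map for $\Pi(A_0)$) both quasi-isomorphisms; two-out-of-three then gives the claim. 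The remaining points of your proposal --- functoriality of $\widehat{\mathcal{K}}$, $\widetilde{\mathcal{K}}$, $\mathbb{R}\Gamma_\Xi$ via Proposition~\ref{prop: functoriality axiom} and its analogue, the quasi-isomorphism $D_{(Y,\cz,i)}$ from Assumption~\ref{ass: weq}, and the comparison $\theta_Y$ from Assumption~\ref{ass: descent} --- agree with the paper.
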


\begin{remark}\label{rem: gluing lifts}
	In certain cases, it is possible to compute $\mathbb{R}\Gamma_\Xi(Y)$ explicitly even if  $Y\in\mathcal{A}_\Xi^{\mathrm{loc}}$ is not necessarily embeddable.
	Let $\{U_\lambda \rightarrow  Y\}_{\lambda\in\Lambda}$ a covering family of $Y$ in $\mathcal{A}$ by $\Xi$-embeddable objects with $\Xi$-rigid data $(\cz_\lambda,i_\lambda)$ for $U_\lambda$.
	Assume that for any $I = (\lambda_0,\ldots,\lambda_m)\in\Lambda^{m + 1}$ there exist fiber products $U_I: = U_{\lambda_0}\times_Y\cdots\times_YU_{\lambda_m}$.
	Then by Assumption~\ref{ass: descent}, for any $I = (\lambda_0,\ldots,\lambda_m)$ and $j = 0,\ldots,m$, the composition $i_{I,j}\colon U_I \rightarrow  U_{\lambda_j} \xrightarrow{i_{\lambda_j}}\cz_{\lambda_j}$ defines a $\Xi$-rigid datum for $U_I$.
	Let $A_I$ be a multiset in $\mathsf{SET}^0_\Xi(U_I)$ whose support is $\{(\mathrm{id}_{U_I},U_I,\cz_{\lambda_j},i_{I,j})\}_{j = 0,\ldots,n}$ and assume that the multiplicity of each element is one.
	Then in an obvious way we obtain a simplicial object $(U_{\sbt},\cz_{\sbt},i_{\sbt})$ in $\mathsf{RT}_\Xi$ given by
	$$(U_m,\cz_m,i_m) = (\coprod_{I\in\Lambda^{m + 1}}U_I,\coprod_{I\in\Lambda^{m + 1}}\cz_{A_I},\coprod_{I\in\Lambda^{m + 1}} i_{A_I})$$ 
	and quasi-isomorphisms
		\begin{equation}\label{eq: gluing lifts}
		\mathcal{K}(U_{\sbt},\cz_{\sbt},i_{\sbt})\xleftarrow{\sim}
		\widetilde{\mathcal{K}}(U_{\sbt},\cz_{\sbt},i_{\sbt}) \xrightarrow{\sim}
		\widehat{\mathcal{K}}(U_{\sbt})\xleftarrow{\sim}
		\widehat{\mathcal{K}}(Y) \xrightarrow{\sim}
		\mathbb{R}\Gamma_\Xi(Y).
		\end{equation}
\end{remark}

\subsubsection{Morphisms of rigid cohomological tuples}\label{subsubsec: Morphisms}

Morphisms of rigid cohomological tuples will play a role in the construction of base change morphisms and Frobenius endomorphisms.

Let $\Xi$ and $\Xi'$ be rigid cohomological tuples as described in the previous section.
A morphism of rigid cohomological tuples $L\colon\Xi \rightarrow \Xi'$ is a quadruple
	\[L = (L_{\mathrm{base}},L_{\mathrm{lift}},L_{\mathrm{mod}}, L_{\mathrm{coh}})\]
whose entries are defined as follows.
	\begin{enumerate}
	 \item The first two entries are a continuous functor $L_{\mathrm{base}}\colon\mathcal{A} \rightarrow \mathcal{A}'$ and a functor $L_{\mathrm{lift}}\colon\mathcal{B} \rightarrow \mathcal{B}'$, such that $L_{\mathrm{base}}(\mathcal{I})\subset\mathcal{I}'$ and $\mathrm{red}'\circ L_{\mathrm{lift}} = L_{\mathrm{base}}\circ\mathrm{red}$.
	As a consequence they induce functors
	\begin{align*}
	&L_{\mathsf{RT}}\colon\mathsf{RT}_\Xi \rightarrow \mathsf{RT}_{\Xi'},&&\\
	&L_{\mathsf{RD},Y}\colon\mathsf{RD}_\Xi(Y) \rightarrow \mathsf{RD}_{\Xi'}(L_{\mathrm{base}}(Y))&&\text{for }Y\in\mathcal{A}_\Xi^{\mathrm{em}},\\
	&L_{\mathsf{SET},Y}\colon\mathsf{SET}_\Xi(Y) \rightarrow \mathsf{SET}_{\Xi'}(L_{\mathrm{base}}(Y))&&\text{for }Y\in\mathcal{A}_\Xi^{\mathrm{em}},\\
	&L_{\mathsf{SET}^0,Y}\colon\mathsf{SET}^0_\Xi(Y) \rightarrow \mathsf{SET}^0_{\Xi'}(L_{\mathrm{base}}(Y))&&\text{for }Y\in\mathcal{A}_\Xi^{\mathrm{em}},\\
	&L_{\mathsf{SET},(Y,\cz,i)}\colon\mathsf{SET}_\Xi(Y,\cz,i) \rightarrow \mathsf{SET}_{\Xi'}(L_{\mathsf{RT}}(Y,\cz,i))&&\text{for }(Y,\cz,i)\in\mathsf{RT}_\Xi,\\
	&L_{\mathsf{SET}^0,(Y,\cz,i)}\colon\mathsf{SET}^0_\Xi(Y,\cz,i) \rightarrow \mathsf{SET}^0_{\Xi'}(L_{\mathsf{RT}}(Y,\cz,i))&&\text{for }(Y,\cz,i)\in\mathsf{RT}_\Xi.
	\end{align*}
	\item The remaining two entries consist of an exact functor $L_{\mathrm{mod}}\colon\mathcal{M}'\rightarrow \mathcal{M}$ and a natural transformation $L_{\mathrm{coh}}\colon \mathcal{K} \rightarrow L_{\mathrm{mod}}\circ\mathcal{K}'\circ L_{\mathrm{RT}}$. 
	
	\item We require the compatibility in a natural way, that is:
	 	\begin{align*}
		&L_{\mathsf{RD},Y}\circ P_Y = P_{L_{\mathrm{base}}(Y)}\circ L_{\mathsf{SET}^0,Y}&\text{for }&Y\in\mathcal{A}_\Xi^{\mathrm{em}},\\
		&L_{\mathrm{base}}(g)_{L_{\mathsf{SET},Y}(\tau)}^\circ = L_{\mathsf{RT}}(g^\circ_\tau)&\text{for }&g\colon Y \rightarrow  Y'\text{ in }\mathcal{A}_\Xi^{\mathrm{em}},\ B\in\mathsf{SET}^0_\Xi(Y),\ \\
		& &&C\in\mathsf{SET}^0_\Xi(Y'), \\
		& &&\tau\colon g^\circ(C) \rightarrow  B\text{ in }\mathsf{SET}_\Xi(Y),\\
		&D_{\mathsf{SET}^0,(Y,\cz,i)} = L_{\mathsf{RD},Y}(D_A)&\text{for }&(Y,\cz,i)\in\mathsf{RT}_\Xi.
	\end{align*}
	\end{enumerate}
	 Here, by abuse of notation, we also denote by $L_{\mathrm{mod}}\colon\mathscr{C}^ + (\mathcal{M}') \rightarrow \mathscr{C}^ + (\mathcal{M})$ the functor induced by $L_{\mathrm{mod}}$.

\begin{proposition}\label{prop: eq: axiom bc}
	There exist canonical natural transformations
		\begin{align*}
		&\widehat{L}_{\mathrm{coh}}\colon \widehat{\mathcal{K}} \rightarrow L_{\mathrm{mod}}\circ\widehat{\mathcal{K}'}\circ L_{\mathrm{base}},\\
		&\widetilde{L}_{\mathrm{coh}}\colon \widetilde{\mathcal{K}} \rightarrow  L_{\mathrm{mod}}\circ\widetilde{\mathcal{K}'}\circ L_{\mathsf{RT}},
		\end{align*}
	such that for any $(Y,\cz,i)\in\mathsf{RT}_\Xi$. the following diagram commutes:
		\[\mathclap{
		\xymatrix{
		\widehat{\mathcal{K}}(Y) \ar[d]^{\widehat{L}_{\mathrm{coh}}}
		&\widetilde{\mathcal{K}}(Y,\cz,i) \ar[d]^{\widetilde{L}_{\mathrm{coh}}} \ar[l]_{\Delta}^\sim \ar[r]^{D}_\sim
		&\mathcal{K}(Y,\cz,i) \ar[d]^{L_{\mathrm{coh}}}\\
		L_{\mathrm{mod}}\circ\widehat{\mathcal{K}'}\circ L_{\mathrm{base}}(Y)
		&L_{\mathrm{mod}}\circ\widetilde{\mathcal{K}'}\circ L_{\mathsf{RT}}(Y,\cz,i) \ar[l]^{L_{\mathrm{mod}}(\Delta)}_\sim \ar[r]_{L_{\mathrm{mod}}(D)}^\sim
		&L_{\mathrm{mod}}\circ\mathcal{K'}\circ L_{\mathsf{RT}}(Y,\cz,i)
		}}\]
		where  subscripts were omitted.
\end{proposition}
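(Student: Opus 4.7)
The plan is to define both natural transformations object\-wise by funnelling the given pointwise transformation $L_{\mathrm{coh}}$ through the filtered colimits defining $\widehat{\mathcal{K}}$ and $\widetilde{\mathcal{K}}$, and then to verify each of the three required commutativities by reducing it to one of the tuple-compatibility axioms recalled in \S\ref{subsubsec: Morphisms}.

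For $\widetilde{L}_{\mathrm{coh}}$, fix $(Y,\cz,i)\in\mathsf{RT}_\Xi$ and $A\in\mathsf{SET}^0_\Xi(Y,\cz,i)$. Applying the natural transformation $L_{\mathrm{coh}}$ at the rigid triple $(Y,\cz_A,i_A)$ yields a morphism
\[
\mathcal{K}(Y,\cz_A,i_A)\longrightarrow L_{\mathrm{mod}}\mathcal{K}'\bigl(L_{\mathsf{RT}}(Y,\cz_A,i_A)\bigr).
\]
The compatibility $L_{\mathsf{RD},Y}\circ P_Y = P_{L_{\mathrm{base}}(Y)}\circ L_{\mathsf{SET}^0,Y}$, together with its analogue at the level of triples, identifies the target with the $L_{\mathsf{SET}^0,(Y,\cz,i)}(A)$-term of the colimit computing $L_{\mathrm{mod}}\widetilde{\mathcal{K}'}(L_{\mathsf{RT}}(Y,\cz,i))$. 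Postcomposing with the canonical map into that colimit and using functoriality of $L_{\mathsf{SET}^0,(Y,\cz,i)}$ together with naturality of $L_{\mathrm{coh}}$ gives a compatible system as $A$ varies; passage to the colimit produces $\widetilde{L}_{\mathrm{coh}}(Y,\cz,i)$. The same recipe applied to $L_{\mathsf{SET}^0,Y}$ instead of $L_{\mathsf{SET}^0,(Y,\cz,i)}$ defines $\widehat{L}_{\mathrm{coh}}(Y)$ for $Y\in\mathcal{A}_\Xi^{\mathrm{em}}$.

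Naturality of $\widehat{L}_{\mathrm{coh}}$ under $g\colon Y\to Y'$ follows by superimposing the defining diagram \eqref{eq: axiom diag} for $g^\ast$ (on the $\Xi$-side) and for $L_{\mathrm{base}}(g)^\ast$ (on the $\Xi'$-side), and using the axiom $L_{\mathrm{base}}(g)^\circ_{L_{\mathsf{SET},Y}(\tau)}=L_{\mathsf{RT}}(g^\circ_\tau)$ to match the two vertical transition maps; the universal property of the filtered colimit then forces commutativity. Naturality of $\widetilde{L}_{\mathrm{coh}}$ is identical with \eqref{eq: axiom diag tilde} in place of \eqref{eq: axiom diag}. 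For the right square (involving $D$), the axiom $D'_{L_{\mathsf{SET}^0,(Y,\cz,i)}(A)}=L_{\mathsf{RD},Y}(D_A)$ says exactly that applying $L_{\mathrm{coh}}$ to the morphism $(Y,\cz,i)\xrightarrow{D_A}(Y,\cz_A,i_A)$ produces the analogous $D'$-morphism on the $\Xi'$-side; naturality of $L_{\mathrm{coh}}$ then yields the square pointwise in $A$, and we pass to the colimit. For the left square (involving $\Delta$), the equality $L_{\mathsf{SET}^0,Y}\circ\Pi_{(Y,\cz,i)}=\Pi_{L_{\mathsf{RT}}(Y,\cz,i)}\circ L_{\mathsf{SET}^0,(Y,\cz,i)}$, immediate from the construction of $\Pi$ and the definition of $L_{\mathsf{SET}^0}$ on the two index categories, shows that both compositions arise from the same cofinal map of indexing systems, whence commutativity.

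The main obstacle is the bookkeeping of the two nested filtered categories $\mathsf{SET}^0_\Xi(Y)$ and $\mathsf{SET}^0_\Xi(Y,\cz,i)$ and their images under $L_{\mathsf{SET}^0}$: one must check that $L_{\mathsf{SET}^0}$ genuinely lands in the $\mathsf{SET}^0$-subcategories (i.e.\ that the distinguished identity elements are preserved), and that every square in sight commutes \emph{strictly}, not only up to the quasi-isomorphisms $D$ and $\Delta$. Granting this careful diagram chase, no further content beyond the axioms in the definition of a morphism of rigid cohomological tuples is needed, and the functoriality of the constructions in $Y$ and $(Y,\cz,i)$ is a direct consequence of the universal property of the colimits involved.
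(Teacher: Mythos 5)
Your construction is exactly the one the paper uses: define both transformations termwise by applying $L_{\mathrm{coh}}$ at each $(Y,\cz_A,i_A)$, re-identify the target via $L_{\mathsf{RD},Y}\circ P_Y = P_{L_{\mathrm{base}}(Y)}\circ L_{\mathsf{SET}^0,Y}$ (and the analogous identity at the level of triples, which as you note reduces to the automatic compatibility of $\Pi$ with $L_{\mathsf{SET}^0}$), pass to the colimit, and then use the third axiom $D'_{L_{\mathsf{SET}^0,(Y,\cz,i)}(A)}=L_{\mathsf{RD},Y}(D_A)$ for the $D$-square and the cofinality/$\Pi$-compatibility argument for the $\Delta$-square. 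The paper packages all of this into a single three-row commutative diagram without spelling out the naturality check; your account is the same proof with that check made explicit.
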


\begin{proof}
	The natural transformations $\widehat{L}_{\mathrm{coh}}$ and $\widetilde{L}_{\mathrm{coh}}$ are defined as the compositions of the vertical arrows in the following diagram for each $(Y,\cz,i)\in\mathsf{RT}_\Xi$.
	{\small\begin{equation*}\label{eq: axiom bc}\mathclap{
		\xymatrix{
		\varinjlim_A\mathcal{K}\circ P_Y(A) \ar[d]^{L_{\mathrm{coh}}} &
		\varinjlim_B\mathcal{K}\circ P_{(Y,\cz,i)}(B)  \ar[l]_\Delta^{\sim} \ar[rd]^D_{\sim} \ar[d]^{L_{\mathrm{coh}}} &
		\\
		\varinjlim_AL_{\mathrm{mod}}\circ\mathcal{K}'\circ L_{\mathsf{RT}}\circ P_Y(A) \ar@{ = }[d] &
		\varinjlim_BL_{\mathrm{mod}}\circ\mathcal{K}'\circ L_{\mathsf{RT}}\circ P_{(Y,\cz,i)}(B)  \ar[l]^{\sim} \ar[rd]_{\sim} \ar@{ = }[d]&
		\mathcal{K}(Y,\cz,i) \ar[d]^{L_{\mathrm{coh}}} \\
		\varinjlim_AL_{\mathrm{mod}}\circ\mathcal{K}'\circ P'_{L_{\mathrm{base}}(Y)}\circ L_{\mathsf{SET}^0,Y}(A) \ar[d]_{\sim} &
		\varinjlim_B L_{\mathrm{mod}}\circ\mathcal{K}'\circ P'_{L_{\mathsf{RT}}(Y,\cz,i)}\circ L_{\mathsf{SET}^0,(Y,\cz,i)}(B) \ar[d]_{\sim} \ar[l]^{\sim} \ar[r]_>>>>>{\sim} &
		L_{\mathrm{mod}}\circ\mathcal{K}'\circ L_{\mathsf{RT}}(Y,\cz,i)\\
		\varinjlim_{A'}L_{\mathrm{mod}}\circ\mathcal{K}'\circ P'_{L_{\mathrm{base}}(Y)}(A')&
		\varinjlim_{B'}L_{\mathrm{mod}}\circ\mathcal{K}'\circ P'_{L_{\mathsf{RT}}(Y,\cz,i)}(B') \ar[l]^{\sim}_\Delta \ar[ru]^D_{\sim}&
	}}\end{equation*}}
where the limits run over $A\in\mathsf{SET}^0_\Xi(Y)$, $B\in\mathsf{SET}^0_\Xi(Y,\cz,i)$, $A'\in\mathsf{SET}^0_{\Xi'}(L_{\mathrm{base}}(Y))$, and $B'\in\mathsf{SET}^0_{\Xi'}(L_{\mathsf{RT}}(Y,\cz,i))$.
	The left arrows in the above diagram are given in the same way as $\Delta$, and the right arrows  are induced by $D_B$, $D_{L_{\mathsf{SET}^0,(Y,\cz,i)}(B)}$, and $D_{B'}$.
\end{proof}

Finally we look at the functoriality of $\mathbb{R}\Gamma_\Xi$.
The transformation $\widehat{L}_{\mathrm{coh}}$ gives a morphism $\widehat{\mathcal{K}} \rightarrow  L_{\mathrm{mod}}\circ L_{\mathrm{base}}^{-1}\widehat{\mathcal{K}}'$ of $\mathscr{C}^ + (\mathcal{M})$-valued presheaves on $\mathcal{A}_\Xi^{\mathrm{em}}$, and hence a morphism $\mathbb{K} \rightarrow  L_{\mathrm{mod}}\circ L_{\mathrm{base}}^{\ast}\mathbb{K}'$ of $\mathscr{C}^ + (\mathcal{M})$-valued sheaves on $\mathcal{A}_\Xi^{\mathrm{loc}}$.
	\[\mathbb{R}\Gamma_\Xi(Y) \rightarrow R\Gamma(\mathcal{A}_\Xi^{\mathrm{loc}}/Y,L_{\mathrm{mod}} \circ L_{\mathrm{base}}^\ast\mathbb{K}') \rightarrow L_{\mathrm{mod}}(\mathbb{R}\Gamma_{\Xi'}(L_{\mathrm{base}}(Y))).\]

%
\subsection{Canonical rigid complexes}\label{Subsec: Log-rig-com}
%

In this section we explain how to  apply the axioms in \S\ref{subsubsec: Rigid cohomological tuple} to several concrete situations.

\subsubsection{Rigid cohomology of fine log schemes}\label{subsubsec: usual rigid cohomology}
First we assume that the pair $(R,S\hookrightarrow\mathcal{S})$ is one of $(F,k^0\hookrightarrow\so_F^0)$, $(K,k^0\hookrightarrow\so_K^\pi)$, or $(F[t]^\dagger,T\hookrightarrow\ct)$.
We define a rigid cohomological tuple $\Xi(\mathcal{S})$ as follows:

\begin{enumerate}
	\item The base site $\mathsf{LS}_S$ consists of fine log schemes over $S$, and is endowed with the Zariski topology.
		The lifting category $\mathsf{LS}^{\mathrm{sm}}_{\mathcal{S}}$ consists of smooth weak formal log schemes over $\mathcal{S}$.
		The reduction functor is defined by $\cz \mapsto \cz\times_{\mathcal{S}}S$.
		The immersion class consists of all immersions in $\mathsf{LS}_S$.
		
	\item The realization category $\mathsf{Mod}_R$ is the category of $R$-modules, and the realization functor is defined by
		\begin{align*}
		&\mathsf{RT}_{\Xi(\mathcal{S})} \rightarrow  \mathscr{C}^+ (\mathsf{Mod}_R)\; ,\\ &(Y,\cz,i) \mapsto \mathcal{K}_\rig(Y,\cz,i): = \Gamma(]Y[_{\cz}^{\log},\Gd_{\an}\omega^{\sbt}_{\cz/\mathcal{S},\mathbb{Q}}).
		\end{align*}
		
	\item For a fine log scheme $Y/S$, the functor $P_Y\colon \mathsf{SET}^0_{\Xi(\mathcal{S})}(Y) \rightarrow \mathsf{RD}_{\Xi(\mathcal{S})}$ is defined by 
	$$P_Y(A) = (\cz_A,i_A): = (\prod_{\mathcal{S}}(\cz^{(a)})_{a\in \underline{A}},i_A),$$ 
	where $i_A\colon Y\hookrightarrow\prod_{\mathcal{S}}(\cz^{(a)})_{a\in \underline{A}}$ is the diagonal immersion.
	
	\item For a morphism of fine log schemes $g\colon Y \rightarrow  Y'$, $B\in\mathsf{SET}^0_{\Xi(\mathcal{S})}(Y)$, $C\in\mathsf{SET}^0_{\Xi(\mathcal{S})}(Y')$, $\tau\colon g^\circ(C) \rightarrow  B$, the morphism $g^\circ_\tau\colon (Y,\cz_B,i_B) \rightarrow (Y',\cz_C,i_C)$ is defined by $g\colon Y \rightarrow  Y'$ and the projection
			\[\prod_{\mathcal{S}}(\cz^{(b)})_{b\in\underline{B}} \rightarrow \prod_{\mathcal{S}}(\cz^{(c)})_{c\in \underline{g^\circ(C)}} = \prod_{\mathcal{S}}(\cz^{(c)})_{c\in\underline{C}}\]
		induced by $\underline{\tau}\colon\underline{B} \rightarrow \underline{g^\circ(C)} = \underline{C}$.
		
	\item For $(Y,\cz,i)\in\mathsf{RT}_{\Xi(\mathcal{S})}$ and $A\in\mathsf{SET}^0_{\Xi(\mathcal{S})}(Y,\cz,i)$, the morphism $D_A\colon(\cz,i) \rightarrow (\cz_A,i_A)$ is given by the diagonal morphism $\cz \rightarrow \prod_{\mathcal{S}}(\cz^{(a)})_{a\in\underline{A}} = \cz_A$ induced by $\{F^{(a)}\}_{a\in\underline{A}}$.
\end{enumerate}

We need to verify that Assumtions \ref{ass: weq} and \ref{ass: descent} hold in this scenario. But the former is just a reformulation of \cite[Lemma 1.4]{GrosseKloenne2005}, and the latter follows from the \v{C}ech descent of dagger spaces.
Moreover, the next lemma ensures that every object in $\mathsf{LS}_S$ is locally embeddable, that is $\mathsf{LS}_{S,\Xi(\mathcal{S})}^{\mathrm{loc}} = \mathsf{LS}_S$.

\begin{lemma}\label{lem: log rig embeddable}
	Let $Y$ be an object in $\lss$ which is affine and admits a (global) chart extending $c_S$ for the structure morphism $Y \rightarrow  S$.
	Then $Y$ is $\Xi(\mathcal{S})$-embeddable.
\end{lemma}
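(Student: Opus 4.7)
The plan is to build the embedding by lifting a standard presentation of $Y$ provided by the given global chart. Write $Y = \Spec A$ with $A$ a finitely generated $\mathcal{O}_S$-algebra, and let $(\alpha\colon P_Y \to \cn_Y,\ \gamma\colon Q \to P)$ be the global chart extending $c_S$ (so $Q$ is the monoid underlying $c_S$). Pick finitely many generators of $A$ as an algebra over the image of $\mathcal{O}_S[P]$, which yields a surjective $\mathcal{O}_S$-algebra homomorphism
\[
\mathcal{O}_S \otimes_{\mathcal{O}_F[Q]^\dagger} \mathcal{O}_F[P]^\dagger\langle x_1,\ldots,x_n\rangle^\dagger \twoheadrightarrow A,
\]
equivariant for the log structures (since both sides carry the $P$-chart). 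This exhibits $Y$ as a strict closed log subscheme of the reduction mod $p$ of the candidate weak formal lift
\[
\cz := \cs \times^\dagger_{\Spwf \mathcal{O}_F[Q]^\dagger} \Spwf \mathcal{O}_F[P]^\dagger\langle x_1,\ldots,x_n\rangle^\dagger,
\]
equipped with the log structure associated to $P$. The resulting closed immersion $i\colon Y \hookrightarrow \mathrm{red}(\cz)$ is strict and hence belongs to the immersion class $\mathcal{I}$.

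The remaining step, and the main obstacle, is to verify that $\cz$ is a smooth weak formal log scheme over $\cs$. The underlying weak formal scheme is smooth by construction (it is a relative weakly complete affine space over $\cs \times^\dagger_{\Spwf \mathcal{O}_F[Q]^\dagger} \Spwf \mathcal{O}_F[P]^\dagger$), so the question reduces to log smoothness of the chart morphism $\gamma\colon Q \to P$ after passing to monoid algebras over $\mathcal{O}_F$. By Kato's criterion this requires that $\ker(\gamma^\gp)$ and the torsion part of $\mathrm{coker}(\gamma^\gp)$ have orders prime to $p$. This condition is not automatic from the mere existence of the chart; however, it can always be arranged by a standard combinatorial modification of the chart (for instance, replacing $P$ by $P \oplus \N^r$ for a suitable surjection onto a generating set, or by a saturation procedure), at the cost of enlarging the number $n$ of free variables in the lift. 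Since this modification does not affect the existence of the surjection or the strictness of $i$, one still obtains a $\Xi(\cs)$-rigid datum $(\cz,i)$ for $Y$.

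In summary, the proof splits into (i) a straightforward algebraic presentation of $A$ compatible with the chart, (ii) the formal procedure of taking the weak completion and forming the fibre product with $\cs$, and (iii) the combinatorial verification that by possibly enlarging $P$ one obtains log smoothness over $\cs$. Step (iii) is the only non-routine point; the other steps are essentially formal once the global chart is in hand.
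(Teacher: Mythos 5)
Your outline of steps (i) and (ii) is essentially correct, but step (iii) contains a genuine gap, and it is precisely the step that the paper's proof is designed to circumvent.

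To make the chart morphism $\gamma\colon Q\to P$ satisfy Kato's criterion one needs $\ker(\gamma^\gp)$ and the torsion part of $\coker(\gamma^\gp)$ to have order prime to $p$. Neither of your suggested repairs achieves this. Passing from $P$ to $P\oplus\N^r$ leaves $\ker(\gamma^\gp)$ unchanged, and saturation does not alter $P^\gp$, hence changes neither $\ker(\gamma^\gp)$ nor $\coker(\gamma^\gp)$. There is no modification of $P$ that repairs a bad kernel while keeping $P$ as a chart for the \emph{same} log structure on $Y$. Moreover, you assert that such a modification "does not affect ... the strictness of $i$." That is false: if the chart monoid on the lift is anything other than $P$ itself (for instance a free monoid surjecting onto $P$, which is what is actually needed), the induced immersion $Y\hookrightarrow\mathrm{red}(\cz)$ is no longer strict, because the map on charts is then a proper surjection rather than an isomorphism.

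The paper's proof avoids the obstruction entirely by not insisting on a strict immersion. It chooses a surjection $\delta\colon\N^n\twoheadrightarrow P$ and sets $\cz := \Spwf\so[\N^m\oplus\N^n]^\dagger$, equipped with the log structure coming from the \emph{free} monoid $\N^n\oplus\N$. The chart of $\cz\to\cs$ is then the split injection $\N\hookrightarrow\N^n\oplus\N$, whose kernel is zero and whose cokernel is $\mathbb{Z}^n$, so Kato's criterion holds automatically with no hypothesis on $P$. The resulting closed immersion $Y\hookrightarrow\mathrm{red}(\cz)$ has chart map $\N^n\oplus\N\to P$ given by $(\delta,\beta)$, which is surjective but in general not bijective: $i$ is an immersion but not strict. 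This is permitted, because the immersion class $\mathcal{I}$ in $\Xi(\cs)$ consists of \emph{all} immersions, not only strict ones. Recognizing that non-strict immersions are allowed, and exploiting that freedom to put a free-monoid chart on the lift, is the idea your argument is missing.
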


\begin{proof}
	Let $\so$, $\so_0$, and $A$ be the coordinate rings of $\mathcal{S}$, $S$, and $Y$, respectively, i.e. $\so = \so_F$, $\so_K$, or $\so_F[t]^\dagger$, and $\so_0 = k$ or $k[t]$.
	Denote by $\overline{\so}$ the reduction of $\so$ modulo $p$. 
	Assume that we have a chart $(\alpha\colon P_Y \rightarrow \cn_Y,\ \beta\colon \mathbb{N} \rightarrow  P)$ for the structure morphism $Y \rightarrow  S$ that extends $c_S$.
	Let $\alpha'\colon P \rightarrow  A$ be the map induced by $\alpha$ and $\epsilon\colon \mathbb{N} \rightarrow \so$  the map induced by $c_\cs\colon \mathbb{N}_\cs \rightarrow \cn_\cs$.
	Take surjections $\gamma\colon \so_0[\mathbb{N}^m] \rightarrow  A$ and $\delta\colon\mathbb{N}^n \rightarrow  P$.
	We set $\cz: = \Spwf \so[\mathbb{N}^m\oplus\mathbb{N}^n]^\dagger$ and endow $\cz$ with the log structure associated to the map
		\[\rho\colon\mathbb{N}^n\oplus\mathbb{N} \rightarrow  \so[\mathbb{N}^m\oplus\mathbb{N}^n]^\dagger,\ (\mathbf{n},\ell) \mapsto \epsilon(\ell)\cdot\mathbf{n}.\]
	Let $\eta\colon \so \rightarrow  \overline{\so}$ be the natural surjection.
	Then we have a commutative diagram
		\[\xymatrix{
		\so \ar[r]&\so[\mathbb{N}^m\oplus\mathbb{N}^n]^\dagger \ar[r]^>>>>>\xi& A\\
		\mathbb{N} \ar[r] \ar[u]^\epsilon&\mathbb{N}^n\oplus\mathbb{N} \ar[r]_{(\delta,\beta)} \ar[u]^\rho& P \ar[u]_{\alpha'},
		}\]
	where the map $\xi$ is defined by
		\[a\cdot\mathbf{m}\cdot\mathbf{n} \mapsto \eta(a)\cdot\gamma(\mathbf{m}) \cdot\alpha'\circ\delta(\mathbf{n})\]
	for $a\in \so$, $\mathbf{m}\in\mathbb{N}^m$, and $\mathbf{n}\in\mathbb{N}^n$.
	The horizontal arrows in the right square are surjective, and hence define a closed immersion $i\colon Y\hookrightarrow\cz$.
	The left square defines the structure morphism $\cz \rightarrow \cs$.
	The smoothness criterion \cite[Thm.~3.5]{Kato1989} implies immediately that $\cz$ is smooth over $\mathcal{S}$.
\end{proof}

Thus one can define the  {\it log rigid cohomology} of $Y$ over $\mathcal{S}$  via the realization functor given above as
$$\mathbb{R}\Gamma_{\rig}(Y/\mathcal{S}): = \mathbb{R}\Gamma_{\Xi(\mathcal{S})}(Y).$$

\subsubsection{Rigid cohomology of fine log schemes with boundary}

In this section we introduce two variations of rigid cohomology for fine log schemes with boundary. We start by defining a  rigid cohomological tuple $\Xi_b(\ct)$ for fine $T$-log schemes with boundary.

\begin{enumerate}
	\item The base site $\overline{\mathsf{LS}}_T$ consists of  fine $T$-log schemes with boundary, and is endowed with the Zarsiki topology.
		The lifting category $\overline{\mathsf{LS}}^{\mathrm{sm}}_{\ct}$ consists of strongly smooth weak formal $\ct$-log schemes with boundary.
		The reduction functor is defined by $(\cz,\ovcz) \mapsto (\cz\times_{\so_F^\varnothing}k^\varnothing,\ovcz\times_{\so_F^\varnothing}k^\varnothing)$.
		The immersion class consists of all boundary immersions.
		
	\item The realization category $\mathsf{Mod}_F$ consists of  $F$-vector spaces, and the realization functor is defined by
	\begin{align*}
	&\mathsf{RT}_{\Xi_b(\ct)} \rightarrow  \mathscr{C}^+ (\mathsf{Mod}_F)\\
	&((Y,\ovy),(\cz,\ovcz),i) \mapsto  \mathcal{K}_\rig((Y,\ovy),(\cz,\ovcz),i):=\Gamma(]\ovy[_{\ovcz}^{\log},\Gd_{\an}\omega^{\sbt}_{(\cz,\ovcz)/\ct,\mathbb{Q}}).
	\end{align*}

	\item For a fine $T$-log scheme with boundary $(Y,\ovy)$, the functor
		\[P_{(Y,\ovy)}\colon\mathsf{SET}^0_{\Xi_b(\ct)}(Y,\ovy) \rightarrow \mathsf{RD}_{\Xi_b(\ct)}(Y,\ovy)\]
		 is defined by
			\[P_{(Y,\ovy)}(A) = ((\cz_A,\ovcz_A),i_A): = (\prod_{\ct}((\cz^{(a)},\ovcz^{(a)}))_{a\in\underline{A}},i_A),\]
		where $i_A\colon(Y,\ovy)\hookrightarrow\prod_{\ct}((\cz^{(a)},\ovcz^{(a)}))_{a\in\underline{A}}$ is the diagonal immersion.
		Note that $(\cz_A,\ovcz_A)$ is strongly smooth by Lemma \ref{lem: product of strongly smooth}.
		
	\item For a morphism of fine  $T$-log schemes with boundary $g\colon(Y,\ovy) \rightarrow (Y',\ovy')$, $B\in\mathsf{SET}^0_{\Xi_b(\ct)}(Y,\ovy)$, $C\in\mathsf{SET}^0_{\Xi_b(\ct)}(Y',\ovy')$, $\tau\colon g^\circ(C) \rightarrow  B$, the morphism 
	$$g^\circ_\tau\colon((Y,\ovy),(\cz_B,\ovcz_B),i_B) \rightarrow ((Y',\ovy'),(\cz_C,\ovcz_C),i_C)$$
	 is defined by $g\colon(Y,\ovy) \rightarrow (Y',\ovy')$ and the projection
			\[\prod_{\ct}((\cz^{(b)},\ovcz^{(b)}))_{b\in\underline{B}} \rightarrow \prod_{\ct}((\cz^{(c)},\ovcz^{(c)}))_{c\in\underline{g^\circ(C)}} = \prod_{\ct}((\cz^{(c)},\ovcz^{(c)}))_{c\in\underline{C}}\]
		induced by $\underline{\tau}\colon\underline{B} \rightarrow \underline{g^\circ(C)} = \underline{C}$.
		
	\item For $((Y,\ovy),(\cz,\ovcz),i)\in\mathsf{RT}_{\Xi_b(\ct)}$ and $A\in\mathsf{SET}^0_{\Xi_b(\ct)}((Y,\ovy),(\cz,\ovcz),i)$, the morphism 	
	$$D_A\colon((\cz,\ovcz),i) \rightarrow ((\cz_A,\ovcz_A),i_A)$$
 is given by the diagonal morphism $(\cz,\ovcz) \rightarrow \prod_{\ct}((\cz^{(a)},\ovcz^{(a)}))_{a\in\underline{A}} = (\cz_A,\ovcz_A)$ induced by $\{F^{(a)}\}_{a\in\underline{A}}$.
\end{enumerate}

Then Assumption \ref{ass: weq} holds again by the same argument as \cite[Lemma 1.4]{GrosseKloenne2005} and Assumption \ref{ass: descent} follows from the \v{C}ech descent of dagger spaces.
Thus it makes sense to define
	\[\mathbb{R}\Gamma_\rig((Y,\ovy)/\ct): = \mathbb{R}\Gamma_{\Xi_b(\ct)}(Y,\ovy)\]
for any $(Y,\ovy)\in\overline{\mathsf{LS}}_{T,\Xi_b(\ct)}^{\mathrm{loc}}$, which we call the {\it log rigid cohomology} of $(Y,\ovy)$ over $\ct$.

The authors do not know whether any given $T$-log scheme with boundary is locally $\Xi_b(\ct)$-embeddable, but it will later be used that they are locally of the following form.

\begin{proposition}\label{prop: embeddable boundary}
	Let $(Z,\ovz)\in\overline{\mathsf{LS}}_T$.
	Assume that $\ovz = \Spec A$ is affine, and that there exist a chart $(\alpha\colon P_{\ovz} \rightarrow \cn_{\ovz},\beta\colon\mathbb{N} \rightarrow  P^\gp)$ extending $c_T$ and elements $a,b\in P$ such that $\beta(1) = b-a$ and $Z = \Spec A[\frac{1}{\alpha'(a)}]$, where $\alpha'\colon P \rightarrow  A$ is the map induced by $\alpha$.
	Then $(Z,\ovz)$ is $\Xi_b(\ct)$-embeddable.
	\end{proposition}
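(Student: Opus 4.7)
My plan is to follow the pattern of Lemma 1.17 (which handles affine fine log schemes admitting a chart), adapted to the boundary setting. That is, I would construct a weakly complete lift $\overline{\mathcal{Z}}$ of $\overline{Z}$ together with a closed embedding, and then set $\mathcal{Z}$ to be the weak formal localization at (a lift of) $\alpha'(a)$. The bulk of the argument is then a direct verification of strong smoothness of the resulting lift.

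Concretely, I would first choose $f_1,\dots,f_m\in A$ so that $\alpha'(P)\cup\{f_1,\dots,f_m\}$ generates $A$ as a $k$-algebra, producing a surjection $\gamma\colon k[P\oplus\N^m]\twoheadrightarrow A$ with $\gamma|_P = \alpha'$. Lifting to characteristic zero, set $\mathcal{B}:=\so_F[P\oplus\N^m]^\dagger$ and $\overline{\mathcal{Z}}:=\Spwf\mathcal{B}$, endowed with the log structure associated to the composition $P\hookrightarrow P\oplus\N^m \rightarrow \mathcal{B}$. Any lift $\phi\colon\mathcal{B}\twoheadrightarrow A$ of $\gamma$ induces a closed immersion $\overline{Z}\hookrightarrow\overline{\mathcal{Z}}$, which is exact because both log structures come from $P$ via compatible maps. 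Now define $\mathcal{Z}:=\overline{\mathcal{Z}}[1/\alpha_\mathcal{B}(a)]^\dagger$, where $\alpha_\mathcal{B}\colon P \rightarrow \mathcal{B}$ is the tautological map, and equip $\mathcal{Z}$ with the $\mathcal{T}$-structure $t\mapsto \alpha_\mathcal{B}(b)/\alpha_\mathcal{B}(a)$ (induced by $\beta(1) = b-a$); this reduces mod $p$ to the given $\mathcal{T}$-structure of $Z$. Since $\alpha_\mathcal{B}(a)$ is a non-zero-divisor in the weakly complete monoid algebra $\mathcal{B}$, the open subscheme $\mathcal{Z}$ is schematically dense in $\overline{\mathcal{Z}}$, so $(\mathcal{Z},\overline{\mathcal{Z}})$ is a weak formal $\mathcal{T}$-log scheme with boundary, and by construction $(Z,\overline{Z})\hookrightarrow (\mathcal{Z},\overline{\mathcal{Z}})$ is a boundary exact closed immersion.

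It then remains to verify that $(\mathcal{Z},\overline{\mathcal{Z}})$ is strongly smooth, using the chart $(\alpha_\mathcal{B},\beta)$ and the same elements $a,b\in P$. Condition (ii) of Definition 1.14 is just the hypothesis $\beta(1)=b-a$; condition (iii), that $\overline{\mathcal{Z}}\rightarrow\Spwf\so_F[P]^\dagger$ is classically smooth, is immediate since the morphism presents $\overline{\mathcal{Z}}$ as a weakly complete affine space of dimension $m$ over $\Spwf\so_F[P]^\dagger$; condition (iv) is the very definition of $\mathcal{Z}$. The principal obstacle is condition (i), requiring $\beta^\gp\colon\Z\rightarrow P^\gp$ to be injective with $p$-torsion-free cokernel. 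This is a purely combinatorial condition on the given chart and may in principle fail. I expect it can be arranged by passing to a suitable refinement of $P$ (for instance, replacing $P$ by its saturation relative to $\Z(b-a)$) without affecting the rest of the construction, consistent with the local formulation of Definition 1.14. Granting this, $((\mathcal{Z},\overline{\mathcal{Z}}),i)$ is a $\Xi_b(\mathcal{T})$-rigid datum for $(Z,\overline{Z})$, proving embeddability.
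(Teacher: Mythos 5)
Your construction is right in outline, and you correctly identify the weak point (condition (i) of Definition~\ref{def: strongly smooth}), but you do not resolve it, and the fix you sketch (``saturation relative to $\Z(b-a)$'') does not work in general. The hypothesis of the proposition places \emph{no} constraint on $\beta\colon\N\to P^\gp$ beyond $\beta(1)=b-a$: nothing prevents $b-a$ from being $0$ in $P^\gp$ (so $\beta^\gp$ fails to be injective), or from being divisible by $p$ modulo torsion (so the cokernel has $p$-torsion). Saturating $P$ cannot cure the failure of injectivity, and enlarging $P$ in this way also risks breaking the exactness of the closed immersion $\ovz\hookrightarrow\ovcz$ that your first paragraph relies on. So, as written, the proof has a genuine gap at exactly the step you flag.

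The paper's proof handles this by a different and more robust device: rather than building $\ovcz$ from the monoid $P$ itself, it first takes a \emph{free} presentation $\delta\colon\N^n\twoheadrightarrow P$ and then adjoins one \emph{extra} free rank to the chart monoid. Concretely, $Q:=\N^n\oplus\N$, $B:=\so_F[\N^m\oplus\N^n\oplus\N]^\dagger$, and the map $\eta\colon\N\to Q'^\gp$ sends $1$ to $\widetilde t := (b'-a',1)$ where $a',b'\in\N^n$ lift $a,b$. The last coordinate being $1$ makes $\eta^\gp\colon\Z\to\Z^n\oplus\Z$ automatically injective with cokernel $\cong\Z^n$, which is torsion-free --- so condition (i) holds unconditionally, regardless of $P$ or of $b-a$. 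Surjectivity of $\rho\colon B\to A$ still holds because the extra $\N$-coordinate is simply sent to $1$, and the closed immersion $(Z,\ovz)\hookrightarrow(\cz,\ovcz)$ remains exact for the charts chosen. That ``add one new free variable and put a $1$ in its slot when defining $\widetilde t$'' move is the idea your argument is missing; everything else in your proposal (the monoid-algebra lift, the localization at a lift of $\alpha'(a)$, the verification of (ii)--(iv), schematic density) matches the paper's strategy.
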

	
\begin{proof}
	Indeed, take surjections $\gamma\colon k[\mathbb{N}^m] \rightarrow  A$ and $\delta\colon \mathbb{N}^n \rightarrow  P$.
	We set $Q: = \mathbb{N}^n\oplus\mathbb{N}$ and $B: = \so_F[\mathbb{N}^m\oplus\mathbb{N}^n\oplus\mathbb{N}]^\dagger$, and let $\epsilon\colon Q \rightarrow  B$ be the canonical injection.
	We define $\rho\colon B \rightarrow  A$ by 
	\[\rho(x\cdot \mathbf{m}\cdot\mathbf{n}\cdot \ell): = \overline{x}\cdot \gamma(\mathbf{m})\cdot\alpha\circ\delta(\mathbf{n})\]
	for $x\in\so_F$, $\mathbf{m}\in\mathbb{N}^m$, $\mathbf{n}\in\mathbb{N}^n$, and $\ell\in\mathbb{N}$.
	Take $a',b'\in\mathbb{N}^n$ with $\epsilon(a') = a$ and $\epsilon(b') = b$, and let $\widetilde{a}: = (a',0)\in Q$, $\widetilde{b}: = (b',0)\in Q$.
	Let $Q'$ be the submonoid of $Q^\gp$ generated by $Q$ and $-\widetilde{a}$, and let $\widetilde{t}: = (b'-a',1)\in Q'$.
	We endow $\ovcz: = \Spwf B$ and $\cz: = \Spwf B[\frac{1}{\epsilon(\widetilde{a})}]^\dagger$ with the log structures associated to $Q \xrightarrow{\epsilon}B$ and $Q' \rightarrow  B[\frac{1}{\epsilon(\widetilde{a})}]^\dagger$.
	We define a map $\eta\colon\mathbb{N} \rightarrow  Q'$ by $1 \mapsto \widetilde{t}$.
	Then the commutative diagram
	\[\xymatrix{
	Q \ar[r]^{\epsilon} \ar[d]_{(\delta,0)}&B \ar[d]^\rho\\
	P \ar[r]^{\alpha}&A
	}\]
	induces closed immersions $\ovz\hookrightarrow\ovcz$ and $(Z,\ovz)\hookrightarrow(\cz,\ovcz)$.
	Obviously we have $\ker\eta^\gp = 0$ and $\coker\eta^\gp = \mathbb{Z}^n$ for $\eta^\gp\colon \mathbb{Z} \rightarrow  Q'^\gp = \mathbb{Z}^n\oplus\mathbb{Z}$.
	Thus $(\cz,\ovcz)$ is a strongly smooth $\ct$-log scheme with boundary.
\end{proof}

We introduce now a variant of log rigid cohomology for log schemes with boundary, which will be used to construct a canonical Frobenius endomorphism.
Let $\ovt = \Proj k[x_1,x_2]$ and $\ovct = \Proj \so_F[x_1,x_2]^\dagger$ be the fine log scheme and fine weak formal log scheme with the log structures associated to the points $x_1 = 0$ and $x_2 = 0$.
Take embeddings $T\hookrightarrow\ovt$ and $\ovt\hookrightarrow\ovct$ defined by the equality $t = \frac{x_2}{x_1}$.
Then $(T,\ovt)$ and $(\ct,\ovct)$ are strongly smooth objects in $\lsbt$ and $\lsbct$, respectively.
In this setup, we define a rigid cohomological tuple $\Xi_b(\ct,\ovct)$ as follows:

\begin{enumerate}
	\item The base site $\overline{\mathsf{LS}}_{(T,\ovt)}$ is the full subcategory of $\overline{\mathsf{LS}}_T$ which consists of objects $(Y,\ovy)$ whose structure morphism $Z \rightarrow  T$ extends to $\ovz \rightarrow \ovt$, and the topology on $\overline{\mathsf{LS}}_{(T,\ovt)}$ is the Zariski topology.
		The lifting category $\overline{\mathsf{LS}}_{(\ct,\ovct)}^{\mathrm{sm}}$ is the full subcategory of $\overline{\mathsf{LS}}_{\ct}^{\mathrm{sm}}$ consisting of objects $(\cz,\ovcz)$ whose structure morphism $\cz \rightarrow \ct$ extends to $\ovcz \rightarrow \ovct$.
		Note that such extensions are unique if they exist.
	\item The reduction functor and the immersion class, the realization category, and the realization functor, the functors $P_{(Y,\ovy)}$, the morphisms $g^\circ_\tau$ and $D_A$ are defined in the same way as for $\Xi_b(\ct)$.
\end{enumerate}

We define
	\[\mathbb{R}\Gamma_{\rig}((Y,\ovy)/(\ct,\ovct)): = \mathbb{R}\Gamma_{\Xi_b(\ct,\ovct)}(Y,\ovy)\]
for any $(Y,\ovy)\in\overline{\mathsf{LS}}^{\mathrm{loc}}_{(T,\ovt),\Xi_b(\ct,\ovct)}$, which we call the {\it log rigid cohomology} of $(Y,\ovy)$ over $(\ct,\ovct)$.

\subsubsection{Rigid Hyodo--Kato cohomology}\label{subsubsec: Rigid Hyodo-Kato cohomology}

Finally we define a rigid cohomological tuple $\Xi_\hk^\rig$.

\begin{definition}\label{def: ss log scheme}
	\begin{enumerate}
	\item 		A $k^0$-log scheme with boundary $(Y,\overline{Y})$ is called {\it strictly semistable} if Zariski locally on $\ovy$ there exists a chart $(\alpha\colon P_{\ovy} \rightarrow \cn_{\ovy},\ \beta\colon\N \rightarrow  P^\gp)$ for $(Y,\ovy)$ extending $c_{k^0}$ of the following form:
	 	\begin{itemize}
		\item $P = \N^m\oplus\N^n$ for some integers $m\geqslant 1$ and $n\geqslant 0$, and $\beta$ is given by the composition of the diagonal map $\N \rightarrow \N^m$ and the canonical injection $\N^m \rightarrow \Z^m\oplus\Z^n$.
		
		\item The morphism of schemes
				\begin{align*}
				\ovy \rightarrow& \Spec k\times_{\Spec k[\N]}\Spec k[\N^m\oplus\N^n] \\
				&\qquad \qquad= \Spec k[t_1,\ldots,t_m,s_1,\ldots,s_n]/(t_1\cdots t_m)
				\end{align*}
			induced by $\beta'$ is smooth, and makes the diagram 
				\[\xymatrix{
				\overline{Y} \ar[r]&\Spec k[t_1,\ldots,t_m,s_1,\ldots,s_n]/(t_1\cdots t_m)\\
				Y \ar[u] \ar[r]&\Spec k[t_1,\ldots,t_m,s_1^{\pm 1},\ldots,s_n^{\pm 1}]/(t_1\cdots t_m). \ar[u]
				}\]
			Cartesian.
		\end{itemize}
		We denote the category of strictly semistable $k^0$-log schemes with boundary by $\ssk$.
		In addition we regard $\ssk$ as a site with respect to the Zariski topology.
		Namely a covering family of an object $(Y,\ovy)$ in $\ssk$ is a family of boundary strict open immersions $(U_h,\overline{U}_h)\hookrightarrow(Y,\ovy)$ with $\bigcup_h\overline{U}_h = \ovy$.
	\item	A weak formal $\ct$-log scheme with boundary $(\cz,\ovcz)$ is called {\it strictly semistable} if  Zariski locally on $\ovcz$ there exists a chart $(\alpha\colon P_{\ovcz} \rightarrow  \cn_{\ovcz},\ \beta\colon \N \rightarrow  P^\gp)$ for $(\cz,\ovcz)$ extending $c_\ct$ of the following form:
		\begin{itemize}
		\item $P = \N^m\oplus\N^n$ for some integers $m\geqslant 1$ and $n\geqslant 0$, and $\beta$ is given by the composition of the diagonal map $\N \rightarrow \N^m$ and the canonical injection $\N^m \rightarrow \Z^m\oplus\Z^n$.
		\item The morphism of weak formal schemes
				\[\ovcz \rightarrow \Spwf\so_F[\N^m\oplus\N^n]^\dagger = \Spwf\so_F[t_1,\ldots,t_m,s_1,\ldots,s_n]^\dagger\]
			induced by $\beta'$ is smooth, and makes the diagram 
				\[\xymatrix{
				\ovcz \ar[r]&\Spwf\so_F[t_1,\ldots,t_m,s_1,\ldots,s_n]^\dagger\\
				\cz \ar[u] \ar[r]&\Spwf\so_F[t_1,\ldots,t_m,s_1^{\pm 1},\ldots,s_n^{\pm 1}]^\dagger. \ar[u]
				}\]
			Cartesian.
		\end{itemize}
		We denote the category of strictly semistable weak formal $\ct$-log schemes with boundary by $\sst$.
	\end{enumerate}
\end{definition}

\begin{remark}
	For a strictly semistable $k^0$-log scheme with boundary $(Y,\ovy)$, the structure morphism $Y \rightarrow  k^0$ extends to a morphism $\ovy \rightarrow  k^0$ with a chart $\beta'\colon \N \rightarrow \N^m\oplus\N^n = P$, where $P$ is a local chart for $\ovy$ as in Definition \ref{def: ss log scheme}.
	Note that this does not hold for $k^0$-log schemes with boundary in general.
	Similarly, for a strictly semistable weak formal $\ct$-log scheme with boundary $(\cz,\ovcz)$, the structure morphism $\cz \rightarrow \ct$ extends to a morphism $\ovcz \rightarrow \ct$ with a chart $\beta'\colon \N \rightarrow \N^m\oplus\N^n = P$.
\end{remark}

A strictly semistable log scheme in the sense of \cite[\S 2.1]{GrosseKloenne2005} can be regarded as a strictly semistable $k^0$-log scheme with boundary with $Y = \ovy$.
Note that for an object $(Y,\ovy)$ in $\ssk$, the log schemes $Y$ and $\ovy$ are log smooth over $k^0$. For an object $(\cz,\ovcz)$ in $\sst$, the weak formal log schemes $\cz$ and $\ovcz$ are log smooth over $\ct$.

\begin{definition}\label{def: upsilon}
	\begin{enumerate}
	\item Let $(Y,\overline{Y})$ be an object in $\ssk$ and $D: = \ovy\setminus Y$ the reduced closed subscheme, which is a simple normal crossing divisor.
		A {\it horizontal component} of $D$ is a Cartier divisor on $\ovy$ whose support is contained in $D$, and which can not be written as the sum of two non-trivial Cartier divisors.
		We denote by $\Upsilon_{\ovy}$ the set of irreducible components of $\ovy$, and by $\Upsilon_D$ the set of horizontal components of $D$.
		Note that each horizontal component is reduced, and that we have $D = \sum_{\beta\in\Upsilon_D}D_\beta$, where $D_\beta$ is the Cartier divisor corresponding to $\beta$.
	\item Let $(\cz,\ovcz)$ be an object in $\sst$, and let $\cd: = \ovcz\setminus\cz$ be the reduced closed  admissible weak formal subscheme, which is a relative simple normal crossing divisor over $\ct$. Set $\ovcy: = \ovcz\times_{\ct}\so_F^0$.
		We denote by $\Upsilon_{\ovcy}$ (resp. $\Upsilon_{\cd}$) the set of irreducible components of $\ovcy$ (resp. $\cd$). 
		\end{enumerate}
\end{definition}

\begin{definition}
	For a strictly semistable weak formal $\ct$-log scheme with boundary $(\cz,\ovcz)$, a {\it Frobenius lifting} on $(\cz,\ovcz)$ is an endomorphism $\phi$ on $(\cz,\ovcz)$ which lifts the $p$-th power Frobenius on the reduction $\overline{Z}: = \ovcz\times_\ct T$, is compatible with $\sigma$ on $\ct$, and sends the equations of all irreducible components of $\ovcy: = \ovcz \times_{\ct}\so_F^0$ and $\cd: = \ovcz\setminus\cz$ to their $p$-th power up to units.
\end{definition}

We come now to the definition of the rigid cohomological tuple $\Xi_\hk^\rig$.

\begin{enumerate}
	\item[(i)] The base site $\overline{\mathsf{LS}}_{k^0}^{\mathrm{ss}}$ consists of  strictly semistable $k^0$-log schemes with boundary, and is endowed with the Zariski topology.
		The lifting category $\overline{\mathsf{LS}}_{\ct}^{\mathrm{ss}}(\varphi)$ consists of triples $(\cz,\ovcz,\phi)$, where $(\cz,\ovcz)$ is a strictly semistable weak formal $\ct$-log scheme with boundary $(\cz,\ovcz)$ and $\phi$ is a Frobenius lifting on $(\cz,\ovcz)$.
		The reduction functor is defined by $(\cz,\ovcz,\phi) \mapsto (\cz\times_{\ct}k^0,\ovcz\times_{\ct}k^0)$.
		The immersion class consists of all boundary strict immersions.
\end{enumerate}

For an object $(\cz,\ovcz)$ in $\sst$, let
	$$
	\widetilde{\omega}_{\ovcz}^{\sbt}: = \omega^{\sbt}_{\ovcz/\so_F^\varnothing}
	\quad\text{ and }\quad
	\omega^{\sbt}_{\ovcy}: = \omega^{\sbt}_{\ovcy/\so_F^0}
	$$
be the log de~Rham complexes of $\ovcz$ over $\so_F^\varnothing$ and $\ovcy: = \ovcz\times_{\ct}\so_F^0$ over $\so_F^0$, respectively.
Recall that in the first case we consider $\so_F$ with the trivial log structure, while in the second case we consider the log structure associated to the map $\mathbb{N} \rightarrow \so_F,\ 1 \mapsto  0$.
Set
	\[\widetilde{\omega}^{\sbt}_{\ovcy}: = \widetilde{\omega}^{\sbt}_{\ovcz}\otimes\mathcal{O}_{\ovcy}\]
Then there is a short exact sequence
	\begin{equation}\label{eq: rigid ses}
	0 \rightarrow \omega^{\sbt}_{\ovcy}[-1] \xrightarrow[]{\wedge d\log t}\widetilde{\omega}^{\sbt}_{\ovcy} \rightarrow \omega^{\sbt}_{\ovcy} \rightarrow  0.
	\end{equation}
Let $\ovcy_{\Q}$ be the generic fibre of $\ovcy$, and $\omega^{\sbt}_{\ovcy,\Q}$ (resp. $\widetilde{\omega}^{\sbt}_{\ovcy,\Q}$) the complex of sheaves on $\ovcy_{\Q}$ obtained from $\omega^{\sbt}_{\ovcy}$ (resp. $\widetilde{\omega}^{\sbt}_{\ovcy}$) by tensoring with $\Q$.

To be able to consider a cup product we use  an analogue of a construction due to  Kim and Hain \cite[pp. 1259--1260]{KimHain2004}.

\begin{definition}\label{def: Kim - Hain}
	For an object $(\cz,\ovcz,\phi)$  in $\overline{\mathsf{LS}}_{\ct}^{\mathrm{ss}}(\varphi)$, let $\ovcy$, $\omega_{\ovcy,\Q}^{\sbt}$, and $\widetilde{\omega}_{\ovcy,\Q}^{\sbt}$ be as above.
	Let $\widetilde{\omega}_{\ovcy,\Q}^{\sbt}[u]$ be the commutative dg-algebra on $\ovcy_\Q$ generated by $\widetilde{\omega}_{\ovcy,\Q}^{\sbt}$ and degree zero elements $u^{[k]}$ for $k\geqslant 0$ with the relations
		$$
		du^{[k]} = d\log t\cdot u^{[k-1]}\quad\text{ and }\quad u^{[0]} = 1.
		$$
	The multiplication is given by
		\[u^{[k]}\wedge u^{[\ell]} = \frac{(k + \ell)!}{k!\ell!}u^{[k + \ell]}.\]
	Let $\widetilde{\omega}^{\sbt}_{\ovcy,\Q}\llbracket u\rrbracket:=\varprojlim_i\widetilde{\omega}^{\sbt}_{\ovcy,\Q}[u]/(u^{[i]})$ be the completion of $\widetilde{\omega}^{\sbt}_{\ovcy,\Q}[u]$ with respect to the ideal generated by $u^{[1]}$, which is also a commutative dg-algebra in obvious way.
	We define the Frobenius operator $\varphi$ on $\widetilde{\omega}_{\ovcy,\Q}^{\sbt}\llbracket u\rrbracket$ by the Frobenius action on $\widetilde{\omega}_{\ovcy,\Q}^{\sbt}$ induced by $\phi$ and by $\varphi(u^{[k]}): = p^ku^{[k]}$.
	We define the monodromy operator $N$ on $\widetilde{\omega}_{\ovcy,\Q}^{\sbt}\llbracket u\rrbracket$ by $N(u^{[k]}): = u^{[k-1]}$.
	Then clearly $N\varphi = p\varphi N$ holds.
	Moreover we have $\varphi(\eta_1\wedge\eta_2) = \varphi(\eta_1)\wedge\varphi(\eta_2)$ and $N(\eta_1\wedge\eta_2) = N(\eta_1)\wedge\eta_2 + \eta_1\wedge N(\eta_2)$ for any sections $\eta_1$ and $\eta_2$ of $\widetilde{\omega}_{\ovcy,\Q}^{\sbt}\llbracket u\rrbracket$.
\end{definition}

\begin{lemma}\label{lem: qis with KimHain}
	The morphism $\widetilde{\omega}_{\ovcy,\Q}^{\sbt}\llbracket u\rrbracket \rightarrow \omega^{\sbt}_{\ovcy,\Q}$ which sends $u^{[k]} \mapsto  0$ for $k\geqslant 1$ is a quasi-isomorphism of dg-algebras.
\end{lemma}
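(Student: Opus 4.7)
We would split the argument into two parts: first, verifying that the map is a homomorphism of dg-algebras; second, showing that it is a quasi-isomorphism. For the first part, multiplicativity is immediate from the identity $u^{[k]}\wedge u^{[\ell]} = \binom{k+\ell}{k}u^{[k+\ell]}$ (which lies in the kernel of the map whenever $k\geq 1$ or $\ell\geq 1$) together with multiplicativity of the quotient $\widetilde{\omega}_{\ovcy,\Q}^{\sbt}\twoheadrightarrow \omega_{\ovcy,\Q}^{\sbt}$. Compatibility with the differential follows from the identity
\[
d\xi = \sum_{k\geq 0}u^{[k]}\bigl(d\omega_k + d\log t\wedge\omega_{k+1}\bigr)\qquad\text{for }\xi = \sum_k u^{[k]}\omega_k,
\]
which implies that $d\xi$ maps to the image in $\omega_{\ovcy,\Q}^{n+1}$ of $d\omega_0 + d\log t\wedge \omega_1$; since $d\log t$ projects to $0$, this agrees with $d$ applied to the image of $\omega_0$.

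For the quasi-isomorphism claim, we would reduce to a local question on $\ovcy_\Q$ and choose an open patch on which (\ref{eq: rigid ses}) admits an $\mathcal{O}$-linear section. Combined with the injectivity of $\wedge d\log t\colon \omega_{\ovcy,\Q}^{n}\hookrightarrow\widetilde{\omega}_{\ovcy,\Q}^{n+1}$ provided by (\ref{eq: rigid ses}) in degree $n+1$, this yields the kernel description
\[
\ker\bigl(\wedge d\log t\colon \widetilde{\omega}_{\ovcy,\Q}^n\to\widetilde{\omega}_{\ovcy,\Q}^{n+1}\bigr) = d\log t\wedge\widetilde{\omega}_{\ovcy,\Q}^{n-1}.
\]
For surjectivity on cohomology, given a closed $\alpha\in\omega_{\ovcy,\Q}^n$ we would lift it to $\omega_0\in\widetilde{\omega}_{\ovcy,\Q}^n$; closedness of $\alpha$ forces $d\omega_0$ into the kernel of the projection, so $d\omega_0 = -d\log t\wedge\omega_1$ for some $\omega_1\in\widetilde{\omega}_{\ovcy,\Q}^n$ by (\ref{eq: rigid ses}). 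Differentiating gives $d\log t\wedge d\omega_1 = 0$, and the kernel description above lets us solve $d\omega_1 = -d\log t\wedge\omega_2$. Iterating yields a sequence $(\omega_k)_{k\geq 0}$ satisfying $d\omega_k = -d\log t\wedge\omega_{k+1}$ for all $k$, and the resulting $\xi := \sum_{k\geq 0}u^{[k]}\omega_k \in \widetilde{\omega}_{\ovcy,\Q}^n\llbracket u\rrbracket$ is then closed and lifts the class of $\alpha$.

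For injectivity, suppose $\xi = \sum_k u^{[k]}\omega_k$ is closed with $\omega_0$ in the kernel of $\widetilde{\omega}_{\ovcy,\Q}^n\to\omega_{\ovcy,\Q}^n$, so $\omega_0 = d\log t\wedge\beta_0$ for some $\beta_0\in\widetilde{\omega}_{\ovcy,\Q}^{n-1}$. The closedness equation $d\omega_0 = -d\log t\wedge\omega_1$ reduces to $d\log t\wedge(\omega_1 - d\beta_0) = 0$, so by the kernel description $\omega_1 = d\beta_0 + d\log t\wedge\beta_1$ for some $\beta_1$; iterating this step produces $(\beta_k)_{k\geq 0}$ with $\omega_k = d\beta_{k-1} + d\log t\wedge\beta_k$ for all $k\geq 1$. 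A direct calculation then shows that $\eta := \sum_{k\geq 0}u^{[k+1]}\beta_k$ satisfies $d\eta = \xi$. The main subtlety, and the reason the completion is indispensable here, is that $\eta$ is generically an infinite series even when $\xi$ has finitely many nonzero coefficients; the argument would fail verbatim in the polynomial version $\widetilde{\omega}_{\ovcy,\Q}^{\sbt}[u]$.
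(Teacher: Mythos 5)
Your proof is correct and reaches the conclusion by a more elementary, by-hand route than the paper's. The paper realizes $\widetilde{\omega}^{\sbt}_{\ovcy,\Q}\llbracket u\rrbracket$ as the product-total complex of the fourth-quadrant double complex $(\widetilde{\omega}_{\ovcy,\Q}^{i+j}u^{[-j]})_{i,j}$, shows via the short exact sequence \eqref{eq: rigid ses} that each column is an exact resolution of $\omega^k_{\ovcy,\Q}$, and concludes by the acyclic assembly lemma. Your recursive construction of the tails $(\omega_k)_{k\geqslant 0}$ and $(\beta_k)_{k\geqslant 0}$ is precisely the unrolled content of that lemma, driven by the same kernel identity that gives the exactness of the columns; both proofs isolate the essential role of the $u$-adic completion, the paper by noting that the double complex sits in the fourth quadrant so that formal arguments would not yield the analogous statement for $\widetilde{\omega}^{\sbt}_{\ovcy,\Q}[u]$, and you by observing that the primitive $\eta$ is generically an infinite series. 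Two small remarks: your kernel description $\ker\bigl(\wedge d\log t\colon\widetilde{\omega}^n_{\ovcy,\Q}\to\widetilde{\omega}^{n+1}_{\ovcy,\Q}\bigr)=d\log t\wedge\widetilde{\omega}^{n-1}_{\ovcy,\Q}$ follows from \eqref{eq: rigid ses} alone — injectivity of $\wedge d\log t$ on $\omega^{n}_{\ovcy,\Q}$ in degree $n+1$ together with $\ker(\widetilde{\omega}^{n}_{\ovcy,\Q}\to\omega^{n}_{\ovcy,\Q})=d\log t\wedge\widetilde{\omega}^{n-1}_{\ovcy,\Q}$ — so the local $\mathcal{O}$-linear splitting is not actually needed for it; and in the injectivity step you should first subtract $d\widetilde{\gamma}$ from $\xi$, where $\widetilde{\gamma}\in\widetilde{\omega}^{n-1}_{\ovcy,\Q}$ lifts a primitive of the image of $\omega_0$ in $\omega^n_{\ovcy,\Q}$, in order to reduce to your stated assumption that $\omega_0$ lies in the kernel of the projection.
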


\begin{proof}
	The compatibility of this morphism with the differentials and the multiplications are straightforward.
	Note moreover, that as complex, $\widetilde{\omega}_{\ovcy,\Q}^{\sbt}\llbracket u\rrbracket$ is given as the product-total complex of the double complex $(\widetilde{\omega}_{\ovcy,\Q}^{i + j}u^{[-j]})_{i,j}$, with $u^{[-j]} = 0$ for $j> 0$, and  horizontal differential $\partial_1$ and  vertical differential $\partial_2$ defined by 
		\begin{align*}
		&\partial_1^{i,j}\colon\widetilde{\omega}_{\ovcy,\Q}^{i + j}u^{[-j]} \rightarrow \widetilde{\omega}_{\ovcy,\Q}^{i + j + 1}u^{[-j]},\ \eta\cdot u^{[-j]} \mapsto (d\eta)\cdot u^{[-j]}&&(\eta\in\widetilde{\omega}_{\ovcy,\Q}^{i + j})\\
		&\partial_2^{i,j}\colon\widetilde{\omega}_{\ovcy,\Q}^{i + j}u^{[-j]} \rightarrow \widetilde{\omega}_{\ovcy,\Q}^{i + j + 1}u^{[-j-1]},\ \eta\cdot u^{[-j]} \mapsto (-1)^{i+1}d\log t\wedge\eta \cdot u^{[-j-1]}&&(\eta\in\widetilde{\omega}_{\ovcy,\Q}^{i + j})
		\end{align*}
	By the short exact sequence \eqref{eq: rigid ses}, we see that the sequence
		\[\cdots \rightarrow \widetilde{\omega}^{k-2}_{\ovcy,\Q}u^{[2]} \xrightarrow{\partial_2^{-2,k}}\widetilde{\omega}^{k-1}_{\ovcy,\Q}u^{[1]} \xrightarrow{\partial_2^{-1,k}}\widetilde{\omega}^k_{\ovcy,\Q}u^{[0]} \rightarrow \omega^k_{\ovcy,\Q} \rightarrow  0\]
	is exact for any $k\geqslant 0$.
	Hence the $k$-th column of $(\widetilde{\omega}_{\ovcy,\Q}^{i + j}u^{[-j]})_{i,j}$ is quasi-isomorphic to $\omega^k_{\ovcy,\Q}$.
	By the acyclic assembly lemma \cite[Lemma 2.7.3]{Weibel1994}, this shows the claim of  the lemma.
\end{proof}

\begin{remark}\label{rem: Kim Hain}
	\begin{enumerate}
	\item Since the double complex $(\widetilde{\omega}_{\ovcy,\Q}^{i + j}u^{[-j]})_{i,j}$ is in the fourth quadrant, it does not follow from formal arguments that $\widetilde{\omega}^{\sbt}_{\ovcy,\Q}[u]\rightarrow \omega^{\sbt}_{\ovcy,\Q}$ is a quasi-isomorphism.
		
		 The authors of \cite{KimHain2004}  define the Hyodo--Kato complex via the complex $W\widetilde{\omega}[u]$, which is the de Rham--Witt counterpart of our complex $\widetilde{\omega}^{\sbt}_{\ovcy,\Q}[u]$.
		However the proof of \cite[Lemma 7]{KimHain2004} only shows that the map $W\widetilde{\omega}\llbracket u\rrbracket\rightarrow W\omega$ is a quasi-isomorphism, where $W\widetilde{\omega}\llbracket u\rrbracket$ is the completion of $W\widetilde{\omega}[u]$ with respect to the ideal generated by $u^{[1]}$.
		We solve this issue in our subsequent paper.
		
	\item Note that $\widetilde{\omega}^{\sbt}_{\ovcy,\Q}\llbracket u\rrbracket$ does not have a weight filtration.
	By an analogous construction as in \cite[p. 1270]{KimHain2004}, we may also define a commutative dg-algebra with $\varphi$, $N$, and the weight filtration, which is quasi-isomorphic to $\widetilde{\omega}^{\sbt}_{\ovcy,\Q}\llbracket u\rrbracket$.
	Another possibility is to construct a  complex of $(\varphi,N)$-modules via a Steenbrink double complex as in \cite[\S 5]{GrosseKloenne2005}.
	It is quasi-isomorphic to $\omega^{\sbt}_{\ovcy,\Q}$ as well, and in addition has a weight filtration.
	However the wedge product is not well-defined there.
	\end{enumerate}
\end{remark}

We continue in our definition of the tuple $\Xi_\hk^\rig$.

\begin{enumerate}
	\item[(ii)] The realization category $\mathsf{Mod}_F(\varphi,N)$ consists of $(\varphi,N)$-modules over $F$.
		The realization functor is defined by
			\begin{align*}&\mathsf{RT}_{\Xi_\hk^\rig} \rightarrow \mathcal{C}^+(\mathsf{Mod}_F(\varphi,N))\;,\\
			&((Y,\ovy),(\cz,\ovcz,\phi),i) \mapsto \mathcal{K}_\hk((Y,\ovy),(\cz,\ovcz,\phi),i):=\Gamma(]\ovy[_{\ovcy}^{\log},\Gd_{\an}\widetilde{\omega}^{\sbt}_{\ovcy,\Q}\llbracket u\rrbracket).
			\end{align*}
\end{enumerate}

We discuss now the requirements and set-up for the definition of the functor $P_{(Y,\ovy)}$
for $(Y,\ovy)\in\overline{\mathsf{LS}}_{k^0,\Xi_\hk^\rig}^{\mathrm{em}}$.
Let $A\in\mathsf{SET}_{\Xi_\hk^\rig}^0(Y,\ovy)$, and fix an element $a_0\in \underline{A}$ with $f^{(a_0)} = \mathrm{id}_{(Y,\ovy)}$.
For each $a\in\underline{A}$ and $\alpha\in\Upsilon_{\ovcy^{(a)}}$, let
	\[\Upsilon_{\ovy}^{a,\alpha}: = \{\gamma\in\Upsilon_{\ovy}\mid (i^{(a)}\circ f^{(a)})(\ovy_\gamma)\subset\ovcy^{(a)}_\alpha\},\]
where $\ovy_\gamma$ and $\ovcy^{(a)}_\alpha$ are the irreducible components corresponding $\gamma$ and $\alpha$, respectively.
Since $(i^{(a)}\circ f^{(a)})(Y)\subset \cz^{(a)}$, we may consider the pull-back $(i^{(a)}\circ f^{(a)})^{\ast}\cd^{(a)}$ of the Cartier divisor $\cd^{(a)}$.
For each $\beta\in\Upsilon_{\cd^{(a)}}$ and $\delta\in\Upsilon_{D}$, let $m_{\beta,\delta}^{(a)}$ be the multiplicity of $(i^{(a)}\circ f^{(a)})^{\ast}\cd_\beta^{(a)}$ at $D_\delta$, namely we have
	\[(i^{(a)}\circ f^{(a)})^{\ast}\cd^{(a)}_\beta = \sum_{\delta\in\Upsilon_D}m_{\beta,\delta}^{(a)}D_\delta.\]
Now we note that there are natural inclusions
	\begin{align*}
	&\Upsilon_{\ovy^{(a)}}\subset\Upsilon_{\ovcy^{(a)}},&
	&\Upsilon_{\ovy}^{a,\alpha}\subset\Upsilon_{\ovy} = \Upsilon_{\ovy^{(a_0)}}\subset\Upsilon_{\ovcy^{(a_0)}}, \\
	&\Upsilon_{D^{(a)}}\subset\Upsilon_{\cd^{(a)}},&
	&\Upsilon_D = \Upsilon_{D^{(a_0)}}\subset\Upsilon_{\cd^{(a_0)}}.
	\end{align*}
Let $\ovcz''_A$ be the blow-up of $\prod_{\so_F}(\ovcz^{(a)})_{a\in\underline{A}}$ along the ideal
	\begin{equation*}
	\prod_{\substack{a\in\underline{A}\\ \alpha\in\Upsilon_{\ovcy^{(a)}}}}(\mathcal{I}^{(a)}_{\alpha} + \prod_{\gamma\in\Upsilon_{\ovy}^{a,\alpha}}\mathcal{I}^{(a_0)}_{\gamma})
	\times\prod_{\substack{a\in\underline{A}\\ \beta\in\Upsilon_{\cd^{(a)}}}}(\mathcal{J}^{(a)}_{\beta}  +  \prod_{\delta\in\Upsilon_D}\mathcal{J}_\delta^{(a_0),m^{(a)}_{\beta,\delta}}),
	\end{equation*}
where $\mathcal{I}^{(a)}_{\alpha}$ and $\mathcal{J}^{(a)}_{\beta}$ are the ideals of
	\begin{align*}
	&\ovcy^{(a)}_{\alpha}\times\prod_{\so_F}(\ovcz^{(b)})_{b\in\underline{A}\setminus\{a\}}\subset\prod_{\so_F}(\ovcz^{(a)})_{a\in\underline{A}}\qquad\text{and}\\
	&\cd^{(a)}_{\beta}\times\prod_{\so_F}(\ovcz^{(b)})_{b\in\underline{A}\setminus\{a\}}\subset\prod_{\so_F}(\ovcz^{(a)})_{a\in\underline{A}}&&
	\end{align*}
respectively.
Let $\ovcz'_A$ be the complement of the strict transforms in $\ovcz''_A$ of $\ovcy^{(a)}_\alpha\times\prod_{\so_F}(\ovcz^{(b)})_{b\in\underline{A}\setminus\{a\}}$ and $\cd^{(a)}_\beta\times\prod_{\so_F}(\ovcz^{(b)})_{b\in\underline{A}\setminus\{a\}}$ for all $a\in\underline{A}$, $\alpha\in\Upsilon_{\ovcy^{(a)}}$, and $\beta\in\Upsilon_{\cd^{(a)}}$.
Let $\ct_A$ be the blow-up of the $\underline{A}$-indexed self product $\prod_{\so_F}(\ct)_{a\in\underline{A}}$ of $\ct$ along the closed weak formal subscheme $\prod_{\so_F}(\Spwf\so_F)_{a\in\underline{A}}$ defined by $t = 0$.
Then the diagonal embedding $\ct\hookrightarrow\prod_{\so_F}(\ct)_{a\in\underline{A}}$ lifts to an embedding $\ct\hookrightarrow\ct_A$, and there exists a natural morphism $\ovcz'_A \rightarrow \ct_A$.
Let $\ovcz_A: = \ovcz'_A\times_{\ct_A}\ct$.
Let $\mathcal{E}_A$ be the divisor on $\ovcz_A$ defined by $t = 0$ with respect to the natural morphism $\ovcz_A \rightarrow \ct$.
We denote by $\widetilde{\cd}_A$ the closed weak formal subscheme of $\ovcz_A$ defined by the inverse image of
	\begin{align*}
	\prod_{\substack{a\in\underline{A}\\ \beta\in\Upsilon_{\cd^{(a)}}}}(\mathcal{J}^{(a)}_{\beta} + \prod_{\delta\in\Upsilon_D} \mathcal{J}_\delta^{(a_0),m^{(a)}_{\beta,\delta}}).
	\end{align*}
 Let $\cd_A$ be the admissible reduced closed weak formal subscheme of $\ovcz_A$ whose underlying spaces is $\widetilde{\cd}_A$. 
 Let $\cz_A: = \ovcz_A\setminus\cd_A$.

\begin{proposition}\label{prop: datum for A}
	The closed weak formal subschemes $\mathcal{E}_A$ and $\cd_A$ of $\ovcz_A$ are simple normal crossing divisors.
	The diagonal morphism $\prod i^{(a)}\circ f^{(a)}\colon\ovy \rightarrow \prod_{\so_F}(\ovcz^{(a)})_{a\in\underline{A}}$ factors into a strict immersion $i_A\colon\ovy \rightarrow \ovcz_A$ and the natural morphism $\ovcz_A \rightarrow \prod_{\so_F}(\ovcz^{(a)})_{a\in\underline{A}}$.
	Furthermore $i_A$ provides a boundary strict immersion $(Y,\ovy)\hookrightarrow(\cz_A,\ovcz_A)$.
	Moreover $(\cz_A,\ovcz_A)$ is independent of the choice of $a_0$ up to canonical isomorphism and functorial on $A$.
\end{proposition}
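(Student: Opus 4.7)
The plan is to verify every claim Zariski-locally on $\ovy$, exploiting the explicit charts afforded by Definition~\ref{def: ss log scheme}, and then to globalize. Fix an $a_0\in\underline{A}$ with $f^{(a_0)} = \mathrm{id}_{(Y,\ovy)}$, and shrink around a point of $\ovy$ so that for every $a\in\underline{A}$ there are strictly semistable coordinates on $\ovcz^{(a)}$ of the form $t_1^{(a)},\ldots,t_{m_a}^{(a)},s_1^{(a)},\ldots,s_{n_a}^{(a)}$ with $t = t_1^{(a)}\cdots t_{m_a}^{(a)}$; the irreducible components of $\ovcy^{(a)}$ are $\{t_\alpha^{(a)} = 0\}$ and the horizontal components of $\cd^{(a)}$ are (among) $\{s_\beta^{(a)} = 0\}$. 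Log smoothness and strict semistability of the diagonal forces the pulled-back equations $(i^{(a)}\circ f^{(a)})^\ast t_\alpha^{(a)}$ to be a unit times $\prod_{\gamma\in\Upsilon_{\ovy}^{a,\alpha}}t_\gamma^{(a_0)}$, and $(i^{(a)}\circ f^{(a)})^\ast s_\beta^{(a)}$ to be a unit times $\prod_{\delta}(s_\delta^{(a_0)})^{m^{(a)}_{\beta,\delta}}$; this is precisely the content of the definitions of $\Upsilon_\ovy^{a,\alpha}$ and $m^{(a)}_{\beta,\delta}$.

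Next I would compute the blow-up explicitly in these coordinates. The ideal $\mathcal{I}^{(a)}_\alpha + \prod_\gamma\mathcal{I}^{(a_0)}_\gamma$ has the local form $(t_\alpha^{(a)},\ \prod_{\gamma\in\Upsilon_\ovy^{a,\alpha}}t_\gamma^{(a_0)})$ and $\mathcal{J}^{(a)}_\beta + \prod_\delta\mathcal{J}^{(a_0),m^{(a)}_{\beta,\delta}}_\delta$ has the form $(s_\beta^{(a)},\ \prod_\delta(s_\delta^{(a_0)})^{m^{(a)}_{\beta,\delta}})$. Blowing up and removing the strict transforms of the two factors cut out by $t_\alpha^{(a)}$ and $s_\beta^{(a)}$ produces, on the resulting chart, the relations $t_\alpha^{(a)} = u_{a,\alpha}\prod_\gamma t_\gamma^{(a_0)}$ and $s_\beta^{(a)} = v_{a,\beta}\prod_\delta(s_\delta^{(a_0)})^{m^{(a)}_{\beta,\delta}}$ with $u_{a,\alpha},v_{a,\beta}$ units. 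Thus $\ovcz'_A$ admits local coordinates $\{t_\gamma^{(a_0)},s_\delta^{(a_0)}\}$ supplemented by the smooth parameters inherited from the remaining factors. The base change along the diagonal $\ct\hookrightarrow\ct_A$ identifies the pulled-back $t$'s, so $\ovcz_A$ becomes locally isomorphic to $\ovcz^{(a_0)}$ times a strongly smooth factor, whence $\ovcz_A$ is strictly semistable with simple normal crossing divisors $\mathcal{E}_A = \{t = 0\}$ and $\cd_A$ (the reduction of $\widetilde{\cd}_A$).

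The factorization of the diagonal morphism through $\ovcz_A$ is now automatic: each blow-up ideal becomes locally principal after pullback to $\ovy$ (generated by the appropriate monomial in the $t_\gamma^{(a_0)}$ or $s_\delta^{(a_0)}$), so the universal property of blow-ups yields a unique lift $\ovy \rightarrow \ovcz'_A$ which avoids all the strict transforms removed; composing with the base change to $\ct$ gives the desired $i_A$. The immersion $i_A$ is strict because on both sides the log structure is given by the same monoid of monomials (the one associated to the normal crossing divisor $\mathcal{E}_A\cup\cd_A$) under the identification provided above. The inclusion $Y\subset i_A^{-1}(\cz_A)$ is immediate since $f^{(a)}(Y)\subset\cz^{(a)}$ forces each $s_\delta^{(a_0)}$ to be a unit on $Y$.

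Independence of the choice of $a_0$: if $a_0'\in\underline{A}$ is another element with $f^{(a_0')} = \mathrm{id}_{(Y,\ovy)}$, the canonical identifications $\Upsilon_\ovy = \Upsilon_{\ovy^{(a_0)}} = \Upsilon_{\ovy^{(a_0')}}$ and $\Upsilon_D = \Upsilon_{D^{(a_0)}} = \Upsilon_{D^{(a_0')}}$ make the two blow-up ideals coincide as subsets of $\prod\mathcal{O}_{\ovcz^{(a)}}$, giving a tautological identification of the resulting weak formal schemes. Functoriality on $A$ under a morphism $\tau\colon A\rightarrow A'$ follows from the corresponding projection $\prod_{b\in\underline{A'}}\ovcz^{(b)}\rightarrow\prod_{a\in\underline{A}}\ovcz^{(a)}$ carrying the blow-up ideal for $A$ into that for $A'$, which by the universal property yields a canonical morphism between the constructions. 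The main obstacle is the first step: although the fact that $\mathrm{Bl}_{(x,y_1\cdots y_r)}$ is resolved by substituting $x = u\cdot y_1\cdots y_r$ in the relevant chart is classical, here it must be carried out simultaneously for all pairs $(a,\alpha)$ and $(a,\beta)$ while tracking the combinatorics of $\Upsilon_{\ovcy^{(a)}},\Upsilon_{\cd^{(a)}}$ and the multiplicities $m^{(a)}_{\beta,\delta}$, and one must check that the universal property suffices to glue the local descriptions into the globally defined $\ovcz_A$ claimed in the statement.
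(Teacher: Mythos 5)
Your proposal follows the same overall strategy as the paper's proof: reduce to a local computation with strictly semistable coordinates, compute the blow-up to obtain the relations $t_\alpha^{(a)}=u_\alpha^{(a)}\prod_\gamma t_\gamma^{(a_0)}$ and $s_\beta^{(a)}=v_\beta^{(a)}\prod_\delta(s_\delta^{(a_0)})^{m^{(a)}_{\beta,\delta}}$, and read off the strict immersion and the normal-crossing statement from the coordinate presentation. That part of the argument is sound and matches the paper's computation.

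The gap is in the independence-of-$a_0$ step. You assert that the identifications $\Upsilon_\ovy=\Upsilon_{\ovy^{(a_0)}}=\Upsilon_{\ovy^{(a_0')}}$ and $\Upsilon_D=\Upsilon_{D^{(a_0)}}=\Upsilon_{D^{(a_0')}}$ ``make the two blow-up ideals coincide.'' They do not. The ideal used for the choice $a_0$ involves $\mathcal{I}^{(a_0)}_\gamma$, the ideal of $\ovcy^{(a_0)}_\gamma\times\prod_{b\neq a_0}\ovcz^{(b)}$, which is cut out on the $a_0$-factor of the product, while the ideal for $a_0'$ involves $\mathcal{I}^{(a_0')}_\gamma$, cut out on the $a_0'$-factor; the coordinates $t_\gamma^{(a_0)}$ and $t_\gamma^{(a_0')}$ live on different factors and are genuinely different functions on $\prod_{\so_F}(\ovcz^{(a)})_{a\in\underline A}$. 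So the two blow-up centers differ, and the identification of the resulting schemes is not tautological. What is true---and what the paper proves---is that \emph{after} blowing up, the unit $w_\alpha^{(a)}=t_\alpha^{(a)}/\prod_{\gamma\in\Upsilon^{a,\alpha}_{\ovy}}t_\gamma^{(a_1)}$ attached to the other choice $a_1$ is a Laurent monomial in the units $u^{(a)}_\alpha$, and conversely; adjoining either family of units therefore produces the same coordinate ring, so the two versions of $\ovcz'_A$ agree. You need this explicit exchange of coordinates (the paper's equation \eqref{eq: indep-blow-up}), not an equality of ideals. A similar comment applies to the functoriality step: the projection $\prod_{b\in\underline{A'}}\ovcz^{(b)}\to\prod_{a\in\underline{A}}\ovcz^{(a)}$ does not literally carry one blow-up ideal into the other; one must show that the pulled-back ideal for $A$ becomes locally principal on $\ovcz_{A'}$ by expressing the relevant ratios as units, which is the content of Lemma~\ref{lem: g circ HK} and not a formal consequence of the universal property alone.
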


\begin{proof}
	Since the construction is local, we may assume that there are commutative diagrams 
		\begin{equation*}\xymatrix{
		\ovy^{(a)} \ar[r] \ar[d]^{i^{(a)}}&\Spec\frac{k[t_\alpha^{(a)},t_{\alpha'}^{(a),-1} s_\beta^{(a)}, s_{\beta'}^{(a),-1}]_{\alpha\in\Upsilon_{\ovcy^{(a)}},\alpha'\in\Upsilon_{\ovcy^{(a)}}\setminus\Upsilon_{\ovy^{(a)}},\beta\in\Upsilon_{\cd^{(a)}},\beta'\in\Upsilon_{\cd^{(a)}} \setminus\Upsilon_{D^{(a)}}}} {(\prod_{\alpha\in\Upsilon_{\ovcy^{(a)}}}t_\alpha^{(a)})} \ar[d]\\
		\ovcz^{(a)} \ar[r]&\Spwf\so_F[t_\alpha^{(a)}, s_\beta^{(a)}]^\dagger_{\alpha\in\Upsilon_{\ovcy^{(a)}},\beta\in\Upsilon_{\cd^{(a)}}}
		}\end{equation*}
	whose horizontal arrows are smooth as in Definition~\ref{def: ss log scheme}.
	We denote the image in $\mathcal{O}_{\ovcz^{(a)}}$ of the coordinates $t_\alpha^{(a)}$ and $s_\beta^{(a)}$ again by the same letters.
	Since $i^{(a)}\circ f^{(a)}\colon \ovy \rightarrow \ovcz^{(a)}$ is a morphism of weak formal log schemes over $\ct$, the homomorphism $(i^{(a)}\circ f^{(a)})^{\sharp}\colon (i^{(a)}\circ f^{(a)})^{-1}\co_{\ovcz^{(a)}} \rightarrow \co_{\ovy}$ sends $\prod_{\alpha\in\Upsilon_{\ovy^{(a)}}}t_\alpha^{(a)}$ to $\prod_{\gamma\in\Upsilon_{\ovy}}t_{\gamma}$, and $t_\alpha^{(a)}$ to $\prod_{\gamma\in\Upsilon_{\ovy}^{a,\alpha}}t_{\gamma}$ up to a unit in $\co_{\ovy}$.
	Moreover since $(i^{(a)}\circ f^{(a)})(Y)\subset \cz^{(a)}$, $(i^{(a)}\circ f^{(a)})^{\sharp}$ sends $s^{(a)}_\beta$ to $\prod_{\delta\in\Upsilon_{D}}s_{\delta}^{m^{(a)}_{\beta,\delta}}$ up to a unit in $\co_{\ovy}$.
	Hence, if we set
		$$
		u^{(a)}_\alpha: = \frac{t_\alpha^{(a)}}{\prod_{\gamma\in\Upsilon_{\ovy}^{a,\alpha}}t_{\gamma}^{(a_0)}} \qquad
		\text{ and }\qquad
		v^{(a)}_\beta: = \frac{s_\beta^{(a)}}{\prod_{\delta\in\Upsilon_{D}}s_{\delta}^{(a_0),m^{(a)}_{\beta,\delta}}}
		$$
	for each $a\in\underline{A}$, $\alpha\in\Upsilon_{\ovcy^{(a)}}$ and $\beta\in\Upsilon_{\cd^{(a)}}$, we obtain a smooth morphism
		\begin{eqnarray*}
		\ovcz'_A& \rightarrow &\Spwf\frac{\so_F[t_\alpha^{(a)},s_\beta^{(a)},u_\alpha^{(a),\pm 1},v_\beta^{(a),\pm 1}]^\dagger_{a\in\underline{A},\alpha\in\Upsilon_{\ovcy^{(a)}},\beta\in\Upsilon_{\cd^{(a)}}}}{\sum_{a\in\underline{A},\alpha\in\Upsilon_{\ovcy^{(a)}}}(t^{(a)}_\alpha-u^{(a)}_\alpha\prod_{\gamma\in\Upsilon^{a,\alpha}_{\ovy}}t^{(a_0)}_\gamma)}\\
		&& = \Spwf\frac{\so_F[t_{\gamma}^{(a_0)},s_{\delta}^{(a_0)},u_\alpha^{(a),\pm 1},v_\beta^{(a),\pm 1}]^\dagger_{\gamma\in\Upsilon_{\ovy},\delta\in\Upsilon_{D},a\in\underline{A},\alpha\in\Upsilon_{\ovcy^{(a)}},\beta\in\Upsilon_{\cd^{(a)}}}}{\sum_{\gamma\in\Upsilon_{\ovy}}(u_\gamma^{(a_0)}-1)},
		\end{eqnarray*}
	where $a_0$ is a fixed element in $\underline{A}$ with $f^{(a_0)} = \mathrm{id}_{(Y,\ovy)}$.
	The embedding $\ovcz_A \rightarrow \ovcz_A'$ is given by the ideal
		\begin{equation*}
		\sum_{a,a'\in\underline{A}} \left(\frac{\prod_{\alpha\in\Upsilon_{\ovcy^{(a)}}}t^{(a)}_\alpha}{\prod_{\alpha'\in\Upsilon_{\ovcy^{(a')}}}t^{(a')}_{\alpha'}}-1 \right) = 
		\sum_{a\in\underline{A}} \left(\prod_{\alpha\in\Upsilon_{\ovcy^{(a)}}}u_\alpha^{(a)}-1 \right).
		\end{equation*}
	Consequently we have a smooth morphism
		\begin{equation*}
		\ovcz_A \rightarrow \Spwf\frac{\so_F[t_{\gamma}^{(a_0)},s_{\delta}^{(a_0)}, u_\alpha^{(a),\pm 1},v_\beta^{(a),\pm 1}]^\dagger_{\gamma\in\Upsilon_{\ovy},\delta\in\Upsilon_{D},a\in\underline{A},\alpha\in\Upsilon_{\ovcy^{(a)}},\beta\in \Upsilon_{\cd^{(a)}}}}{\sum_{\gamma\in\Upsilon_{\ovy}}(u^{(a_0)}_\gamma-1)+\sum_{a\in\underline{A}}(\prod_{\alpha\in\Upsilon_{\ovcy^{(a)}}}u_\alpha^{(a)}-1)}.
		\end{equation*}
	The ideals of $\mathcal{E}_A\subset\ovcz_A$ and $\widetilde{\cd}_A\subset\ovcz_A$ are generated by 
		$$
		\prod_{\alpha\in\Upsilon_{\ovcy^{(a)}}}t^{(a)}_\alpha\ \ (\text{ for any fixed }a\in\underline{A})
		\quad \text{ and }\quad
		\prod_{\substack{a\in\underline{A}\\ \beta\in\Upsilon_{\cd^{(a)}}}}s_\beta^{(a)},
		$$
	and hence by 
		$$
		\prod_{\gamma\in\Upsilon_{\ovy}}t_{\gamma}^{(a_0)}
		\quad \text{ and } \quad
		\prod_{\delta\in\Upsilon_{D}}s_{\delta}^{(a_0),m_{\delta}}
		$$
	respectively, where
		\[m_{\delta}: = \sum_{\substack{a\in\underline{A}\\ \beta\in\Upsilon_{D^{(a)}}}}m^{(a)}_{\beta,\delta}\geqslant 1.\]	
	Thus $\cd_A$ is a simple normal crossing divisor defined by $\prod_{\delta\in\Upsilon_D}s_{\delta}^{(a_0)}$.
	Therefore there is a natural morphism $i_A\colon\ovy \rightarrow \ovcz_A$ sending $u_\alpha^{(a)}$ and $v_\beta^{(\alpha)}$ to units in $\co_{\ovy}$, which is a strict immersion.
	
	If there is another $a_1\in\underline{A}$ with $f^{(a_1)} = \mathrm{id}_{(Y,\ovy)}$, we have
		\begin{align}\label{eq: indep-blow-up}
		w_\alpha^{(a)}: =& \frac{t_\alpha^{(a)}}{\prod_{\gamma\in\Upsilon_{\ovy}^{a,\alpha}}t_{\gamma}^{(a_1)}} = \frac{u_\alpha^{(a)}}{\prod_{\gamma\in\Upsilon_{\ovy}^{a,\alpha}}u_{\gamma}^{(a_1)}}
		\quad\text{and}\quad\\
		z_\beta^{(a)}: =& \frac{s_\beta^{(a)}}{\prod_{\delta\in\Upsilon_D^{a,\beta}}s_{\delta}^{(a_1)}} = \frac{v_\beta^{(a)}}{\prod_{\delta\in\Upsilon_{D}^{a,\beta}}v_{\delta}^{(a_1)}}\nonumber
		\end{align}
	for any $a\in A$, $\alpha\in\Upsilon_{\ovy^{(a)}}$, and $\beta\in\Upsilon_{D^{(a)}}$.
	Note that $w_\alpha^{(a)}$ and $z_\beta^{(a)}$ are counterparts to $u_\alpha^{(a)}$ and $v^{(a)}_\beta$ for the choice $a_1$. The equalities \eqref{eq: indep-blow-up} say that $w_\alpha^{(a)}$ and $z_\beta^{(a)}$ can be written by $u_{\alpha}^{(a),\pm 1}$ and $v_{\beta}^{(a),\pm 1}$. Conversely, by the same reason, $u_\alpha^{(a)}$ and $v_\beta^{(a)}$ are written by $w_{\alpha}^{(a),\pm 1}$ and $z_{\beta}^{(a),\pm 1}$. Since $\ovcz'_A$ for the choice $a_0$ is given by adding $u_{\alpha}^{(a),\pm 1}$ and $v_{\beta}^{(a),\pm 1}$ to the coordinate ring, it is also given by adding $w_{\alpha}^{(a),\pm 1}$ and $z_{\beta}^{(a),\pm 1}$. Thus this coincides with $\ovcz'_A$ for the choice $a_1$.
\end{proof}

We continue to use  the notation of the proof of Proposition~\ref{prop: datum for A}. 
The Frobenius $\phi^{(a)}$ sends up to units  $t_\alpha^{(a)}$ and $s_\beta^{(a)}$ to $t_\alpha^{(a),p}$ and $s_\beta^{(a),p}$, respectively.
Hence $\prod\phi^{(a)}$ on $\prod_{\so_F}(\ovcz^{(a)})_{a\in\underline{A}}$ lifts uniquely to an endomorphism $\phi_A$ on $\ovcz_A$, which sends $u_\alpha^{(a)}$ and $v_\beta^{(a)}$ to $u_\alpha^{(a),p}$ and $v_\beta^{(a),p}$ up to units.
We endow $\ovcz_A$ with the log structure associated to $\mathcal{E}_A\cup\cd_A$.

We precede the definition of the last two entries of the tuple $\Xi_\hk$ with the following statement.

\begin{enumerate}
	\item[(iii)] For $(Y,\ovy)\in\overline{\mathsf{LS}}_{k^0,\Xi_\hk^\rig}^{\mathrm{ss},\mathrm{em}}$, we define a functor $P_{(Y,\ovy)}\colon\mathsf{SET}^0_{\Xi_\hk^\rig}(Y,\ovy) \rightarrow \mathsf{RD}_{\Xi_\hk^\rig}(Y,\ovy)$ by $P_{(Y,\ovy)}(A): = ((\cz_A,\ovcz_A,\phi_A),i_A)$.
\end{enumerate}

\begin{lemma}\label{lem: g circ HK}
	Let $g\colon(Y,\ovy) \rightarrow (Y',\ovy')$ be a morphism in $\overline{\mathsf{LS}}_{k^0,\Xi_\hk^\rig}^{\mathrm{ss},\mathrm{em}}$, $B\in\mathsf{SET}^0_{\Xi_\hk^\rig}(Y,\ovy)$, $C\in\mathsf{SET}^0_{\Xi_\hk^\rig}(Y',\ovy')$, and $\tau\colon g^\circ(C) \rightarrow  B$ be a morphism in $\mathsf{SET}^0_{\Xi_\hk^\rig}(Y,\ovy)$.
	Then the composition of the natural morphism $\ovcz_B \rightarrow  \prod_{\so_F}(\ovcz^{(b)})_{b\in\underline{B}}$ and the morphism
		\[\prod_{\so_F}(\ovcz^{(b)})_{b\in\underline{B}} \rightarrow \prod_{\so_F}(\ovcz^{(b)})_{b\in \underline{g^\circ(C)}} = \prod_{\so_F}(\ovcz^{(c)})_{c\in\underline{C}}\]
	induced by $\tau$ uniquely lifts to a morphism
		\begin{equation}\label{eq: g circ}
		G = G(g,\tau)\colon \ovcz_B \rightarrow \ovcz_C.
		\end{equation}
\end{lemma}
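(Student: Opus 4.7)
The plan is to apply the universal property of blow-ups, exploiting the explicit coordinates from the proof of Proposition~\ref{prop: datum for A}. Write $\pi_B\colon\ovcz_B\to\prod_{b\in\underline{B}}\ovcz^{(b)}$ for the natural morphism and $p\colon\prod_{b}\ovcz^{(b)}\to\prod_{c}\ovcz^{(c)}$ for the projection induced by $\underline{\tau}\colon\underline{C}\to\underline{B}$. Since $\tau$ is a natural transformation of multisets, for each $c\in\underline{C}$ one has $f^{(\tau(c))}=f^{(c)}\circ g$ together with an identification $(\ovcz^{(\tau(c))},\phi^{(\tau(c))},i^{(\tau(c))})=(\ovcz^{(c)},\phi^{(c)},i^{(c)})$ of rigid data, so $p$ is well-defined on each factor. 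The goal is to produce a unique lift of $p\circ\pi_B$ through the defining morphism $\ovcz_C\to\prod_{c}\ovcz^{(c)}$.

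The morphism $g$ sends each irreducible component $\ovy_\gamma$ into a unique component of $\ovy'$, giving a map $g_\ast\colon\Upsilon_{\ovy}\to\Upsilon_{\ovy'}$; similarly, the pullback of Cartier divisors determines multiplicities $n_{\delta,\delta'}\in\N$ with $g^\ast D'_{\delta'}=\sum_\delta n_{\delta,\delta'}D_\delta$. Unwinding the definitions yields the two key identities $\Upsilon^{\tau(c),\alpha}_{\ovy}=g_\ast^{-1}(\Upsilon^{c,\alpha}_{\ovy'})$ and $m^{(\tau(c))}_{\beta,\delta}=\sum_{\delta'}n_{\delta,\delta'}m^{(c)}_{\beta,\delta'}$. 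Fix base points $b_0\in\underline{B}$ and $c_0\in\underline{C}$ with $f^{(b_0)}=\mathrm{id}$ and $f^{(c_0)}=\mathrm{id}$, available because $B,C\in\mathsf{SET}^0$. In the coordinates of Proposition~\ref{prop: datum for A}, the pullback of $t_\alpha^{(c)}$ to $\ovcz_B$ equals $u_\alpha^{(\tau(c))}\prod_{\gamma\in g_\ast^{-1}(\Upsilon^{c,\alpha}_{\ovy'})}t_\gamma^{(b_0)}$ up to a unit, while the pullback of $\prod_{\gamma'\in\Upsilon^{c,\alpha}_{\ovy'}}t_{\gamma'}^{(c_0)}$ is a unit times the same product. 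Hence both generators of $\mathcal{I}^{(c)}_\alpha+\prod_{\gamma'}\mathcal{I}^{(c_0)}_{\gamma'}$ become $\mathcal{O}_{\ovcz_B}$-multiples of a common principal generator, and the pullback ideal is invertible; the computation for $\mathcal{J}^{(c)}_\beta+\prod_{\delta'}\mathcal{J}_{\delta'}^{(c_0),m^{(c)}_{\beta,\delta'}}$ is analogous, using the multiplicity identity. The universal property of blow-ups then produces a unique lift $\ovcz_B\to\ovcz''_C$.

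To finish, one checks that this lift lands in $\ovcz_C$. The pulled-back functions $u_\alpha^{(c)}$ and $v_\beta^{(c)}$ remain units on $\ovcz_B$ by the computation above, so the map avoids the strict transforms removed in passing from $\ovcz''_C$ to $\ovcz'_C$. Compatibility with the base change $\ovcz_C=\ovcz'_C\times_{\ct_C}\ct$ is automatic, since $p$ carries the diagonal $\ct\hookrightarrow\ct_B=\prod_b\ct$ into the diagonal $\ct\hookrightarrow\ct_C=\prod_c\ct$, so $\ovcz_B\to\ct_C$ factors through $\ct$. Uniqueness is inherited from the universal property of the blow-up. The main obstacle is the combinatorial bookkeeping: the two identities above relating $g_\ast$, the multiplicities $n_{\delta,\delta'}$, and the sets $\Upsilon^{\cdot,\cdot}_{\ovy}$ must be verified carefully, after which the invertibility calculation and the universal property do the rest.
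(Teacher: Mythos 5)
Your proof is correct and follows the paper's argument in all essentials: working locally in the coordinates of Proposition~\ref{prop: datum for A}, both establish the same two combinatorial identities (the paper's versions are $\Upsilon^{c,\alpha}_{\ovy} = \coprod_{\gamma'\in\Upsilon^{c,\alpha}_{\ovy'}}\Upsilon^{c_0,\gamma'}_{\ovy}$ and $m^{(c)}_{\beta,\delta}=\sum_{\delta'}m^{(c)}_{\beta,\delta'}m^{(c_0)}_{\delta',\delta}$), show that the blown-up ideals pull back to invertible ideals by verifying that the $u$- and $v$-coordinates become units of $\co_{\ovcz_B}$, and then produce and characterize the lift via the universal property of the blow-up before descending to $\ovcz_C$. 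The only small slip is at the final step, where you write $\ct_B=\prod_b\ct$ and $\ct_C=\prod_c\ct$ although these are in fact blow-ups of those products along the origin; the cleanest way to see the descent from $\ovcz'_C$ to $\ovcz_C$, as the paper does, is to check directly that $\prod_{\alpha\in\Upsilon_{\ovcy^{(c)}}}t_\alpha^{(c)} = \prod_{\alpha'\in\Upsilon_{\ovcy^{(c')}}}t_{\alpha'}^{(c')}$ holds in $\co_{\ovcz_B}$ for all $c,c'\in\underline{C}$.
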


\begin{proof}
	Working locally, we assume without loss of generality that there is a commutative diagram
		\begin{equation*}\xymatrix{
		\ovy^{(a)} \ar[r] \ar[d]^{i^{(a)}}& \Spec\frac{k[t_\alpha^{(a)},t_{\alpha'}^{(a),-1},s_\beta^{(a)}, s_{\beta'}^{(a),-1}]_{\alpha\in\Upsilon_{\ovcy^{(a)}},\alpha' \in\Upsilon_{\ovcy^{(a)}}\setminus\Upsilon_{\ovy^{(a)}},\beta\in\Upsilon_{\cd^{(a)}},\beta'\in\Upsilon_{\cd^{(a)}}\setminus\Upsilon_{D^{(a)}}}}{(\prod_{\alpha\in\Upsilon_{\ovcy^{(a)}}}t_{\alpha}^{(a)})} \ar[d]\\
		\ovcz^{(a)} \ar[r]&\Spwf\so_F[t_\alpha^{(a)}, s_\beta^{(a)}]^\dagger_{\alpha\in\Upsilon_{\ovcy^{(a)}},\beta\in\Upsilon_{\cd^{(a)}}}
		}\end{equation*}
	for any $a\in\underline{B}$ or $a\in\underline{C}$.
	Let $b_0\in\underline{B}$ and $c_0\in\underline{C}$ with $f^{(b_0)} = \mathrm{id}_{(Y,\ovy)}$ and $f^{(c_0)} = \mathrm{id}_{(Y',\ovy')}$.
	Note that for any $c\in\underline{C}$, $\alpha\in\Upsilon_{\ovcy^{(c)}}$, and $\beta\in\Upsilon_{\cd^{(c)}}$, $\delta\in\Upsilon_D$, we have
		$$
		\Upsilon_{\ovy}^{c,\alpha} = \coprod_{\gamma'\in\Upsilon_{\ovy'}^{c,\alpha}}\Upsilon_{\ovy}^{c_0,\gamma'} \quad \text{ and } \quad
		m^{(c)}_{\beta,\delta} = \sum_{\delta'\in\Upsilon_{D'}}m^{(c)}_{\beta,\delta'}\cdot m^{(c_0)}_{\delta',\delta}.
		$$
	Therefore there are equalities in $\mathcal{O}_{\ovcz_B}$
		\begin{eqnarray*}
		\frac{t_\alpha^{(c)}}{\prod_{\gamma'\in\Upsilon_{\ovy'}^{c,\alpha}}t_{\gamma'}^{(c_0)}}
		& = &\frac{t_\alpha^{(c)}}{\prod_{\gamma\in\Upsilon_{\ovy}^{c,\alpha}}t_{\gamma}^{(b_0)}}
		\cdot\prod_{\gamma'\in\Upsilon^{c,\alpha}_{\ovy'}}\left(\frac{\prod_{\gamma\in\Upsilon_{\ovy}^{c_0,\gamma'}}t_{\gamma}^{(b_0)}}{t_{\gamma'}^{(c_0)}}\right),\\
		\frac{s_\beta^{(c)}}{\prod_{\delta'\in\Upsilon_{D'}}s_{\delta'}^{(c_0),m^{(c)}_{\beta,\delta'}}}
		& = &\frac{s_\beta^{(c)}}{\prod_{\delta\in\Upsilon_D}s_{\delta}^{(b_0),m^{(c)}_{\beta,\delta}}}
		\cdot\prod_{\delta'\in\Upsilon_{D'}} \left(\frac{\prod_{\delta\in\Upsilon_D}s_{\delta}^{(b_0), m^{(c_0)}_{\delta',\delta}}}{s_{\delta'}^{(c_0)}}\right)^{m^{(c)}_{\beta,\delta'}},
		\end{eqnarray*}
	and they belong to $\co_{\ovcz_B}^\times$.
	This implies that $\ovcz_B \rightarrow \prod_{\so_F}(\ovcz^{(c)})_{c\in\underline{C}}$ factors through $\ovcz'_C$.
	This factorization is unique by the universal property of the blow-up.
	Moreover since we have $\prod_{\alpha\in\Upsilon_{\ovcy^{(c)}}}t_\alpha^{(c)} = \prod_{\alpha'\in\Upsilon_{\ovcy^{(c')}}}t_{\alpha'}^{(c')}$ in $\mathcal{O}_{\ovcz_B}$ for any $c,c'\in\underline{C}$, this uniquely factors through $\ovcz_C$.
\end{proof}

\begin{enumerate}
	\item[(iv)] For $g,B,C,\tau$ as in Lemma \ref{lem: g circ HK}, we define a morphism
				\[g^\circ_\tau\colon ((Y,\ovy),(\cz_B,\ovcz_B,\phi_B),i_B) \rightarrow ((Y',\ovy'),(\cz_C,\ovcz_C,\phi_C),i_C)\]
			in $\mathsf{RT}_{\Xi_\hk^\rig}$ by $g^\circ_\tau: = (g,G(g,\tau))$, where $G(g,\tau)$ is as in Lemma \ref{lem: g circ HK}.	
	\item[(v)] Moreover one sees easily that, for $((Y,\ovy),(\cz,\ovcz,\phi),i)\in\mathsf{RT}_{\Xi_\hk^\rig}$ and $A\in\mathsf{SET}^0_{\Xi_\hk^\rig}((Y,\ovy),(\cz,\ovcz,\phi),i)$, the diagonal morphism $\ovcz \rightarrow \prod_{\so_F}(\ovcz^{(a)})_{a\in\underline{A}}$ uniquely lifts to a morphism $\ovcz \rightarrow \ovcz_A$.
			This defines a morphism $D_A\colon ((\cz,\ovcz,\phi),i) \rightarrow  ((\cz_A,\ovcz_A,\phi_A),i_A)$ in $\mathsf{RD}_{\Xi_\hk^\rig}(Y,\ovy)$.
\end{enumerate}

By Lemma \ref{lem: qis with KimHain} we have $\Gamma(]\ovy[_{\ovcy}^{\log},\Gd_{\an}\widetilde{\omega}^{\sbt}_{\ovcy,\Q}\llbracket u\rrbracket) \cong\Gamma(]\ovy[_{\ovcy}^{\log},\Gd_{\an}\omega^{\sbt}_{\ovcy/\so_F^0,\Q}).$
Thus Assumption \ref{ass: weq} and Assumption \ref{ass: descent} follow from the corresponding facts for log rigid cohomology of $\ovy$. 
Moreover, the next lemma implies that $\overline{\mathsf{LS}}_{k^0,\Xi_\hk^\rig}^{\mathrm{ss},\mathrm{loc}} = \ssk$, that is strictly semistable $k^0$-log schemes with boundary are locally $\Xi_\hk$-embeddable.

\begin{lemma}\label{lem: HK embedding}
	Let $(Y,\ovy)$ be an object in $\ssk$ admitting a (global) chart as in Definition~\ref{def: ss log scheme} (i), and assume that $\ovy$ is affine.
	Then $(Y,\ovy)$ is $\Xi_\hk^\rig$-embeddable.
\end{lemma}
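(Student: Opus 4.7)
The plan is to mimic the construction of Lemma~\ref{lem: log rig embeddable} and Proposition~\ref{prop: embeddable boundary}, building a weakly complete affine lift of $\ovy$ whose coordinates tautologically witness strict semistability and carry an obvious Frobenius. Writing $\ovy = \Spec A$, I use the given chart to fix elements $t_1,\ldots,t_m,s_1,\ldots,s_n \in A$ making the smooth morphism $\ovy \to \Spec k[t_1,\ldots,t_m,s_1,\ldots,s_n]/(t_1\cdots t_m)$ of Definition~\ref{def: ss log scheme}(i) explicit. Since $\ovy$ is affine, I pick additional elements $\overline{x}_1,\ldots,\overline{x}_r \in A$ so that
\begin{equation*}
\gamma\colon k[t_1,\ldots,t_m,s_1,\ldots,s_n,x_1,\ldots,x_r]/(t_1\cdots t_m) \twoheadrightarrow A
\end{equation*}
is surjective.

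Next, I set $\ovcz := \Spwf \so_F[t_1,\ldots,t_m,s_1,\ldots,s_n,x_1,\ldots,x_r]^\dagger$, endowed with the log structure associated to the chart $\N^m \oplus \N^n \to \mathcal{O}_{\ovcz}$ sending generators to $t_1,\ldots,t_m,s_1,\ldots,s_n$, and with structure morphism $\ovcz \to \ct$ given by $t \mapsto t_1\cdots t_m$. Let $\cz := \ovcz[1/(s_1 \cdots s_n)]^\dagger$. The evident coordinate projection realizes both the smoothness and the Cartesian condition of Definition~\ref{def: ss log scheme}(ii), so $(\cz,\ovcz) \in \sst$. The surjection $\gamma$ produces a closed immersion $\ovy \hookrightarrow \ovcz\times_\ct k^0$ that is strict on log structures (both sides receive the same $\N^m\oplus\N^n$-chart), and inverting $s_1\cdots s_n$ on both sides gives $Y = \ovy \times_{\ovcz\times_\ct k^0}(\cz\times_\ct k^0)$; together with the density of $Y$ in $\ovy$ built into $(Y,\ovy) \in \ssk$, this exhibits a boundary strict immersion
\begin{equation*}
i\colon (Y,\ovy) \hookrightarrow (\cz\times_\ct k^0,\ovcz\times_\ct k^0).
\end{equation*}

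Finally, I define $\phi$ on $\ovcz$ as the $\sigma$-semilinear endomorphism sending $t_i \mapsto t_i^p$, $s_j \mapsto s_j^p$, $x_k \mapsto x_k^p$. It lifts the $p$-th power Frobenius on $\ovcz\times_\ct T$, is compatible with $\sigma$ on $\ct$ because $\phi(t_1\cdots t_m) = t^p$, preserves $\cz$, and raises the defining equations of the irreducible components of $\ovcy = \ovcz\times_\ct \so_F^0$ (the $t_i$) and of $\cd = \ovcz\setminus\cz$ (the $s_j$) to their $p$-th powers, so $\phi$ is a Frobenius lifting and $((\cz,\ovcz,\phi),i)$ supplies the required object of $\mathsf{RD}_{\Xi_\hk^\rig}(Y,\ovy)$. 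The construction hinges entirely on the global chart hypothesis, which collapses strict semistability to a coordinate check; no step hides a difficulty, so there is no real main obstacle beyond carefully bookkeeping the log and boundary data so that every clause in the definition of $\Xi_\hk^\rig$ is witnessed.
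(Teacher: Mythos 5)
Your proof is correct but takes a genuinely different route from the paper's. You build the rigid Hyodo--Kato datum by brute force, taking $\ovcz=\Spwf\so_F[t_1,\ldots,t_m,s_1,\ldots,s_n,x_1,\ldots,x_r]^\dagger$ with the tautological chart and the coordinate-wise Frobenius $t_i\mapsto t_i^p$, $s_j\mapsto s_j^p$, $x_k\mapsto x_k^p$; this is in the same spirit as Lemma~\ref{lem: log rig embeddable} and Proposition~\ref{prop: embeddable boundary} and needs no outside input. The paper instead applies \cite[Prop.~11.3]{Kato1996} to realize $\ovy$ as the divisor $\{t_1\cdots t_m=0\}$ inside an affine smooth $k$-scheme $\ovz$, lifts $\ovz$ to a smooth affine $\so_F$-scheme by Elkik's theorem, takes the weak completion $\ovcz$, and produces the Frobenius from the infinitesimal lifting property of $\ovcz\to\Spwf\so_F[t_1,\ldots,t_m,s_1,\ldots,s_n]^\dagger$ together with descent from the $p$-adic to the weak completion via \cite[Cor.~2.4.3]{vanderPut1986}. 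What the heavier machinery buys is a \emph{tight} rigid datum: the reduction satisfies $\ovcz\times_\ct k^0\cong\ovy$ and $\cz\times_\ct k^0\cong Y$ exactly, rather than $(Y,\ovy)$ sitting as a proper closed subobject of the reduction as in your construction. That stronger form is quietly needed later: the proof of Lemma~\ref{lem: qis of M V P} appeals to ``the proof of Lemma~\ref{lem: HK embedding}'' to obtain an exact closed immersion $(Y,\ovy)\hookrightarrow(\cz,\ovcz)$ with $\ovy\cong\ovcz\times_\ct k^0$ and $Y\cong\cz\times_\ct k^0$, which your construction does not deliver. So while your argument establishes the lemma exactly as stated, a reader who substitutes it for the paper's would have to supply the Kato--Elkik refinement separately to run Lemma~\ref{lem: qis of M V P}.
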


\begin{proof}
	By \cite[Prop.~11.3]{Kato1996} there exists a closed immersion $\overline{Y}\hookrightarrow\overline{Z}$ into an affine smooth scheme $\overline{Z}$, which fits to a Cartesian diagram
		\[\xymatrix{
		\overline{Y} \ar[r] \ar[d]&\overline{Z} \ar[d]\\
		\Spec k[t_1,\ldots,t_m,s_1,\ldots,s_n]/(t_1\cdots t_m) \ar[r]&\Spec k[t_1,\ldots,t_m,s_1,\ldots,s_n],
		}\]
	where the vertical arrows are smooth. 
	By \cite{Elkik1973} we can lift $\overline{Z}$ to an affine smooth $\so_F$-scheme.
	 Let $\ovcz$ be its weak completion.
	If we take lifts to $\co_{\ovcz}$ of the images of $t_1,\ldots,t_m,s_1,\ldots,s_n$ in $\co_{\overline{Z}}$, we obtain a smooth morphism $\ovcz \rightarrow \Spwf\so_F[t_1,\ldots,t_m,s_1,\ldots,s_n]^\dagger$.
	Moreover the infinitesimal lifting property of $\ovcz \rightarrow \Spwf\so_F[t_1,\ldots,t_m,s_1,\ldots,s_n]^\dagger$ implies that there exists a lift of the $p$-th power Frobenius on $\overline{Z}$  to the completion $\hat{\ovcz}$ of $\ovcz$  which is compatible with $\sigma$ on $\ct$ and which sends $t_i$ and $s_j$ to their $p$-th powers.
	Hence by \cite[Cor.~2.4.3]{vanderPut1986} we obtain a lift $\phi$ of the Frobenius to $\ovcz$ of the same form.
	We endow $\overline{\mathcal{Z}}$ with the log structure defined by the chart $\N^{m + n} \rightarrow \mathcal{O}_{\overline{\mathcal{Z}}},(\ell_1,\ldots,\ell_{m + n}) \mapsto  t_1^{\ell_1}\cdots t_m^{\ell_m}s_1^{\ell_{m + 1}}\cdots s_n^{\ell_{m + n}}$.
	Let $\cd$ be the closed weak formal subscheme of $\ovcz$ defined by $s_1\cdots s_n = 0$, and let $\cz: = \ovcz\setminus\cd$.
	Then $(\cz,\ovcz)$ and $\phi$ give a $\Xi_\hk^\rig$-rigid datum for $(Y,\ovy)$.
\end{proof}

Finally, we may set
	\[\mathbb{R}\Gamma^\rig_\hk(Y,\ovy): = \mathbb{R}\Gamma_{\Xi_\hk^\rig}(Y,\ovy)\]
for any $(Y,\ovy)\in\overline{\mathsf{LS}}_{k^0}^{\mathrm{ss}}$, which we call the {\it rigid Hyodo--Kato cohomology} of $(Y,\ovy)$.

\subsubsection{Base change and Frobenius}\label{subsubsec: Base change and Frobenius}

In this section we use the axiomatization of morphisms of rigid cohomological tuples of \S\ref{subsubsec: Morphisms} to study base change properties of the canonical rigid complexes defined above and the existence of a Frobenius morphism in applicable cases.

First we take care of base change from $\ct$ to $\so_F^0$ by defining a morphism of rigid cohomological tuples $L_{\so_F^0/\ct}\colon\Xi(\ct) \rightarrow \Xi(\so_F^0)$ consisting of functors
	\begin{align*}
	&L_{\so_F^0/\ct,\mathrm{base}}\colon\mathsf{LS}_T \rightarrow \mathsf{LS}_{k^0}, \qquad Y \mapsto  Y\times_Tk^0,\\
	&L_{\so_F^0/\ct,\mathrm{lift}}\colon\mathsf{LS}_\ct^{\mathrm{sm}} \rightarrow \mathsf{LS}_{\so_F^0}^{\mathrm{ss}},\qquad\cz \mapsto \cz\times_\ct\so_F^0,
	\end{align*}
a forgetful functor 
	$$L_{\so_F^0/\ct,\mathrm{mod}}\colon \mathsf{Mod}_F\rightarrow\mathsf{Mod}_{F[t]^\dagger}$$ 
	induced by the map $F[t]^\dagger\rightarrow F, t\mapsto 0$, and a natural transformation $L_{\so_F^0/\ct,\mathrm{coh}} $ 
	defined by the natural morphisms
	\[L_{\so_F^0/\ct,\mathrm{coh}}(Y,\cz,i)\colon \Gamma(]Y[_{\cz}^{\log},\Gd_\an\omega^{\sbt}_{\cz/\ct,\Q})\rightarrow \Gamma(]Y\times_Tk^0[_{\cz\times_{\ct}\so_F^0},\omega^{\sbt}_{\cz\times_{\ct}\so_F^0/\so_F^0,\Q})\]
induced by the inclusion $Y\times_Tk^0\hookrightarrow Y$.
Hence $L_{\so_F^0/\ct}$ induces a morphism
	\begin{equation}\label{eq: LFT}
	\mathbb{L}_{\so_F^0/\ct}(Y)\colon\mathbb{R}\Gamma_\rig(Y/\ct) \rightarrow \mathbb{R}\Gamma_\rig(Y\times_Tk^0/\so_F^0)
	\end{equation}
for any $Y\in\mathsf{LS}_T$.

Similarly for base change from $\ct$ to $\so_K^\pi$ we can define a morphism of rigid cohomological tuples $L_{\so_K^\pi/\ct}\colon\Xi(\ct) \rightarrow \Xi(\so_K^\pi)$, which induces a morphism
	\begin{equation}\label{eq: LKT}
	\mathbb{L}_{\so_K^\pi/\ct}(Y)\colon\mathbb{R}\Gamma_\rig(Y/\ct)\rightarrow \mathbb{R}\Gamma_\rig(Y\times_Tk^0/\so_K^\pi)
	\end{equation}
	for any $Y\in\mathsf{LS}_T$.

We also want to compare rigid cohomology of log schemes with boundary over $\ct$ and over $(\ct,\ovct)$. Thus we define a morphism of rigid cohomological tuples $L^\sharp\colon\Xi_b(\ct,\ovct) \rightarrow \Xi_b(\ct)$ consisting of forgetful functors
	\begin{align*}
		&L_{\mathrm{base}}^\sharp\colon\overline{\mathsf{LS}}_{(T,\ovt)} \rightarrow \overline{\mathsf{LS}}_T,\\
		&L_{\mathrm{lift}}^\sharp\colon\overline{\mathsf{LS}}_{(\ct,\ovct)}^{\mathrm{sm}} \rightarrow \overline{\mathsf{LS}}_{\ct}^{\mathrm{sm}},
	\end{align*}
a functor 
	$$L_{\mathrm{mod}}^\sharp: = \mathrm{id\colon\mathsf{Mod}_F \rightarrow \mathsf{Mod}_F},$$ 
and a natural transformation $L_{\mathrm{coh}}^\sharp(Y,\ovy): = \mathrm{id}$ for $(Y,\ovy)\in\overline{\mathsf{LS}}_{(T,\ovt)}$. 
This induces an isomorphism
	\begin{equation}\label{eq: Lsharp}
	\mathbb{L}^\sharp\colon\mathbb{R}\Gamma_\rig((Y,\ovy)/(\ct,\ovct)) \xrightarrow{\sim} \mathbb{R}\Gamma_\rig((Y,\ovy)/\ct)
	\end{equation}
for any $(Y,\ovy)\in\overline{\mathsf{LS}}_{(T,\ovt),\Xi(\ct,\ovct)}^{\mathrm{loc}}$.

On the other hand we define a morphism of rigid cohomological tuples $L_{(\ct,\ovct)/\ct}\colon\Xi_b(\ct) \rightarrow \Xi_b(\ct,\ovct)$ by functors
	\begin{align*}
		&L_{(\ct,\ovct)/\ct,\mathrm{base}}\colon\overline{\mathsf{LS}}_T \rightarrow \overline{\mathsf{LS}}_{(T,\ovt)},\qquad(Y,\ovy) \mapsto (Y,\ovy)\times_T(T,\ovt),\\
		&L_{(\ct,\ovct)/\ct,\mathrm{lift}}\colon\overline{\mathsf{LS}}_{\ct}^{\mathrm{sm}} \rightarrow \overline{\mathsf{LS}}_{(\ct,\ovct)}^{\mathrm{sm}},\qquad(\cz,\ovcz) \mapsto (\cz,\ovcz)\times_{\ct}(\ct,\ovct),\\
		&L_{(\ct,\ovct)/\ct,\mathrm{mod}}: = \mathrm{id}\colon\mathsf{Mod}_F \rightarrow \mathsf{Mod}_F,
	\end{align*}
and a natural transformation $L_{(\ct,\ovct)/\ct,\mathrm{coh}}$ given by morphisms
	\[L_{(\ct,\ovct)/\ct,\mathrm{coh}}(Y,\ovy)\colon\Gamma(]\ovy[_{\ovcz}^{\log},\Gd_\an\omega^{\sbt}_{(\cz,\ovcz)/\ct,\Q}) \rightarrow  \Gamma(]\ovy\overline{\times}_T \ovt[_{\ovcz\overline{\times}_{\ct}\ovct}^{\log},\Gd_\an\omega^{\sbt}_{(\cz,\ovcz\overline{\times}_{\ct}\ovct)/\ct,\Q})\]
induced by the projections $(Y,\ovy\overline{\times}_T\ovt): = (Y,\ovy)\times(T,\ovt) \rightarrow (Y,\ovy)$ and $(\cz,\ovcz\overline{\times}_{\ct}\ovct): = (\cz,\ovcz)\times(\ct,\ovct) \rightarrow (\cz,\ovcz)$.
Then $L_{(\ct,\ovct)/\ct}$ induces a morphism
	\begin{equation}\label{eq: Lovct}
	\mathbb{L}_{(\ct,\ovct)/\ct}\colon\mathbb{R}\Gamma_\rig((Y,\ovy)/\ct) \rightarrow \mathbb{R}\Gamma_\rig((Y,\ovy)\times (T,\ovt)/(\ct,\ovct))
	\end{equation}
for any $(Y,\ovy)\in\overline{\mathsf{LS}}_{T,\Xi(\ct)}^{\mathrm{loc}}$.

We need to relate cohomology of log schemes with and without boundary over the same log scheme. Specifically we define a morphism of rigid cohomological tuples $L^b\colon\Xi_b(\ct) \rightarrow \Xi(\ct)$ by functors
	\begin{align*}
	&L^b_{\mathrm{base}}\colon\overline{\mathsf{LS}}_T \rightarrow \mathsf{LS}_T,\qquad(Y,\ovy) \mapsto  Y,\\
	&L^b_{\mathrm{lift}}\colon \overline{\mathsf{LS}}_{\ct}^{\mathrm{sm}} \rightarrow \mathsf{LS}_{\ct}^{\mathrm{sm}},\qquad(\cz,\ovcz) \mapsto \cz,
	\end{align*}
a forgetful functor
	\[L^b_{\mathrm{mod}}\colon \mathsf{Mod}_{F[t]^\dagger} \rightarrow \mathsf{Mod}_F,\]
and a natural transformation $L^b_{\mathrm{coh}}$ defined by the natural morphisms
	\[L^b_{\mathrm{coh}}(Y,\cz,i)\colon\Gamma(]\ovy[_{\ovcz}^{\log},\Gd_\an\omega^{\sbt}_{(\cz,\ovcz)/\ct,\Q}) \rightarrow \Gamma(]Y[_{\cz}^{\log},\Gd_\an\omega^{\sbt}_{\cz/\ct,\Q}).\]
Then $L^b$ induces morphisms
	\begin{equation}\label{eq: Lb}
	\mathbb{L}^b\colon \mathbb{R}\Gamma_\rig((Y,\ovy)/\ct) \rightarrow \mathbb{R}\Gamma_\rig(Y/\ct)
	\end{equation}
for any $(Y,\ovy)\in\overline{\mathsf{LS}}_T$.

Let $\Xi'_\hk$ be a rigid cohomological tuple obtained from $\Xi_\hk^\rig$ by forgetting Frobenius and monodromy, and replacing $\mathsf{Mod}_F(\varphi,N)$ by $\mathsf{Mod}_F$. Clearly, the natural morphism $\Xi'_\hk\rightarrow\Xi_\hk^\rig$ induces isomorphisms
	$$\mathbb{R}\Gamma_{\Xi'_\hk}(Y,\ovy)\cong\mathbb{R}\Gamma_{\Xi_\hk^\rig}(Y,\ovy)=\mathbb{R}\Gamma_\hk(Y,\ovy).$$
Keeping this in mind, we define a morphism of rigid cohomological tuples $L^\hk\colon\Xi'_\hk \rightarrow \Xi(\so_F^0)$ consisting of functors
	\begin{align*}
	&L^\hk_{\mathrm{base}}\colon \overline{\mathsf{LS}}_{k^0}^{\mathrm{ss}} \rightarrow \mathsf{LS}_{k^0},\qquad(Y,\ovy) \mapsto \ovy,\\
	&L^\hk_{\mathrm{lift}}\colon \overline{\mathsf{LS}}_{\ct}^{\mathrm{ss}}(\varphi) \rightarrow \mathsf{LS}_{\so_F^0},\qquad(\cz,\ovcz,\phi) \mapsto \ovcz\times_{\ct}\so_F^0,\\
	&L^\hk_{\mathrm{mod}}:=\mathrm{id}\colon \mathsf{Mod}_F\rightarrow\mathsf{Mod}_F,
	\end{align*}
and a natural transformation $L^\hk_{\mathrm{coh}}$ defined by the natural quasi-isomorphisms
	\[L^\hk_{\mathrm{coh}}((Y,\ovy),(\cz,\ovcz,\phi),i)\colon\Gamma(]\ovy[_{\ovcy}^{\log},\Gd_\an\widetilde{\omega}^{\sbt}_{\ovcy,Q}\llbracket u\rrbracket) \rightarrow \Gamma(]\ovy[_{\ovcy}^{\log},\Gd_\an\omega^{\sbt}_{\ovcy/\so_F^0,\Q}),\]
where $\ovcy: = \ovcz\times_\ct\so_F^0$.
Then $L^\hk$ induces isomorphisms
	\begin{equation}\label{eq: LHK}
	\mathbb{L}^\hk\colon \mathbb{R}\Gamma^\rig_\hk(Y,\ovy) \xrightarrow{\sim} \mathbb{R}\Gamma_\rig(\ovy/\so_F^0)
	\end{equation}
for any $(Y,\ovy)\in\overline{\mathsf{LS}}_{k^0}^{\mathrm{ss}}$.

We describe now the twisting by Frobenius in terms of a morphism of rigid cohomological tuples. Let $(R,S\hookrightarrow\mathcal{S})$ be one of $(F,k^0\hookrightarrow\so_F^0)$ or $(F[t]^\dagger,T\hookrightarrow\ct)$.
We define a morphism 
 $L^\sigma\colon\Xi(\mathcal{S}) \rightarrow \Xi(\mathcal{S})$ by functors
	\begin{align*}
	&L^\sigma_{\mathrm{base}}\colon \mathsf{LS}_S \rightarrow \mathsf{LS}_S,\qquad Y \mapsto  Y\times_{S,\sigma}S,\\
	&L^\sigma_{\mathrm{lift}}\colon \mathsf{LS}_{\mathcal{S}}^{\mathrm{sm}} \rightarrow \mathsf{LS}_{\mathcal{S}}^{\mathrm{sm}}, \qquad\cz \mapsto \cz\times_{\ct,\sigma}\ct,
	\end{align*}
a functor 
	$$L^\sigma_{\mathrm{mod}}\colon\mathsf{Mod}_R \rightarrow \mathsf{Mod}_R$$ given by twisting the $R$-action by $\sigma\colon R\rightarrow R$, and a natural transformation $L^\sigma_{\mathrm{coh}}$ defined by a morphism
	\[L^\sigma_{\mathrm{coh}}(Y,\cz,i)\colon\Gamma(]Y[_{\cz},\Gd_\an\omega^{\sbt}_{\cz/\mathcal{S},\Q}) \rightarrow \Gamma(]Y\times_{S,\sigma}S[_{\cz\times_{\mathcal{S},\sigma}\mathcal{S}},\Gd_\an\omega^{\sbt}_{\cz\times_{\mathcal{S},\sigma}\mathcal{S}/\mathcal{S},\Q})\]
induced by the canonical projections $Y\times_{S,\sigma}S \rightarrow  Y$ and $\cz\times_{\ct,\sigma}\ct \rightarrow \cz$.
Then $L^\sigma$ induces
	\begin{equation}\label{eq: Lsigma}
	\mathbb{L}^\sigma\colon \mathbb{R}\Gamma_\rig(Y/\mathcal{S}) \rightarrow \mathbb{R}\Gamma_\rig(Y\times_{S,\sigma}S/\mathcal{S})
	\end{equation}
for any $Y\in\mathsf{LS}_S$.

We need the same construction for log schemes with boundary, which turns out to be more involved. More precisely, we define a morphism $L^{b,\sigma}\colon\Xi_b(\ct,\ovct) \rightarrow \Xi_b(\ct,\ovct)$.
For $(Y,\ovy)\in\overline{\mathsf{LS}}_{(T,\ovt)}$, let $(Y,\ovy)\times_{\sigma}(T,\ovt)$ be the product in $\overline{\mathsf{LS}}_{(T,\ovt)}$ where the structure map of $(T,\ovt)$ is given by $\sigma\colon T \rightarrow  T$.
Let $(Y^\sigma,\ovy^\sigma)$ be an object in $\overline{\mathsf{LS}}_{(T,\ovt)}$ whose underlying log scheme with boundary is $(Y,\ovy)\times_{\sigma}(T,\ovt)$ and whose structure morphism is the canonical projection $Y\times_{T,\sigma}T \rightarrow  T$.
In a similar way we define $(\cz^\sigma,\ovcz^\sigma)$ for any $(\cz,\ovcz)\in\overline{\mathsf{LS}}_{(\ct,\ovct)}^{\mathrm{sm}}$.

\begin{lemma}
	Let $(\cz,\ovcz)$ be an object in $\overline{\mathsf{LS}}_{(\ct,\ovct)}^{\mathrm{sm}}$.
	Then $(\cz^\sigma,\ovcz^\sigma)$ is strongly smooth.
\end{lemma}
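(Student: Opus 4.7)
The plan is to construct local strong-smoothness charts for $(\cz^\sigma,\ovcz^\sigma)$ by a ``$p$-th-root'' extension of the monoid chart of $(\cz,\ovcz)$. The guiding principle is that $\sigma\colon\ct\to\ct$ corresponds on charts to the multiplication-by-$p$ map $\N\to\N$, so base-changing along $\sigma$ should amount, monoid-theoretically, to adjoining a formal $p$-th root of $\beta(1)=b-a\in P^\gp$.

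First I would fix a local chart $(\alpha\colon P_{\ovcz}\to\cn_{\ovcz},\,\beta\colon\N\to P^\gp)$ together with $a,b\in P$ as given by Definition~\ref{def: strongly smooth}. I then define the abelian group $P^{\sigma,\gp}:=(P^\gp\oplus\Z)/\langle(\beta(1),-p)\rangle$, denote by $f$ the class of $(0,1)$, and let $P^\sigma\subseteq P^{\sigma,\gp}$ be the submonoid generated by the image of $P$ and by $f$. Set $\beta^\sigma\colon\N\to P^{\sigma,\gp}$, $1\mapsto f$, and choose $a^\sigma:=a$ and $b^\sigma:=a+f$, both elements of $P^\sigma$; then $\beta^\sigma(1)=b^\sigma-a^\sigma$, which takes care of Definition~\ref{def: strongly smooth}(ii). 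Condition~(i) follows immediately: because $\beta^\gp$ is injective, the relation $nf\in\langle(\beta(1),-p)\rangle$ forces $n=0$, so $f$ has infinite order and $(\beta^\sigma)^\gp$ is injective; moreover quotienting $P^{\sigma,\gp}$ by $\Z f$ kills the class of $(0,1)$ and identifies $\coker(\beta^\sigma)^\gp$ with $\coker\beta^\gp$, which is $p$-torsion free by hypothesis.

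For the geometric conditions~(iii) and~(iv), the defining relation $\beta(1)=pf$ in $P^{\sigma,\gp}$ realises the square
\[
\xymatrix{
\Spwf\so_F[P^\sigma[-a]]^\dagger\ar[r]^-{1\mapsto f}\ar[d]&\ct\ar[d]^\sigma\\
\Spwf\so_F[P[-a]]^\dagger\ar[r]^-{1\mapsto b-a}&\ct
}
\]
as Cartesian (it is simply the spectrum of the pushout of monoids $P[-a]\leftarrow\N\xrightarrow{\times p}\N$). Defining $\ovcz^\sigma$ locally as the scheme-theoretic base change $\ovcz\times_{\Spwf\so_F[P]^\dagger}\Spwf\so_F[P^\sigma]^\dagger$ with its associated $P^\sigma$-log structure, classical smoothness of $\ovcz\to\Spwf\so_F[P]^\dagger$ is preserved, giving~(iii). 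Inverting $\alpha^{\sigma\prime}(a^\sigma)=\alpha^{\sigma\prime}(a)$ and combining with the Cartesian square above identifies $\ovcz^\sigma[1/\alpha^{\sigma\prime}(a^\sigma)]^\dagger$ with $\cz\times_{\ct,\sigma}\ct=\cz^\sigma$, yielding~(iv).

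The main obstacle is verifying that this scheme-theoretic model really agrees with the product $\ovcz\ovtimes_{(\ct,\ovct),\sigma}\ovct$ in $\overline{\mathsf{LS}}_{(\ct,\ovct)}$, i.e.\ that $\cz^\sigma$ is log schematically dense in $\ovcz\times_{\Spwf\so_F[P]^\dagger}\Spwf\so_F[P^\sigma]^\dagger$. This reduces to showing that $\alpha^{\sigma\prime}(a)$ is not a zero divisor on the latter weak formal scheme, which in turn follows from the flatness inherent in classical smoothness together with the explicit monoidal description of the chart.
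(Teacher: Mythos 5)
Your idea of adjoining a formal $p$-th root $f$ of $\beta(1)=b-a$ is the right one, and it correctly models $(\cz^\sigma,\ovcz^\sigma)$ over the standard affine chart $\ct\subset\ovct$; the verifications of conditions (i) and (ii) of Definition~\ref{def: strongly smooth} for the chart $(\alpha^\sigma,\beta^\sigma)$ with $a^\sigma=a$, $b^\sigma=a+f$ are sound. But there is a genuine gap. The weak formal scheme $\ovcz^\sigma$ is the log schematic image of $\cz\times_{\ct,\sigma}\ct$ inside $\ovcz\times_{\so_F}\ovct$, and $\ovct$ is a projective line covered by two affine charts, $\ct$ and $\overline{\mathcal{U}}=\Spwf\so_F[s]^\dagger$ with $s=1/t$. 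Your local model $\ovcz\times_{\Spwf\so_F[P]^\dagger}\Spwf\so_F[P^\sigma]^\dagger$ lives entirely over $\ct$ (since $f$, not $f^{-1}$, is the structural coordinate) and sees none of the locus $s=0$, over which $\ovcz^\sigma$ in general has points. The paper's proof is precisely a two-chart version of your argument: it covers $(\ct,\ovct)$ by $(\ct,\ct)$ and $(\mathcal{U},\overline{\mathcal{U}})$ and constructs two monoids $Q_{\pm}$; your $P^\sigma$ corresponds to the chart $Q_+$ over $(\ct,\ct)$, and you have omitted $Q_-$.

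A concrete failure: take $(\cz,\ovcz)=(\mathcal{U},\overline{\mathcal{U}})=(\Spwf\so_F[s^{\pm1}]^\dagger,\Spwf\so_F[s]^\dagger)$ with $P=\N$, $\alpha(1)=s$, $\beta(1)=-1$, $a=1$, $b=0$. In $P^{\sigma,\gp}=(\Z\oplus\Z)/\langle(-1,-p)\rangle\cong\Z$ the class of $(1,0)$ equals $-pf$, so the submonoid $P^\sigma$ generated by the image of $P$ and by $f$ contains both $f$ and $-pf$ and is therefore the whole group. Your local model is then $\cz^\sigma$ with empty boundary, whereas the actual $\ovcz^\sigma$ does have boundary: the closure of $\{s_1=s_2^p\}$ in $\overline{\mathcal{U}}\times_{\so_F}\ovct$ contains the point $s_1=s_2=0$. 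Your final paragraph correctly senses that the crux is whether your scheme-theoretic model agrees with $\ovcz^\sigma$, but then misdiagnoses the obstacle as schematic density of $\cz^\sigma$ in your model; the real issue is that the log schematic image must be taken in $\ovcz\times_{\so_F}\ovct$ rather than in $\ovcz\times_{\so_F}\ct$, so your model is only a proper open piece of $\ovcz^\sigma$, and density inside that piece says nothing about what lies over $s=0$.
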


\begin{proof}
	This follows from local computations.
	Without loss of generality, we assume that there is a chart $(\alpha\colon P_{\ovcz} \rightarrow \cn_{\ovcz},\ \beta\colon \N \rightarrow  P^\gp)$ for $(\cz,\ovcz)$ and $a,b\in P$ as in Definition~\ref{def: strongly smooth}.
	Let $\mathcal{U}=\Spwf\so_F[s^{\pm 1}]^\dagger$ and $\overline{\mathcal{U}}=\Spwf\so_F[s]^\dagger$ be the strict open weak formal log subschemes of $\ovct$s defined by the equality $s = \frac{x_1}{x_2}$.
	Then $(\ct,\ovct)$ is covered by $(\ct,\ct)$ and $(\mathcal{U},\overline{\mathcal{U}})$, and hence $(\cz^\sigma,\ovcz^\sigma)$ is covered by $(\mathcal{U}_ + ,\overline{\mathcal{U}}_ + ): = (\cz,\ovcz)\times_{\ct,\sigma}(\ct,\ct)$ and $(\mathcal{U}_-,\overline{\mathcal{U}}_-): = (\cz,\ovcz)\times_{\ct,\sigma}(\mathcal{U},\overline{\mathcal{U}})$.
	For $\varepsilon\in\{ + ,-\}$, let $Q_{\varepsilon}$ be the image of
		\[P\oplus\N \rightarrow  P^\gp\oplus_{\Z,p}\Z: = \coker((\beta^\gp,\varepsilon p)\colon\Z \rightarrow  P^\gp\oplus\Z).\]
	Let $\gamma_{\varepsilon}\colon Q_{\varepsilon,\overline{\mathcal{U}}_{\varepsilon}} \rightarrow \cn_{\overline{\mathcal{U}}_{\varepsilon}}$ be the morphism induced by $\alpha$ and by
	\begin{eqnarray*}
	\N_\ct \rightarrow \cn_\ct,\ 1 \mapsto  t & \text{if} & \varepsilon =  + ,\\
	\N_{\overline{\mathcal{U}}} \rightarrow \cn_{\overline{\mathcal{U}}},\ 1 \mapsto  s & \text{if}  & \varepsilon = -.
	\end{eqnarray*}
	Let $\delta_{\varepsilon}\colon\N \rightarrow  Q_{\varepsilon}^\gp$ be the composition of $\N \rightarrow  P^\gp\oplus\Z,\ 1 \mapsto (0,\varepsilon 1)$ and the natural surjection $P^\gp\oplus\Z \rightarrow  P^\gp\oplus_{\Z,p}\Z = Q^\gp$.
	Set
	\begin{align*}
	&a_ + : = (a,0), b_ + : = (b,0)\in Q_ + ,\\
	&a_-: = (a,1) , b_-: = (b,1)\in Q_-.
	 \end{align*}
	Since we have $\kker\delta_{\varepsilon}^\gp \cong\kker\alpha_{\varepsilon}^\gp$ and $\coker\delta_{\varepsilon}^\gp \cong\coker\alpha_{\varepsilon}^\gp$, we see that the chart $(\gamma_\varepsilon,\delta_\varepsilon)$ for $(\mathcal{U}_\varepsilon,\overline{\mathcal{U}}_\varepsilon)$ with $a_\varepsilon,b_\varepsilon\in Q_\varepsilon$ satisfies the conditions in Definition~\ref{def: strongly smooth}.
\end{proof}

Now we define $L^{b,\sigma}$ by functors
	\begin{align*}
	&L^{b,\sigma}_{\mathrm{base}}\colon\overline{\mathsf{LS}}_{(T,\ovt)} \rightarrow \overline{\mathsf{LS}}_{(T,\ovt)}, \qquad(Y,\ovy) \mapsto (Y^\sigma,\ovy^\sigma),\\
	&L^{b,\sigma}_{\mathrm{lift}}\colon\overline{\mathsf{LS}}_{(\ct,\ovct)}^{\mathrm{sm}} \rightarrow \overline{\mathsf{LS}}_{(\ct,\ovct)}^{\mathrm{sm}}, \qquad (\cz,\ovcz) \mapsto (\cz^\sigma,\ovcz^\sigma),
	\end{align*}
a functor 
$$L^{b,\sigma}_{\mathrm{mod}}\colon\mathsf{Mod}_F \rightarrow \mathsf{Mod}_F$$ given by twisting $F$-action by $\sigma\colon F\rightarrow F$, and a natural transformation $L^{b,\sigma}_{\mathrm{coh}}$ defined by morphisms
\[L^{b,\sigma}_{\mathrm{coh}}((Y,\ovy),(\cz,\ovcz),i)\colon\Gamma(]\ovy[_{\ovcz}^{\log},\Gd_\an\omega^{\sbt}_{(\cz,\ovcz)/\ct,\Q}) \rightarrow \Gamma(]\ovy^\sigma[_{\ovcz^\sigma}^{\log},\Gd_\an\omega^{\sbt}_{(\cz^\sigma,\ovcz^\sigma)/\ct,\Q})\]
induced by the canonical projections $(Y^\sigma,\ovy^\sigma) \rightarrow (Y,\ovy)$ and $(\cz^\sigma,\ovcz^\sigma) \rightarrow (\cz,\ovcz)$.
Then $L^{b,\sigma}$ induces
	\begin{equation}\label{eq: Lbsigma}
	\mathbb{L}^{b,\sigma}\colon\mathbb{R}\Gamma_\rig((Y,\ovy)/(\ct,\ovct)) \rightarrow \mathbb{R}\Gamma_\rig((Y^\sigma,\ovy^\sigma)/(\ct,\ovct))
	\end{equation}
for any $(Y,\ovy)\in\overline{\mathsf{LS}}_{(T,\ovt),\Xi_b(\ct,\ovct)}^{\mathrm{loc}}$.

Lastly we can explain Frobenius endomorphisms on rigid complexes.
Let $(R,S\hookrightarrow\mathcal{S})$ be one of $(F,k^0\hookrightarrow\so_F^0)$ or $(F[t]^\dagger,T\hookrightarrow\ct)$.
For $Y\in\mathsf{LS}_S$, let $\chi\colon Y \rightarrow  Y^\sigma$ be the relative Frobenius of $Y$ over $S$.
By functoriality we have a morphism in $\mathscr{D}^ + (\mathsf{Mod}_R)$
	\[\chi^{\ast}\colon\mathbb{R}\Gamma_\rig(Y^\sigma/\mathcal{S}) \rightarrow \mathbb{R}\Gamma_\rig(Y/\mathcal{S}).\]
We define the Frobenius endomorphism
	\begin{equation}\label{eq: base change Frobenius}
	\varphi\colon\mathbb{R}\Gamma_\rig(Y/\mathcal{S}) \rightarrow \mathbb{R}\Gamma_\rig(Y/\mathcal{S})
	\end{equation}
as the composition $\varphi: = \chi^{\ast}\circ(\mathbb{L}^\sigma(Y))$.

Similarly, for $(Y,\ovy)\in\overline{\mathsf{LS}}_{(T,\ovt)}$, let $\chi\colon(Y,\ovy) \rightarrow (Y^\sigma,\ovy^\sigma)$ the morphism in $\overline{\mathsf{LS}}_{(T,\ovt)}$ induced by the absolute Frobenius $(Y,\ovy) \rightarrow (Y,\ovy)$ and the natural morphism $(Z,\ovz) \rightarrow (T,\ovt)$.
When $(Y,\ovy)$ is locally $\Xi_b(\ct,\ovct)$-embeddable, by functoriality we have a morphism in $\mathscr{D}^ + (\mathsf{Mod}_F)$
	\[\chi^{\ast}\colon\mathbb{R}\Gamma_\rig((Y^\sigma,\ovy^\sigma)/(\ct,\ovct)) \rightarrow \mathbb{R}\Gamma_\rig((Y,\ovy)/(\ct,\ovct)).\]
We define the Frobenius endomorphism
	\begin{equation}\label{eq: Frobenius boudary}
	\varphi\colon\mathbb{R}\Gamma_\rig((Y,\ovy)/(\ct,\ovct)) \rightarrow \mathbb{R}\Gamma_\rig((Y,\ovy)/(\ct,\ovct))
	\end{equation}
to be the composition $\varphi: = \chi^{\ast}\circ(\mathbb{L}^{b,\sigma})$.

With this, it is possible to define a Frobenius endomorphism on $\mathbb{R}\Gamma_\rig((Y,\ovy)/\ct)$ in the following case.
Let $(Y,\ovy)\in\overline{\mathsf{LS}}_{T,\Xi_b(\ct)}^{\mathrm{loc}}$ and assume that $\mathbb{L}_{(\ct,\ovct)/\ct}(Y,\ovy)$ is an isomorphism.
Then we define $\varphi$ on $\mathbb{R}\Gamma_\rig((Y,\ovy)/\ct)$ as the composition
	\begin{eqnarray*}
	\mathbb{R}\Gamma_\rig((Y,\ovy)/\ct)& \xrightarrow{\mathbb{L}_{(\ct,\ovct)/\ct}(Y,\ovy)}&
	\mathbb{R}\Gamma_\rig((Y,\ovy\overline{\times}_{T}\ovt)/(\ct,\ovct))\\
	& \xrightarrow{\ \ \ \ \ \ \ \varphi\ \ \ \ \ \ \ }&\mathbb{R}\Gamma_\rig((Y,\ovy\overline{\times}_T\ovt)/(\ct,\ovct))\\
	& \xrightarrow{\mathbb{L}_{(\ct,\ovct)/\ct}(Y,\ovy)^{-1}}&\mathbb{R}\Gamma_\rig((Y,\ovy)/\ct).
	\end{eqnarray*}

\section{Frobenius and Hyodo--Kato map}\label{sec: rigid structures}

In the previous section, we have constructed several canonical rigid complexes.
	\begin{itemize}
		\item For a strictly semistable $k^0$-log scheme with boundary, i.e. $(Y,\ovy) \in \ssk$, the canonical rigid Hyodo--Kato complex $\mathbb{R}\Gamma_{\hk}^{\rig}(Y,\ovy)$ with Frobenius and monodromy operator.
		\item For a fine log scheme over $k^0$, i.e. $Y  \in \lsk$, the canonical log rigid complexes $\mathbb{R}\Gamma_{\rig}(Y/\so_F^0)$ and $\mathbb{R}\Gamma_{\rig}(Y/\so_K^\pi)$.
		\item For a fine log scheme over $T$, i.e. $Y \in \lst$, the canonical log rigid complex $\mathbb{R}\Gamma_{\rig}(Y/\mathcal{T})$.
		\item For a locally $\Xi_b(\mathcal{T})$-embeddable fine $T$-log scheme with boundary, i.e. $(Y,\ovy)\in \lelsb$, the canonical log rigid complex $\mathbb{R}\Gamma_{\rig}((Y,\ovy)/\mathcal{T})$.
	\end{itemize}

In the situation of a strictly semistable $k^0$-log scheme with boundary $(Y,\ovy)$, we will see in \S\ref{subsec: HK-map} how to construct an analogue of the Hyodo--Kato map as a zigzag between $\mathbb{R}\Gamma_{\rig}(\ovy/\so_F^0) $ and $ \mathbb{R}\Gamma_{\rig}(\ovy/\so_K^\pi)$ via a complex $\mathbb{R}\Gamma_{\rig}((\overline{V}_{\sbt},\overline{P}_{\sbt})/\mathcal{T})$ for a certain simplicial $T$-log scheme with boundary. 
Because of the canonical quasi-isomorphism $\mathbb{R}\Gamma^\rig_\hk(Y,\ovy)\cong \mathbb{R}\Gamma_\rig(\ovy/\so_F^0)$, we  have on the complex $\mathbb{R}\Gamma^\rig_\hk(Y,\ovy)$ all the structures needed for the construction of log rigid syntomic cohomology theory.

In \S\ref{Subsec: Frobenius monodromy}, we compare the Frobenius operators on $\mathbb{R}\Gamma^\rig_\hk(Y,\ovy)$ and $\mathbb{R}\Gamma_\rig(\ovy/\so_F^0)$.
The former is defined on the level of complexes from local lifts of Frobenius.
The latter is defined via base change in \S\ref{subsubsec: Base change and Frobenius}, and suitable for the comparison with the crystalline Frobenius.

\subsection{The rigid Hyodo--Kato map}\label{subsec: HK-map}

We modify the construction  of the rigid Hyodo--Kato map  in \cite{GrosseKloenne2005} so that it is functorial, and generalize it to strictly semistable log schemes with boundary.

Let $(Y,\ovy)$ be an object in $\ssk$.
We will construct a Hyodo--Kato map $\mathbb{R}\Gamma_\rig(\ovy/\so_F^0) \rightarrow \mathbb{R}\Gamma_\rig(\ovy/\so_K^\pi)$ as a morphism in $\mathscr{D}^+(\mathsf{Mod}_F)$.
Recall that $\Upsilon_{\ovy}$ and $\Upsilon_D$ are the sets of the irreducible components and the horizontal components of $\ovy$ (Definition~\ref{def: upsilon}).
For $j\in\Upsilon_{\ovy}$, let $\mathcal{L}_j$ be the line bundle on $\ovy$ which corresponds to $j$ as in \cite[\S 2.1]{GrosseKloenne2005}. 
Precisely, let $\ovy_j$ be the irreducible component of $\ovy$ which corresponds to $j$, and let $\mathscr{N}_{\ovy,j}$ be the preimage of $\kker(\co_{\ovy} \rightarrow \co_{\ovy_j})$ in $\mathscr{N}_{\ovy}$.
Then $\mathscr{N}_{\ovy,j}$ is a principal homogeneous space over $\co_{\ovy}^\times$, and its associated line bundle is the dual $\mathcal{L}_j^{-1}$ of $\mathcal{L}_j$.
Note that the natural map $\mathscr{N}_{\ovy,j} \rightarrow \co_{\ovy}$ defines a global section $s_j$ of $\mathcal{L}_j$.
For any non-empty subset $J\subset\Upsilon_{\ovy}$, set $\ovm_J: = \bigcap_{j\in J}\ovy_j$. We regard it as an exact closed log subscheme of $\ovy$.
By abuse of notation, we denote the restriction of $\mathcal{L}_j$ to $\ovm_J$ again by $\mathcal{L}_j$. 
For $j\in J$, set
	$$
	\ovv^j_J : =  \Spec\Sym_{\co_{\ovm_J}}\mathcal{L}_j^{-1} \qquad \text{and} \qquad \ovp_J^j : =  \Proj\Sym_{\co_{\ovm_J}}(\co_{\ovm_J}\oplus\mathcal{L}_j^{-1}).
	$$
We regard $\ovv^j_J$ as an open subscheme of $\ovp_J^j$ via the isomorphism
	\begin{eqnarray*}
	&\Sym_{\co_{\ovm_J}}\mathcal{L}_j^{-1}  \xrightarrow{\sim} \Sym_{\co_{\ovm_J}}(\co_{\ovm_J}\oplus\mathcal{L}_j^{-1})[1^{-1}_{\co_{\ovm_J}}]_0\\ 
	& s  \mapsto   1^{-1}_{\co_{\ovm_J}}\otimes s \quad \text{for }s\in\mathcal{L}_j^{-1},
	\end{eqnarray*}
where $1_{\co_{\ovm_J}}$ is considered as a degree-one element of $\Sym_{\co_{\ovm_J}}(\co_{\ovm_J}\oplus\mathcal{L}_j^{-1})$, and $\Sym_{\co_{\ovm_J}}(\co_{\ovm_J}\oplus\mathcal{L}_j^{-1})[1^{-1}_{\co_{\ovm_J}}]_0$ denotes the degree-zero part of $\Sym_{\co_{\ovm_J}}(\co_{\ovm_J}\oplus\mathcal{L}_j^{-1})[1^{-1}_{\co_{\ovm_J}}]$.
For a subset $J'\subset J$, set
	$$
	\ovv^{J'}_J: = \prod_{\ovm_J}(\ovv^j_J)_{j\in J'}\qquad\text{ and }\qquad\ovp^{J'}_J: = \prod_{\ovm_J}(\ovp_J^j)_{j\in J'}.
	$$
We set $\ovv_J: = \ovv_J^J$ and $\ovp_J: = \ovp_J^J$.
For $j\in J$, the pull-back of the divisor $\ovp_J^j\setminus\ovv_J^j$ on $\ovp_J^j$ is a divisor on $\ovp_J$ denoted by $N_{j,\infty}$.
Let $N_{j,0}$ be the divisor on $\ovp_J$ which is the pull-back of the zero section divisor $\ovm_J\hookrightarrow\ovv_J^j\hookrightarrow\ovp_J^j$ on $\ovp_J^j$.
Set $N_\infty: = \bigcup_{j\in J}N_{j,\infty}$ and $N_0: = \bigcup_{j\in J}N_{j,0}$.
Let $N'$ and $D'$ be the divisors on $\ovp_J$ given by the pull-back of the divisors $\ovm_J\cap\bigcup_{i\in\Upsilon_{\ovy}\setminus J}\ovy_i$ and $\ovm_J\cap (\ovy\setminus Y)$ on $\ovm_J$, respectively, via the structure morphism $\ovp_J \rightarrow \ovm_J$.
We endow $\ovp_J$ with the log structure associated to the normal crossing divisor $N_\infty\cup N_0\cup N'\cup D'$.
We consider $\ovp_J^{J'}$, and thus in particular $\ovm_J = \ovp_J^\varnothing$, as an exact closed log subscheme of $\ovp_J$ by identifying it with the intersection in $\ovp_J$ of all $N_{j,0}$ for $j\in J\setminus J'$.
The global sections $s_i$ of $\mathcal{L}_i$ for $i\in\Upsilon_{\ovy}\setminus J$ define a map $\eta\colon\bigotimes_{\co_{\ovm_J}}(\mathcal{L}_i^{-1})_{i\in\Upsilon_{\ovy}\setminus J} \rightarrow \co_{\ovm_J}$, and hence a map
	\[\co_{\ovm_J} \cong\bigotimes_{\co_{\ovm_J}}(\mathcal{L}_j^{-1})_{j\in J}\otimes\bigotimes_{\co_{\ovm_J}}(\mathcal{L}_i^{-1})_{i\in\Upsilon_{\ovy}\setminus J} \xrightarrow{1\otimes\eta}\bigotimes_{\co_{\ovm_J}}(\mathcal{L}_j^{-1})_{j\in J} \rightarrow \Sym_{\co_{\ovm_J}}(\bigoplus_{j\in J}\mathcal{L}_j^{-1}).\]
We consider  $\ovv_J$ as a log scheme over $T$ by sending $t$ to the image of $1_{\co_{\ovm_J}}$ under this map.
Then $(\ovv_J^{J'},\ovp_J^{J'})$ is a $T$-log scheme with boundary.
As in \cite[\S 2.4]{GrosseKloenne2005} (or Proposition~\ref{prop: embeddable boundary}), one can see that  $(\ovv_J^{J'},\ovp_J^{J'})$ is $\Xi_b(\ct)$-embeddable.

For a set $\Upsilon$ and an integer $m\geqslant 0$ we set
	\[\widetilde{\Lambda}_m(\Upsilon): = \{\lambda = (J_0(\lambda),\ldots,J_m(\lambda))\mid\varnothing\neq J_0(\lambda)\subset J_1(\lambda)\subset\cdots\subset J_m(\lambda)\subset\Upsilon\}.\]
Now we define a simplicial log scheme $\ovm_{\sbt}$ and a simplicial $T$-log scheme with boundary $(\ovv_{\sbt},\ovp_{\sbt})$ by
	$$
	\ovm_m : =  \coprod_{\lambda\in \widetilde{\Lambda}_m(\Upsilon_{\ovy})}\ovm_{J_m(\lambda)} \qquad \text{and} \qquad (\ovv_m,\ovp_m): = \coprod_{\lambda\in\widetilde{\Lambda}_m(\Upsilon_{\ovy})}(\ovv_{J_m(\lambda)}^{J_0(\lambda)},\ovp_{J_m(\lambda)}^{J_0(\lambda)}).
	$$

\begin{remark}
	Here we use the set $\widetilde{\Lambda}_m(\Upsilon)$ instead of the set $\Lambda_m(\Upsilon)$ in \cite[\S 3.2]{GrosseKloenne2005}, in order to ensure the functoriality of the simplicial construction.
	More precisely,  a morphism $f\colon\ovy \rightarrow \ovy'$ in $\ssk$ naturally induces a map $\rho_f\colon\widetilde{\Lambda}_m(\Upsilon_{\ovy}) \rightarrow \widetilde{\Lambda}_m(\Upsilon_{\ovy'})$, but not necessarily a map $\Lambda_m(\Upsilon_{\ovy}) \rightarrow \Lambda_m(\Upsilon_{\ovy'})$.
	If we let $\ovm'_{\sbt}$ and $(\ovv'_{\sbt},\ovp'_{\sbt})$ be the simplicial objects constructed from $\ovy'$ as above,  $\rho_f$ induces natural morphisms $\ovm_{\sbt} \rightarrow \ovm'_{\sbt}$ and $\ovv_{\sbt} \rightarrow \ovv'_{\sbt}$.
	Note that this does not extend to a morphism $\ovp_{\sbt} \rightarrow \ovp'_{\sbt}$ in general.
	The following lemma implies that the log rigid complexes of  $(\ovv_{\sbt},\ovp_{\sbt})$ given by $\widetilde{\Lambda}_m(\Upsilon_{\ovy})$ and $\Lambda_m(\Upsilon)$ are quasi-isomorphic to each other.
\end{remark}

\begin{lemma}
Let $X$ be a Grothendieck topological space, $\{U_j\}_{j\in\Upsilon}$ an admissible open covering of $X$, and $\mathcal{F}^{\sbt}$ a complex of sheaves of abelian groups on $X$.
For $J\subset\Upsilon$ we set $U_J: = \bigcap_{j\in\Upsilon}U_j$.
Then for the simplicial space $\widetilde{U}_{\sbt}$ given by $\widetilde{U}_m: = \coprod_{\lambda\in \widetilde{\Lambda}_m(\Upsilon)}U_{J_m(\lambda)}$ we have an isomorphism
	\[ \R\Gamma(X,\mathcal{F}^{\sbt}) \cong  \R\Gamma(\widetilde{U}_{\sbt},\mathcal{F}^{\sbt} |_{\widetilde{U}_{\sbt}}).\]
\end{lemma}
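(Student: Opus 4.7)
The plan is to realize the augmentation $\epsilon\colon \widetilde U_{\sbt} \to X$ as a \v{C}ech-type hypercovering and to conclude via the standard cohomological descent theorem. Since each $\widetilde U_n = \coprod_{\lambda \in \widetilde\Lambda_n(\Upsilon)} U_{J_n(\lambda)}$ is a coproduct of admissible opens of $X$, cohomological descent for $\epsilon$ reduces to verifying that the associated augmented simplicial sheaf of abelian groups resolves the constant sheaf $\mathbb{Z}_X$; equivalently, that for every admissible open $V \subset X$ the simplicial set of local sections of $\widetilde U_{\sbt}$ over $V$ has contractible geometric realization.

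Next I would identify these sections combinatorially. By admissibility one may assume that $V$ is contained in only finitely many $U_j$; set $\Upsilon(V) := \{\,j \in \Upsilon : V \subset U_j\,\}$. An element $\lambda \in \widetilde\Lambda_n(\Upsilon)$ contributes a copy of $V$ to the local sections of $\widetilde U_n$ over $V$ precisely when $J_n(\lambda) \subset \Upsilon(V)$, i.e.\ when the entire chain lies in the poset $P(\Upsilon(V))^{\ast}$ of non-empty subsets of $\Upsilon(V)$ ordered by inclusion. Consequently the simplicial set of sections over $V$ is exactly the nerve $N(P(\Upsilon(V))^{\ast}) = \widetilde\Lambda_{\sbt}(\Upsilon(V))$, and under this identification the simplicial face maps coming from the structure $\widetilde U_n \to \widetilde U_{n-1}$ agree with the usual face maps on the nerve of the poset.

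The decisive observation is that $P(\Upsilon(V))^{\ast}$ admits $\Upsilon(V)$ itself as a (unique) maximum element, so the natural transformation $\mathrm{id} \Rightarrow \mathrm{const}_{\Upsilon(V)}$ of endofunctors on this poset (whose components are the unique arrows $J \hookrightarrow \Upsilon(V)$) induces a simplicial homotopy from the identity of $\widetilde\Lambda_{\sbt}(\Upsilon(V))$ to the constant map at the $0$-simplex $(\Upsilon(V))$. Its geometric realization is therefore contractible, which completes the local verification. The main obstacle I anticipate is the careful formulation of cohomological descent in the generality of a Grothendieck topological space, where one lacks literal stalks; this is handled by using admissibility to pass locally to finite subcovers so that the resolution property of the simplicial sheaf can be checked at the level of sections, after which the purely combinatorial contractibility argument just described applies uniformly.
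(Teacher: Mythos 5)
Your proof is correct but takes a genuinely different route from the paper's. The paper introduces the auxiliary index set $\Lambda_m(\Upsilon)$ of \emph{strictly} increasing chains, exhibits the inclusion $i\colon C^{\sbt}\to\widetilde C^{\sbt}$ and projection $p$ between the corresponding \v{C}ech complexes with $p\circ i=\mathrm{id}$, constructs an explicit chain homotopy $h$ witnessing $i\circ p\simeq\mathrm{id}$, and then cites \cite[Lem.~3.5]{GrosseKloenne2007} which already gives cohomological descent for the $\Lambda_{\sbt}$-indexed simplicial space; the statement follows by transport of structure. You instead prove cohomological descent for $\widetilde U_{\sbt}$ directly, by identifying, over a small connected admissible $V$, the simplicial set of sections of $\widetilde U_{\sbt}$ with the nerve of the poset $P(\Upsilon(V))^{\ast}$ of nonempty subsets of $\Upsilon(V)=\{j:V\subset U_j\}$, and observing that $\Upsilon(V)$ is a terminal object, so this nerve is contractible. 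Your argument is more conceptual and self-contained (no auxiliary strict-chain complex, no cited descent lemma), while the paper's is more computational but offloads the descent statement to an available reference. Two points deserve to be spelled out in your write-up: the identification of sections with the nerve requires $V$ connected (otherwise a map $V\to\coprod_\lambda U_{J_m(\lambda)}$ need not pick out a single $\lambda$), and the contraction you produce depends on $V$ --- since $\Upsilon(V)$ grows as $V$ shrinks, it does \emph{not} glue to a chain homotopy of sheaves --- but this is harmless, because exactness of the augmented complex $\mathbb{Z}[\widetilde U_{\sbt}]\to\mathbb{Z}_X$ on sections over a cofinal family of admissible opens already forces the presheaf $\ker/\mathrm{im}$ to have trivial sheafification.
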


\begin{proof}
We consider the set
	\[\Lambda_m(\Upsilon): = \{\lambda = (J_0(\lambda),\ldots,J_m(\lambda))\mid\varnothing\neq J_0(\lambda)\subsetneq J_1(\lambda)\subsetneq\cdots\subsetneq J_m(\lambda)\subset\Upsilon\}.\]
Then for a sheaf $\mathcal{F}$ of abelian groups on $X$ we obtain two types of Cech complexes $C^{\sbt}(X,\mathcal{F})$ and $\widetilde{C}^{\sbt}(X,\mathcal{F})$ given by
	$$
	C^m(X,\mathcal{F}): = \bigoplus_{\lambda\in \Lambda_m(\Upsilon)}\Gamma(U_{J_m(\lambda)},\mathcal{F}),\qquad
	\widetilde{C}^m(X,\mathcal{F}): = \bigoplus_{\lambda\in \widetilde{\Lambda}_m(\Upsilon)}\Gamma(U_{J_m(\lambda)},\mathcal{F}).
	$$
The natural direct decomposition 
	$$
	\widetilde{C}^m(X,\mathcal{F}) = C^m(X,\mathcal{F})\oplus \bigoplus_{\lambda\in\widetilde{\Lambda}_m(\Upsilon) \setminus\Lambda_m(\Upsilon)}\Gamma(U_{J_m(\lambda)},\mathcal{F})
	$$ 
induces an inclusion $i\colon C^{\sbt}(X,\mathcal{F}) \rightarrow \widetilde{C}^{\sbt}(X,\mathcal{F})$ and a projection $p\colon \widetilde{C}^{\sbt}(X,\mathcal{F}) \rightarrow  C^{\sbt}(X,\mathcal{F})$ which are morphisms of complexes and satisfy $p\circ i = \mathrm{id}$.
Moreover, by  a straightforward calculation one can see that $i\circ p$ and $\mathrm{id}$ are homotopic with a homotopy $h$ defined as follows.
Fix $m\geqslant 0$ and an element $\lambda\in\widetilde{\Lambda}_m(\Upsilon)$, and take positive integers $k_1,\ldots,k_n$ satisfying $J_{k_1 + \ldots + k_{j-1}}(\lambda) = \cdots = J_{k_1 + \cdots  + k_j-1}(\lambda)$ and $J_{k_1 + \cdots  + k_j-1}(\lambda)\neq J_{k_1 + \cdots + k_j}(\lambda)$ for any $j$.
Let $r$ be the number of $j$'s satisfying $k_j\geqslant 2$ if they exist, and otherwise we set $r = 1$.
For $1\leqslant j\leqslant n$, we set
	\[s_j(\lambda): = (J_0(\lambda),\ldots,J_{k_1 + \cdots + k_j-2}(\lambda),J_{k_1 + \cdots + k_j-1}(\lambda),J_{k_1 + \cdots + k_j-1}(\lambda),J_{k_1 + \cdots + k_j}(\lambda),\ldots,J_m(\lambda)).\]
Then for an element $f = (f_\mu)_{\mu\in\widetilde{\Lambda}_{m + 1}(\Upsilon)}\in \widetilde{C}^m(X,\mathcal{F})$, the $\lambda$-component of $h(f)$ is defined by
	\[h(f)_\lambda: = \sum_{j = 1}^n\frac{1}{r}(-1)^{k_1 + \cdots k_j}f_{s_j(\lambda)}.\]
Hence we see that $C^{\sbt}(X,\mathcal{F})$ and $\widetilde{C}^{\sbt}(X,\mathcal{F})$ are naturally quasi-isomorphic.
This result and \cite[Lem.~3.5]{GrosseKloenne2007} imply the statement.
\end{proof}

Let $(Y,\ovy)$ be an object in $\ssk$, and  $\ovm_{\sbt}$ and $(\ovv_{\sbt},\ovp_{\sbt})$ the associated simplicial objects defined above.
We denote the compositions
	\begin{align*}
	&\mathbb{R}\Gamma_\rig((\ovv_{\sbt},\ovp_{\sbt})/\ct) \xrightarrow{\mathbb{L}^b} \mathbb{R}\Gamma_\rig(\ovv_{\sbt}/\ct) \xrightarrow{\mathbb{L}_{\so_F^0/\ct}} \mathbb{R}\Gamma_\rig(\ovv_{\sbt}\times_Tk^0/\so_F^0)  \rightarrow  \mathbb{R}\Gamma_\rig(\ovm_{\sbt}/\so_F^0), \\
	&\mathbb{R}\Gamma_\rig((\ovv_{\sbt},\ovp_{\sbt})/\ct) \xrightarrow{\mathbb{L}^b} \mathbb{R}\Gamma_\rig(\ovv_{\sbt}/\ct)  \xrightarrow{\mathbb{L}_{\so_K^\pi/\ct}} \mathbb{R}\Gamma_\rig(\ovv_{\sbt}\times_Tk^0/\so_K^\pi)  \rightarrow \mathbb{R}\Gamma_\rig(\ovm_{\sbt}/\so_K^\pi)
	\end{align*}
by $\overline{\xi}_0$ and $\overline{\xi}_\pi$, respectively.
If we repeat the constructions for $(Y,Y)$, we obtain simplicial objects $M_{\sbt}$, $(V_{\sbt},P_{\sbt})$, and morphisms
	\begin{align*}
	& \xi_0\colon \mathbb{R}\Gamma_\rig((V_{\sbt},P_{\sbt})/\ct)  \rightarrow  \mathbb{R}\Gamma_\rig(M_{\sbt}/\so_F^0),\\
	& \xi_\pi\colon \mathbb{R}\Gamma_\rig((V_{\sbt},P_{\sbt})/\ct ) \rightarrow  \mathbb{R}\Gamma_\rig(M_{\sbt}/\so_K^\pi).
	\end{align*}
They fit together in a commutative diagram 
	{\small
	\begin{equation*}\label{eq: hk map diag}\mathclap{
	\xymatrix{
	 \mathbb{R}\Gamma_\rig(Y/\so_F^0) \ar[r]^{(b)} &  \mathbb{R}\Gamma_\rig(M_{\sbt}/\so_F^0) &  \mathbb{R}\Gamma_\rig((V_{\sbt},P_{\sbt})/\ct) \ar[l]_{\xi_0} \ar[r]^{\xi_\pi} &  \mathbb{R}\Gamma_\rig(M_{\sbt}/\so_K^\pi) &  \mathbb{R}\Gamma_\rig(Y/\so_K^\pi) \ar[l]_{(b)}\\
	 \mathbb{R}\Gamma_\rig(\ovy/\so_F^0) \ar[r]^{(b)} \ar[u]^{(a)} &  \mathbb{R}\Gamma_\rig(\ovm_{\sbt}/\so_F^0) \ar[u]& \mathbb{R}\Gamma_\rig((\ovv_{\sbt},\ovp_{\sbt})/\ct) \ar[l]_{\overline{\xi_0}} \ar[r]^{\overline{\xi_\pi}} \ar[u] &  \mathbb{R}\Gamma_\rig(\ovm_{\sbt}/\so_K^\pi) \ar[u]& \mathbb{R}\Gamma_\rig(\ovy/\so_K^\pi) \ar[l]_{(b)} \ar[u]
	}
	}\end{equation*}}
Here, the morphisms $(b)$ are quasi-isomorphisms by  cohomological descent of admissible coverings of dagger spaces.
Moreover $\xi_0$ and $\xi_\pi\otimes 1\colon \mathbb{R}\Gamma_\rig((V_{\sbt},P_{\sbt})/\ct)\otimes_FK \rightarrow  \mathbb{R}\Gamma_\rig(M_{\sbt}/\so_K^\pi)$ are quasi-isomorphisms by \cite[Thm.~3.1]{GrosseKloenne2005}.

\begin{lemma}\label{lem: qis of M V P}
	Let $(Y,\ovy)$ be an object in $\ssk$, and let $\ovm_{\sbt}$, $M_{\sbt}$, $(\ovv_{\sbt},\ovp_{\sbt})$, and $(V_{\sbt},P_{\sbt})$ be as above.
	The morphisms
		\begin{align}
		& \mathbb{R}\Gamma_\rig(\ovm_{\sbt}/\so_F^0)  \rightarrow   \mathbb{R}\Gamma_\rig(M_{\sbt}/\so_F^0),\label{eq: qis of M F}\\
		&  \mathbb{R}\Gamma_\rig(\ovm_{\sbt}/\so_K^\pi)  \rightarrow   \mathbb{R}\Gamma_\rig(M_{\sbt}/\so_K^\pi),\label{eq: qis of M K}\\
		&  \mathbb{R}\Gamma_\rig((\ovv_{\sbt},\ovp_{\sbt})/\ct)  \rightarrow   \mathbb{R}\Gamma_\rig((V_{\sbt},P_{\sbt})/\ct)\label{eq: qis of V P}
		\end{align}
	are quasi-isomorphisms.
\end{lemma}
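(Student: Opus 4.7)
All three quasi-isomorphisms instantiate the same principle: if a log scheme carries log structure along a relative simple normal crossing divisor $D$ coming from the horizontal boundary, then its log rigid cohomology is unchanged when one passes to the open complement of $D$ equipped with the residual log structure. The plan is to establish this componentwise and then globalize via the simplicial structure.

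First, I would reduce each assertion to a local statement. Since the morphisms $M_J\hookrightarrow\ovm_J$ and $(V_J^{J'},P_J^{J'})\hookrightarrow(\ovv_J^{J'},\ovp_J^{J'})$ differ only by the removal of (the pull-back of) $D|_{\ovm_J}$, and since the simplicial sources and targets are formed in parallel over the same indexing set $\widetilde{\Lambda}_m(\Upsilon_{\ovy})$, cohomological descent for the simplicial structure (of the same sort used for the vertical maps labelled $(b)$ in the diagram preceding the lemma) reduces everything to proving that each componentwise map
$\R\Gamma_\rig(\ovm_J/\cs)\to\R\Gamma_\rig(M_J/\cs)$ for $\cs\in\{\so_F^0,\so_K^\pi\}$, and $\R\Gamma_\rig((\ovv_J^{J'},\ovp_J^{J'})/\ct)\to\R\Gamma_\rig((V_J^{J'},P_J^{J'})/\ct)$, is a quasi-isomorphism.

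Second, using the canonical construction of Subsection~\ref{Subsec: Log-rig-com} together with the gluing of local embeddings described in Remark~\ref{rem: gluing lifts}, one can pick Zariski-locally a smooth weak formal lift $\ovcm_J$ of $\ovm_J$ together with a lift $\cd_J\subset\ovcm_J$ of $D|_{\ovm_J}$, and then represent both sides of the map by log de Rham complexes on the tubes $]\ovm_J[_{\ovcm_J}^{\log}$ and $]M_J[_{\ovcm_J\setminus\cd_J}$ respectively. The comparison map then becomes the natural restriction between log de Rham complexes that both forgets the poles along $\cd_J$ and restricts to a smaller tube. An analogous setup, using strongly smooth lifts with boundary, handles the third assertion: the projective bundle $\ovcp_J^{J'}\to\ovcm_J$ commutes with the base change $M_J\hookrightarrow\ovm_J$, so the contributions of the log divisors $N_0, N_\infty, N'$ match on both sides and only the pull-back divisor $D'$ (coming from $D|_{\ovm_J}$) is affected.

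Third, the core analytic input is a Poincar\'e-type lemma: for a relative SNC divisor $\cd\subset\ovcm$ given locally by $s_1\cdots s_n=0$, the inclusion
\[\omega^{\sbt}_{(\ovcm\setminus\cd)/\cs,\Q}\hookrightarrow\omega^{\sbt}_{\ovcm/\cs,\Q}\]
induces, after applying $\R\Gamma$ on the corresponding tubes, a quasi-isomorphism. Locally this reduces to the standard dagger-algebraic computation that overconvergent power series in a variable $s$ with $d\log s$ adjoined are quasi-isomorphic to overconvergent Laurent series in $s$; the multivariable and relative versions follow by K\"unneth, and the same argument over $\so_K^\pi$ or over $\ct$ (with the appropriate extra variables tracking $t$, respectively the boundary of $\ovct$) goes through unchanged. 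The main obstacle is of a bookkeeping nature: one has to ensure that the locally constructed quasi-isomorphisms assemble compatibly under the canonical multiset-indexed limits of Theorem~\ref{Thm: cohomology functors}, so that they really compute the functorial morphism induced by $M_J\hookrightarrow\ovm_J$ on the canonical rigid complexes; this follows from the naturality of the local restriction maps in the choice of lift.
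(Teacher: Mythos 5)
Your proposal is correct in outline but takes a genuinely different route from the paper. You aim to prove the comparison directly by a local Poincar\'e-type lemma: forgetting the log poles along the horizontal divisor $\cd$ and shrinking the tube to the complement induces a quasi-isomorphism, reduced to the one-variable Monsky--Washnitzer computation and K\"unneth. The paper instead \emph{avoids} proving any such dagger-algebraic statement: it chooses a local lift $(\cz,\ovcz)\in\sst$ of $(Y,\ovy)$, forms the strictly semistable $k^0$-log scheme \emph{without boundary} $Y'$ whose components are indexed by $\Upsilon_{\ovy}\amalg\Upsilon_D$ (i.e.\ the horizontal divisors are promoted to vertical ones), and then applies Gro{\ss}e-Kl\"onne's Lemma 4.4 \emph{twice} with the common refinement $M'_{J,\heartsuit}$, which removes all other components at once; the maps $\mathbb{R}\Gamma_\rig(\ovm_J/\so_F^0)\to\mathbb{R}\Gamma_\rig(M'_{J,\heartsuit}/\so_F^0)$ and $\mathbb{R}\Gamma_\rig(M_J/\so_F^0)\to\mathbb{R}\Gamma_\rig(M'_{J,\heartsuit}/\so_F^0)$ are both quasi-isomorphisms, and the statement follows by two-out-of-three. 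Your route is more self-contained in spirit, but the ``Poincar\'e lemma plus K\"unneth'' you invoke is essentially the content of the cited Gro{\ss}e-Kl\"onne lemma rather than a reduction of it to something easier: making it precise in the overconvergent, relative, multi-divisor setting (and checking compatibility with the canonical multiset-indexed limits) would amount to re-proving that lemma, and the K\"unneth input in particular deserves a citation or argument rather than being folded into ``bookkeeping.'' The paper's embedding trick is shorter precisely because it converts a mixed vertical/horizontal situation into a purely vertical one where a ready-made result applies, at the cost of introducing the auxiliary object $(Y',Y')$.

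Two smaller remarks. First, your reduction to componentwise statements is fine, but the relevant descent is the \v{C}ech descent over the coproducts indexed by $\widetilde{\Lambda}_m(\Upsilon_{\ovy})$, not the vertical arrows $(b)$ in the preceding diagram (those compare a single space to its associated simplicial object); the point is simply that both sides are built in parallel over the same index category, so it suffices to argue termwise, which you correctly exploit. Second, note that the assertion over $\ct$ for $(\ovv_{\sbt},\ovp_{\sbt})\to(V_{\sbt},P_{\sbt})$ is not purely about removing the horizontal divisor: $\ovp_J^{J'}$ also carries the log structure from $N_0, N_\infty, N'$, and you should check that the projective-bundle factors really do pass through the comparison unchanged. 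In the paper this is subsumed under ``the other quasi-isomorphisms can be proved similarly'' via the same $Y'$-trick; in your approach it needs an explicit reduction to the $\so_F^0$ or $\so_K^\pi$ case along the fibres of $\ovp_J^{J'}\to\ovm_J$, which is plausible but not free.
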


\begin{proof}
	By taking an affine open covering of $\ovy$, we may assume that $\ovy$ is affine and admits a global chart as in Definitin~\ref{def: ss log scheme} (i).
	As in the proof of Lemma~\ref{lem: HK embedding}, one can take an exact closed immersion $(Y,\ovy)\hookrightarrow(\cz,\ovcz)$ into an object in $\sst$ with $\ovy \cong\ovcz\times_\ct k^0$ and $Y \cong\cz\times_\ct k^0$.
	Set $\ovcy: = \ovcz\times_\ct\so_F^0$ and $\cd: = \ovcz\setminus\cz$.
	Let $Y'$ be the exact closed log subscheme of $\ovcz$ whose underlying scheme is the special fibre of $\ovcy\cup\cd$.
	Then $(Y',Y')$ is an object in $\ssk$, and there is a natural bijection $\Upsilon_{Y'} \cong\Upsilon_{\ovy}\amalg\Upsilon_D$.
	Through this bijection we regard $\Upsilon_{\ovy}$ and $\Upsilon_D$ as subsets of $\Upsilon_{Y'}$.
	Let $J\subset \Upsilon_{\ovy}$ be a non-empty subset, and let $M'_J: = \bigcap_{j\in J}Y'_j$ be the log scheme associated to $(Y',Y')$.
	Then we have isomorphisms
		$$
		\ovm_J \cong M'_J\qquad\text{and} \qquad  M_J \cong M'_J\setminus(M'_J\cap\bigcup_{i\in\Upsilon_D}Y'_i).
		$$
	 If we set $M'_{J,\heartsuit}: = M'_J\setminus(M'_J\cap \bigcup_{i\in\Upsilon_{Y'}\setminus J}Y'_i)$, we can apply \cite[Lem.~4.4]{GrosseKloenne2005}  to obtain that the morphisms
		\begin{align*}
		& \mathbb{R}\Gamma_\rig(M'_J/\so_F^0) \rightarrow  \mathbb{R}\Gamma_\rig(M'_{J,\heartsuit}/\so_F^0)\quad \text{ and } \quad\\ &\mathbb{R}\Gamma_\rig((M'_J\setminus(M'_J\cap \bigcup_{i\in\Upsilon_D}Y'_i))/\so_F^0) \rightarrow  \mathbb{R}\Gamma_\rig(M'_{J,\heartsuit}/\so_F^0)
		\end{align*}
	are quasi-isomorphisms.
	Hence we see that \eqref{eq: qis of M F} is a quasi-isomorphism.
	The other quasi-isomorphisms can be proved similarly.
\end{proof}

The commutativity of the diagram before Lemma \ref{lem: qis of M V P}  implies the following statement.

\begin{corollary}\label{cor: qis of Y}
	In the setting of Lemma~\ref{lem: qis of M V P}, the morphisms
	\begin{align*}
		& \mathbb{R}\Gamma_\rig((\ovv_{\sbt},\ovp_{\sbt})/\ct) \xrightarrow{\overline{\xi}_0} \mathbb{R}\Gamma_\rig(\ovm_{\sbt}/\so_F^0),\\
		& \mathbb{R}\Gamma_\rig((\ovv_{\sbt},\ovp_{\sbt})/\ct)\otimes_FK \xrightarrow{\overline{\xi}_\pi\otimes 1} \mathbb{R}\Gamma_\rig(\ovm_{\sbt}/\so_K^\pi),\\
		& \mathbb{R}\Gamma_\rig(\ovy/\so_K^\pi) \rightarrow  \mathbb{R}\Gamma_\rig(Y/\so_K^\pi),\\
		& \mathbb{R}\Gamma_\rig(\ovy/\so_F^0) \rightarrow  \mathbb{R}\Gamma_\rig(Y/\so_F^0)\\
	\end{align*}
	are quasi-isomorphisms.
\end{corollary}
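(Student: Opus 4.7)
The plan is to derive all four claims by routine diagram chases on the commutative diagram displayed just before Lemma~\ref{lem: qis of M V P}, using only that diagram together with Lemma~\ref{lem: qis of M V P} and the quasi-isomorphism assertions in the paragraph following it.

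First I would collect the quasi-isomorphisms supplied directly by the preceding discussion. The four horizontal arrows labelled $(b)$ are quasi-isomorphisms by cohomological descent along admissible coverings of dagger spaces; $\xi_0$ and $\xi_\pi\otimes 1$ are quasi-isomorphisms by \cite[Thm.~3.1]{GrosseKloenne2005}; and the three central vertical comparisons $\mathbb{R}\Gamma_\rig(\ovm_{\sbt}/\so_F^0)\to\mathbb{R}\Gamma_\rig(M_{\sbt}/\so_F^0)$, $\mathbb{R}\Gamma_\rig(\ovm_{\sbt}/\so_K^\pi)\to\mathbb{R}\Gamma_\rig(M_{\sbt}/\so_K^\pi)$ and $\mathbb{R}\Gamma_\rig((\ovv_{\sbt},\ovp_{\sbt})/\ct)\to\mathbb{R}\Gamma_\rig((V_{\sbt},P_{\sbt})/\ct)$ are quasi-isomorphisms by Lemma~\ref{lem: qis of M V P}.

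Next I would treat $\overline{\xi}_0$ and $\overline{\xi}_\pi\otimes 1$. Each sits in one of the inner commutative squares of the diagram, in which the opposite edge is $\xi_0$, respectively $\xi_\pi\otimes 1$, and the two remaining edges are among the central vertical arrows just listed. Since $F\to K$ is flat, tensoring the relevant square with $K$ over $F$ preserves the three quasi-isomorphism edges in the $\xi_\pi$-case; in both squares the two-out-of-three property for quasi-isomorphisms forces the fourth edge to be a quasi-isomorphism as well.

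Finally, for the $\ovy\to Y$ comparisons, I would apply the same principle to the two outermost pairs of columns. On the $\so_F^0$-side both $(b)$ arrows $\mathbb{R}\Gamma_\rig(\ovy/\so_F^0)\to\mathbb{R}\Gamma_\rig(\ovm_{\sbt}/\so_F^0)$ and $\mathbb{R}\Gamma_\rig(Y/\so_F^0)\to\mathbb{R}\Gamma_\rig(M_{\sbt}/\so_F^0)$ are quasi-isomorphisms, and the vertical arrow between their targets is one by Lemma~\ref{lem: qis of M V P}; hence the vertical arrow between their sources, which is the map in question, is a quasi-isomorphism. The analogous square on the right-hand side of the diagram treats the $\so_K^\pi$ statement. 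No step presents a real obstacle: all the geometric and cohomological content has already been absorbed into the preceding constructions and into Lemma~\ref{lem: qis of M V P}, leaving here only a purely formal two-out-of-three argument.
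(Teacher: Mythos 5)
Your proposal is correct and follows exactly the route the paper takes: the paper simply states that the commutativity of the diagram preceding Lemma~\ref{lem: qis of M V P}, combined with that lemma and the previously noted quasi-isomorphisms, implies the corollary. You have merely spelled out the two-out-of-three argument (including the flatness of $F\to K$ needed in the $\overline{\xi}_\pi\otimes 1$ square) that the paper leaves implicit.
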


We did not have to assume that $\ovy$ is proper to obtain the results in this corollary and the previous lemma.
As a consequence, the lower horizontal line in the diagram before Lemma \ref{lem: qis of M V P} gives a morphism
	\begin{align*}
	\iota^\rig_\pi\colon \mathbb{R}\Gamma_\rig(\ovy/\so_F^0) \rightarrow  \mathbb{R}\Gamma_\rig(\ovy/\so_K^\pi)
	\end{align*}
in $\mathscr{D}^+(\mathsf{Mod}_F)$, which induces a quasi-isomorphism after tensoring with $K$.

\begin{proposition}\label{prop: functoriality of HK}
	The morphism $\iota^\rig_\pi$ is functorial in $(Y,\ovy)$.
\end{proposition}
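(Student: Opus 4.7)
Let $f\colon(Y,\ovy)\rightarrow(Y',\ovy')$ be a morphism in $\ssk$. The plan is to establish functoriality by reducing the zigzag defining $\iota^\rig_\pi$ to an equivalent zigzag that is manifestly functorial in $(Y,\ovy)$. The key obstruction is that, while the design of the index category $\widetilde{\Lambda}_m$ ensures that $f$ induces natural simplicial morphisms $\ovm_\sbt\rightarrow\ovm'_\sbt$ and $\ovv_\sbt\rightarrow\ovv'_\sbt$, no natural morphism $\ovp_\sbt\rightarrow\ovp'_\sbt$ exists, so the middle term $\mathbb{R}\Gamma_\rig((\ovv_\sbt,\ovp_\sbt)/\ct)$ of the zigzag cannot be compared directly with its primed counterpart.

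First I would exploit the factorizations $\overline{\xi}_0 = \tilde{\xi}_0\circ\mathbb{L}^b$ and $\overline{\xi}_\pi = \tilde{\xi}_\pi\circ\mathbb{L}^b$ implicit in the definitions, where $\mathbb{L}^b\colon\mathbb{R}\Gamma_\rig((\ovv_\sbt,\ovp_\sbt)/\ct)\rightarrow\mathbb{R}\Gamma_\rig(\ovv_\sbt/\ct)$ is the forgetful morphism and
\begin{align*}
\tilde{\xi}_0 &\colon \mathbb{R}\Gamma_\rig(\ovv_\sbt/\ct)\xrightarrow{\mathbb{L}_{\so_F^0/\ct}}\mathbb{R}\Gamma_\rig(\ovv_\sbt\times_Tk^0/\so_F^0)\rightarrow\mathbb{R}\Gamma_\rig(\ovm_\sbt/\so_F^0), \\
\tilde{\xi}_\pi &\colon \mathbb{R}\Gamma_\rig(\ovv_\sbt/\ct)\xrightarrow{\mathbb{L}_{\so_K^\pi/\ct}}\mathbb{R}\Gamma_\rig(\ovv_\sbt\times_Tk^0/\so_K^\pi)\rightarrow\mathbb{R}\Gamma_\rig(\ovm_\sbt/\so_K^\pi)
\end{align*}
are natural compositions involving only objects that are visibly functorial in $(Y,\ovy)$.

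Next I would show that $\tilde{\xi}_0$ is a quasi-isomorphism. Since $\overline{\xi}_0$ is a quasi-isomorphism by Corollary~\ref{cor: qis of Y}, the factorization $\overline{\xi}_0 = \tilde{\xi}_0\circ\mathbb{L}^b$ would then imply that $\mathbb{L}^b$ is also a quasi-isomorphism. Consequently, in $\mathscr{D}^+(\mathsf{Mod}_F)$ the zigzag defining $\iota^\rig_\pi$ is equivalent to
\[\mathbb{R}\Gamma_\rig(\ovy/\so_F^0)\xrightarrow{\sim}\mathbb{R}\Gamma_\rig(\ovm_\sbt/\so_F^0)\xleftarrow[\sim]{\tilde{\xi}_0}\mathbb{R}\Gamma_\rig(\ovv_\sbt/\ct)\xrightarrow{\tilde{\xi}_\pi}\mathbb{R}\Gamma_\rig(\ovm_\sbt/\so_K^\pi)\xleftarrow{\sim}\mathbb{R}\Gamma_\rig(\ovy/\so_K^\pi),\]
which uses only nodes and morphisms that are functorial in $(Y,\ovy)$. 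Functoriality of $\iota^\rig_\pi$ then reduces to routine verification of the commutativity of the large diagram comparing this replacement zigzag for $(Y,\ovy)$ and $(Y',\ovy')$: each square commutes either by the functoriality of $\mathbb{R}\Gamma_\rig(\cdot/\cs)$ for $\cs\in\{\ct,\so_F^0,\so_K^\pi\}$ in the log scheme argument, or by the natural transformation properties of the base change morphisms $\mathbb{L}_{\so_F^0/\ct},\mathbb{L}_{\so_K^\pi/\ct}$ discussed in \S\ref{subsubsec: Base change and Frobenius}.

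The main obstacle is establishing that $\tilde{\xi}_0$ is a quasi-isomorphism. The argument should be local and combinatorial: since $\ovv_\sbt\rightarrow\ovm_\sbt$ is locally a product of total spaces of line bundles, restriction to the zero section (combined with base change from $\ct$ to $\so_F^0$) should induce a quasi-isomorphism of log rigid complexes by a log-geometric homotopy invariance principle. The relevant techniques parallel those in the proof of Lemma~\ref{lem: qis of M V P} and \cite[Lem.~4.4]{GrosseKloenne2005}, but applied to the simpler setting of log rigid cohomology without projective compactification.
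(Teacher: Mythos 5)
Your proposal and the paper's proof diverge at the central point, and the gap in your approach is real.

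You want to replace the non-functorial middle term $\mathbb{R}\Gamma_\rig((\ovv_{\sbt},\ovp_{\sbt})/\ct)$ by $\mathbb{R}\Gamma_\rig(\ovv_{\sbt}/\ct)$, which is manifestly functorial. To do that you need $\mathbb{L}^b$ (equivalently, by two-out-of-three and Corollary~\ref{cor: qis of Y}, the map $\tilde\xi_0$) to be a quasi-isomorphism, and you defer this to ``a log-geometric homotopy invariance principle'' paralleling \cite[Lem.~4.4]{GrosseKloenne2005}. This is exactly the step I do not see how to carry out, and it is \emph{not} what the paper does. The paper instead introduces an auxiliary \emph{compactification} $\widetilde{P}_J^I$ (the log schematic image of $\ovv_J^I$ in $\ovp_J^I\times_k\ovp_{J'}^{\prime I'}$) which dominates both $\ovp_J^I$ and $\ovp_{J'}^{\prime I'}$, and then proves in Lemma~\ref{lem: qis with tilde boundary} --- via a local coordinate computation, $\R\pi_\ast\co_{\widetilde{\mathcal{P}}_{J,\Q}}\cong\co_{\overline{\mathcal{P}}_{J,\Q}}$, and a devissage through $\mathbb{P}^1$-bundles --- that replacing $\ovp_J^I$ by $\widetilde{P}_J^I$ does not change the rigid cohomology. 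This is an invariance under \emph{change} of boundary compactification, a substantially weaker and more tractable statement than the \emph{removal} of compactification that your $\mathbb{L}^b$-claim requires.

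Concretely, your ``homotopy invariance'' conflates two distinct operations hidden inside $\tilde\xi_0$: (i) restriction along the zero section $\ovm_J\hookrightarrow\ovv_J^{J'}\times_Tk^0$, which indeed looks amenable to a retraction argument, and (ii) the base change $\mathbb{L}_{\so_F^0/\ct}$ along $t\mapsto0$. The second is where the difficulty lives: it is precisely this specialization step that Gro\ss{}e-Kl\"onne controls by working with the proper morphism $\ovp_J^{J'}\to\ovm_J$, whose coherent cohomology lets him track the $\ct$-structure. Dropping the compactification removes exactly the finiteness/coherence that the argument exploits. The citation of \cite[Lem.~4.4]{GrosseKloenne2005} is also off-target --- that lemma is about deleting closed strata of a semistable log scheme without changing log rigid cohomology, not about affine bundles or specialization. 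Since the paper goes to considerable trouble (Lemmas~\ref{lem: compute tilde P} and~\ref{lem: qis with tilde boundary}) to avoid ever needing $\mathbb{L}^b$ to be an isomorphism, you should regard that claim as a serious missing ingredient rather than a routine verification: either supply a genuine proof that $\mathbb{L}^b$ is a quasi-isomorphism for these $(\ovv_J^{J'},\ovp_J^{J'})$, or adopt the paper's intermediate-compactification strategy.
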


To prove this proposition, let $f\colon (Y,\ovy) \rightarrow  (Y',\ovy')$ be a morphism in $\ssk$, and $\rho = \rho_f\colon\Upsilon_{\ovy} \rightarrow \Upsilon_{\ovy'}$ the map induced by $f$.
For $I\subset J\subset\Upsilon_{\ovy}$, we set $J': = \rho(J)\subset\Upsilon_{\ovy'}$ and $I': = \rho(I)\subset I$.
Denote by $\ovm_J$, $(\ovv_J^I,\ovp_J^I)$ and $\ovm'_{J'}$, $({\ovv'}^{I'}_{J'},{\ovp'}_{J'}^{I'})$ the log schemes (with boundary) associated to $(Y,\ovy)$ and $(Y',\ovy')$.
Then $f$ induces morphisms $\ovm_J \rightarrow  \ovm'_{J'}$ and $\ovv_J^I \rightarrow  {\ovv'}_{J'}^{I'}$, but not $\ovp_J^I \rightarrow {\ovp'}_{J'}^{I'}$ in general.
{As a remedy, we consider} the log schematic image $\widetilde{P}_J^I$ of the diagonal map $\ovv_J^I \rightarrow  \ovp_J^I\times_k {\ovp'}_{J'}^{I'}$.

\begin{lemma}\label{lem: compute tilde P}
	Let $f\colon(Y,\ovy) \rightarrow (Y',\ovy')$ be a morphism in $\ssk$ and let $I\subset J\subset\Upsilon_{\ovy}$.
	With the notation as above, $(\ovv_J^I,\widetilde{P}_J^I)$ is a locally $\ct$-embeddable $T$-log scheme with boundary, that is $(\ovv_J^I,\widetilde{P}_J^I)\in \lelsb$.
\end{lemma}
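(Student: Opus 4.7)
The plan is to prove the lemma by an explicit local construction, analogous in spirit to Proposition~\ref{prop: datum for A}. Work Zariski-locally on $\ovm_J$, choosing a point together with its image in $\ovm'_{J'}$, and fix local charts for $\ovy$ and $\ovy'$ of the form $\N^m \oplus \N^n$ and $\N^{m'} \oplus \N^{n'}$ as in Definition~\ref{def: ss log scheme}. Over such a chart the line bundles $\mathcal{L}_j$ for $j \in \Upsilon_{\ovy}$ and $\mathcal{L}'_{j'}$ for $j' \in \Upsilon_{\ovy'}$ are trivialized, so $\ovp_J^I$ (resp.~${\ovp'}_{J'}^{I'}$) is locally isomorphic to $\ovm_J \times_k \prod_{j \in I}\mathbb{P}^1$ (resp.~$\ovm'_{J'} \times_k \prod_{j' \in I'}\mathbb{P}^1$), equipped with the log structure coming from the $0$- and $\infty$-sections in each $\mathbb{P}^1$ together with the appropriate horizontal and non-$J$ contributions pulled back from $\ovm_J$.

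Next, describe the diagonal morphism $\ovv_J^I \to \ovp_J^I \times_k {\ovp'}_{J'}^{I'}$ explicitly. Since $f$ is a morphism of strictly semistable $k^0$-log schemes with boundary, the induced chart morphism $P' \to P$ sends each vertical generator $t'_{j'}$ (for $j' \in \rho(J)$) to a product $\prod_{k \in \rho^{-1}(j')} t_k^{n_{k,j'}}$ (up to a unit), with the exponents $n_{k,j'}$ summing appropriately to preserve the relation $\beta^\gp(1) = \sum t_k$, and similarly for the horizontal generators. This translates to a formula expressing the pullback $s'_{j'}$ of the tautological section on the ${\ovp'}_{J'}^{I'}$-factor as a monomial in the tautological sections $s_k$ on the $\ovp_J^I$-factor. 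Consequently $\widetilde{P}_J^I$ is cut out locally in $\ovp_J^I \times_k {\ovp'}_{J'}^{I'}$ by these explicit monomial relations (plus the strict subscheme equations from $\ovm_J \to \ovm'_{J'}$), and in particular is a scheme of explicitly controlled combinatorial type.

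From this local description, the conditions that $(\ovv_J^I,\widetilde{P}_J^I)$ be a fine $T$-log scheme with boundary are immediate: $\ovv_J^I$ is the strict open locus where none of the $s_k$ vanish, schematic density and the injectivity/isomorphism conditions on the log structure sheaves follow from the explicit monomial description, and the $T$-structure extends from $\ovv_J^I$ via the map sending $t$ to the appropriate product of $s_k$'s and pulled-back horizontal coordinates. For local $\Xi_b(\ct)$-embeddability, we lift the local chart data to weak formal $\so_F$-schemes exactly as in the proof of Lemma~\ref{lem: HK embedding} (using \cite{Elkik1973} and \cite[Cor.~2.4.3]{vanderPut1986}), obtaining a chart of the form required by Proposition~\ref{prop: embeddable boundary}; possibly after a preliminary blow-up analogous to that producing $\ovcz_A'$ in Proposition~\ref{prop: datum for A}, the monomial relations can be rewritten as unit conditions so that the resulting affine scheme is smooth.

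The main obstacle is the bookkeeping for the multiplicities $n_{k,j'}$ coming from the combinatorics of $\rho$: one must check that after the monomial relations the remaining coordinates still yield a submonoid $P_{\widetilde{P}_J^I}$ satisfying the injectivity and $p$-torsion-freeness conditions of Definition~\ref{def: strongly smooth}, and that the associated $a,b$ reproducing the $\ct$-structure exist. Once the chart is set up correctly this is purely formal, but the combinatorial setup is where all the content lies.
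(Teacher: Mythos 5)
Your outline matches the paper's strategy in broad strokes — work Zariski-locally with explicit charts, express the pullback of tautological sections as monomials, and lift to weak formal schemes — but it leaves the central technical step unresolved, and it misses the clean structural device the paper uses to make the lifting step work.

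The paper does not lift the chart data for $(Y,\ovy)$ and $(Y',\ovy')$ separately and then reconcile them via an ad-hoc preliminary blow-up as you suggest. Instead, it fixes $\Xi_\hk^\rig$-rigid data for \emph{both} objects and considers the rigid Hyodo--Kato datum $((\cz_A,\ovcz_A,\phi_A),i_A)$ associated to the two-element multiset $A$ with support $\{\mathrm{id}_{(Y,\ovy)}, f\}$. Proposition~\ref{prop: datum for A} has already shown that $\ovcz_A$ comes with coordinates $u_\alpha, v_\beta$ in which the maps $t'_\alpha \mapsto u_\alpha\prod_{\gamma\in\Upsilon_{\ovy}^\alpha}t_\gamma$ and $s'_\beta\mapsto v_\beta\prod_{\delta\in\Upsilon_D}s_\delta^{m_{\beta,\delta}}$ make the lift of $f$ automatic; the blow-up is baked into that earlier construction rather than being something one improvises here. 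This is a genuine simplification over ``lift each side and then blow up.'' A small point on your coordinate description: on the semistable (vertical) side the exponents are necessarily $1$ — each irreducible component of $\ovy$ maps to a unique component of $\ovy'$ — so writing $t_k^{n_{k,j'}}$ with unspecified $n_{k,j'}$ obscures the real combinatorics, which live entirely in the horizontal multiplicities $m_{\beta,\delta}$.

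More importantly, the thing you flag as ``once the chart is set up correctly this is purely formal, but the combinatorial setup is where all the content lies'' is exactly the part you must supply and the paper does supply: it covers $\overline{\mathcal{P}}_J\times\overline{\mathcal{P}}'_{J'}$ by the affine charts $\overline{\mathcal{U}}_\varepsilon$ indexed by sign vectors $\varepsilon\in\{\pm 1\}^J\times\{\pm 1\}^{J'}$, computes the image monoid $Q$ of $(\psi_J, c^\gp\circ\psi'_{J'})\colon Q_A\oplus Q'\to Q_A^\gp$, and exhibits an explicit chart $(\xi,\zeta)$ with elements $a,b\in Q$ verifying Definition~\ref{def: strongly smooth}. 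Without this one has not shown that $(\overline{\mathcal{V}}_J,\widetilde{\mathcal{P}}_J)$ is strongly smooth, and hence has not established embeddability of $(\ovv_J^I,\widetilde{P}_J^I)$ via the boundary immersions $(\ovv_J^I,\widetilde{P}_J^I)\hookrightarrow(\ovv_J^J,\widetilde{P}_J^J)\hookrightarrow(\overline{\mathcal{V}}_J,\widetilde{\mathcal{P}}_J)$. So while your plan points in the right direction, the proposal as written leaves the crux open and also does not realize that the strong smoothness should be checked for the \emph{lift} $(\overline{\mathcal{V}}_J,\widetilde{\mathcal{P}}_J)$ rather than by trying to fit $(\ovv_J^I,\widetilde{P}_J^I)$ itself into the hypotheses of Proposition~\ref{prop: embeddable boundary}.
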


\begin{proof}
	Without loss of generality we assume that there are $\Xi_\hk^\rig$-rigid data $((\cz,\ovcz,\phi),i)$ for $(Y,\ovy)$ and $((\cz',\ovcz',\phi'),i')$ for $(Y',\ovy')$.
		Furthermore, let $A\in\mathsf{SET}^0_{\Xi_\hk^\rig}(Y,\ovy)$ be a multiset whose multiplicity is one at $(\mathrm{id}_{(Y,\ovy)},(Y,\ovy),(\cz,\ovcz,\phi),i)$ and $(f,(Y',\ovy'),(\cz',\ovcz',\phi'),i')$, and zero otherwise.
	Let $((\cz_A,\ovcz_A,\phi_A),i_A)$ be the $\Xi_\hk^\rig$-rigid datum for $(Y,\ovy)$ associated to $A$ constructed as in \S\ref{subsubsec: Rigid Hyodo-Kato cohomology}.
	As in the proof of Proposition~\ref{prop: datum for A} there is a commutative diagram
		\[\xymatrix{
		\ovy \ar[r] \ar[d]&\ovcz_A \ar[d] \ar[r]&\Spwf\frac{\so_F[t_\gamma,s_\delta,t_{\gamma'}^{-1},u_\alpha^{\pm 1},v_\beta^{\pm 1}]^\dagger_{\alpha\in\Upsilon_{\ovcy'},\beta\in\Upsilon_{\cd'},\gamma\in\Upsilon_{\ovcy},\gamma'\in\Upsilon_{\ovcy}\setminus\Upsilon_{\ovy},\delta\in\Upsilon_{\cd}}}{(\prod_{\alpha\in\Upsilon_{\ovcy'}}u_\alpha-1)} \ar[d]\\
		\ovy' \ar[r]&\ovcz' \ar[r]&\Spwf\so_F[t'_\alpha, s'_\beta]^\dagger_{\alpha\in\Upsilon_{\ovcy'},\beta\in\Upsilon_{\cd'}}.
		}\]
	Here the right horizontal arrows are smooth. The vertical arrow on the right is defined by
		$$
		t'_\alpha \mapsto  u_\alpha\prod_{\gamma\in \Upsilon_{\ovy}^\alpha}t_\gamma\qquad\text{ and }\qquad s'_\beta \mapsto v_\beta \prod_{\delta\in\Upsilon_D}s_\delta^{m_{\beta,\delta}},
		$$
where $\Upsilon_{\ovy}^\alpha = \{\gamma\in\Upsilon_{\ovy}\mid(i'\circ f)(\ovy_\gamma)\subset\ovcy'_\alpha\}$ and $m_{\beta,\delta}$ is the multiplicity of $(i\circ f)^\ast \cd'_\beta$ at $D_\delta$.
	Consider the diagram
		{\small\[\mathclap{
		\xymatrix{
		\co_{\ovcz_A} &\frac{\so_F[t_\gamma,s_\delta,t_{\gamma'}^{-1},u_\alpha^{\pm 1},v_\beta^{\pm 1}]^\dagger_{\alpha\in\Upsilon_{\ovcy'},\beta\in\Upsilon_{\cd'},\gamma\in\Upsilon_{\ovcy},\gamma'\in\Upsilon_{\ovcy}\setminus\Upsilon_{\ovy},\delta\in\Upsilon_{\cd}}}{(\prod_{\alpha\in\Upsilon_{\ovcy'}}u_\alpha-1)} \ar[l] & \mathbb{N}^{\Upsilon_{\ovcy}} \oplus\mathbb{N}^{\Upsilon_\cd} \oplus\mathbb{Z}^{\Upsilon_{\ovcy'}} \oplus\mathbb{Z}^{\Upsilon_{\cd'}} = :Q_A \ar[l]_>>>>>>\kappa\\
		\co_{\ovcz'} \ar[u] & \so_F[t'_\alpha,s'_\gamma]^\dagger_{\alpha\in\Upsilon_{\ovcy'},\beta\in\Upsilon_{\cd'}} \ar[u] \ar[l] &\mathbb{N}^{\Upsilon_{\ovcy'}}\oplus\mathbb{N}^{\Upsilon_{\cd'}} = :Q', \ar[l]_{\kappa'} \ar[u]_c
		}
		}\]}
where $\kappa$ is defined by
		\begin{align*}
		&1_\gamma \mapsto  t_\gamma\ \ (\gamma\in\Upsilon_{\ovcy}),&1_\delta  \mapsto  s_\delta\ \ (\delta\in\Upsilon_{\cd}),\\
		&1_\alpha \mapsto u_\alpha\ \ (\alpha\in\Upsilon_{\ovcy'}),&1_\beta \mapsto  v_\beta\ \ (\beta\in\Upsilon_{\cd'}),
		\end{align*}
	$\kappa'$ is defined by
		\begin{align*}
		&1_\alpha \mapsto  t'_\alpha\ \ (\alpha\in\Upsilon_{\ovcy'}),&&1_\beta \mapsto  s'_\beta\ \ (\beta\in\Upsilon_{\cd'}),
		\end{align*}
	and $c$ is defined by
		\begin{align*}
		&1_\alpha \mapsto  1_\alpha + \sum_{\gamma\in\Upsilon_{\ovcy}^\alpha}1_\gamma\ \ (\alpha\in\Upsilon_{\ovcy'}),&&1_\beta \mapsto  1_\beta + \sum_{\delta\in\Upsilon_{\cd}}m_{\beta,\delta}1_\delta\ \ (\beta\in\Upsilon_{\cd'}).
		\end{align*}
	Here $1_\alpha$ denotes $1$ in the $\alpha$-component of $\mathbb{N}^{\Upsilon_{\ovcy'}}$.
	We used a similar notation for $1_\beta$, $1_\gamma$, and $1_\delta$.
	The above diagram gives a chart of $\ovcz_A \rightarrow \ovcz'$.
	Set
		\begin{align*}
		&\overline{\mathcal{M}}_J: = \bigcap_{j\in J}\ovcy_{A,j}, &&\overline{\mathcal{V}}_J: = \overline{\mathcal{M}}_J\times\ct^J, &&\overline{\mathcal{P}}_J: = \overline{\mathcal{M}}_J\times\ovct^J,\\
		&\overline{\mathcal{M}}'_{J'}: = \bigcap_{j\in J'}\ovcy'_j, &&\overline{\mathcal{V}}'_{J'}: = \overline{\mathcal{M}}'_{J'}\times\ct^{J'},&& \overline{\mathcal{P}}_{J'}: = \overline{\mathcal{M}}'_{J'}\times\ovct^{J'}.
		\end{align*}
	For $\gamma\in J$ (resp. $\alpha\in J'$), we denote the coordinate of the $\gamma$-component (resp. $\alpha$-component) of  $\ct^J$ (resp. $\ct^{J'}$) by $x_\gamma$ (resp. $y_\alpha$).
	We regard $\overline{\mathcal{V}}_J$ and $\overline{\mathcal{V}}'_{J'}$ as log schemes over $\ct$ via the maps
		$$
		t \mapsto \prod_{\gamma\in\Upsilon_{\ovcy}\setminus J}t_\gamma\cdot\prod_{\gamma'\in J}x_{\gamma'}\qquad \text{ and }\qquad
		t \mapsto  \prod_{\alpha\in\Upsilon_{\ovcy'}\setminus J'}t'_\alpha\cdot\prod_{\alpha'\in J'}y_{\alpha'}
		$$
	respectively.
	The morphism $\overline{\mathcal{V}}_J  \rightarrow \overline{\mathcal{V}}'_{J'}$ defined by
	$$
	y_\alpha \mapsto  u_\alpha\cdot\prod_{\gamma\in\Upsilon_{\ovy}^\alpha \setminus J}t_\gamma\cdot\prod_{\gamma'\in\Upsilon_{\ovy}^\alpha \cap J}x_{\gamma'}
	$$
	for $\alpha\in J'$ lifts $\ovv_J \rightarrow \ovv'_{J'}$.
	Let $\widetilde{\mathcal{P}}_J$ be the (weak formal) log schematic image of the diagonal embedding $\overline{\mathcal{V}}_J\hookrightarrow \overline{\mathcal{P}}_J \times\overline{\mathcal{P}}'_{J'}$.
	Note that $\overline{\mathcal{P}}_J\times\overline{\mathcal{P}}'_{J'}$ is covered by
		\[\overline{\mathcal{U}}_\varepsilon : =  \overline{\cm}_J\times\overline{\cm}'_{J'} \times \Spwf\so_F[x_\gamma^{\varepsilon_\gamma}, y_\alpha^{\varepsilon_\alpha}]^\dagger_{\gamma\in J, \alpha\in J'} \subset \overline{\mathcal{P}}_J \times \overline{\mathcal{P}}'_{J'}\]
	for all $\varepsilon = ((\varepsilon_\gamma)_{\gamma\in J},(\varepsilon_\alpha)_{\alpha\in J'})\in \{\pm 1\}^J\times\{\pm 1\}^{J'}$.
	Let
		$$
		\widetilde{\mathcal{U}}_\varepsilon : =  \overline{\mathcal{U}}_\varepsilon\cap \widetilde{\mathcal{P}}_J \qquad \text{ and } \qquad \mathcal{U}_\varepsilon : =  \widetilde{\mathcal{U}}_\varepsilon\cap \overline{\mathcal{V}}_J,
		$$
	and endow them with the log structure which is the pull-back of the log structure on $\widetilde{\mathcal{P}}_J$.
	We define a map $\psi_J\colon Q_A \rightarrow  Q_A^\gp$ by
		\begin{align*}
		&1_\gamma \mapsto 
		\begin{cases}
		1_\gamma&(\gamma\in\Upsilon_{\ovcy}\setminus J,\text{ or }\gamma\in J \text{ and }\varepsilon_\gamma = 1),\\
		-1_\gamma&(\gamma\in J\text{ and }\varepsilon_\gamma = -1),
		\end{cases}
		&&1_\delta \mapsto  1_\delta\ \ (\delta\in\Upsilon_{\cd}),\\
		&1_\alpha \mapsto  1_\alpha\ \ (\alpha_\in\Upsilon_{\ovcy'}),
		&&1_\beta \mapsto  1_\beta\ \ (\beta_\in\Upsilon_{\cd'}),
		\end{align*} 
	and a second map  $\psi'_{J'}\colon Q' \rightarrow  {Q'}^\gp$ by 
		\begin{align*}
		&1_\alpha \mapsto 
		\begin{cases}
		1_\alpha&(\alpha\in\Upsilon_{\ovcy'}\setminus J',\text{ or }\alpha\in J' \text{ and }\varepsilon_\alpha = 1),\\
		-1_\alpha&(\alpha\in J'\text{ and }\varepsilon_\alpha = -1),
		\end{cases}
		&&1_\beta \mapsto  1_\beta\ \ (\beta\in\Upsilon_{\cd'}).
		\end{align*}
	Let $Q$ be the image of
		\[(\psi_J,c^\gp\circ\psi'_{J'})\colon Q_A\oplus Q' \rightarrow  Q_A^\gp.\]
	Then $(\mathcal{U}_\varepsilon,\widetilde{\mathcal{U}}_\varepsilon)$ has a chart $(\xi\colon Q_{\widetilde{\mathcal{U}}_\varepsilon} \rightarrow \cm_{\widetilde{\mathcal{U}}_\varepsilon},\zeta\colon\mathbb{N} \rightarrow  Q^\gp)$, where $\xi$ is induced by the map $Q_{A,\overline{\mathcal{V}}_J}^\gp \rightarrow \cm_{\overline{\mathcal{V}}_J}^\gp$ which sends
		\begin{align*}
		&1_\gamma \mapsto  t_\gamma\ (\gamma\in\Upsilon_{\ovcy}\setminus J),&&1_{\gamma'} \mapsto  x_{\gamma'}\ (\gamma'\in J),&&1_\delta \mapsto  s_\delta\ (\delta\in\Upsilon_{\cd}),\\
		&1_\alpha \mapsto  u_\alpha\ (\alpha\in\Upsilon_{\ovcy'}),&&1_\beta \mapsto  v_\beta\ (\beta\in\Upsilon_{\cd'}),&&
		\end{align*}
		and $\zeta$ sends $1 \mapsto \sum_{\gamma\in\Upsilon_{\ovcy}\setminus J}1_\gamma + \sum_{\gamma'\in J}1_{\gamma'}$.
		Set $a: = -\sum_{\gamma\in J,\ \varepsilon_\gamma = -1}1_\gamma\in Q$ and $b: = \sum_{\gamma\in\Upsilon_{\ovcy}\setminus J}1_\gamma + \sum_{\gamma'\in J,\ \varepsilon_{\gamma'} = 1}1_{\gamma'}\in Q$.
		Then the chart $(\xi,\zeta)$ and $a,b\in Q$ satisfy the conditions in Definition~\ref{def: strongly smooth}.
		Since we have boundary immersions $(\ovv_J^I,\widetilde{P}_J^I)\hookrightarrow(\ovv_J^J,\widetilde{P}_J^J)\hookrightarrow(\overline{\mathcal{V}}_J,\widetilde{\mathcal{P}}_J)$, the lemma holds.
\end{proof}

\begin{lemma}\label{lem: qis with tilde boundary}
	The morphism $ \mathbb{R}\Gamma_\rig((\ovv_J^I,\ovp_J^I)/\ct) \rightarrow  \mathbb{R}\Gamma_\rig((\ovv_J^I,\widetilde{P}_J^I)/\ct)$ induced by the natural projection $(\ovv_J^I,\widetilde{P}_J^I) \rightarrow (\ovv_J^I,\ovp_J^I)$ is a quasi-isomorphism.
\end{lemma}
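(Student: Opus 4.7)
The plan is to prove the quasi-isomorphism locally by lifting the projection $\widetilde{P}_J^I \to \ovp_J^I$ to a morphism of strongly smooth weak formal $\ct$-log schemes with boundary, and then showing that this lifted projection is, on the relevant log tube, a fibration by polydisks. By the canonicity of log rigid cohomology established in \S\ref{Subsec: Log-rig-com}, it suffices to verify the statement for a well-chosen local datum and cover.

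First, I would apply the local chart construction from the proof of Lemma \ref{lem: compute tilde P}. This produces on the one hand a strongly smooth lift $(\overline{\mathcal{V}}_J^I, \widetilde{\mathcal{P}}_J^I)$ of $(\ovv_J^I, \widetilde{P}_J^I)$, and on the other hand the obvious lift $(\overline{\mathcal{V}}_J^I, \overline{\mathcal{P}}_J^I)$ of $(\ovv_J^I, \ovp_J^I)$ as a product of $\mathbb{P}^1$-bundles (lifted to formal $\mathbb{P}^1$-bundles over $\overline{\mathcal{M}}_J^I$), together with a morphism $\widetilde{\pi}\colon \widetilde{\mathcal{P}}_J^I \to \overline{\mathcal{P}}_J^I$ lifting the projection. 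The log rigid cohomology of both sides can then be computed via $\omega^{\sbt}_{(\overline{\mathcal{V}}_J^I,\,\overline{\mathcal{P}}_J^I)/\ct,\mathbb{Q}}$ and $\omega^{\sbt}_{(\overline{\mathcal{V}}_J^I,\,\widetilde{\mathcal{P}}_J^I)/\ct,\mathbb{Q}}$ on the respective log tubes.

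Next, I would inspect the explicit local description of $\widetilde{\mathcal{P}}_J^I$: it is cut out in $\overline{\mathcal{P}}_J^I \times_{\overline{\cm}_J^I} (\overline{\mathcal{P}}'_{J'} \times \mathbb{A}^{\Upsilon_{\ovcy'}} \times \mathbb{A}^{\Upsilon_{\cd'}})$ by the graph relations $y_\alpha = u_\alpha \prod_{\gamma \in \Upsilon_{\ovy}^\alpha \setminus J} t_\gamma \prod_{\gamma' \in \Upsilon_{\ovy}^\alpha \cap J} x_{\gamma'}$ (for $\alpha \in J'$) together with $\prod_{\alpha \in \Upsilon_{\ovcy'}} u_\alpha = 1$. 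After using these relations to eliminate the $y_\alpha$, the fibre of $\widetilde{\pi}$ over $\overline{\mathcal{P}}_J^I$ is parametrized by the coordinates $u_\alpha^{\pm 1}$ (modulo $\prod u_\alpha = 1$) and $v_\beta$. On the log tube lying over $]\ovv_J^I[^{\log}_{\overline{\mathcal{P}}_J^I}$, these $u_\alpha$ and $v_\beta$ take values close to those specified by $f\colon (Y,\ovy) \to (Y',\ovy')$, and hence range precisely in a product of polyannuli and polydisks, yielding a trivial polydisk fibration structure on the tube level.

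Finally, I would invoke the relative Poincar\'e lemma for log de Rham complexes along such polydisk fibrations, as used by Gro\ss{}e-Kl\"onne in \cite{GrosseKloenne2005}. This gives that the pullback map
\[
\omega^{\sbt}_{(\overline{\mathcal{V}}_J^I,\,\overline{\mathcal{P}}_J^I)/\ct,\mathbb{Q}} \longrightarrow R\widetilde{\pi}_{\mathbb{Q},\ast}\, \omega^{\sbt}_{(\overline{\mathcal{V}}_J^I,\,\widetilde{\mathcal{P}}_J^I)/\ct,\mathbb{Q}}
\]
is a quasi-isomorphism on $]\ovp_J^I[^{\log}_{\overline{\mathcal{P}}_J^I}$; applying $\R\Gamma$ and passing to the canonical complexes then produces the desired quasi-isomorphism. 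The main technical obstacle, which I expect to be the most delicate step, is verifying that the log structure on $\widetilde{\mathcal{P}}_J^I$ coming from the boundary $\widetilde{P}_J^I \setminus \ovv_J^I$ is exactly the one making the fibres of $\widetilde{\pi}$ into standard log polydisks (rather than contributing extra forms that would obstruct the Poincar\'e lemma). This amounts to matching the combinatorial data of the graph construction with the strong smoothness chart produced in Lemma \ref{lem: compute tilde P}, and is handled by the explicit description of the submonoid $Q$ and the elements $a,b \in Q$ from that proof.
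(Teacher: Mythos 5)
There is a genuine gap: the geometric picture underlying the proposal is wrong, and the Poincar\'e-lemma step fails.

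Your argument rests on the claim that, after lifting, the projection $\widetilde{\pi}\colon\widetilde{\mathcal{P}}_J\to\overline{\mathcal{P}}_J$ becomes a fibration by polydisks and polyannuli over the relevant tube. This is not the case. Recall that $\widetilde{\mathcal{P}}_J$ is the log schematic image of the diagonal embedding of $\overline{\mathcal{V}}_J$ into $\overline{\mathcal{P}}_J\times\overline{\mathcal{P}}'_{J'}$, i.e.\ the closure of the graph of the map $\overline{\mathcal{V}}_J\to\overline{\mathcal{V}}'_{J'}$, $y_\alpha\mapsto u_\alpha\prod_{\gamma\in\Upsilon_{\ovy}^\alpha\setminus J}t_\gamma\prod_{\gamma'\in\Upsilon_{\ovy}^\alpha\cap J}x_{\gamma'}$, inside the compactification $\overline{\mathcal{P}}_J\times\overline{\mathcal{P}}'_{J'}$ where each $y_\alpha$ has been compactified to $\mathbb{P}^1$. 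The quantities $u_\alpha$, $v_\beta$ are \emph{functions on the base} $\overline{\mathcal{M}}_J$, not fibre coordinates, and the $y_\alpha$'s are eliminated by the graph relations only away from the indeterminacy locus. Over boundary points of $\overline{\mathcal{P}}_J$ where both $\prod t_\gamma\prod x_{\gamma'}=0$ and $x_{\gamma'}=\infty$ occur, the closure acquires positive-dimensional, \emph{proper} fibres (products of $\mathbb{P}^1$'s, blow-up type). The tube appearing in the statement is $]\ovp_J^I[_{\overline{\mathcal{P}}_J}$, which contains these boundary points — not $]\ovv_J^I[$, where an isomorphism picture would apply. Proper fibres cannot be handled by a relative Poincar\'e lemma for polydisk fibrations, so the final step of your argument has no justification.

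The correct mechanism, and the one the paper uses, is algebraic rather than analytic. One first checks, from the explicit chart computed in Lemma~\ref{lem: compute tilde P}, that the relative log de~Rham complex is pulled back:
$\omega^i_{(\overline{\mathcal{V}}_J,\widetilde{\mathcal{P}}_J)/\ct,\Q}\cong\pi^\ast\omega^i_{(\overline{\mathcal{V}}_J,\overline{\mathcal{P}}_J)/\ct,\Q}$, a free $\co_{\widetilde{\mathcal{P}}_{J,\Q}}$-module in each degree. The whole lemma then reduces to showing
$\R\pi_\ast\co_{\widetilde{\mathcal{P}}_{J,\Q}}\cong\co_{\overline{\mathcal{P}}_{J,\Q}}$,
and concluding via the projection formula and the Hodge-to-de~Rham spectral sequences on both tubes. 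That sheaf-theoretic vanishing is proved by passing to the underlying algebraic schemes $\widetilde{Z}_{J,L}\to\overline{Z}_J$ (the weak formal data come from weak completions of genuine schemes, cf.\ the proof of Lemma~\ref{lem: HK embedding}), inducting on $\lvert L\rvert$: the case $\lvert L\rvert=1$ is a blow-up-type statement handled via a resolution of singularities and the Grothendieck--Leray spectral sequence, and the inductive step uses flatness of $\pi^{\{\ell\}}_s$ and the product decomposition $\widetilde{\mathcal{P}}_{J,L_1\cup L_2}=\widetilde{\mathcal{P}}_{J,L_1}\times_{\overline{\mathcal{P}}_J}\widetilde{\mathcal{P}}_{J,L_2}$. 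None of this is captured by a fibration-by-disks argument. Your concern at the end — about the log structure contributing extra fibre forms — is also misdirected: the issue is not extra forms (the pullback statement already precludes those), but the properness of the fibres of $\pi$.
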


\begin{proof}
	Again we can work locally on $\ovy$ and $\ovy'$.
	Thus assume that $\ovy$ and $\ovy'$ are affine.
	Let $(\overline{\mathcal{V}}_J,\widetilde{\mathcal{P}}_J)$ be as in the proof of Lemma~\ref{lem: compute tilde P}, and $\pi\colon\widetilde{\mathcal{P}}_{J,\Q} \rightarrow \overline{\mathcal{P}}_{J,\Q}$ be the morphism of dagger spaces induced by the natural projection.
	By the computation in the proof of Lemma~\ref{lem: compute tilde P}, we see that for any $i\geqslant 0$ we have $\omega^i_{(\overline{\mathcal{V}}_J,\widetilde{\mathcal{P}}_J)/\ct,\Q} \cong\pi^\ast\omega^i_{(\overline{\mathcal{V}}_J,\overline{\mathcal{P}}_J)/\ct,\Q}$ and it is free over $\co_{\widetilde{\mathcal{P}}_{J,\Q}}$.
	Now we claim that
	\begin{equation}\label{eq: claim}
	\R\pi_\ast\mathcal{O}_{\widetilde{\mathcal{P}}_{J,\Q}} \cong\mathcal{O}_{\overline{\mathcal{P}}_{J,\Q}}.
	\end{equation}
	If this claim holds, we have $\R\pi_\ast\omega^i_{(\overline{\mathcal{V}}_J,\widetilde{\mathcal{P}}_J)/\ct,\Q} \cong\omega^i_{(\overline{\mathcal{V}}_J,\overline{\mathcal{P}}_J)/\ct,\Q}$.
	Since $]\widetilde{P}_J^I[_{\widetilde{\mathcal{P}}_J} = \pi^{-1}(]\ovp_J^I[_{\overline{\mathcal{P}}_J})$, we obtain
		\[H^j(]\widetilde{P}_J^I[_{\widetilde{\mathcal{P}}_J},\omega^i_{(\overline{\mathcal{V}}_J,\widetilde{\mathcal{P}}_J)/\ct,\Q}) \cong H^j(]\ovp_J^I[_{\overline{\mathcal{P}}_J},\omega^i_{(\overline{\mathcal{V}}_J,\overline{\mathcal{P}}_J)/\ct,\Q})\]
	for any $i,j\geqslant 0$.
	Thus we have an isomorphism between the spectral sequences
		\begin{align*}
		&E_1^{i,j} = H^j(]\widetilde{P}_J^I[_{\widetilde{\mathcal{P}}_J},\omega^i_{(\overline{\mathcal{V}}_J,\widetilde{\mathcal{P}}_J)/\ct,\Q})\Rightarrow H^{i + j}(]\widetilde{P}_J^I[_{\widetilde{\mathcal{P}}_J},\omega^{\sbt}_{(\overline{\mathcal{V}}_J,\widetilde{\mathcal{P}}_J)/\ct,\Q}) = H^{i + j}_\rig((\ovv_J^I,\widetilde{P}_J^I)/\ct)
		&&\\
		&E_1^{i,j} = H^j(]\ovp_J^I[_{\overline{\mathcal{P}}_J},\omega^i_{(\overline{\mathcal{V}}_J,\overline{\mathcal{P}}_J)/\ct,\Q}) \Rightarrow H^{i + j}(]\ovp_J^I[_{\overline{\mathcal{P}}_J},\omega^{\sbt}_{(\overline{\mathcal{V}}_J,\overline{\mathcal{P}}_J)/\ct,\Q}) = H^{i + j}_\rig((\ovv_J^I,\ovp_J^I)/\ct),&&
		\end{align*}
	and this implies the lemma.
	It remains to prove the claim \eqref{eq: claim}.
	For any subset $I\subset J$ and $L\subset J'$ let
		\begin{align*}
		& \overline{\mathcal{V}}_J^I : =  \overline{\cm}_J\times\ct^I, && \overline{\mathcal{P}}_J^I : =  \overline{\cm}_J\times\ovct^I,\\
		& {\overline{\mathcal{V}}'}_{J'}^{L} : =  \overline{\cm}'_{J'}\times\ct^{L}, && {\overline{\mathcal{P}}'}_{J'}^{L} : =  \overline{\cm}'_{J'}\times\ovct^{L}.
		\end{align*}
	Consider the composition of natural morphisms $\overline{\mathcal{V}}_J \rightarrow {\overline{\mathcal{V}}'}_{J'} \rightarrow {\overline{\mathcal{V}}'}_{J'}^{L} \rightarrow {\overline{\mathcal{P}}'}_{J'}^{L}$, where the second map is induced by the natural projection $\ct^{J'} \rightarrow \ct^{L}$.
	Let $\widetilde{\mathcal{P}}_{J,L}$ be the (weak formal) log schematic image of the diagonal immersion $\overline{\mathcal{V}}_J \rightarrow \overline{\mathcal{P}}_J\times{\overline{\mathcal{P}}'}_{J'}^{L}$.
	Then for all subsets $L_1,L_2\subset J'$ with $L_1\cap L_2 = \emptyset$, we have $\widetilde{\mathcal{P}}_{J,L_1\cup L_2} = \widetilde{\mathcal{P}}_{J,L_1}\times_{\overline{\mathcal{P}}_J} \widetilde{\mathcal{P}}_{J,L_2}$.
	Let $\pi^L\colon\widetilde{\mathcal{P}}_{J,L,\Q} \rightarrow \overline{\mathcal{P}}_{J,\Q}$ be the morphism induced by the natural projection.
	Note that   $\widetilde{\mathcal{P}}_{J,J'} = \widetilde{\mathcal{P}}_J$ and $\pi^{J'} = \pi$.
	Recall that locally rigid Hyodo-Kato data can be constructed as   weak completion of a usual scheme (see the proof of Lemma~\ref{lem: HK embedding}), hence one can apply all constructions for schemes, and obtain a morphism of schemes $\widetilde{Z}_{J,L} \rightarrow \overline{Z}_J$ whose weak completion is $\widetilde{\mathcal{P}}_{J,L} \rightarrow \overline{\mathcal{P}}_J$.
	Let $\pi^L_s\colon \widetilde{Z}_{J,L,\Q} \rightarrow \overline{Z}_{J,\Q}$ be the induced morphism.
	By \cite[Thm.~A]{Meredith1972}, it suffices to show that 	
	$$
	\R\pi^{J'}_{s,\ast}\co_{\widetilde{Z}_{J,J',\Q}} \cong\co_{\overline{Z}_{J,\Q}}.
	$$
	We will prove that $\R\pi^L_{s,\ast}\co_{\widetilde{Z}_{J,L,\Q}} \cong\co_{\overline{Z}_{J,\Q}}$ for any $L$ by induction on $\lvert L\rvert$.
	Note that  $\pi^L_{s,\ast}\co_{\widetilde{Z}_{J,L,\Q}} = \co_{\overline{Z}_{J,\Q}}$ since $\overline{Z}_{J,\Q}$ is normal.
	If $\lvert L\rvert = 1$, we have $\R\pi^L_{s,\ast}\co_{\widetilde{Z}_{J,L,\Q}} = 0$ by \cite[Cor.~11.2]{Hartshorne}.
	Let $g\colon X \rightarrow \widetilde{Z}_{J,L,\Q}$ be a resolution of singularities.
	Since $X$ is smooth,  $\R(\pi^L_s\circ g)_\ast\co_X = \co_{\ovz_{J,\Q}}$ holds automatically.
	Hence via the Grothendieck--Leray spectral sequence
		\[E_2^{p,q} = \R^p\pi^L_{s,\ast}\R^qg_\ast\co_X\Rightarrow\R^{p + q}(\pi^L_s\circ g)_\ast\co_X,\]
	we obtain $E_2^{1,0} = \R^1\pi^L_{s,\ast}\co_{\widetilde{Z}_{J,L,\Q}} = 0$.
	Assume $\lvert L\rvert\geqslant 2$, fix an element $\ell\in L$, and set $L': = L\setminus\{\ell\}$.
	Since $\pi^{\{\ell\}}_s$ is flat, we have
		\begin{equation}\label{eq: Higher direct}
		\R\pi^{L,\{\ell\}}_{s,\ast}\co_{\widetilde{Z}_{J,L,\Q}} = \R\pi^{L,\{\ell\}}_{s,\ast}\pi^{L,L',\ast}_s\co_{\widetilde{Z}_{J,L',\Q}} = \pi^{\{\ell\},\ast}_s\R\pi^{L'}_{s,\ast}\co_{\widetilde{Z}_{J,L',\Q}} = \co_{\widetilde{Z}_{J,\{\ell\},\Q}},
		\end{equation}
	where $\pi^{L,L'}_s\colon\widetilde{Z}_{J,L,\Q} \rightarrow \widetilde{Z}_{J,L',\Q}$ and $\pi^{L,\{\ell\}}_s\colon\widetilde{Z}_{J,L,\Q} \rightarrow \widetilde{Z}_{J,\{\ell\},\Q}$ are natural projections, and the third equality is given by the induction hypothesis.
	Now we also have $\R\pi^{\ell}_{s,\ast}\co_{\widetilde{Z}_{J,\{\ell\},\Q}} = \co_{\ovz_{J,\Q}}$.
	Hence by combining this with \eqref{eq: Higher direct} we obtain $\R\pi^L_{s,\ast}\co_{\widetilde{Z}_{J,L,\Q}} = \co_{\ovz_{J,\Q}}$ as desired.
\end{proof}

Consequently, for a morphism $(Y,\ovy) \rightarrow (Y',\ovy')$ in $\ssk$, we obtain a commutative diagram
	{\small\[\mathclap{
	\xymatrix{
	 \mathbb{R}\Gamma_\rig(\ovy/\so_F^0) \ar[r]& \mathbb{R}\Gamma_\rig(\ovm_{\sbt}/\so_F^0)& \mathbb{R}\Gamma_\rig((\ovv_{\sbt},\ovp_{\sbt})/\ct) \ar[l]_{\overline{\xi_0}} \ar[r]^{\overline{\xi_\pi}} \ar[d]^{ \cong}& \mathbb{R}\Gamma_\rig(\ovm_{\sbt}/\so_K^\pi)& \mathbb{R}\Gamma_\rig(\ovy/\so_K^\pi) \ar[l]\\
	&& \mathbb{R}\Gamma_\rig((\ovv_{\sbt},\widetilde{P}_{\sbt})/\ct)&&\\
	 \mathbb{R}\Gamma_\rig(\ovy'/\so_F^0) \ar[r] \ar[uu]& \mathbb{R}\Gamma_\rig(\ovm'_{\sbt}/\so_F^0) \ar[uu]& \mathbb{R}\Gamma_\rig((\ovv'_{\sbt},\ovp'_{\sbt})/\ct) \ar[l]_{\overline{\xi_0}} \ar[r]^{\overline{\xi_\pi}} \ar[u]& \mathbb{R}\Gamma_\rig(\ovm'_{\sbt}/\so_K^\pi) \ar[uu]& \mathbb{R}\Gamma_\rig(\ovy'/\so_K^\pi). \ar[l] \ar[uu]
	}}\]}
This gives the functoriality of the rigid Hyodo-Kato map and finishes the proof of Proposition~\ref{prop: functoriality of HK}.

\subsection{Frobenius on the rigid Hyodo--Kato complex}\label{Subsec: Frobenius monodromy}

We will now show that the locally explicit Frobenius action on the rigid Hyodo--Kato complex is on the level of cohomology given by base change and by the relative Frobenius via functoriality.

Let   $(Y,\ovy) \in \ssk$. On the one hand, Definition~\ref{def: Kim - Hain} induces a Frobenius endomorphism $\varphi_{\hk}$ on $ \mathbb{R}\Gamma^\rig_{\hk}(Y,\ovy)$ coming from local data. 
On the other hand, in (\ref{eq: base change Frobenius}) we obtained a Frobenius endomorphism  $\varphi$ on $ \mathbb{R}\Gamma_\rig(\ovy/\so_F^0)$ by base change and functoriality. We will now see, that they are compatible with the comparison quasi-isomorphism between  $ \mathbb{R}\Gamma^\rig_{\hk}(Y,\ovy)$ and $ \mathbb{R}\Gamma_\rig(\ovy/\so_F^0)$.

\begin{proposition}\label{prop: rigid Frobenius}
	For  $(Y,\ovy)\in\ssk$, the isomorphism $\Theta_{\hk}\colon  \mathbb{R}\Gamma^\rig_{\hk}(Y,\ovy) \rightarrow  \mathbb{R}\Gamma_\rig(\ovy/\so_F^0) $ is compatible with the Frobenius endomorphism. 
	In other words, there is a commutative diagram
	$$
	\xymatrix{
	 \mathbb{R}\Gamma^\rig_{\hk}(Y,\ovy)  \ar[r]^{\varphi_{\hk}}  \ar[d]^{\sim}_{\Theta_{\hk}} &  \mathbb{R}\Gamma^\rig_{\hk}(Y,\ovy) \ar[d]^{\sim}_{\Theta_{\hk}}\\
	 \mathbb{R}\Gamma_\rig(\ovy/\so_F^0)  \ar[r]^\varphi&  \mathbb{R}\Gamma_\rig(\ovy/\so_F^0).
	}$$
\end{proposition}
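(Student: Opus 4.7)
The plan is to reduce to the embeddable case, where both Frobenii admit explicit descriptions on the realization complexes, and then check that they induce the same endomorphism modulo the quasi-isomorphism of Lemma~\ref{lem: qis with KimHain}.

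First I would apply the axiomatic framework of \S\ref{subsec: Axiomatisation}: by Theorem~\ref{Thm: cohomology functors} and the local existence of rigid Hyodo--Kato data (Lemma~\ref{lem: HK embedding}), it suffices to verify the statement for $(Y,\ovy)$ admitting a $\Xi_\hk^\rig$-rigid datum $((\cz,\ovcz,\phi),i)$ and then pass to the general case by gluing. Setting $\ovcy := \ovcz \times_\ct \so_F^0$, the Hyodo--Kato Frobenius $\varphi_\hk$ is represented on $\Gamma(]\ovy[_{\ovcy}^{\log},\Gd_\an \widetilde{\omega}^{\sbt}_{\ovcy,\Q}\llbracket u\rrbracket)$ by $\phi|_{\ovcy}^\ast$ on $\widetilde{\omega}^{\sbt}_{\ovcy,\Q}$ combined with $u^{[k]} \mapsto p^k u^{[k]}$ (Definition~\ref{def: Kim - Hain}); since the quasi-isomorphism of Lemma~\ref{lem: qis with KimHain} kills $u^{[k]}$ for $k \geq 1$, the induced endomorphism on $\Gamma(]\ovy[_{\ovcy}^{\log},\Gd_\an \omega^{\sbt}_{\ovcy/\so_F^0,\Q})$ is simply $\phi|_{\ovcy}^\ast$.

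The crux of the argument is to show that this $\phi|_{\ovcy}^\ast$ agrees with the base-change Frobenius $\chi^\ast \circ \mathbb{L}^\sigma(\ovy)$. Here I would exploit that $\phi$ is by definition compatible with $\sigma$ on $\ct$ and lifts the absolute $p$-th power Frobenius modulo $p$; restricting to $\ovcy$ and factoring through the relative Frobenius yields a canonical decomposition
\[ \ovcy \xrightarrow{\phi'} \ovcy \times_{\so_F^0,\sigma} \so_F^0 \xrightarrow{\mathrm{pr}} \ovcy, \]
in which the projection $\mathrm{pr}$ represents $\mathbb{L}^\sigma$ (by the definition of $L^\sigma$ in \S\ref{subsubsec: Base change and Frobenius}) and $\phi'$, viewed as a morphism of rigid data over $\chi\colon \ovy \to \ovy^\sigma$, induces $\chi^\ast$ via the realization functor. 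Consequently $\phi|_{\ovcy}^\ast = \phi'^\ast \circ \mathrm{pr}^\ast$ on realizations equals $\chi^\ast \circ \mathbb{L}^\sigma$ on $\mathbb{R}\Gamma_\rig(\ovy/\so_F^0)$, establishing the equality of the two Frobenii in the embeddable case.

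To pass to general $(Y,\ovy) \in \ssk$, I would phrase both Frobenii as compositions induced by morphisms of rigid cohomological tuples in the sense of \S\ref{subsubsec: Morphisms}, making the preceding factorization natural in the rigid datum. Proposition~\ref{prop: eq: axiom bc} then transports the comparison to the limit complexes $\widetilde{\mathcal{K}}$ and $\widehat{\mathcal{K}}$, and hence to $\mathbb{R}\Gamma_{\Xi_\hk^\rig}$ and $\mathbb{R}\Gamma_{\Xi(\so_F^0)}$. The main technical obstacle I anticipate lies in this last step: organizing the functoriality of the factorization $\phi|_{\ovcy} = \mathrm{pr} \circ \phi'$ carefully enough that varying the Frobenius lift $\phi$ (as one must do when taking the colimits defining $\widehat{\mathcal{K}}$) does not destroy the compatibility.
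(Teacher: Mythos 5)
Your plan is essentially the same as the paper's proof: both reduce via a simplicial Čech covering (Remark~\ref{rem: gluing lifts}) to a rigid datum with a Frobenius lift $\phi$, both observe that the quasi-isomorphism of Lemma~\ref{lem: qis with KimHain} turns $\varphi_\hk$ into $\phi|_{\ovcy}^\ast$, and both identify $\phi|_{\ovcy}^\ast$ with the base-change Frobenius $\chi^\ast\circ\mathbb{L}^\sigma$ by factoring $\phi|_{\ovcy}$ through a lift of the relative Frobenius followed by the projection $\ovcy^\sigma\to\ovcy$. The one place where you anticipate a technical obstacle --- organizing the colimits so that varying $\phi$ does not destroy compatibility --- the paper handles more economically: because the realization category for $\Xi_\hk^\rig$ is $\mathsf{Mod}_F(\varphi,N)$, the comparison map $\rho_\hk^\rig$ to the chosen realization complex is automatically $\varphi_\hk$-equivariant, and the base-change Frobenius on $\mathbb{R}\Gamma_\rig(\ovy/\so_F^0)$ is transported to the same realization complex by a second commutative ladder; the two Frobenii are then compared on a single complex $\mathcal{K}_\rig(\overline{Y}_{\sbt},\ovcy_{\sbt},i_{\sbt})$, so no tracking of the factorization across the colimit is needed.
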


\begin{proof}
As explained in Remark~\ref{rem: gluing lifts} we can take a simplicial object $((Y_{\sbt},\overline{Y}_{\sbt}),(\cz_{\sbt},\ovcz_{\sbt},\phi_{\sbt}),i_{\sbt})$ in $\rqhk$ such that $\{(Y_{\sbt},\overline{Y}_{\sbt})\}$ is a Zariski covering of $(Y,\ovy)$.  
Let $(\overline{Y}_{\sbt}\overline{\mathcal{Y}}_{\sbt},i_{\sbt})$ be the simplicial object in $\RTOF$ given by $\overline{\mathcal{Y}}_{\sbt}: = \ovcz_{\sbt}\times_\ct\so_F^0$.
Then we have a commutative diagram of quasi-isomorphisms
		{\small\[\mathclap{
		\xymatrix{	
		 \mathbb{R}\Gamma^\rig_{\hk}(Y,\ovy) \ar[r]^\sim \ar[d]^{\mathbb{L}^{\hk}}
		&\widehat{\mathcal{K}}_\hk(Y_{\sbt},\overline{Y}_{\sbt})  \ar[d]^{\widehat{L}^{\hk}_{\mathrm{coh}}}
		&\widetilde{\mathcal{K}}_\hk((Y_{\sbt}, \overline{Y}_{\sbt}),(\cz_{\sbt},\ovcz_{\sbt},\phi_{\sbt}),i_{\sbt}) \ar[l]_<<<<\sim \ar[r]^\sim  \ar[d]^{\widetilde{L}^{\hk}_{\mathrm{coh}}}
		& \mathcal{K}_\hk((Y_{\sbt},\overline{Y}_{\sbt}),(\cz_{\sbt},\ovcz_{\sbt},\phi_{\sbt}),i_{\sbt}) \ar[d]^{L^{\hk}}\\
		 \mathbb{R}\Gamma_\rig(\ovy/\so_F^0) \ar[r]^\sim
		&\widehat{\mathcal{K}}_\rig(\overline{Y}_{\sbt})
		&\widetilde{\mathcal{K}}_\rig(\overline{Y}_{\sbt},\ovcy_{\sbt},i_{\sbt}) \ar[l]_\sim \ar[r]^\sim
		&\mathcal{K}_\rig(\overline{Y}_{\sbt},\ovcy_{\sbt},i_{\sbt}),
		}
		}\]}
	where we denote by $\mathcal{K}_\hk$ and $\mathcal{K}_\rig$ the realization functors of $\Xi_\hk^\rig$ and $\Xi(\so_F^0)$ respectively, and where $\widehat{\mathcal{K}}_\hk$ and $\widetilde{\mathcal{K}}_\hk$, as well as $\widehat{\mathcal{K}}_\rig$ and $\widetilde{\mathcal{K}}_\rig$ are defined appropriately as  in \ref{Thm: cohomology functors}.
	We  denote by $\rho_\rig$ and $\rho^\rig_{\hk}$ the compositions of the upper, respectively   lower,  horizontal arrows  in the above diagram.
	Since $\rho^\rig_{\hk}$ is a morphism in $\mathscr{D}^+(\mathsf{Mod}_F(\varphi,N))$, the Frobenius endomorphisms $\varphi_{\hk}$ on $ \mathbb{R}\Gamma^\rig_{\hk}(Y,\ovy)$ and $ \mathcal{K}_\hk((Y_{\sbt},\overline{Y}_{\sbt}),(\cz_{\sbt},\ovcz_{\sbt},\phi_{\sbt}),i_{\sbt})$ induced locally by Definition~\ref{def: Kim - Hain}  are compatible with each other via $\rho^\rig_{\hk}$.
	
	On the other hand, since the Frobenius $\phi_{\sbt}$ of the simplicial rigid Hyodo--Kato datum induces  a lift of the absolute Frobenius on $\ovcy_{\sbt}$, we obtain a lift $F_{\overline{\cy}_{\sbt}}\colon  \overline{\cy}_{\sbt}  \rightarrow   \overline{\cy}^\sigma_{\sbt}$ over $\ct$ of the relative Frobenius endomorphism $f_{\overline{Y}_{\sbt}} \colon \overline{Y}_{\sbt}  \rightarrow  \overline{Y}_{\sbt}^\sigma$.
	Now we have  a commutative diagram
		\[\xymatrix{
		 \mathbb{R}\Gamma_\rig(\ovy/\so_F^0) \ar[r]^\sim \ar[d]^{\mathbb{L}^\sigma}
		&\widehat{\mathcal{K}}_\rig(\overline{Y}_{\sbt}) \ar[d]^{\widehat{L}^\sigma}
		&\widetilde{ \mathcal{K}}_\rig(\overline{Y}_{\sbt},\ovcy_{\sbt},i_{\sbt}) \ar[d]^{\widetilde{L}^\sigma} \ar[r]^\sim \ar[l]_\sim
		&\mathcal{K}_\rig(\overline{Y}_{\sbt},\ovcy_{\sbt},i_{\sbt}) \ar[d]^{L^\sigma}\\
		 \mathbb{R}\Gamma_\rig(\ovy^\sigma/\so_F^0) \ar[r]^\sim \ar[d]^{f^\ast_{\ovy}}
		&\widehat{\mathcal{K}}_\rig(\overline{Y}^\sigma_{\sbt}) \ar[d]^{f^\ast_{\overline{Y}_{\sbt}}} 
		&\widetilde{\mathcal{K}}_\rig (\overline{Y}^\sigma_{\sbt},\ovcy^\sigma_{\sbt},i_{\sbt}^\sigma) \ar[d]^{(f_{\overline{Y}_{\sbt}} ,F_{\ovcy_{\sbt}})^\ast}  \ar[r]^\sim \ar[l]_\sim
		& \mathcal{K}_\rig (\overline{Y}^\sigma_{\sbt},\ovcy^\sigma_{\sbt},i_{\sbt}^\sigma) \ar[d]^{(f_{\overline{Y}_{\sbt}},F_{\ovcy_{\sbt}})^\ast}\\
		 \mathbb{R}\Gamma_\rig(\ovy/\so_F^0) \ar[r]^\sim
		&\widehat{ \mathcal{K}}_\rig(\overline{Y}_{\sbt})
		&\widetilde{ \mathcal{K}}_\rig(\overline{Y}_{\sbt},\ovcy_{\sbt},i_{\sbt}) \ar[r]^\sim \ar[l]_\sim
		& \mathcal{K}_\rig(\overline{Y}_{\sbt},\ovcy_{\sbt},i_{\sbt}),
		}\]
	and accordingly the Frobenius $\varphi$ on  $ \mathbb{R}\Gamma_\rig(\ovy/\so_F^0)$ is given by 
		$$
		\varphi: = f^\ast_{\ovy}\circ \mathbb{L}^\sigma  = \rho_\rig^{-1}\circ (f_{\overline{Y}_{\sbt}},F_{\ovcy_{\sbt}})^\ast \circ L^\sigma\circ\rho_\rig.
		$$
	Moreover, the morphism  $(f_{\overline{Y}_{\sbt}},F_{\ovcy_{\sbt}})^\ast\circ L^\sigma$ on $ \mathcal{K}_\rig(\overline{Y}_{\sbt},\ovcy_{\sbt},i_{\sbt})$ is induced by the action of $\phi_{\sbt}$ on $\overline{\cy}_{\sbt}$, and hence compatible with the Frobenius $\varphi_{\hk}$ on $ \mathcal{K}_{\hk}((Y_{\sbt},\overline{Y}_{\sbt}),(\cz_{\sbt},\ovcz_{\sbt},\phi_{\sbt}),i_{\sbt})$, so that  we have
	$$ 
	L_{\hk}\circ\varphi_{\hk} = (f_{\overline{Y}_{\sbt}},F_{\ovcy_{\sbt}})^\ast \circ L^\sigma\circ L_{\hk}  \colon
	 \mathcal{K}_{\hk}((Y_{\sbt},\overline{Y}_{\sbt},(\cz_{\sbt},\ovcz_{\sbt},\phi_{\sbt}),i_{\sbt})) \rightarrow  \mathcal{K}_\rig(\overline{Y}_{\sbt},\ovcy_{\sbt},i_{\sbt}).
	$$
	This shows the desired compatibility.
	\end{proof}
	
\section{Syntomic cohomology}\label{section: syntomic cohomology}

In this section we recall the definition of crystalline syntomic cohomology from \cite{NekovarNiziol2016} and give a new definition of rigid syntomic cohomology   in the case of a strictly semistable log scheme with boundary. 
Both of these cohomology theories can be understood in the context of $p$-adic Hodge complexes\cite{Bannai2002,ChiarellottoCiccioniMazzari2013,DegliseNiziol2018}.

\subsection{Review of crystalline syntomic cohomology}\label{subsec: syntomic for K-varieties}

We start with a brief review of crysalline Hyodo--Kato theory and refer to \cite{HyodoKato1994} and  \cite[\S3A]{NekovarNiziol2016} for more details.

Let $\so_F\langle t_l\rangle$ be the divided power polynomial algebra generated by elements $t_l$ for  $l\in(\mathfrak{m}_K/\mathfrak{m}_K^2)\setminus 0$ with the relation $t_{al} = [\overline{a}]t_l$ for $a\in \so_K^\ast$. Let $\mathcal{T}_{PD}$ be the $p$-adic completion of the subalgebra of $\so_F\langle t_l\rangle$ generated by $t_l$ and $\frac{t_l^{ie}}{i!}$. If we fix $l$ and set $t = t_l$ it can be seen as an $\so_F$-subalgebra of $F\llbracket t\rrbracket$. As before, we extend the Frobenius by setting $\sigma(t_l) = t_l^p$. By abuse of notation we denote by $\mathcal{T}_{PD}$ also  the  scheme $\Spec \mathcal{T}_{PD}$ with the log structure generated by the $t_l$'s. There are exact closed embeddings
	$$
	\so_F^0 \xrightarrow{i_0} \mathcal{T}_{PD}  \xleftarrow{i_\pi} \so_K^\pi
	$$
	via $t_l \mapsto  0$ and $t_l \mapsto  [\overline{\frac{l}{\pi}}]\pi$,
For a fine proper log smooth log scheme of Cartier type $X$ over $\so_K^\pi$ we consider the log crystalline complexes
	\begin{eqnarray*}
	 \mathbb{R}\Gamma_{\cris}(X/\so_K^\pi) &: =  & \holim  \mathbb{R}\Gamma_{\cris}(X_n/\so_{K,n}^\pi),\\
	 \mathbb{R}\Gamma_{\cris}(X/\mathcal{T}_{PD}) &: = & \holim  \mathbb{R}\Gamma_{\cris}(X_n/\mathcal{T}_{PD,n}),\\	
	 \mathbb{R}\Gamma_{\hk}^\cris(X) &: =  &  \mathbb{R}\Gamma_{\cris}(X_0/\so_F^0): =  \holim  \mathbb{R}\Gamma_{\cris}(X_0/\so_{F,n}^0).
	\end{eqnarray*}
The latter one is often called the Hyodo--Kato complex. 

The Frobenius action $\varphi$ on $ \mathbb{R}\Gamma_{\cris}(X/\mathcal{T}_{PD})$ and $ \mathbb{R}\Gamma_{\hk}^\cris(X)$ is  given by the relative Frobenius and base change by $\sigma$, similarly to the rigid case \eqref{eq: base change Frobenius}, that is 
	\begin{eqnarray*}
	&\varphi\colon  \mathbb{R}\Gamma_{\cris}(X/\mathcal{T}_{PD})  \xrightarrow{\Theta_\sigma}  \mathbb{R}\Gamma_{\cris}(X^\sigma/\mathcal{T}_{PD})   \xrightarrow{\chi_X^\ast}   \mathbb{R}\Gamma_{\cris}(X/\mathcal{T}_{PD})\\
	&\varphi\colon  \mathbb{R}\Gamma_{\hk}^\cris(X)   \xrightarrow{\Theta_\sigma}   \mathbb{R}\Gamma_{\hk}^\cris(X^\sigma)   \xrightarrow{\chi_{X_0}^\ast}     \mathbb{R}\Gamma_{\hk}^\cris(X)
	\end{eqnarray*}
where $\Theta_\sigma$ is the base change by $\sigma$ on $\mathcal{T}_{PD}$ or $\so_F$, and $\chi_X$ and $\chi_{X_0}$ are the relative Frobenii of $X$ over $\mathcal{T}_{PD}$ and of $X_0$ over $\so_F$, respectively. The Frobenius action is invertible on $ \mathbb{R}\Gamma_{\hk}^\cris(X)_\Q$. As usual we denote by $\varphi_r$ the Frobenius divided by $p^r$.

The monodromy operator on $ \mathbb{R}\Gamma_{\hk}^\cris(X)$ can be obtained as the boundary morphism of a short exact sequence. 
Let $\so_F\langle t\rangle$ be the  divided power polynomial algebra in one variable $t$ over $\so_F$.
Let $\ct^\cris$ be $\Spf\so_F\langle t\rangle$ endowed with the log structure associated to the map $\mathbb{N} \rightarrow \so_F\langle t\rangle,\ 1 \mapsto  t$, and $i_0\colon \so_F^0 \rightarrow  \Spf\so_F\langle t\rangle$ be the exact closed immersion induced by $t \mapsto  0$.
Choose an embedding system for $X_0$ over $\Spec(\so_{F,n}[t], 1 \mapsto  t)$ as explained in \cite[(3.6)]{HyodoKato1994}.
The crystalline complexes $C_{X_0/\so_{F,n}^\varnothing}$ and $C_{X_0/\so_{F,n}\langle t\rangle}$ of $X_0$ over $\so_{F,n}^\varnothing$ and $\ct^\cris_n$ on the \'etale site $X_{0,\eet}$  associated to the chosen embedding system are related by the short exact sequence
	$$
	0 \rightarrow  C_{X_0/\ct^\cris_n}[-1]  \xrightarrow{\wedge d\log t} C_{X_0/\so_{F,n}^\varnothing}  \xrightarrow{\can} C_{X_0/\ct^\cris_n}  \rightarrow  0.
	$$
By taking the pull-back along $i_0$ we obtain  
	\begin{equation}\label{Equ:SES2}
	0 \rightarrow  C_{X_0/\so_{F,n}^0}[-1]  \xrightarrow{\wedge d\log t} i_0^\ast C_{X_0/\so_{F,n}^\varnothing}  \xrightarrow{\can} C_{X_0/\so_{F,n}^0}  \rightarrow  0
	\end{equation}
on $X_{0,\eet}$.
Here $C_{X_0/\so_{F,n}^0}$ computes the crystalline Hyodo--Kato cohomology. 
The monodromy $N$ is the connecting homomorphism of the induced long exact sequence.
One can now take the homotopy limit  and invert $p$ to obtain the desired map
	$$
	N\colon  \mathbb{R}\Gamma_{\hk}^\cris(X)_{\Q}  \rightarrow   \mathbb{R}\Gamma_{\hk}^\cris(X)_{\Q}.
	$$
The homotopy limit of $\{C_{X_0/\so_{F,n}^0}\}_n$ is commonly denoted by $W\omega^{\sbt}_{X_0}$ and called the logarithmic de~Rham--Witt complex.
Similarly, we use the notation
 	\begin{equation}\label{equ: tilde de Rham-Witt}
 	W\widetilde{\omega}^{\sbt}_{X_0}: =  \holim i_0^\ast C_{X_0/\so_{F,n}^\varnothing}.
 	\end{equation}
 On the rational complex $ \mathbb{R}\Gamma_{\hk}^\cris(X)_\Q$  one replaces the usual monodromy operator by the normalized one, that is $e^{-1}N$ where $e$ is the absolute ramification index of $K$, to make it compatible with base change. 

The morphisms of log schemes $i_0$ and $i_\pi$ induce morphisms on cohomology
	\begin{equation}\label{eq:crisHK}
	 \mathbb{R}\Gamma_{\hk}^\cris(X) \xleftarrow{i_0^\ast}  \mathbb{R}\Gamma_{\cris}(X/{\mathcal{T}_{PD}})  \xrightarrow{i_\pi^\ast}  \mathbb{R}\Gamma_{\cris}(X/\so_K^\pi).
	\end{equation}
The map $ \mathbb{R}\Gamma_{\cris}(X/{\mathcal{T}_{PD}}) \rightarrow   \mathbb{R}\Gamma_{\hk}^\cris(X)$ from the above diagram has in the derived category a unique functorial $F$-linear section $s_\pi\colon \mathbb{R}\Gamma_{\hk}^\cris(X)_{\Q} \rightarrow   \mathbb{R}\Gamma_{\cris}(X/{\mathcal{T}_{PD}})_{\Q}$ which commutes with the Frobenius (c.f. the sentence after \cite[Lem.~4.4.11]{Tsuji1999}). We set
	$$
	\iota_\pi^\cris: = i_\pi^\ast\circ s_\pi\colon  \mathbb{R}\Gamma_{\hk}^\cris(X)_{\Q} \rightarrow   \mathbb{R}\Gamma_{\cris}( X/\so_K^\pi)_{\Q}.
	$$
It induces a $K$-linear functorial quasi-isomorphism $\iota_\pi\colon   \mathbb{R}\Gamma_{\hk}^\cris(X)\otimes_{W(k)}K  \rightarrow   \mathbb{R}\Gamma_{\cris}( X/\so_K^\pi)_{\Q}$.

Moreover, there exists a canonical quasi-isomorphism
	$$
	\gamma\colon  \mathbb{R}\Gamma_{\dr}(X_K)  \xrightarrow{\sim}  \mathbb{R}\Gamma_{\cris}(X/\so_K^\pi)_\Q,
	$$
where we endow $X_K$ with the pull-back log structure of $X$ and the left hand side is the log de~Rham cohomology of $X_K$ with the Hodge filtration given by the stupid filtration of the log de~Rham complex.
The composition in the derived category 
	$$
	\iota_{\dr}^\pi: = \gamma^{-1}\circ i_\pi^\ast\circ s_\pi\colon  \mathbb{R}\Gamma_{\hk}^\cris(X)_{\Q}  \rightarrow   \mathbb{R}\Gamma_{\dr}(X_K)
	$$
is called the Hyodo--Kato morphism.

\begin{definition}\label{def: crystalline syntomic}
Let $X$ be a fine proper log smooth log scheme of Cartier type over $\so_K^\pi$. For $r\geqslant 0$ and a choice of uniformizer $\pi$, we define the crystalline syntomic cohomology as the homotopy limit
	\begin{equation}\label{equ: syn cris}
	 \mathbb{R}\Gamma_{\syn}^{\cris}(X,r,\pi): = 
	\left[\begin{aligned}\xymatrix{ && \fil^r  \mathbb{R}\Gamma_{\dr}(X_K)  \ar[d]_{(0,\gamma)}\\
	 \mathbb{R}\Gamma_{\hk}^\cris(X)_{\Q}  \ar[rr]^{(1-\varphi_r,\iota_\pi^\cris) \qquad \quad}  \ar[d]_N &&  \mathbb{R}\Gamma_{\hk}^\cris(X)_\Q \oplus  \mathbb{R}\Gamma_{\cris}(X/\so_K^\pi)  \ar[d]_{(N,0)}\\
	 \mathbb{R}\Gamma_{\hk}^\cris(X)_\Q  \ar[rr]^{1-\varphi_{r-1}} &&   \mathbb{R}\Gamma_{\hk}^\cris(X)_\Q}
	\end{aligned}\right].
	 \end{equation}
\end{definition}
Note that we used the normalized monodromy operator here.
Since $\gamma$ is invertible in the derived category, there is in fact a quasi-isomorphism
	$$
	 \mathbb{R}\Gamma_{\syn}^{\cris}(X,r,\pi)  \cong
	\left[\begin{aligned}\xymatrix{ 
	 \mathbb{R}\Gamma_{\hk}^\cris(X)_\Q  \ar[rr]^{(1-\varphi_r,\iota_{\dr}^\pi) \qquad  \qquad }  \ar[d]_N &&  \mathbb{R}\Gamma_{\hk}^\cris(X)_\Q \oplus  \mathbb{R}\Gamma_{\dr}(X_K)/\fil^r  \ar[d]_{(N,0)}\\
	 \mathbb{R}\Gamma_{\hk}^\cris(X)_\Q  \ar[rr]^{1-\varphi_{r-1}} &&   \mathbb{R}\Gamma_{\hk}^\cris(X)_\Q}
	\end{aligned}\right].
	$$
	
In  \cite[\S3C]{NekovarNiziol2016} Nekov\'a\v{r} and Nizio\l{} use $\h$-sheafification to extend syntomic cohomology to $K$-varieties. 
For a $K$-variety $Z$ and $r\in\Z$ we denote it by $\mathbb{R}\Gamma_{\syn}^{\NN}(Z,r)$. 
We may relate $\mathbb{R}\Gamma_{\syn}^{\NN}(Z,r)$ and (\ref{equ: syn cris}) in the case that there is an open embedding $Z\hookrightarrow X$ of a $K$-variety into a regular proper flat $\so_K$-scheme such that $X\setminus Z$ is a normal crossing divisor and $X_0$ is reduced. 
We endow $X$ with the log structure associated to the divisor $X\setminus Z$.

\begin{lemma}\label{lem: NN comparison}
For any $r\geqslant 0$ there is a canonical isomorphism in $\mathscr{D}^+(\mathsf{Mod}_{\Q_p})$
	\begin{equation}\label{Equ:ArithmeticPHC}
	  \mathbb{R}\Gamma_{\syn}^{\NN}(Z,r) \cong  \mathbb{R}\Gamma^{\cris}_{\syn}(X,r,\pi)
	\end{equation}
depending on the choice of a uniformizer $\pi$.
\end{lemma}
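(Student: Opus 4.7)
The plan is to reduce the comparison to a statement that is essentially already present in \cite{NekovarNiziol2016}. Recall that $\mathbb{R}\Gamma^{\NN}_{\syn}(Z,r)$ is constructed by first defining a log-crystalline syntomic cohomology functor on the category of fine proper log smooth log schemes of Cartier type over $\so_K^\pi$ and then $\h$-sheafifying its restriction (via the generic fibre functor) to $K$-varieties. Under our hypotheses the log scheme $X$ (with the log structure associated to the normal crossing divisor $X\setminus Z$) is proper, log smooth over $\so_K^\pi$, and of Cartier type because $X_0$ is reduced. Hence $X$ is admissible as input for the definition \eqref{equ: syn cris}, and $X_K$ with its induced log structure has generic fibre $Z$ as a $K$-variety (ignoring log structure).

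First I would unwind the definition of the $\h$-syntomic complex on $Z$. By \cite[\S 3C]{NekovarNiziol2016} one has a natural morphism $\mathbb{R}\Gamma^{\NN}_{\syn}(Z,r)\to\mathbb{R}\Gamma^{\cris}_{\syn}(X,r,\pi)$ induced by the augmentation from the $\h$-site to the Zariski site of $X$ once one knows that the right-hand complex depends only on $Z$ functorially on such good compactifications. The argument for this functoriality is carried out for each of the four building blocks of \eqref{equ: syn cris}: the crystalline Hyodo--Kato complex $\mathbb{R}\Gamma_{\hk}^\cris(X)_\Q$, the filtered log de~Rham complex $\fil^r\mathbb{R}\Gamma_{\dr}(X_K)$, the comparison $\iota_\pi^\cris$, and the operators $\varphi_r$ and $N$. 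All of these are invariants of $Z$ in the good-reduction log sense, which is why Nekov\'a\v{r}--Nizio\l{} can define $\mathbb{R}\Gamma^{\NN}_{\syn}(Z,r)$ via $\h$-descent starting precisely from such compactifications.

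Next I would verify that this natural morphism is a quasi-isomorphism. The key point is that all four building blocks satisfy $\h$-descent on the category of good compactifications: the filtered log de~Rham part by \cite[Cor.~3.25]{NekovarNiziol2016} (using Beilinson's theorem on $\h$-descent for log de~Rham complexes of regular pairs), the Hyodo--Kato part by the analogous result of Beilinson for the crystalline Hyodo--Kato complex with its $(\varphi,N)$-structure, and the compatibility of the Hyodo--Kato morphism $\iota_\pi^\cris$ then follows from its functorial characterisation (unique $F$-linear section commuting with $\varphi$). Since forming the homotopy limit described in \eqref{equ: syn cris} commutes with $\h$-sheafification, the comparison morphism identifies $\mathbb{R}\Gamma^{\NN}_{\syn}(Z,r)$ with $\mathbb{R}\Gamma^{\cris}_{\syn}(X,r,\pi)$.

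The main technical obstacle is bookkeeping: making sure that the dependence on $\pi$ on the right-hand side matches the conventions used in \cite{NekovarNiziol2016}, in particular that the same section $s_\pi$ and the same normalisation of the monodromy operator are used. Once these conventions are aligned, the argument is a formal consequence of $\h$-descent for each piece of the homotopy limit and the uniqueness of the section $s_\pi$.
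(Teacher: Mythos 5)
Your proposal heads in the same general direction as the paper, which in fact proves this lemma by a one-line citation: it is the combination of \cite[Prop.~3.18]{NekovarNiziol2016} and \cite[Prop.~3.8]{NekovarNiziol2016}. So to the extent that your argument reconstructs what Nekov\'a\v{r}--Nizio\l{} prove, it is the same approach. However, the reconstruction conflates two separate steps and hides exactly the step that is not a formal $\h$-descent statement.

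Nekov\'a\v{r}--Nizio\l{} do not directly $\h$-sheafify the homotopy limit \eqref{equ: syn cris}, which involves the Hyodo--Kato complex $\mathbb{R}\Gamma_{\hk}^{\cris}(X)_\Q$, the section $s_\pi$ and the map $\iota_\pi^{\cris}$. Their sheaf-theoretic construction starts from a ``geometric'' crystalline syntomic complex built from $\mathbb{R}\Gamma_{\cris}(X/\mathcal{T}_{PD})$, for which $\h$-descent (via Beilinson's theorems) is directly accessible; this gives \cite[Prop.~3.18]{NekovarNiziol2016}, identifying $\mathbb{R}\Gamma_{\syn}^{\NN}(Z,r)$ with that complex on a good model. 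The passage from this complex to the homotopy limit \eqref{equ: syn cris} that you want on the right-hand side is \cite[Prop.~3.8]{NekovarNiziol2016}, and its content is that the base-change map $i_0^*\colon\mathbb{R}\Gamma_{\cris}(X/\mathcal{T}_{PD})_\Q\to\mathbb{R}\Gamma_{\hk}^{\cris}(X)_\Q$ becomes a quasi-isomorphism after taking the $1-\varphi_r$ homotopy fibre --- a Frobenius eigenspace argument, not a sheafification argument. Your step ``all four building blocks satisfy $\h$-descent'' and ``the compatibility of $\iota_\pi^{\cris}$ follows from its functorial characterisation'' papers over this: the term $\mathbb{R}\Gamma_{\hk}^{\cris}(X)_\Q$ together with $\iota_\pi^{\cris}$ does not appear in the complex one actually $\h$-sheafifies, so there is nothing to descend until after the Prop.~3.8 comparison has been carried out. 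Two smaller remarks: the natural map coming from sheafification goes from $\mathbb{R}\Gamma^{\cris}_{\syn}(X,r,\pi)$ to $\mathbb{R}\Gamma^{\NN}_{\syn}(Z,r)$, i.e.\ from the presheaf value to the sheafified one, not the direction you wrote; and your assertion that the homotopy limit in \eqref{equ: syn cris} commutes with $\h$-sheafification is not automatic and would itself require justification --- another reason the actual argument does not proceed by sheafifying \eqref{equ: syn cris} termwise.
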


\begin{proof}
This is the combination of \cite[Prop.~3.18]{NekovarNiziol2016} and \cite[Prop.~3.8]{NekovarNiziol2016}.
\end{proof}

\begin{remark}
The rational log syntomic complexes $\mathbb{R}\Gamma_{\syn}^{\cris}(X,r,\pi)$ are graded commutative dg algebras \cite[\S 3B]{NekovarNiziol2016}. 
 A definition of the syntomic product structure can be found in \cite[\S 2.2]{Tsuji1999}. 
 The above comparison morphism  is compatible with the product structure (c.f. Proof of \cite[Prop. 5.2]{NekovarNiziol2016}). 
 For an explicit description of the cup product see also \cite[\S 2.4]{Besser2016}. 
 \end{remark}

\subsection{Rigid syntomic cohomology for strictly  semistable schemes}\label{subsec: syntomic def}

We give a definition of rigid syntomic cohomology for strictly semistable log schemes with boundary over $\so_K^\pi$ as a homotopy limit analogous  to \cite{NekovarNiziol2016}.  

By abuse of notation we  denote by $\so_K^\pi$ also the scheme $\Spec\so_K$ with the log structure associated to the map $\N \rightarrow \so_K,\ 1 \mapsto \pi$, and let $c_{\so_K^\pi}$ be the chart of $\so_K^\pi$ induced by this map.

\begin{definition}\label{def:ss log schemes w/ boundary over so_K}
	A $\so_K^\pi$-log scheme with boundary $(X,\ovx)$ is called {\it strictly semistable} if Zariski locally on $\ovx$ there exists a chart $(\alpha\colon P_{\ovx} \rightarrow \cn_{\ovx},\ \beta\colon\N \rightarrow  P^\gp)$ extendending $c_{\so_K^\pi}$ of the following form:
		\begin{itemize}
		\item $P = \N^m\oplus\N^n$ for some integers $m\geqslant 1$ and $n\geqslant 0$, and $\beta$ is given by the composition of the diagonal map $\N \rightarrow \N^m$ and the canonical injection $\N^m \rightarrow \Z^m\oplus\Z^n$.
		In particular the morphism of $X$ extends to a morphism $\ovx \rightarrow \so_K^\pi$ with a chart $\beta'\colon\N \rightarrow \N^m\oplus\N^n = P$.
		\item The morphism of schemes
			\begin{align*}
			\ovx  \rightarrow & \Spec\so_K\times_{\Spec\so_K[\N]}\Spec\so_K[\N^m\oplus\N^n]\\
			&\qquad  =  \Spec\so_K[t_1,\ldots,t_m,s_1,\ldots,s_n]/(t_1\cdots t_m-\pi)
			\end{align*}
		induced by $\beta'$ is smooth, and makes the diagram 
		\[\xymatrix{
		\ovx \ar[r]&\Spec\so_K[t_1,\ldots,t_m,s_1,\ldots,s_n]/(t_1\cdots t_m-\pi)\\
		X \ar[u] \ar[r]&\Spec\so_K[t_1,\ldots,t_m,s_1^{\pm1},\ldots,s_n^{\pm 1}]/(t_1\cdots t_m-\pi) \ar[u]
		}\]
		Cartesian.
		\end{itemize}
		A strictly semistable $\so_K^\pi$-log scheme with boundary $(X,\overline{X})$ is called proper if the underlying scheme $\overline{X}$ is  proper over $\so_K$. 
		This definition is independent of the choice of $\pi$.
		
		A strictly semistable $\so_K^\pi$-log scheme is a fine log scheme $X$ over $\so_K$ such that $(X,X)$ is a strictly semistable $\so_K^\pi$-log scheme with boundary.
\end{definition}


Let $(X,\ovx)$ be a strictly semistable $\so_K^\pi$-log scheme with boundary.
Instead of crystalline Hyodo--Kato cohomology we use the rigid Hyodo--Kato cohomology $ \mathbb{R}\Gamma^\rig_{\hk}(X_0,\ovx_0)$ as our first building block. 
The second building block is the same as in \cite{NekovarNiziol2016}, namely Deligne's de~Rham complex $ \mathbb{R}\Gamma_\dr^{\D}(X_K)$ \cite{Deligne1974}.  
These are connected to each other by the rigid Hyodo--Kato map.
In \S\ref{subsec: HK-map} we showed that there exists a functorial morphism in $\mathscr{D}^+(\mathsf{Mod}_F)$
	$$
	\iota^\rig_\pi\colon \mathbb{R}\Gamma_{\rig}(\ovx_0/\so_F^0)  \rightarrow   \mathbb{R}\Gamma_{\rig}(\ovx_0/\so_K^\pi).
	$$
Since we have the canonical quasi-isomorphisms $ \mathbb{R}\Gamma_\rig(\ovx_0/\so_K^\pi) \xrightarrow{\sim} \mathbb{R}\Gamma_\rig(X_0/\so_K^\pi)$ of Corollary \ref{cor: qis of Y} and $\mathbb{L}^{\hk}\colon \mathbb{R}\Gamma_{\hk}^\rig(X_0,\ovx_0) \xrightarrow{\sim} \mathbb{R}\Gamma_{\rig}(\ovx_0/\so_F^0)$ of \eqref{eq: LHK}, we obtain a morphism
	\begin{equation}\label{GKHKmap}
	 \mathbb{R}\Gamma_{\hk}^\rig(X_0,\ovx_0)  \rightarrow   \mathbb{R}\Gamma_{\rig}(X_0/\so_K^\pi),
	\end{equation}
which by abuse of notation we  denote again by $\iota^\rig_\pi$.

We now have to link the log rigid cohomology $ \mathbb{R}\Gamma_{\rig}(X_0/\so_K^\pi)$ and the de~Rham cohomology $ \mathbb{R}\Gamma_{\dr}^{\D}(X_K)$.
Let $w\colon X_{K,\an} \rightarrow  X_{K,\Zar}$ be the canonical morphism from the analytic to the Zariski site of $X_K$.
Furthermore, we denote by $X_K^{\an}$  the dagger space associated to $X_K$.
Finally, let $\mathcal{X}$ be a weak completion of $X$ and $u\colon\mathcal{X}_\Q \rightarrow  X_K^{\an}$ the canonical inclusion of its generic fibre into $X_K^{\an}$. Note that $(X_0,\mathcal{X},i)$ with $i:X_0 \rightarrow  \mathcal{X}$ forms a $\Xi(\so_K^\pi)$-rigid triple as introduced in \S\ref{subsubsec: usual rigid cohomology}.

Similarly to \cite[Prop.~4.9]{ChiarellottoCiccioniMazzari2013} we consider the diagram of sites
	\begin{equation}\label{diag: CCM Prop. 4.9}
	\xymatrix{ Pt_{\an}(X_K)  \ar[d]^{u_1} \ar[r] & Pt_{\an + \Zar}(X_K) \ar[d]^{u_2} & Pt_{\Zar}(X_K) \ar[d]^{u_3} \ar[l]\\ X_{K,\an}  \ar[r]^w & X_{K,\Zar} & X_{K,\Zar},  \ar[l]_{\id}}
	\end{equation}
where $Pt_{\an}(X_K)$ denotes the discrete site of rigid points on $X_{K,\an}$, $Pt_{\Zar}(X_K)$ the discrete site of Zariski points of $X_K$, and $Pt_{\an + \Zar}(X_K)$ their direct sum in the category of sites.
Note that $u_2$ is induced by $u_3$ on the Zariski points of $X_K$ and by $w\circ u_1$ on the rigid points of $X_{K,\an}$.

By \cite[Lem.~3.2]{ChiarellottoCiccioniMazzari2013}, the left square with the map $\Omega^{\sbt}_{X_K} \rightarrow  w_\ast\Omega^{\sbt}_{X_K^{\an}}$ induces $\Gd_{\an + \Zar}\Omega^{\sbt}_{X_K} \rightarrow  w_\ast\Gd_{\an}\Omega^{\sbt}_{X_K^{\an}}$, and the right square induces $\Gd_{\an + \Zar}\Omega^{\sbt}_{X_K} \rightarrow \Gd_{\Zar}\Omega^{\sbt}_{X_K}$.
Thus we obtain canonical morphisms
	\begin{equation}\label{eq: an + Zar}
	\Gamma(X^{\an}_K,\Gd_{\an}\Omega^{\sbt}_{X_K^{\an}}) \leftarrow \Gamma(X_K,\Gd_{\an + \Zar}\Omega^{\sbt}_{X_K})  \rightarrow  \Gamma(X_K,\Gd_{\Zar}\Omega^{\sbt}_{X_K}).
	\end{equation}
Note that $u_1$, $u_3$, and hence $u_2$ are exact and conservative (see \cite[\S4]{vanderPutSchneider1995} for $u_1$).
Therefore $\Gd_{\an + \Zar}\Omega^{\sbt}_{X_K}$ and $\Gd_{\Zar}\Omega^{\sbt}_{X_K}$ both give a resolution of $\Omega^{\sbt}_{X_K}$, and the second arrow in \eqref{eq: an + Zar} is a quasi-isomorphism. 
Moreover by \cite[\S 1.8 (b)]{GrosseKloenne2004-2} the first arrow in \eqref{eq: an + Zar} is also a quasi-isomorphism.

Furthermore,  $u\colon\cx_\Q \rightarrow  X_K^{\an}$ induces a canonical map
	\begin{equation}\label{eq: pull back}
	\Gamma(X^{\an}_K,\Gd_{\an}\Omega^{\sbt}_{X_K^{\an}})  \rightarrow  \Gamma(\mathcal{X}_\Q,\Gd_{\an}\Omega^{\sbt}_{\mathcal{X}_K})  =   \mathcal{K}_\rig((X_0,\mathcal{X},i).
	\end{equation}

Since we have $  \mathcal{K}_\rig((X_0,\mathcal{X},i)\cong \mathbb{R}\Gamma_\rig(X_0/\so_K^\pi)$ by Theorem~\ref{Thm: cohomology functors} and it is well known that $\Gamma(X_K,\Gd_\an\Omega^{\sbt}_{X_K}) \cong \mathbb{R}\Gamma_{\dr}^{\D}(X_K)$, we can combine  \eqref{eq: an + Zar} and \eqref{eq: pull back} to obtain
a morphism
	\[\mathrm{sp}\colon \mathbb{R}\Gamma_{\dr}^{\D}(X_K)  \rightarrow   \mathbb{R}\Gamma_{\rig}(X_0/\so_K^\pi)\]
in $\mathscr{D}^ + _K$.
Note that the morphism \eqref{eq: pull back} and hence $\mathrm{sp}$ is not a quasi-isomorphism in general.

Consequently we obtain a pair of morphisms in the derived category
	\begin{equation}\label{HK - zigzag}
	 \mathbb{R}\Gamma_{\hk}^{\rig}(X_0,\ovx_0) \xrightarrow{\iota^\rig_\pi }  \mathbb{R}\Gamma_{\rig}(X_0/\so_K^\pi) \xleftarrow{\mathrm{sp}}  \mathbb{R}\Gamma_{\dr}^{\D}(X_K),
	\end{equation}
where both $\iota^\rig_\pi$ and $\mathrm{sp}$ are given by zigzags on the level of complexes as described above. 

For the definition of rigid syntomic cohomology as homotopy limit we use the normalized monodromy, that is we divide the monodromy coming from Definition~\ref{def: Kim - Hain} by the absolute ramification index $e$.

\begin{definition}\label{Def:LogrigidSyn}
	Let $(X,\ovx)$ be a strictly semistable $\so_K^\pi$-log scheme with boundary.
	For $r\geqslant 0$ and a choice of uniformizer $\pi$ the rigid syntomic cohomology of $(X,\ovx)$ is given by
	\begin{equation}\label{eq: LogrigidSyn}
	 \mathbb{R}\Gamma^{\rig}_{\syn}((X,\ovx),r,\pi)
	 : =  \left[\begin{aligned}\xymatrix{ && \fil^r \mathbb{R}\Gamma_{\dr}^{\D}(X_K)  \ar[d]_{(0,\mathrm{sp})}\\
	 \mathbb{R}\Gamma_{\hk}^{\rig}(X_0,\ovx_0)  \ar[rr]^{(1-\varphi_r,\iota_\pi^\rig)  \qquad }  \ar[d]_N &&  \mathbb{R}\Gamma_{\hk}^{\rig}(X_0,\ovx_0) \oplus  \mathbb{R}\Gamma_{\rig}(X_0/\so_K^\pi)  \ar[d]_{(N,0)}\\
	 \mathbb{R}\Gamma_{\hk}^{\rig}(X_0,\ovx_0)  \ar[rr]^{1-\varphi_{r-1}} &&   \mathbb{R}\Gamma_{\hk}^{\rig}(X_0,\ovx_0)}
	\end{aligned}\right],
	 \end{equation}
where  $\varphi_r$ denotes as usual the Frobenius divided by $p^r$. It is functorial in $(X,\ovx)$.

For a strictly semistable $\so_K^\pi$-log scheme $X$, we define the rigid syntomic cohomology of $X$ to be $ \mathbb{R}\Gamma^\rig_\syn(X,r,\pi): =  \mathbb{R}\Gamma^\rig_\syn((X,X),r,\pi)$.
\end{definition}

We will now show that rigid syntomic cohomology also admits a cup product (c.f. \cite{Besser2016}[\S 2.4]). 
This is the case because each of its building blocks has a cup product.
More precisely, let $(X,\overline{X})$ be a strictly semistable log scheme with boundary over $\so_K^\pi$, and let
	\[ \mathbb{R}\Gamma^\rig_{\hk}(X_0,\overline{X}_0) \xrightarrow{a} \mathbb{R}\Gamma_\rig(X_0/\so_K^\pi)\xleftarrow[]{b} \mathbb{R}\Gamma_{\dr}^{\D}(X_K)\]
be the associated $p$-adic Hodge complex.
The cup products on $ \mathbb{R}\Gamma^\rig_{\hk}(X_0,\overline{X}_0)$, $ \mathbb{R}\Gamma_\rig(X_0/\so_K^\pi)$, and $ \mathbb{R}\Gamma_{\dr}^{\D}(X_K)$ are induced by the wedge product of differential forms.
We want to define a cup product
	\[H^{\rig,i}_\syn((X,\overline{X}),r,\pi)\times H^{\rig,j}_\syn((X,\overline{X}),s,\pi) \rightarrow  H^{\rig,i + j}_\syn((X,\overline{X}),r + s,\pi)\]
	 on rigid syntomic cohomology.
Thus let $\eta\in H^{\rig,i}_\syn((X,\overline{X}),r,\pi)$ and $\eta'\in  H^{\rig,j}_\syn((X,\overline{X}),s,\pi)$.
The class of $\eta$ is represented by a sextuple $(u,v,w,x,y,z)$ with
	\begin{align*}
	&u\in \mathbb{R}\Gamma^{\rig,i}_{\hk}(X_0,\overline{X}_0),&&v\in\mathrm{Fil}^r \mathbb{R}\Gamma^{D,i}_\dr(X_K),\\
	&w,x\in \mathbb{R}\Gamma^{\rig,i-1}_{\hk}(X_0,\overline{X}_0),&&y\in \mathbb{R}\Gamma^{i-1}_\rig(\overline{X}_0/\so_K^\pi),\\
	&z\in \mathbb{R}\Gamma^{\rig,i-2}_{\hk}(X_0,\overline{X}_0),&&
	\end{align*}
such that
	\begin{align*}
	&du = 0,&&dv = 0,&&\\
	&dw = (1-\frac{\varphi}{p^r})(u),&&dx = N(u),&&dy = a(u)-b(v),\\
	&dz = N(w)-(1-\frac{\varphi}{p^{r-1}})(x).&& &&
	\end{align*}
Similarly, the class of $\eta'$ is represented by a sextuple $(u',v',w',x',y',z')$.
We define another  sextuple $(u'',v'',w'',x'',y'',z'')$ by
	\begin{eqnarray*}
	u''&: = &u\cup u',\\
	v''&: = &v\cup v',\\
	w''&: = &w\cup\frac{\varphi(u')}{p^s} + (-1)^i(u\cup w'),\\
	x''&: = &x\cup u' + (-1)^i(u\cup x'),\\
	y''&: = &y\cup a(u') + (-1)^i(b(v)\cup y'),\\
	z''&: = &z\cup\frac{\varphi(u')}{p^s}-(-1)^i(w\cup\frac{\varphi(x')}{p^{s-1}}) + (-1)^i(x\cup w')-u\cup z'.
	\end{eqnarray*}
The class of $H^{\rig,i + j}_\syn((X,\overline{X}),r + s,\pi)$ it represents, is the cup product of $\eta$ and $\eta'$ and denoted by $\eta\cup\eta'$.

To conclude this section, we note that the rigid syntomic cohomology for $X$ and $(X,\ovx)$ are isomorphic.

\begin{proposition}\label{prop: rig syn comp}
	Let $(X,\ovx)$ be a strictly semistable log scheme with boundary over $\so_K^\pi$.
	Then the canonical morphism in $\mathscr{D}^+(\mathsf{Mod}_{\Q_p})$
	\[ \mathbb{R}\Gamma^\rig_\syn((X,\ovx),r,\pi) \rightarrow  \mathbb{R}\Gamma^\rig_\syn(X,r,\pi)\]
	is an isomorphism.
\end{proposition}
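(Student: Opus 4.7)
The plan is to compare the two syntomic complexes vertex-by-vertex using the canonical morphism $(X_0,X_0)\rightarrow(X_0,\ovx_0)$ in $\ssk$ induced by the boundary strict open immersion $X_0\hookrightarrow\ovx_0$. Two of the three vertices of the defining diagram, namely $\fil^r\mathbb{R}\Gamma_\dr^{\D}(X_K)$ and $\mathbb{R}\Gamma_\rig(X_0/\so_K^\pi)$, depend only on $X$ and not on the compactification $\ovx$; hence those vertices, as well as the specialization morphism $\mathrm{sp}$ connecting them, are literally identical on both sides.

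For the Hyodo--Kato vertex, I would combine the canonical isomorphism $\mathbb{L}^\hk$ of \eqref{eq: LHK} with the quasi-isomorphism $\mathbb{R}\Gamma_\rig(\ovx_0/\so_F^0)\xrightarrow{\sim}\mathbb{R}\Gamma_\rig(X_0/\so_F^0)$ of Corollary~\ref{cor: qis of Y} to obtain a commutative square of quasi-isomorphisms identifying $\mathbb{R}\Gamma^\rig_\hk(X_0,\ovx_0)$ with $\mathbb{R}\Gamma^\rig_\hk(X_0,X_0)$. Via this identification, the Hyodo--Kato map $\iota^\rig_\pi$ on each side is, by the very construction in \eqref{GKHKmap}, the zig-zag built from $\mathbb{L}^\hk$, the comparison of Corollary~\ref{cor: qis of Y}, and the rigid Hyodo--Kato map $\mathbb{R}\Gamma_\rig(-/\so_F^0)\rightarrow\mathbb{R}\Gamma_\rig(-/\so_K^\pi)$ of \S\ref{subsec: HK-map}; the functoriality of the latter (Proposition~\ref{prop: functoriality of HK}) yields the commutativity of the resulting square. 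The monodromy $N$ is induced by a natural transformation of realization functors in the sense of \S\ref{subsubsec: Morphisms}, and so is automatically functorial in $(X_0,\ovx_0)$.

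The point I expect to be the chief obstacle is the Frobenius compatibility. On the Hyodo--Kato complex the Frobenius is defined by local lifts via Definition~\ref{def: Kim - Hain}, while the natural operator on $\mathbb{R}\Gamma_\rig(\ovx_0/\so_F^0)$ is the base-change Frobenius of \eqref{eq: base change Frobenius}. These are reconciled by Proposition~\ref{prop: rigid Frobenius}, after which the base-change version is visibly functorial in $\ovx_0$ and hence commutes with the horizontal map to $\mathbb{R}\Gamma_\rig(X_0/\so_F^0)$. Once these pointwise quasi-isomorphisms and the coherent commutativity in $\mathscr{D}^+(\mathsf{Mod}_{\Q_p})$ are in place, the induced map between the iterated mapping cones defining $\mathbb{R}\Gamma^\rig_\syn((X,\ovx),r,\pi)$ and $\mathbb{R}\Gamma^\rig_\syn(X,r,\pi)$ is itself a quasi-isomorphism, as desired.
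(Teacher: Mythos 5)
Your argument is correct and follows the paper's approach: both reduce the problem to the Hyodo--Kato vertex (the other two vertices and the map $\mathrm{sp}$ depend only on $X$) and apply Corollary~\ref{cor: qis of Y} after identifying $\mathbb{R}\Gamma^\rig_\hk(X_0,\ovx_0)$ with $\mathbb{R}\Gamma_\rig(\ovx_0/\so_F^0)$ via $\mathbb{L}^\hk$. Your detour through Proposition~\ref{prop: rigid Frobenius} is superfluous here, though: the canonical map $\mathbb{R}\Gamma^\rig_\hk(X_0,\ovx_0)\to\mathbb{R}\Gamma^\rig_\hk(X_0,X_0)$ is already a morphism in $\mathscr{D}^+(\mathsf{Mod}_F(\varphi,N))$ by the functoriality established in Theorem~\ref{Thm: cohomology functors}, so $\varphi$- and $N$-equivariance (and hence commutativity of the remaining squares in the defining homotopy limit) are automatic without passing to the base-change description of Frobenius.
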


\begin{proof}
	It is enough to show that $ \mathbb{R}\Gamma^\rig_{\hk}(X_0,\ovx_0) \rightarrow  \mathbb{R}\Gamma^\rig_{\hk}(X_0)$ is an isomorphism.
	This follows from Corollary~\ref{cor: qis of Y}, since we have $ \mathbb{R}\Gamma^\rig_{\hk}(X_0,\ovx_0) \cong \mathbb{R}\Gamma_\rig(\ovx_0/\so_F^0)$ and $ \mathbb{R}\Gamma^\rig_{\hk}(X_0,X_0) \cong \mathbb{R}\Gamma_\rig(X_0/\so_F^0)$.
\end{proof}

\section{Comparison in the compactifiable case}\label{section: comparison}

In this section we compare the syntomic cohomology theories introduced in the previous section in the case of a strictly semistable log scheme which has a normal crossings compactification.
Our main result is the following.

\begin{theorem}\label{thm: syntomic}
Let $(X,\overline{X})$ be a proper strictly semistable $\so_K^\pi$-log scheme with boundary.
Then for $r\geqslant 0$ there exists a canonical quasi-isomorphism
	$$
	 \mathbb{R}\Gamma^\rig_\syn((X,\ovx),r,\pi) \cong \mathbb{R}\Gamma_{\syn}^{\cris}(\ovx,r,\pi),
	$$ 
which is compatible with cup products.
\end{theorem}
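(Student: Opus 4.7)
The plan is to compare the two homotopy limits term by term. Since both $\mathbb{R}\Gamma^\rig_\syn((X,\ovx),r,\pi)$ and $\mathbb{R}\Gamma_{\syn}^{\cris}(\ovx,r,\pi)$ are built from $p$-adic Hodge complexes of the same formal shape, it suffices to produce a quasi-isomorphism between the defining diagrams that is compatible with Frobenius, monodromy, and the Hyodo--Kato and specialization morphisms. Concretely, I need
\begin{enumerate}
\item a canonical quasi-isomorphism $\mathbb{R}\Gamma^\rig_\hk(X_0,\ovx_0) \cong \mathbb{R}\Gamma^\cris_\hk(\ovx)$ of $(\varphi,N)$-modules over $F$;
\item a canonical quasi-isomorphism $\mathbb{R}\Gamma_\rig(X_0/\so_K^\pi) \cong \mathbb{R}\Gamma_\cris(\ovx/\so_K^\pi)_\Q$;
\item a $K$-linear identification of Deligne's $\mathbb{R}\Gamma_\dr^\D(X_K)$ with the filtered log de~Rham complex of $\ovx_K$ used on the crystalline side, via the comparison morphism $\gamma$;
\item compatibility of the horizontal structural maps: $\iota_\pi^\rig$ versus $\iota_\pi^\cris$, and $\mathrm{sp}$ versus (a representative of) $\gamma^{-1}\circ i_\pi^\ast\circ s_\pi$.
\end{enumerate}
Once these are in place, the compatibility with cup products follows immediately since both products are induced by wedge products of differentials and by the explicit sextuple formula of Section~\ref{subsec: syntomic def}.

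The comparison \S\ref{Subsec:comparison rig - cris} will supply (i) and (ii). The strategy is the standard one: locally embed $\ovx$ in a smooth weak formal lift $\ovcx$ over $\so_K^\pi$ (resp. its reduction $\ovcx\times_{\so_K^\pi}\so_F^0$ for the Hyodo--Kato version), and identify the log crystalline complex with $\Gamma$ of the log de~Rham complex on the tube $]\ovx_0[^{\log}_{\ovcx}$; this is possible because $\ovx$ is proper, so the $p$-adic completion controls the crystalline side while overconvergence is automatic and therefore the rigid side is the same computation tensored with $\Q$. A simplicial descent using a covering by locally embeddable pieces globalises this identification, while Lemma~\ref{lem: qis with KimHain} shows that the $u$-extension $\widetilde{\omega}^{\sbt}\llbracket u\rrbracket$ of Definition~\ref{def: Kim - Hain} is quasi-isomorphic to $\omega^{\sbt}_{\ovcy/\so_F^0,\Q}$, matching the target of \eqref{Equ:SES2}.

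Item (iii) follows because $\overline{X}\setminus X$ is a normal crossings divisor in the proper regular scheme $\overline{X}$, so Deligne's log de~Rham cohomology with its Hodge filtration is computed by the log de~Rham complex of $\ovx_K$, which is exactly the generic fibre used in $\gamma$. For (iv), the comparison of specialization maps is formal from (ii) and the functoriality of the Godement resolutions; the hard part is the comparison of Hyodo--Kato maps, because $\iota_\pi^\cris$ is defined abstractly via the unique Frobenius-equivariant section $s_\pi$ of \cite{Tsuji1999}, whereas $\iota_\pi^\rig$ is produced as an explicit zigzag through the simplicial system $(\ovv_{\sbt},\ovp_{\sbt})$ of \S\ref{subsec: HK-map}. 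The argument will be to lift the zigzag to a crystalline zigzag through the corresponding crystalline complexes $\mathbb{R}\Gamma_\cris(M_{\sbt}/\so_F^0)$, $\mathbb{R}\Gamma_\cris((V_{\sbt},P_{\sbt})/\ct_{PD})$, $\mathbb{R}\Gamma_\cris(M_{\sbt}/\so_K^\pi)$, check via (i) and (ii) that this zigzag represents the crystalline Hyodo--Kato morphism (using Frobenius-equivariance and the $F$-linearity characterization of $s_\pi$), and then conclude by comparing the two zigzags via (i)--(ii). Frobenius compatibility, which is the subtle point, is already arranged by Proposition~\ref{prop: rigid Frobenius}, which shows that the locally-defined Frobenius on $\mathbb{R}\Gamma^\rig_\hk$ agrees with the one coming from base change and relative Frobenius, i.e.~the same recipe used for $\mathbb{R}\Gamma^\cris_\hk$.

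The main obstacle will be step (iv), and specifically the Frobenius-equivariance of the comparison of Hyodo--Kato morphisms. Since $s_\pi$ is characterized uniquely by its Frobenius-commutation and $F$-linearity in the derived category, the proof reduces to verifying that the rigid zigzag, after the identifications (i) and (ii), also commutes with $\varphi$; this in turn uses Proposition~\ref{prop: rigid Frobenius} together with the fact that the intermediate simplicial object $(V_{\sbt},P_{\sbt})$ carries compatible Frobenius lifts on every component, inherited from a chosen Frobenius on the lift of $\overline{X}$. Once this uniqueness argument goes through, applying $[\,\cdot\,]$ (the explicit four-term homotopy limit functor) to the morphism of diagrams produces the required quasi-isomorphism, and the cup-product compatibility is then automatic from the explicit formula.
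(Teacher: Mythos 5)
Your overall decomposition — reduce to comparing the building blocks $\mathbb{R}\Gamma_\hk$, $\mathbb{R}\Gamma(\,\cdot\,/\so_K^\pi)$, $\mathbb{R}\Gamma_\dr$ and then the structural maps $\varphi$, $N$, $\iota_\pi$, $\mathrm{sp}$ — matches the paper's plan, and (iii) and the cup-product remark are fine. But there are two substantive problems.

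First, on (i)/(ii) your justification is too casual. You assert that properness of $\ovx$ makes overconvergence ``automatic'' so the rigid and crystalline computations coincide after $\otimes\Q$; that is not an argument. The paper routes through log convergent cohomology (\S\ref{Subsec:comparison rig - cris}): it introduces the motivic weight filtration and the resulting spectral sequence \eqref{eq: motivic weight ss} to reduce the $\so_F^0$ comparison to the boundary-free case $\cd=\emptyset$, which is covered by Gro\ss e-Kl\"onne, and it uses a separate argument (a global partially proper weak formal lift $\overline{\mathcal{X}}$ and \cite[Thm.~3.2]{GrosseKloenne2000}) for $\so_K^\pi$; only then does it invoke Shiho's convergent--crystalline comparison. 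None of that is visible in your sketch, and without the weight spectral sequence reduction the overconvergence step has no proof.

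Second, and more seriously, your strategy for (iv) does not go through. You propose to lift the rigid zigzag to a crystalline zigzag through $\mathbb{R}\Gamma_\cris\bigl((V_{\sbt},P_{\sbt})/\ct_{PD}\bigr)$ and then identify it with $\iota_\pi^\cris$ via the uniqueness of $s_\pi$. But $(V_{\sbt},P_{\sbt})$ is a simplicial $T$-log scheme with \emph{boundary}, and there is no log crystalline theory for log schemes with boundary in this paper; the object $\mathbb{R}\Gamma_\cris\bigl((V_{\sbt},P_{\sbt})/\ct_{PD}\bigr)$ is undefined. Moreover, your appeal to Proposition~\ref{prop: rigid Frobenius} for Frobenius-compatibility of the zigzag is misplaced: that proposition compares the two Frobenii on $\mathbb{R}\Gamma^\rig_\hk$ and $\mathbb{R}\Gamma_\rig(\ovy/\so_F^0)$, but the Frobenius on the intermediate term $\mathbb{R}\Gamma_\rig((\ovv_{\sbt},\ovp_{\sbt})/\ct)$ needs its own argument — the paper proves a separate lemma showing $\mathbb{L}_{(\ct,\ovct)/\ct}$ is a quasi-isomorphism precisely to be able to define $\varphi$ there via the $(\ct,\ovct)$ construction from \S\ref{subsubsec: Base change and Frobenius}. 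What the paper actually does for (iv) is quite different and worth internalizing: it does \emph{not} construct a rigid analogue of $s_\pi$ (the paper explicitly notes it is unclear how to produce such a section). Instead it observes that the rigid zigzag through $(\ovv_{\sbt},\ovp_{\sbt})$ fits into a commuting diagram with the plain two-arrow picture $\mathbb{R}\Gamma_\rig(\ovx_0/\so_F^0)\leftarrow\mathbb{R}\Gamma_\rig(\ovx_0/\ct)\rightarrow\mathbb{R}\Gamma_\rig(\ovx_0/\so_K^\pi)$ (all left arrows are $t\mapsto 0$, all right arrows are $t\mapsto\pi$), passes to convergent cohomology, and then restricts to Frobenius eigenspaces, where $i_0^\ast\colon\mathbb{R}\Gamma_\cris(\ovx/\ct_{PD})_\Q^{\varphi=p^r}\to\mathbb{R}\Gamma^\cris_\hk(\ovx)_\Q^{\varphi=p^r}$ becomes a quasi-isomorphism by \cite[Prop.~3.8]{NekovarNiziol2016}, so $s_\pi$ may be replaced by $(i_0^\ast)^{-1}$. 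The compatibility of $\iota_\pi^\rig$ and $\iota_\pi^\cris$ is thus proved \emph{only} on Frobenius eigenspaces with eigenvalue $p^r$ for $r\geqslant -1$ — and this is enough because the syntomic complex sees $\mathbb{R}\Gamma_\hk$ only through the homotopy fibre of $1-\varphi_r$. Your uniqueness-of-$s_\pi$ route might ultimately be made to work, but as written it contains these two gaps.
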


Combined with Lemma~\ref{lem: NN comparison} and Proposition~\ref{prop: rig syn comp}, this implies the following.

\begin{corollary}
	Let $(X,\overline{X})$ be a proper strictly semistable $\so_K^\pi$-log scheme with boundary.
Then for $r\geqslant 0$ there exists a canonical quasi-isomorphism
	$$
	 \mathbb{R}\Gamma^\rig_\syn(X,r,\pi) \cong \mathbb{R}\Gamma_{\syn}^{\NN}(X_K,r),
	$$
which is compatible with cup products. 
\end{corollary}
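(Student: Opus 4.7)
The plan is to derive the corollary from Theorem~\ref{thm: syntomic}: by Proposition~\ref{prop: rig syn comp} the left-hand side equals $\mathbb{R}\Gamma^\rig_\syn((X,X),r,\pi)$, and by Lemma~\ref{lem: NN comparison} the right-hand side equals $\mathbb{R}\Gamma_{\syn}^{\cris}(X,r,\pi)$; Theorem~\ref{thm: syntomic} applied to the pair $(X,X)$, which is a proper strictly semistable $\so_K^\pi$-log scheme with boundary, then supplies the desired quasi-isomorphism, compatibly with cup products. The only substantive work is therefore the proof of Theorem~\ref{thm: syntomic}, which I now sketch.

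My approach is to compare the two four-term homotopy limits \eqref{eq: LogrigidSyn} and \eqref{equ: syn cris} vertex-by-vertex, and to check that the comparisons intertwine all structural arrows. First I would establish three ``coefficient'' quasi-isomorphisms. (i) $\mathbb{R}\Gamma^\rig_{\hk}(X_0,\ovx_0) \xrightarrow{\sim} \mathbb{R}\Gamma^\cris_{\hk}(\ovx)_{\Q}$: via the canonical identification $\mathbb{L}^\hk$ of \eqref{eq: LHK} the left side is already $\mathbb{R}\Gamma_\rig(\ovx_0/\so_F^0)$, and the comparison with crystalline Hyodo--Kato cohomology is a variant of the Berthelot--Shiho comparison between log rigid and log crystalline cohomology, obtained by embedding $\ovx_0$ Zariski-locally into a smooth weak formal $\so_F^0$-log scheme, identifying the overconvergent log de~Rham complex on the tube with the rationalised log crystalline complex of the PD-envelope, and gluing by \v{C}ech descent over the simplicial Hyodo--Kato datum of Remark~\ref{rem: gluing lifts}; the auxiliary variable $u$ in $\widetilde{\omega}^{\sbt}_{\ovcy,\Q}\llbracket u\rrbracket$ is harmless thanks to Lemma~\ref{lem: qis with KimHain}. (ii) $\mathbb{R}\Gamma_\rig(X_0/\so_K^\pi) \xrightarrow{\sim} \mathbb{R}\Gamma_\cris(\ovx/\so_K^\pi)_{\Q}$, by exactly the same local-and-descend method. (iii) $\fil^r\mathbb{R}\Gamma_\dr^{\D}(X_K) \xrightarrow{\sim} \fil^r\mathbb{R}\Gamma_\dr(\ovx_K)$, where the target carries the Hodge filtration coming from the stupid filtration on the log de~Rham complex of $(X_K,\ovx_K)$: this is Deligne's classical comparison between the meromorphic and the logarithmic de~Rham complex, which in the $p$-adic Hodge-complex formalism is treated in \cite{ChiarellottoCiccioniMazzari2013}.

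Next I would check that these three quasi-isomorphisms intertwine the structural arrows of the two homotopy limits. For the Frobenius this is essentially Proposition~\ref{prop: rigid Frobenius}, which reconciles the local-lift Frobenius on $\mathbb{R}\Gamma^\rig_{\hk}(X_0,\ovx_0)$ with the base-change Frobenius used on the crystalline side. The monodromy operator on both sides arises as the connecting morphism of a short exact sequence built by inserting $d\log t$ --- compare \eqref{eq: rigid ses} with \eqref{Equ:SES2} --- and the comparison of these two sequences localises to the identification of (i) and thus respects $N$. The specialisation $\mathrm{sp}$ and its crystalline counterpart $\gamma^{-1}$ are both induced by pull-back to the generic fibre from a common smooth lift, so their agreement is formal.

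The main obstacle is the compatibility of the two Hyodo--Kato morphisms $\iota^\rig_\pi$ and $\iota^\cris_\pi$: the rigid map of \S\ref{subsec: HK-map} is an explicit zigzag through $\mathbb{R}\Gamma_\rig((\ovv_{\sbt},\ovp_{\sbt})/\ct)$ built from a projective-bundle/Kim--Hain picture, whereas $\iota^\cris_\pi = i_\pi^\ast\circ s_\pi$ is defined using Beilinson's section $s_\pi$, and the two constructions are \emph{a priori} unrelated. The route I would take exploits the uniqueness of $s_\pi$ on the rational crystalline complex (the sentence after \cite[Lem.~4.4.11]{Tsuji1999}): it suffices to show that the composite of $\iota^\rig_\pi$ with the coefficient identifications (i) and (ii) is $F$-linear, functorial in $(X,\ovx)$, and $\varphi$-equivariant, where Frobenius-equivariance is furnished by Proposition~\ref{prop: rigid Frobenius} and functoriality by Proposition~\ref{prop: functoriality of HK}; the uniqueness statement then forces agreement with $\iota^\cris_\pi$ after inverting $p$. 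Once all four vertices and the six structure arrows match, functoriality of the bracket $[\,\cdot\,]$ delivers a quasi-isomorphism of the homotopy limits. Finally, compatibility with cup products is automatic: both cup products are given by the same sextuple formula spelled out after Definition~\ref{Def:LogrigidSyn}, and the coefficient quasi-isomorphisms (i)--(iii) are each induced by comparison maps of commutative dg algebras built out of the wedge product of differential forms, so they respect the products on the nose.
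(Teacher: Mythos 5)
Your derivation of the corollary from Theorem~\ref{thm: syntomic}, Lemma~\ref{lem: NN comparison}, and Proposition~\ref{prop: rig syn comp} is exactly the paper's: the corollary is stated as an immediate consequence of those three results. That part is fine and needs no further comment.

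Since most of your proposal is a sketch of Theorem~\ref{thm: syntomic} itself, a few remarks on how it compares with the paper's argument are in order. Your coefficient comparisons (i)--(iii) are in the right spirit, but the paper does not pass directly from overconvergent tubes to PD-envelopes; it routes through Shiho's log \emph{convergent} cohomology (Lemma~\ref{lem:rig - conv}), reducing the $\so_F^0$-case to the boundary-free case via the motivic weight spectral sequence \eqref{eq: motivic weight ss} and then invoking Gro\ss{}e-Kl\"onne and Shiho. This detour is what makes the comparison with $\mathbb{R}\Gamma_\cris(\ovx/\mathcal{T}_{PD})_\Q$ available later, and your sketch omits it.

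The genuine soft spot is the Hyodo--Kato compatibility. You propose to invoke Tsuji's uniqueness of $s_\pi$: show that the composite of $\iota^\rig_\pi$ with the coefficient identifications is $F$-linear, functorial, and $\varphi$-equivariant, then conclude it must agree with $\iota^\cris_\pi = i_\pi^\ast\circ s_\pi$. But the uniqueness statement concerns \emph{sections} of $i_0^\ast\colon \mathbb{R}\Gamma_\cris(\ovx/\mathcal{T}_{PD})_\Q \to \mathbb{R}\Gamma_\hk^\cris(\ovx)_\Q$; it does not assert that there is a unique functorial $\varphi$-equivariant map $\mathbb{R}\Gamma_\hk^\cris(\ovx)_\Q \to \mathbb{R}\Gamma_\cris(\ovx/\so_K^\pi)_\Q$. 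To apply it you would first have to exhibit the rigid zigzag as $i_\pi^\ast\circ s'$ for some section $s'$ of $i_0^\ast$, i.e.\ factor $\iota^\rig_\pi$ through $\mathbb{R}\Gamma_\cris(\ovx/\mathcal{T}_{PD})_\Q$. That factorization is precisely the work the paper does: it relates the zigzag vertex $\mathbb{R}\Gamma_\rig((\ovv_\sbt,\ovp_\sbt)/\ct)$ to $\mathbb{R}\Gamma_\rig(\ovx_0/\ct) \to \mathbb{R}\Gamma_\conv(\ovx_0/\widehat{\ct}) \to \mathbb{R}\Gamma_\cris(\ovx/\mathcal{T}_{PD})_\Q$ in a large commuting diagram and, crucially, restricts to Frobenius eigenspaces, where $i_0^\ast$ becomes invertible (Nekov\'a\v{r}--Nizio\l{}'s Prop.~3.8) so that no section is needed at all. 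Relatedly, the paper \emph{deduces} that $\mathrm{sp}$ is a quasi-isomorphism from the commutativity of the final triangles, rather than assuming it as a formal consequence of pull-back to the generic fibre, as your sketch suggests. With those adjustments your outline would match the paper's proof of the theorem.
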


Recall that the complexes in the theorem are given by the homotopy limits
	$$
 	 \mathbb{R}\Gamma^{\rig}_{\syn}((X,\ovx),r,\pi)
	  =  \left[\begin{aligned}\xymatrix{ && \fil^r \mathbb{R}\Gamma_{\dr}^{\D}(X_K)  \ar[d]_{(0,\mathrm{sp})}\\
	  \mathbb{R}\Gamma^{\rig}_{\hk}(X_0,\ovx_0) \ar[rr]^{(1-\varphi_r,\iota_\pi^\rig) \qquad }  \ar[d]_N &&  \mathbb{R}\Gamma_{\hk}^{\rig}(X_0,\ovx_0)\oplus  \mathbb{R}\Gamma_{\rig}(X_0 /\so_K^\pi)  \ar[d]_{(N,0)}\\
	 \mathbb{R}\Gamma_{\hk}^{\rig}(X_0,\ovx_0)  \ar[rr]^{1-\varphi_{r-1}} &&   \mathbb{R}\Gamma_{\hk}^{\rig}(X_0,\ovx_0)}
	\end{aligned}\right],
	 $$
and
	$$
	 \mathbb{R}\Gamma_{\syn}^{\cris}(\ovx,r,\pi) = 
	\left[\begin{aligned}\xymatrix{ && \fil^r  \mathbb{R}\Gamma_{\dr}(\ovx_K)  \ar[d]_{(0,\gamma)}\\
	 \mathbb{R}\Gamma_{\hk}^\cris(\ovx)  \ar[rr]^{(1-\varphi_r,\iota_\pi^\cris) \qquad }  \ar[d]_N &&  \mathbb{R}\Gamma_{\hk}^\cris(\ovx) \oplus  \mathbb{R}\Gamma_{\cris}(\ovx/\so_K^\pi)  \ar[d]_{(N,0)}\\
	 \mathbb{R}\Gamma_{\hk}^\cris(\ovx)  \ar[rr]^{1-\varphi_{r-1}} &&   \mathbb{R}\Gamma_{\hk}^\cris(\ovx)}
	\end{aligned}\right].
	$$

The first step to link them is to compare the cohomology theories involved.
Note that in this case Deligne's de~Rham cohomology is computed by the compactification $X_K\hookrightarrow\ovx_K$, namely we have a natural filtered quasi-isomorphism $ \mathbb{R}\Gamma_\dr^{\D}(X_K) \cong \mathbb{R}\Gamma_\dr(\ovx_K)$.
Thus it remains to obtain a canonical quasi-isomorphism between $ \mathbb{R}\Gamma^{\rig}_{\hk}(X_0,\ovx_0)$ and $ \mathbb{R}\Gamma_{\hk}^\cris(\ovx)$ which is  compatible with Frobenius $\varphi$ and the (normalized) monodromy $N$.
The next step is to show that the crystalline and rigid Hyodo--Kato morphisms are compatible.
In particular we need to show the compatibility of $\iota_\pi^\rig$ and $\iota_\pi^\cris$. 
This is not obvious from the construction.
However, it suffices to see the compatibility on Frobenius eigenspaces where it will follow from formal arguments.

\subsection{Logarithmic rigid and crystalline cohomology}\label{Subsec:comparison rig - cris}

In this section we compare for a strictly semistable $k^0$-log scheme with boundary log rigid cohomology with log crystalline cohomology passing through Shiho's log convergent cohomology.

All constructions of rigid complexes in Section \ref{sec: Log rigid complexes} can also be carried out for formal schemes and rigid spaces instead of weak formal schemes and dagger spaces.
We call the resulting complexes convergent Hyodo--Kato complex and log convergent complexes, and denote them by replacing the index $\rig$ by $\conv$.
This is a priori different from the log convergent cohomology in the sense of \cite{Shiho2002}, but by \cite[Cor.~2.3.9]{Shiho2002} we may identify them.

In particular we obtain rigid cohomological tuples $\Xi_\conv$, $\Xi^\conv_\hk$, and functors
\begin{align*}
	&\lsk \rightarrow \mathscr{D}^+(\mathsf{Mod}_{\so_\Q}),&&Y \mapsto  \mathbb{R}\Gamma_{\conv}(Y/\cs),\\
	&\ssk \rightarrow \mathscr{D}^+(\mathsf{Mod}_F(\varphi,N)),&&(Y,\ovy) \mapsto  \mathbb{R}\Gamma^{\conv}_{\hk}(Y,\ovy)
\end{align*}
for $\cs = \so_F^0,\so_K^\pi,\widehat{\ct}$ and $\so_Q = F,K,\widehat{K[t]}$.

For a strictly semistable weak formal $\ct$-log scheme with boundary $(\cz,\ovcz) \in \sst$, we denote by $\ovcz^\flat$ the weak formal log scheme whose underlying weak formal scheme is $\ovcz$ and the log structure is associated to the simple normal crossing divisor $\ovcy: = \ovcz\times_\ct\so_F^0$.
(Note that the log structure of $\ovcz$ is associated to $\ovcy\cup\cd$.)
For any weak formal subscheme $\mathcal{V}$ of $\ovcz$ we denote by $\mathcal{V}^\flat$ the weak formal log scheme whose underlying weak formal scheme is $\mathcal{V}$ and whose log structure is the pull-back of the log structure of $\ovcz^\flat$.
Consider the log de~Rham complex  $\omega^{\sbt}_{\ovcy^\flat}: = \omega^{\sbt}_{\ovcy^\flat/\so_F^0}$ of $\ovcy^\flat$ over $\so_F^0$.
We define the motivic weight filtration $P_{\sbt}$ on $\omega^{\sbt}_{\ovcy}$ by counting the log differentials along the divisor $\ovcy\setminus\cy=\ovcy\cap\cd$,
	\[P_k\omega^i_{\ovcy}: = \im(\omega^k_{\ovcy}\otimes\omega^{i-k}_{\ovcy^\flat} \rightarrow \omega^i_{\ovcy}).\] 
Let $\cd$ be the divisor $\ovcz\setminus \cz$. 
For a subset $I\subset \Upsilon_{\cd}$, we let $\cd_I: = \bigcap_{\beta\in I}\cd_\beta$, where $\cd_\beta$ is the irreducible component of $\cd$ which corresponds to $\beta$.
By taking the residue along the divisor $\ovcy\cap\cd$ in $\ovcy$, one computes the graded pieces  as
\[\Gr^P_k\omega^{\sbt}_{\ovcy}: = \bigoplus_{\substack{I\subset\Upsilon_\cd\\ \lvert I\rvert = k}}\omega^{\sbt}_{(\ovcy\cap\cd_I)^\flat}[-k].\]
Thus for an object $((Y,\ovy),(\cz,\ovcz,\phi),i)\in\mathsf{RT}_{\Xi_\hk^\rig}$ this filtration induces a motivic weight spectral sequence of log rigid cohomology groups over $\so_F^0$
	\begin{equation}
	E_1^{p,q} = \bigoplus_{\substack{I\subset\Upsilon_\cd\\ \lvert I\rvert = -p}}H^{2p + q}\mathcal{K}_\rig((\ovy\cap D_I)^\flat,(\ovcy\cap\cd_I)^\flat,i)\Rightarrow H^{p + q}\mathcal{K}_\rig(\ovy,\ovcy,i).
	\end{equation}
The general case can be reduced to this  by Remark~\ref{rem: gluing lifts} and we obtain
	\begin{equation}\label{eq: motivic weight ss}
	E_1^{p,q} = \bigoplus_{\substack{I\subset\Upsilon_\cd\\ \lvert I\rvert = -p}}H^{2p + q}_\rig((\ovy\cap D_I)^\flat/\so_F^0)\Rightarrow H^{p + q}_\rig(\ovy/\so_F^0)
	\end{equation}
for an object $(Y,\ovy)$ in $\ssk$.
Note that $(\ovy\cap D_I)^\flat$ is a strictly semistable $k^0$-log scheme.
Similar spectral sequences exist for log convergent cohomology.

\begin{lemma}\label{lem:rig - conv}
	Let $(X,\ovx)$ be a proper strictly semistable $\so_K^\pi$-log scheme with boundary.
	Then the completion of weak formal schemes and dagger spaces induces quasi-isomorphisms 
	\begin{eqnarray*}
	& \mathbb{R}\Gamma_\rig(\ovx_0/\so_F^0) \xrightarrow{\sim} \mathbb{R}\Gamma_\conv(\ovx_0/\so_F^0),\\ 
	 & \mathbb{R}\Gamma_\rig(\ovx_0/\so_K^\pi) \xrightarrow{\sim} \mathbb{R}\Gamma_\conv(\ovx_0/\so_K^\pi)
	 \end{eqnarray*}
\end{lemma}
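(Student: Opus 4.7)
The plan is as follows. First I would construct the natural comparison map. Since the rigid and convergent cohomology theories are built from parallel rigid cohomological tuples (differing only in that $\Xi_\conv$ uses formal log schemes with rigid analytic generic fibres in place of weak formal log schemes with dagger generic fibres), the $p$-adic completion functor $\cz \mapsto \widehat{\cz}$ together with the canonical inclusion $\cz_\Q \hookrightarrow \widehat{\cz}_\Q$ of the dagger generic fibre into the rigid analytic one induces, via the formalism of \S\ref{subsubsec: Morphisms}, a morphism of rigid cohomological tuples $\Xi(\cs) \to \Xi_\conv(\cs)$ for $\cs \in \{\so_F^0, \so_K^\pi\}$. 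This produces canonical maps $\mathbb{R}\Gamma_\rig(\ovx_0/\cs) \to \mathbb{R}\Gamma_\conv(\ovx_0/\cs)$, and it is these that I claim are quasi-isomorphisms.

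Next I would reduce to smooth strata using the motivic weight spectral sequence. The filtration $P_\bullet$ on $\omega^{\sbt}_{\ovcy}$ defined just before \eqref{eq: motivic weight ss} is preserved by $p$-adic completion, so the map constructed above fits into a morphism between the rigid and convergent weight spectral sequences. On $E_1$, this reduces the statement to establishing, for each intersection $D_I$ of horizontal components of the boundary, that
\[
\mathbb{R}\Gamma_\rig((\ovx_0 \cap D_I)^\flat/\cs) \xrightarrow{\sim} \mathbb{R}\Gamma_\conv((\ovx_0 \cap D_I)^\flat/\cs)
\]
is a quasi-isomorphism. By construction, each $(\ovx_0 \cap D_I)^\flat$ is a strictly semistable $k^0$-log scheme (in the sense of Definition~\ref{def: ss log scheme} with $Y=\ovy$), and since $\ovx$ is proper over $\so_K$ by assumption, the stratum is moreover proper over $k$.

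For a proper strictly semistable $k^0$-log scheme, the desired comparison then follows by a standard argument: one chooses a global smooth weak formal lift (using Elkik \cite{Elkik1973}, as in the proof of Lemma~\ref{lem: HK embedding}), so that both sides are computed as hypercohomology of the log de~Rham complex on the dagger, respectively rigid analytic, generic fibre of the same lift. Because the lift is proper, a GAGA-type argument (cf.\ \cite[\S 1.8]{GrosseKloenne2004-2}) identifies the hypercohomology of coherent sheaves on the dagger space with that of the associated rigid analytic space; equivalently, one may route through log crystalline cohomology via Shiho's comparison \cite{Shiho2002} on the convergent side and Große-Klönne's \cite{GrosseKloenne2005} on the rigid side. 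The main obstacle in this argument is verifying that the morphism between the two weight spectral sequences is truly compatible on the $E_1$ page, i.e.\ that the residue maps along the boundary and the passage from the canonical rigid complexes of Theorem~\ref{Thm: cohomology functors} to their convergent analogues commute strictly with the direct sum decompositions; this is formal but requires carefully tracking the Godement resolutions and the simplicial reduction of Remark~\ref{rem: gluing lifts} through the completion morphism.
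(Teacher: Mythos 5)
Your first reduction is on target: using the motivic weight spectral sequence \eqref{eq: motivic weight ss} to reduce the $\so_F^0$ case to proper strictly semistable $k^0$-log schemes with $\cd=\emptyset$ matches the paper's opening move. After that there is a genuine gap. You propose to ``choose a global smooth weak formal lift (using Elkik)'' of the stratum. This fails on two counts. First, the strata $(\ovx_0\cap D_I)^\flat$ are strictly semistable $k^0$-log schemes, which are log smooth over $k^0$ but \emph{not} classically smooth, so there is no smooth lift in the sense you need. Second, and more seriously, Elkik's theorem is a local statement --- it lifts affine smooth algebras --- and does not produce a global lift of a proper scheme, not even of a classically smooth proper scheme, let alone a semistable one. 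What actually closes the argument at this stage, and what the paper cites, is Gro\ss{}e-Kl\"onne \cite[\S 3.8]{GrosseKloenne2005}: one runs a \emph{second} spectral sequence, a \v{C}ech-type decomposition keyed to the vertical irreducible components of $\ovy$, to reduce further to classically smooth proper $k$-schemes, and then one invokes Berthelot's rigid~$=$~convergent comparison for proper smooth varieties, which is itself a local-lift-plus-\v{C}ech argument needing no global lift.

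For the $\so_K^\pi$ case your route is also unnecessarily heavy and leans on the same flawed Elkik step. The paper's argument here is much simpler and needs no spectral sequence at all: the weak completion $\overline{\mathcal{X}}$ of the proper scheme $\ovx$ furnishes a \emph{global} lift which is partially proper, so \cite[Thm.~3.2]{GrosseKloenne2000} applies directly. Finally, your proposed alternative of ``routing through log crystalline cohomology'' is circular in the paper's logical order, since the corollary immediately following this lemma is precisely where the rigid/convergent--crystalline comparisons are deduced \emph{from} this lemma.
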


\begin{proof}
	For $\so_F^0$, we  reduce to the case $\cd = \emptyset$ by the spectral sequence \eqref{eq: motivic weight ss}.
	But this case is covered by Gro\ss e-Kl\"{o}nne \cite[\S 3.8]{GrosseKloenne2005}.
	Indeed, one constructs another spectral sequence coming from the singularity of $\ovy$, and then  reduces to the classical, i.e.  non-logarithmic, result  due to Berthelot.
 
	For $\so_K^\pi$, the statement follows from \cite[Thm.~3.2]{GrosseKloenne2000}, since $\ovx_0$ has a global lifting $\overline{\mathcal{X}}$, in form of the weak completion of $\ovx$, which is partially proper.
\end{proof}

Now the comparison between log rigid and log crystalline cohomology follows immediately from the comparison of log convergent and log crystalline cohomology in \cite[Thm.~3.1.1]{Shiho2002} and the invariance of convergent cohomology under nilpotent thickenings (compare \cite{Shiho2008}).

\begin{corollary}
	Let $(X,\ovx)$ be a proper strictly semistable $\so_K^\pi$-log scheme with boundary.
	Then there are canonical quasi-isomorphisms
	\begin{align*}
	& \mathbb{R}\Gamma_\rig(\ovx_0/\so_F^0) \xrightarrow{\sim} \mathbb{R}\Gamma_\cris(\ovx_0/\so_F^0)_\Q,\\ 
	& \mathbb{R}\Gamma_\rig(\ovx_0/\so_K^\pi) \xrightarrow{\sim} \mathbb{R}\Gamma_\cris(\ovx/\so_K^\pi)_\Q,\\
	& \mathbb{R}\Gamma_{\hk}^\rig(X_0,\ovx_0) \xrightarrow{\sim}  \mathbb{R}\Gamma_{\hk}^\cris(\ovx)_\Q.
	\end{align*}
\end{corollary}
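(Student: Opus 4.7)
The plan is to obtain each of the three desired quasi-isomorphisms by routing through log convergent cohomology, using Lemma~\ref{lem:rig - conv}, Shiho's comparison theorem between log convergent and log crystalline cohomology \cite[Thm.~3.1.1]{Shiho2002}, and the invariance of log convergent cohomology under nilpotent thickenings from \cite{Shiho2008}. At this stage the statement only concerns underlying complexes, so I do not need to track the Frobenius or monodromy structure; this compatibility is deferred to the next subsection.

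For the first quasi-isomorphism $\mathbb{R}\Gamma_\rig(\ovx_0/\so_F^0) \xrightarrow{\sim} \mathbb{R}\Gamma_\cris(\ovx_0/\so_F^0)_\Q$, I would simply compose the map from Lemma~\ref{lem:rig - conv} with Shiho's comparison. The hypotheses are satisfied because strict semistability of $(X,\overline{X})$ forces $\overline{X}_0$ to be proper, log smooth, and of Cartier type over $k^0$, which is exactly what Shiho's theorem demands on the rational convergent side.

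For the second quasi-isomorphism the situation is slightly more involved, since $\mathbb{R}\Gamma_\cris(\ovx/\so_K^\pi)_\Q$ is computed on $\ovx$ rather than on its special fibre. Here I would use Lemma~\ref{lem:rig - conv} to identify $\mathbb{R}\Gamma_\rig(\ovx_0/\so_K^\pi)$ with $\mathbb{R}\Gamma_\conv(\ovx_0/\so_K^\pi)$, then apply the invariance of log convergent cohomology under the nilpotent thickening $\ovx_0 \hookrightarrow \ovx$ from \cite{Shiho2008} to reach $\mathbb{R}\Gamma_\conv(\ovx/\so_K^\pi)$, and finally invoke Shiho's comparison theorem a second time to land in $\mathbb{R}\Gamma_\cris(\ovx/\so_K^\pi)_\Q$.

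For the Hyodo--Kato comparison $\mathbb{R}\Gamma_{\hk}^\rig(X_0,\ovx_0) \xrightarrow{\sim} \mathbb{R}\Gamma_{\hk}^\cris(\ovx)_\Q$, it suffices to combine the canonical quasi-isomorphism $\mathbb{L}^{\hk}\colon \mathbb{R}\Gamma_{\hk}^\rig(X_0,\ovx_0) \xrightarrow{\sim} \mathbb{R}\Gamma_\rig(\ovx_0/\so_F^0)$ from \eqref{eq: LHK} with the first quasi-isomorphism above, noting that by definition $\mathbb{R}\Gamma_{\hk}^\cris(\ovx) = \mathbb{R}\Gamma_\cris(\ovx_0/\so_F^0)$. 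The only real verification, and what I would flag as the subtle point, is to check that the Cartier-type hypothesis needed for Shiho's theorem indeed holds for the log structures on $\overline{X}_0$ and on $\overline{X}$ induced from a strictly semistable $\so_K^\pi$-log scheme with boundary (Definition~\ref{def:ss log schemes w/ boundary over so_K}); this is a local computation from the explicit charts $P = \N^m \oplus \N^n$, whose Frobenius pull-back is exact, so the condition is satisfied.
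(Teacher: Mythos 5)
Your proof proposal is correct and follows essentially the same route as the paper: the paper's one-sentence justification of this corollary invokes exactly Lemma~\ref{lem:rig - conv}, Shiho's convergent/crystalline comparison \cite[Thm.~3.1.1]{Shiho2002}, and thickening invariance from \cite{Shiho2008}, which is precisely the chain you spell out. Your additional check that strictly semistable log schemes are of Cartier type (so that Shiho's theorem applies) is a worthwhile verification the paper leaves implicit.
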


\subsection{Compatibility of structures}\label{subsec: comparison structures}

First we turn our attention to the structure given by Frobenius and monodromy.

\begin{proposition}
	Let $(X,\overline{X})$ be a proper strictly semistable $\so_K^\pi$-log scheme with boundary.
	Then the action of Frobenius and the (normalized) monodromy operators on $ \mathbb{R}\Gamma^{\rig}_{\hk}(X_0, \ovx_0)$ and $ \mathbb{R}\Gamma_{\hk}^\cris(\overline{X})_{\Q}$ are compatible with each other.
\end{proposition}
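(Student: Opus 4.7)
The plan is to verify the two compatibilities separately along the canonical quasi-isomorphism $\mathbb{R}\Gamma^\rig_\hk(X_0,\ovx_0)\xrightarrow{\sim}\mathbb{R}\Gamma^\cris_\hk(\ovx)_\Q$ from the preceding corollary, which factors through log convergent cohomology via Lemma~\ref{lem:rig - conv} and Shiho's comparison theorem.

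For the Frobenius, Proposition~\ref{prop: rigid Frobenius} already identifies the rigid Hyodo--Kato Frobenius $\varphi_\hk$ defined locally by Definition~\ref{def: Kim - Hain} with the base-change Frobenius \eqref{eq: base change Frobenius} on $\mathbb{R}\Gamma_\rig(\ovx_0/\so_F^0)$, so it suffices to match the latter with the crystalline Frobenius of Section~\ref{subsec: syntomic for K-varieties}. Both are defined as the composition of base change along $\sigma\colon\so_F\to\so_F$ with pullback along the relative Frobenius $\chi_{\ovx_0}$, and the rigid--convergent--crystalline zigzag is a chain of natural transformations of cohomology theories on $\lsk$, hence it commutes with each of these two operations by functoriality.

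For the monodromy, I would exhibit both operators as the connecting map of essentially the same short exact sequence: \eqref{eq: rigid ses} on the rigid side and \eqref{Equ:SES2} on the crystalline side. On the crystalline side this is built into the definition, up to the $e^{-1}$ normalization. On the rigid side the corresponding assertion is that the Kim--Hain monodromy on $\widetilde{\omega}^\sbt_{\ovcy,\Q}\llbracket u\rrbracket$ induces, via the quasi-isomorphism of Lemma~\ref{lem: qis with KimHain}, the boundary map of \eqref{eq: rigid ses}: a closed lift $\omega=\omega_0+\omega_1 u^{[1]}+\cdots$ of a class $[\bar\omega_0]\in H^i(\omega^\sbt_{\ovcy,\Q})$ to $\widetilde{\omega}^i_{\ovcy,\Q}\llbracket u\rrbracket$ must satisfy $d\omega_0=\pm d\log t\wedge\omega_1$, so both $N[\omega]$ and the boundary map read off $[\bar\omega_1]$. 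The sequences \eqref{eq: rigid ses} and \eqref{Equ:SES2} are in turn matched under the rigid--crystalline comparison of Section~\ref{Subsec:comparison rig - cris}, since locally they are both obtained by pulling back a common lift of $\ovcy$ along the two log structures on $\so_F$; naturality of connecting maps in derived triangles then concludes. The main obstacle is precisely this identification of the Kim--Hain $N$ with the boundary map of \eqref{eq: rigid ses}, which requires an explicit lifting through the acyclic-assembly argument in the proof of Lemma~\ref{lem: qis with KimHain}; the remaining steps are formal consequences of the functoriality of the comparison morphisms already established.
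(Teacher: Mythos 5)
Your proof takes a genuinely different route from the paper's. The paper stays entirely within the Kim--Hain framework: it observes first that the rigid and convergent Kim--Hain complexes have compatible $\varphi$ and $N$ by construction, then introduces the de~Rham--Witt Kim--Hain complex $W\widetilde{\omega}^{\sbt}_{\overline{X}_0}\llbracket u\rrbracket$ (the crystalline mirror of Definition~\ref{def: Kim - Hain}, with $\varphi(u^{[k]}) = p^ku^{[k]}$ and $N(u^{[k]}) = u^{[k-1]}$), constructs a morphism of complexes from the convergent Kim--Hain complex to $\R\Gamma(\overline{X}_0,W\widetilde{\omega}^{\sbt}_{\overline{X}_0}\llbracket u\rrbracket)_\Q$ that is compatible with $\varphi$ and $N$ at the level of complexes because both sides carry the same explicit operators, and then invokes \cite[Lem.~6, Lem.~7]{KimHain2004} to conclude that $W\widetilde{\omega}^{\sbt}\llbracket u\rrbracket \to W\omega^{\sbt}$ is a $(\varphi,N)$-compatible quasi-isomorphism. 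This handles $\varphi$ and $N$ simultaneously and completely avoids re-proving the boundary-map description of $N$.

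Your decomposition treats $\varphi$ and $N$ separately. For $\varphi$ you go through Proposition~\ref{prop: rigid Frobenius} to pass to the base-change Frobenius and then rely on naturality of the rigid--convergent--crystalline comparison; this is sound, though you should be explicit that the comparison maps of \S\ref{Subsec:comparison rig - cris} commute with $\sigma$-base-change and with relative Frobenius pullback (which does hold and is implicit in the paper's ``compatible by construction''). For $N$ you propose to identify the Kim--Hain monodromy with the connecting map of \eqref{eq: rigid ses} and then match the two short exact sequences; the identification you sketch is correct up to sign (and the sign must be checked against the convention for the shift in $\omega^{\sbt}_{\ovcy}[-1]$), but what you cast as ``naturality of connecting maps'' hides the real work of choosing, for a given local embedding, weak formal and PD lifts that are compatible enough to compare \eqref{eq: rigid ses} and \eqref{Equ:SES2} --- which is precisely the kind of bookkeeping the paper outsources to Kim--Hain's lemmas by mirroring the $\llbracket u\rrbracket$-construction in the de~Rham--Witt world. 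Both routes are viable; the paper's is shorter because the $(\varphi,N)$-structure is carried along by a single morphism of dg-algebras, whereas yours needs a separate comparison of exact sequences for $N$ and a functoriality argument for $\varphi$. The one ingredient you do not mention, and which the paper leans on heavily, is the de~Rham--Witt Kim--Hain complex $W\widetilde{\omega}^{\sbt}\llbracket u\rrbracket$; incorporating it would let you drop the boundary-map analysis entirely.
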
	

\begin{proof}
By construction the Frobenius and monodromy operators on $ \mathbb{R}\Gamma^\rig_{\hk}(X_0,\ovx_0)$ and $ \mathbb{R}\Gamma^\conv_{\hk}(X_0,\ovx_0)$ are compatible with each other.
To show the compatibility between the Frobenius and monodromy on convergent Hyodo--Kato and on crystalline Hyodo--Kato cohomology, we use the alternative construction from \cite{KimHain2004},
which we used as a guide for Definition~\ref{def: Kim - Hain}.

Let $W\widetilde{\omega}^{\sbt}_{\overline{X}_0}\llbracket u\rrbracket$ be the completion with respect to the ideal $(u^{[1]})$ of the commutative dg-algebra obtained from $W\widetilde{\omega}^{\sbt}_{\overline{X}_0}$ defined in (\ref{equ: tilde de Rham-Witt}) by adjoining the divided powers $u^{[i]}$ of a variable $u$ in degree zero, i.e. they satisfy the relations $du^{[i]} = d\log t\cdot u^{[i-1]}$ and $u^{[0]} = 1$.
The monodromy operator on $W\widetilde{\omega}^{\sbt}_{\overline{X}_0}\llbracket u\rrbracket$ is defined to be the $W\widetilde{\omega}^{\sbt}_{\overline{X}_0}$-linear morphism that maps $u^{[i]}$ to $u^{[i-1]}$.
By \cite[Lem.~6]{KimHain2004} and \cite[Lem.~7]{KimHain2004}, the natural morphism $W\widetilde{\omega}^{\sbt}_{\overline{X}_0}\llbracket u\rrbracket  \rightarrow  W\omega^{\sbt}_{\overline{X}_0}$ is a quasi-isomorphism, compatible with Frobenius and monodromy (compare Remark~\ref{rem: Kim Hain}). 
Hence $W\widetilde{\omega}^{\sbt}_{\overline{X}_0}\llbracket u\rrbracket$ also computes the crystalline Hyodo--Kato cohomology and its structures.

To compare this with the Frobenius and monodromy on the convergent Hyodo--Kato complex, we note that by Lemma~\ref{lem: HK embedding} and Remark~\ref{rem: gluing lifts} (or rather their convergent analogue) we can choose an embedding system $(\overline{U}_{\sbt},\ovz_{\sbt})$ in the sense of \cite[2.18]{HyodoKato1994} for $\overline{X}_0  \rightarrow  \Spec(\so_F[t],1 \mapsto  t)$ in the Zariski topology, whose  completion gives a simplicial object $((U_{\sbt},\overline{U}_{\sbt}),(\cz_{\sbt},\ovcz_{\sbt},\phi_{\sbt}),i_{\sbt}) \in\mathsf{RT}_{\Xi_\hk^\conv}$.
Hence, without loss of generality we assume that $(X_0,\overline{X}_0)$ is $\Xi_\hk^\conv$-embeddable and choose $((X_0,\overline{X}_0),(\cz,\ovcz,\phi),i) \in\mathsf{RT}_{\Xi_\hk^\conv}$ appropriately.

By construction there is a morphism 
	$$
	 \mathcal{K}_{\Xi^\conv_\hk}((X_0,\ovx_0),(\cz,\ovcz,\phi),i) \rightarrow   R\Gamma(\ovx_0,W\widetilde{\omega}^{\sbt}_{\overline{X}_0}\llbracket u\rrbracket)_{\Q}
	$$
which is compatible with Frobenius and monodromy.
It is in fact a quasi-isomorphism by the usual comparison between log convergent and log crystalline cohomology and this shows the claim.
\end{proof}

Next we study the compatibility of the Hyodo--Kato morphisms.

\begin{proposition}
	Let $(X,\overline{X})$ be a proper strictly semistable $\so_K^\pi$-log scheme with boundary. The Hyodo--Kato morphisms on $ \mathbb{R}\Gamma_{\hk}^\rig(X_0,\ovx_0)$ and $  \mathbb{R}\Gamma_{\hk}^\cris(\ovx)_\Q$ are compatible on the Frobenius eigenspaces relative to $p^r$ for $r\geqslant -1$. 
\end{proposition}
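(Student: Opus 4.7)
The plan is to reduce the statement to the uniqueness of $F$-linear Frobenius-equivariant sections of the specialization map $i_0^\ast$. By the rigid--crystalline comparison quasi-isomorphisms of \S\ref{Subsec:comparison rig - cris}, together with the Frobenius compatibility established in the previous proposition, both $\iota_\pi^\rig$ and $\iota_\pi^\cris$ may be viewed as $F$-linear morphisms with the same source $\mathbb{R}\Gamma_\hk^\cris(\ovx)_\Q$ and target $\mathbb{R}\Gamma_\cris(\ovx/\so_K^\pi)_\Q$. The task thus becomes to show that they induce equal maps on the generalized Frobenius eigenspace for the eigenvalue $p^r$, for each $r\geqslant -1$.

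By construction $\iota_\pi^\cris = i_\pi^\ast\circ s_\pi$, where $s_\pi$ is the unique $F$-linear section of $i_0^\ast\colon \mathbb{R}\Gamma_\cris(\ovx/\mathcal{T}_{PD})_\Q\to\mathbb{R}\Gamma_\hk^\cris(\ovx)_\Q$ commuting with Frobenius (see \cite[Lem.~4.4.11]{Tsuji1999}). Passing to cohomology, $\kker(i_0^\ast)$ lies in $t\cdot H^\ast_\cris(\ovx/\mathcal{T}_{PD})_\Q$, whose $t^{[n]}$-piece carries Frobenius with an extra factor of $p^n$; any Frobenius-equivariant discrepancy between two sections therefore contributes, on the $p^r$-eigenspace, via the $p^{r-n}$-eigenspaces for $n\geqslant 1$, and these vanish under the standard Newton polygon bound once $r\geqslant -1$. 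It hence suffices to upgrade $\iota_\pi^\rig$ to a Frobenius-equivariant $F$-linear section $\tilde s$ of $i_0^\ast$ satisfying $i_\pi^\ast\circ\tilde s = \iota_\pi^\rig$ after the identifications.

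Such a lift can be constructed by reinterpreting the Kim--Hain complex $\widetilde\omega^{\sbt}_{\ovcy,\Q}\llbracket u\rrbracket$ of Definition~\ref{def: Kim - Hain}, which computes $\mathbb{R}\Gamma_\hk^\rig(X_0,\ovx_0)$, via the substitution $u^{[k]}\mapsto t^{[k]}$. This sends the Kim--Hain complex into a model of $\mathbb{R}\Gamma_\cris(\ovx/\mathcal{T}_{PD})_\Q$, and is Frobenius-equivariant since $\varphi(u^{[k]}) = p^k u^{[k]} = \varphi(t^{[k]})$. Composing with $i_\pi^\ast$ realizes the zigzag defining $\iota_\pi^\rig$ through $\mathbb{R}\Gamma_\rig((V_{\sbt},P_{\sbt})/\ct)$ as a factorization via this section.

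The main obstacle will be the chain-level identification between the $(V_{\sbt},P_{\sbt})$-construction, used to bridge $\so_F^0$ and $\so_K^\pi$ in the rigid setting, and the Kim--Hain divided power construction yielding $\tilde s$. This requires comparing the local rigid exactifications over $\ct$ with the divided power envelopes over $\mathcal{T}_{PD}$, in a manner compatible with Frobenius and with the specialization maps $i_0^\ast$ and $i_\pi^\ast$. Once this comparison is in hand, the uniqueness of $s_\pi$ on each $p^r$-eigenspace for $r\geqslant -1$ forces $\iota_\pi^\rig$ and $\iota_\pi^\cris$ to agree there, which is the desired compatibility.
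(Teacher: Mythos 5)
Your overall plan --- reduce to a statement about Frobenius-equivariant sections of the map to Hyodo--Kato cohomology --- is in the right spirit but has two concrete problems, and the paper's actual argument sidesteps both by taking a simpler route.

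First, the explicit construction of $\tilde s$ via the substitution $u^{[k]}\mapsto t^{[k]}$ does not respect the differential. In Definition~\ref{def: Kim - Hain} one has $du^{[k]} = d\log t\cdot u^{[k-1]}$, whereas $d\,t^{[k]} = t^{[k-1]}\,dt = t\cdot t^{[k-1]}\,d\log t$; the two sides differ by a factor of $t$, so the proposed map of complexes is not a map of complexes. (The relation $du^{[k]}=d\log t\cdot u^{[k-1]}$ makes $u$ behave formally like $\log t$, not like a divided power of $t$, and replacing $u^{[k]}$ by $(\log t)^k/k!$ would introduce convergence issues you would then have to control.) Second, the uniqueness step is also shaky as stated: $\sigma(t)=t^p$, so $\varphi$ does not act on the $t^{[n]}$-graded piece by $p^n$, and the ``Newton polygon bound'' you invoke is not what controls $\ker(i_0^\ast)$ here. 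Finally, you explicitly flag ``the main obstacle'' --- chain-level comparison of the $(V_{\sbt},P_{\sbt})$-zigzag with the divided-power model --- and leave it unresolved; that identification is precisely the hard part and would need to be carried out.

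The paper avoids all of this. It never constructs a section of $i_0^\ast$ on the rigid side. Instead it passes through log convergent cohomology to relate the rigid triple $\so_F^0\hookrightarrow\mathcal{T}\hookleftarrow\so_K^\pi$ to the crystalline triple $\so_F^0\hookrightarrow\mathcal{T}_{PD}\hookleftarrow\so_K^\pi$, and uses the key fact (from the proof of \cite[Prop.~3.8]{NekovarNiziol2016}) that $i_0^\ast\colon \mathbb{R}\Gamma_\cris(\ovx/\mathcal{T}_{PD})_\Q\to\mathbb{R}\Gamma_{\hk}^\cris(\ovx)_\Q$ is already a quasi-isomorphism after restricting to Frobenius eigenspaces. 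One can therefore simply invert $i_0^\ast$ there; the uniqueness of $s_\pi$ plays no role. What remains is to check that the $(\ovv_{\sbt},\ovp_{\sbt})$-zigzag defining $\iota_\pi^\rig$ sits over the direct $\so_F^0\leftarrow\mathcal{T}\rightarrow\so_K^\pi$ zigzag (which is immediate, since in both all left arrows are induced by $t\mapsto 0$ and all right arrows by $t\mapsto\pi$) and to equip $\mathbb{R}\Gamma_\rig((\ovv_{\sbt},\ovp_{\sbt})/\ct)$ with a Frobenius compatible with everything (the small lemma about $\mathbb{L}_{(\ct,\ovct)/\ct}$). If you want to salvage your approach, you would need to find the correct substitution replacing $u^{[k]}\mapsto t^{[k]}$ and actually prove the chain-level comparison you defer, which is likely to be more work than the paper's argument.
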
	

\begin{proof}
To compare the crystalline and rigid Hyodo--Kato morphisms we once again pass through log convergent cohomology. 
Analogous to \eqref{eq:crisHK} we consider the exact closed immersions
	$$
	\so_F^0 \xrightarrow{i_0} \widehat{\mathcal{T}} \xleftarrow{i_\pi} \so_K^\pi
	$$
given by $t \mapsto  0$ and $t \mapsto  \pi$, which induce the base change morphisms
	\begin{equation}\label{eq:convHK}
	 \mathbb{R}\Gamma_{\conv}(\overline{X}_0/\so_F^0)\xleftarrow{i_0^\ast}  \mathbb{R}\Gamma_{\conv}(\overline{X}_0/\widehat{\mathcal{T}})  \xrightarrow{i_\pi^\ast}  \mathbb{R}\Gamma_{\conv}(\overline{X}_0/\so_K^\pi).
	\end{equation}

The functor between the log convergent and log crystalline site defined in \cite[\S 5.3]{Shiho2000} together with the crystalline and convergent Poincar\'e Lemma induces a commutative diagram
	\begin{equation}\label{eq:diageConvCris}
	\xymatrix{
	 \mathbb{R}\Gamma_{\conv}(\overline{X}_0/\so_F^0)  \ar[d]^\sim &   \mathbb{R}\Gamma_{\conv}(\overline{X}_0/\widehat{\mathcal{T}}) \ar[d]  \ar[r]^{i^\ast_\pi}   \ar[l]_{i_0^\ast} &     \mathbb{R}\Gamma_{\conv}(\overline{X}_0/\so_K^\pi) \ar[d]^{\sim} & \\
	 \mathbb{R}\Gamma_{\hk}^\cris(\ovx)_\Q &  \mathbb{R}\Gamma_{\cris}(\overline{X}/\mathcal{T}_{PD})_{\Q} \ar[r]^{i^\ast_\pi}  \ar[l]_{i_0^\ast}^{\sim_\varphi} & \mathbb{R}\Gamma_{\cris}(\overline{X}/\so_K^\pi)_{\Q}& \mathbb{R}\Gamma_{\dr}(\ovx_K)  \ar[lu]_{\quad\mathrm{sp}}^{\quad\sim}  \ar[l]_{\gamma}^{\sim}
	}
	\end{equation}
where   $ \mathbb{R}\Gamma_{\cris}(\overline{X}/\mathcal{T}_{PD})_{\Q}  \xrightarrow{i_0^\ast}  \mathbb{R}\Gamma_{\hk}^\cris(\overline{X})_{\Q} $ is a quasi-isomorphism on Frobenius eigenspaces, i.e. on the homotopy limits of $1-\varphi_r$, by \cite[Proof of Prop.~3.8]{NekovarNiziol2016}.

In the case of rigid cohomology, the closed immersions of weak formal log schemes,
	$$
	\so_F^0 \xrightarrow{i_0} \mathcal{T} \xleftarrow{i_\pi} \so_K^\pi
	$$
induce canonical morphisms on cohomology
	\begin{equation}\label{eq:rigHK}
	 \mathbb{R}\Gamma_{\rig}(\ovx_0/\so_F^0)\xleftarrow{i_0^\ast}  \mathbb{R}\Gamma_{\rig}(\ovx_0/{\mathcal{T}})  \xrightarrow{i_\pi^\ast}  \mathbb{R}\Gamma_{\rig}(\ovx_0/\so_K^\pi),
	\end{equation}
and similarly for $X_0$. 
By construction they are compatible with the convergent analogue (\ref{eq:convHK}) via the comparison morphisms from the previous section.

However it is unclear how to obtain a section of the left morphism. 
Instead, we use the construction in the diagram before Lemma \ref{lem: qis of M V P} to obtain $\iota_\pi^\rig$.
In what follows we use the same notation as in \S\ref{subsec: HK-map} with $(X_0,\ovx_0)$ instead of $(Y,\ovy)$.
Note that in the diagram before Lemma \ref{lem: qis of M V P} $\overline{\xi}_0$ is induced by $t \mapsto  0$ while $\overline{\xi}_\pi$ is induced by $t \mapsto  \pi$. 

Let $\overline{M}_{\sbt}$ be the simplicial log scheme over $k^0$ associated to $(X_0,\ovx_0)$ as in \S\ref{subsec: HK-map}, and $(\overline{V}_{\sbt},\overline{P}_{\sbt})$ the corresponding $T$-log scheme with boundary.
The augmentation $\overline{M}_{\sbt} \rightarrow  \overline{Y}$ induces quasi-isomorphisms for $\mathcal{S} = \so_F^0, \so_K^\pi,\mathcal{T}$
	$$
	 \mathbb{R}\Gamma_\rig(\ovx_0/\mathcal{S}) \xrightarrow{\sim}  \mathbb{R}\Gamma_\rig(\overline{M}_{\sbt}/\mathcal{S}).
	$$
Together with the canonical morphism $ \mathbb{R}\Gamma_\rig((\overline{V}_{\sbt},\overline{P}_{\sbt})/\mathcal{T})  \rightarrow   \mathbb{R}\Gamma_\rig(\overline{M}_{\sbt}/\mathcal{T})$ they allow us to fit (\ref{eq:rigHK}) and the diagram before Lemma \ref{lem: qis of M V P} into one diagram
	$$
	\xymatrix{ &&  \mathbb{R}\Gamma_\rig((\overline{V}_{\sbt},\overline{P}_{\sbt})/\mathcal{T})  \ar[lld]_{\sim} \ar[rrd]  \ar[d]&& \\
	  \mathbb{R}\Gamma_\rig(\overline{M}_{\sbt}/\so_F^0)  &&  \mathbb{R}\Gamma_\rig(\overline{M}_{\sbt}/\mathcal{T})  \ar[ll] \ar[rr]&&  \mathbb{R}\Gamma_\rig(\overline{M}_{\sbt}/\so_K^\pi) \\
	  \mathbb{R}\Gamma_\rig(\ovx_0/\so_F^0) \ar[u]^{\sim} & &  \mathbb{R}\Gamma_\rig(\ovx_0/\mathcal{T})  \ar[rr] \ar[ll] \ar[u]^{\sim} &&  \mathbb{R}\Gamma_\rig(\ovx_0/\so_K^\pi)  \ar[u]^{\sim}}
	 $$
which obviously commutes since the all maps which point left are induced by $t \mapsto  0$ and all maps which point right are induced by $t \mapsto  \pi$.

To see that $\iota_\pi^\rig$ and $\iota_\pi^\cris$ are compatible, we have to invert the morphism  $i_0^\ast$ in  (\ref{eq:crisHK}).
This is possible on Frobenius eigenspaces. 
To consider the Frobenius on $ \mathbb{R}\Gamma_\rig((\ovv_{\sbt},\ovp_{\sbt})/\ct)$, we need the following lemma.

\begin{lemma}
	The morphism $\mathbb{L}_{(\ct,\ovct)/\ct}\colon \mathbb{R}\Gamma_\rig((\ovv_{\sbt},\ovp_{\sbt})/\ct) \rightarrow  \mathbb{R}\Gamma_\rig((\ovv_{\sbt},\ovp_{\sbt})\times_T(T,\ovt)/(\ct,\ovct))$ in \eqref{eq: Lovct} is a quasi-isomorphism.
\end{lemma}

\begin{proof}
	It suffices to show that $ \mathbb{R}\Gamma_\rig((\ovv_J^I,\ovp_J^I)/\ct) \cong \mathbb{R}\Gamma((\ovv_J^I,\ovp_J^I)\times_T(T,\ovt)/(\ct,\ovct))$ for any $I\subset J\subset\Upsilon_{\ovx_0}$.
	This is just the construction $(\ovv_J^I,\widetilde{P}_J^I)$ in \S\ref{subsec: HK-map} with respect to the structure morphism $\ovx_0 \rightarrow  k^0$.
	More precisely, if we let $(Y',\ovy'): = k^0$ and let $f\colon(Y,\ovy) \rightarrow (Y',\ovy') = k^0$ be the structure morphism, then $(\ovv_J^I,\widetilde{P}_J^I)$ associated to $f$ is just $(\ovv_J^I,\ovp_J^I)\times_T(T,\ovt)$.
	Thus we have
		\[\mathbb{R}\Gamma_\rig((\ovv_J^I,\ovp_J^I)/\ct) \cong \mathbb{R}\Gamma((\ovv_J^I,\ovp_J^I)\times_T(T,\ovt)/\ct)\]
	by Lemma~\ref{lem: qis with tilde boundary}.
	The right hand side is isomorphic to $ \mathbb{R}\Gamma_\rig((\ovv_J^I,\ovp_J^I)\times_T(T,\ovt)/(\ct,\ovct))$ via $\mathbb{L}^\sharp$ in \eqref{eq: Lsharp}.
\end{proof}

Consequently, one can define a Frobenius endomorphism on $ \mathbb{R}\Gamma_\rig((\ovv_{\sbt},\ovp_{\sbt})/\ct)$ as explained at the end of \S\ref{subsubsec: Base change and Frobenius}, which is compatible with the Frobenius on $ \mathbb{R}\Gamma_\rig(\overline{M}_{\sbt}/\ct)$ and hence with the Frobenius on  $ \mathbb{R}\Gamma_{\rig}(\overline{X}_0/\mathcal{T})$ and with the Frobenius on $ \mathbb{R}\Gamma_{\rig}(\overline{X}_0/\so_F^0) $.

Putting everything that we discussed together we obtain a commutative diagram
	{\small
	$$
	\mathclap{
	\xymatrix{	
	&   \mathbb{R}\Gamma_{\rig}((\overline{V}_{\sbt},\overline{P}_{\sbt})/\mathcal{T})^{\varphi = p^r}  \ar[d] \ar[dl]_{\sim}   \ar[dr] & \\
	 \mathbb{R}\Gamma_{\rig}(\overline{X}_0/\so_F^0)^{\varphi = p^r}  \ar[d]_{\sim}   &  \mathbb{R}\Gamma_{\rig}(\overline{X}_0/\mathcal{T})^{\varphi = p^r}  \ar[d]  \ar[l] \ar[r]&  \mathbb{R}\Gamma_{\rig}(\overline{X}_0/\so_K^\pi) \ar[d]_{\sim}& \\
	 \mathbb{R}\Gamma_{\conv}(\overline{X}_0/\so_F^0)^{\varphi = p^r}  \ar[d]_{\sim}   &  \mathbb{R}\Gamma_{\conv}(\overline{X}_0/\widehat{\mathcal{T}})^{\varphi = p^r}  \ar[l] \ar[r] \ar[d]  &  \mathbb{R}\Gamma_{\conv}(\overline{X}_0/\so_K^\pi) \ar[d]_{\sim} &  \mathbb{R}\Gamma_{\dr}(\ovx_K)   \ar[l]^{ \qquad \sim}_{ \qquad \mathrm{sp}}  \ar[ld]^{\sim}_{\quad\gamma}  \ar[lu]_{\mathrm{sp}}  \\
	 \mathbb{R}\Gamma_{\hk}^\cris(\overline{X})_{\Q}^{\varphi = p^r}   & \mathbb{R}\Gamma_{\cris}(\overline{X}/\mathcal{T}_{PD})_{\Q}^{\varphi = p^r}  \ar[l]_{\sim}  \ar[r] & \mathbb{R}\Gamma_{\cris}(\overline{X}/\so_K^\pi)_{\Q}&
	}
	}$$}
where we  restricted the cohomology theories in the left and middle vertical row to Frobenius eigenspaces.
From the commutativity of the triangles on the right hand side of the diagram we conclude that the canonical morphism $\mathrm{sp}\colon  \mathbb{R}\Gamma_{\dr}(\ovx_K) \rightarrow   \mathbb{R}\Gamma_{\rig}(\overline{X}_0/\so_K^\pi)$ and hence $\mathrm{sp}\colon  \mathbb{R}\Gamma_{\dr}^{\D}(X_K) \rightarrow   \mathbb{R}\Gamma_{\rig}(X_0/\so_K^\pi)$  constructed in \S\ref{subsec: syntomic def} are quasi-isomorphisms.

This shows as desired  that the rigid and crystalline Hyodo--Kato morphisms are compatible on the Frobenius eigenspaces for eigenvalues $p^r$ with $r\geqslant -1$.
\end{proof}

As a consequence, we obtain a quasi-isomorphism between the diagrams which define $ \mathbb{R}\Gamma_\syn^\rig((X,\ovx),r,\pi)$ and $ \mathbb{R}\Gamma^\cris_\syn(\ovx,r,\pi)$ for $r\geqslant 0$. 
Since our construction of the cup product on $ \mathbb{R}\Gamma^\rig_\syn((X_0,\ovx_0),r,\pi)$ follows the construction of \cite[\S 2.4]{Besser2016}, this quasi-isomorphism is compatible with cup products.

\addcontentsline{toc}{section}{References}

\hrule
\vspace{.5cm}
\noindent
\verb=veronika.ertl@mathematik.uni-regensburg.de=\hfill \verb=kazuki72@a3.keio.jp=  \\


\begin{thebibliography}{99}
\bibitem{Bannai2002} 
\textsc{K.$\;$Bannai,} 
\textit{Syntomic cohomology as a $p$-adic absolute Hodge cohomology,} 
Math. Z.~\textbf{242} no.~3 (2002), pp.~443--480.

\bibitem{BannaiKings2011} 
\textsc{K.$\;$Bannai --- G.$\;$Kings,}
 \textit{$p$-adic Beilinson conjecture for ordinary Hecke motives associated to imaginary quadratic fields,} 
 RIMS K\^{o}ky\^{u}roku Bessatsu~\textbf{B25} (2011), pp.~9--30.

\bibitem{Beilinson2012} 
\textsc{A.$\;$Beilinson,} 
\textit{$p$-adic periods and derived de~Rham cohomology,} 
J. Amer. Math.~\textbf{25} no.~3 (2012), pp.~715--738.

\bibitem{Beilinson2013} 
\textsc{A.$\;$Beilinson,} 
\textit{On the crystalline period map,}
 Camb. J. Math.~\textbf{1} no.~1 (2013), pp.~1--51.

\bibitem{BertoliniDarmon2014} 
\textsc{M.$\;$Bertolini --- H.$\;$Darmon,} 
\textit{Kato's Euler system and rational points on elliptic curves I: A $p$-adic Beilinson formula,}
 Israel J. Math.~\textbf{199} no.~1 (2014), pp.~163--188.
	
\bibitem{Besser2000}\textsc{A.$\;$Besser,} 
\textit{Syntomic regulators and $p$-adic integration I: Rigid syntomic regulators,} 
Israel J. Math.~\textbf{120} (2000), pp.~291--334.

\bibitem{Besser2009}
\textsc{A.$\;$Besser --- P.$\;$Buckingham --- R.$\;$de Jeu --- X.-F.$\;$Roblot,}
\textit{On the $p$-adic Beilinson conjecture for number fields,}
 Pure Appl. Math. Q.~\textbf{5} no. 1(2009), pp.~375--434.

\bibitem{Besser2016}
\textsc{A.$\;$Besser --- D.$\;$Loeffler --- S.L.$\;$Zerbes,} 
\textit{Finite polynomial cohomology for general varieties,}
 Ann. Math. Qu\'{e}.~\textbf{40} no.~1 (2016), pp.~203--220.

\bibitem{ChiarellottoCiccioniMazzari2013} 
\textsc{B.$\;$Chiarellotto --- Alice Ciccioni --- Nicola Mazzari,}
 \textit{Cycle classes and the syntomic regulator,}
  {A}lgebra {N}umber {T}heory~\textbf{7} no.~3 (2013), pp.~533--566.

\bibitem{DegliseNiziol2018} 
\textsc{F.$\;$D\'eglise --- W.$\;$Nizio{\l},}
 \textit{On $p$-adic absolute Hodge cohomology and syntomic coefficients, I,} Comment. Math. Helv.~\textbf{93} no.~1 (2018), pp.~71--131. 

\bibitem{Deligne1974} 
\textsc{P.$\;$Deligne,} 
\textit{Th\'eorie de Hodge: III,}
 Inst. Hautes \'{E}tudes Sci. Publ. Math.~\textbf{44}  (1974), pp.~5--77.

\bibitem{Elkik1973} 
\textsc{R.$\;$Elkik,}
\textit{Solutions d'\'{e}quations \`{a} coefficients dans un anneau hens\'{e}lien,}
 Ann. Sci. \'{E}cole Norm. Sup.~(4) \textbf{6} (1973), pp.~553--603.

\bibitem{FontaineMessing1987} 
\textsc{J.-M.$\;$Fontaine --- W.$\;$Messing,}
 \textit{$p$-adic periods and $p$-adic \'{e}tale cohomology,} 
 Current trends in arithmetical algebraic geometry (Arcata, CA, 1985), Contemp. Math.~\textbf{67} (1987) Amer. Math. Soc., Providence, RI, pp.~179--207.

\bibitem{GrosseKloenne2000} 
\textsc{E.$\;$Gro{\ss}e-Kl{\"o}nne,}
 \textit{Rigid analytic spaces with overconvergent structure sheaf,}
  J. Reine Angew. Math.~\textbf{519} (2000), pp.~73--95.

\bibitem{GrosseKloenne2004} 
\textsc{E.$\;$Gro{\ss}e-Kl{\"o}nne,}
 \textit{Compactifications of log morphisms,}
  Tohoku Math. J.~(2) \textbf{56} no.~1 (2004), pp.~79--104.

\bibitem{GrosseKloenne2004-2} 
\textsc{E.$\;$Gro{\ss}e-Kl\"{o}nne,}
 \textit{De~Rham cohomology for rigid spaces,}
  Math. Z. \textbf{247} no.~2 (2004), pp.~223--240.

\bibitem{GrosseKloenne2005} 
\textsc{E.$\;$Gro{\ss}e-Kl{\"o}nne,}
 \textit{Frobenius and monodromy operators in rigid analysis, and Drinfel'd's symmetric space,}
  J. Algebraic Geom.~\textbf{14}  no.~3 (2005), pp.~391--437. 

\bibitem{GrosseKloenne2007} 
\textsc{E.$\;$Gro{\ss}e-Kl{\"o}nne,}
 \textit{The {\v{C}}ech filtration and monodromy in log crystalline cohomology,} 
 Trans. Amer. Math. Soc.~\textbf{359} no.~6 (2007), pp.~2945--2972.

\bibitem{Hartshorne} 
\textsc{R.$\;$Hartshorne,} 
\textit{Algebraic geometry,}
Graduate Texts in Mathematics \textbf{52}. Springer-Verlag, New York-Heidelberg, (1977).

\bibitem{HyodoKato1994} 
\textsc{O.$\;$Hyodo --- K.$\;$Kato,} 
\textit{Semi-stable reduction and crystalline cohomology with logarithmic poles,}  Ast{\'e}risque  \textbf{223} (1994), pp.~221--268.

\bibitem{Kato1996} 
\textsc{F.$\;$Kato,} 
\textit{Log smooth deformation theory,} 
Tohoku Math. J. (2) \textbf{48} (1996), pp.~317--354.

\bibitem{Kato1989} 
\textsc{K.$\;$Kato,}
 \textit{Logarithmic structures of Fontaine--Illusie,}
  Algebraic Analysis, Geometry and Number Theory (1989), Johns Hopkins Univ. Press, Baltimore, MD, pp.~191--224.

\bibitem{Kato1994} 
\textsc{K.$\;$Kato,}
 \textit{Semi-stable reduction and $p$-adic \'{e}tale cohomology,} 
 P\'{e}riodes $p$-adiques (Bures-sur-Yvette, 1988), Ast\'{e}risque \textbf{223} (1994), pp.~269--293.

\bibitem{KimHain2004} 
\textsc{M.$\;$Kim --- R.M.$\;$Hain,} 
\textit{A de~Rham--Witt approach to crystalline rational homotopy theory,} Comps.~Math. \textbf{140} no.~5 (2004), pp.~1245--1276.

\bibitem{LangerMuralidharan2013} 
\textsc{A.$\;$Langer --- A.$\;$Muralidharan,}
 \textit{An analogue of Raynaud's theorem: weak formal schemes and dagger spaces,}
  M\"{u}nster J. Math. \textbf{6} no.~1 (2013), pp.~271--294.
 
\bibitem{Meredith1972} 
\textsc{D.$\;$Meredith,}
 \textit{Weak Formal Schemes,}
  Nagoya Math J. \textbf{45} (1972), pp.~1--38.
 
 \bibitem{MonskyWashnitzer1968} 
 \textsc{P.$\;$Monsky ---  G.$\;$Washnitzer,}
  \textit{Formal cohomology I,}
   Ann. of Math. (2) \textbf{88} (1968), pp.~181--217.

\bibitem{NekovarNiziol2016} 
\textsc{J.$\;$Nekov\'a\v{r} --- W.$\;$Nizio\l{},}
 \textit{Syntomic cohomology and $p$-adic regulators for varieties over $p$-adic fields,}
  Algebra Number Theory \textbf{ 10} no.~8 (2016), pp.~1695--1790.

\bibitem{Niklas2010} 
\textsc{M.$\;$Niklas,}
 \textit{Rigid syntomic regulators and $p$-adic $L$-function of a modular form,} Regensburg PhD Thesis, (2010).

\bibitem{PerrinRiou1995} 
\textsc{B.$\;$Perrin-Riou,} 
\textit{Fonctions $L$ $p$-adiques des repr\'{e}sentations $p$-adiques,}
 Ast\'erisque \textbf{229}, (1995).

\bibitem{Shiho2000} 
\textsc{A.$\;$Shiho,} 
\textit{Crystalline fundamental groups I -- Isocrystals on log crystalline site and log convergent site,} 
J. Math. Sci. Univ. Tokyo \textbf{7} no.~4 (2000), pp.~509--656.

\bibitem{Shiho2002}
\textsc{A.$\;$Shiho,}
 \textit{Crystalline fundamental groups II -- Log convergent cohomology and rigid cohomology,}
  J. Math. Sci. Univ. Tokyo \textbf{9} no.~1 (2002), pp.~1--163.
  
\bibitem{Shiho2008}
\textsc{A.$\;$Shiho,}
 \textit{Relative log convergent cohomology and relative rigid cohomology I,} 
 Preprint, available at arXiv:0707.1742v2.
  
  \bibitem{Tsuji1999}
  \textsc{T.$\;$Tsuji,}
   \textit{ $p$-adic \'etale cohomology and crystalline cohomology in the semi-stable reduction case,}
    Invent. Math. \textbf{137} no.~2 (1999), pp.~233--411.

\bibitem{vanderPut1986}
 \textsc{M.$\;$van der Put,}
  \textit{The cohomology of Monsky and Washnitzer,}
   Introductions aux cohomologies $p$-adiques (Luminy, 1984), M\'{e}m. Soc. Math. France (N.S.) \textbf{23} no.~4 (1986), pp.~33--59.

\bibitem{vanderPutSchneider1995} \textsc{M.$\;$van der Put --- P.$\;$Schneider,}
 \textit{Points and topologies in rigid geometry,}
  Math. Ann. \textbf{302} no.~1 (1995), pp.~81--103.

\bibitem{Weibel1994} \textsc{C.A.$\;$Weibel,}
 \textit{An introduction to homological algebra,}
  Cambridge Studies in Adv. Math. \textbf{38}, Cambridge University Press, Cambridge UK, (1994).

\end{thebibliography}
\end{document}